\newtheoremstyle{newremark}
  {5pt}
  {5pt}
  {\rmfamily}
  {}
  {\rmfamily\bf}
  {.}
  {.5em}
  {}
\newtheorem{theorem}{Theorem}
\newtheorem{lemma}[theorem]{Lemma}
\newtheorem{corollary}[theorem]{Corollary}
\newtheorem{proposition}[theorem]{Proposition}
\theoremstyle{newremark}
\newtheorem{remark}[theorem]{Remark}
\newtheorem{definition}[theorem]{Definition}
\newtheorem*{definition*}{Definition} 
\newtheorem*{notations*}{Notations}
\numberwithin{theorem}{section}
\numberwithin{equation}{section}
\newcommand{\N}{\mathbf{N}} 
\newcommand{\R}{\mathbf{R}} 
\newcommand{\Z}{\mathbf{Z}} 
\newcommand{\C}{\mathbf{C}} 
\newcommand{\Ss}{\mathbf{S}}
\newcommand{\bgamma}{\boldsymbol{\gamma}}
\newcommand{\ba}{\mathbf{a}}
\newcommand{\hel} {
\hskip2.5pt{\vrule height7pt width.5pt depth0pt}
\hskip-.2pt\vbox{\hrule height.5pt width7pt depth0pt}
\, }
\newcommand{\restr}{\hel}
\def\XXint#1#2#3{{%
\setbox0=\hbox{$#1{#2#3}{\int}$}
\vcenter{\hbox{$#2#3$}}\kern-.5\wd0}}
\newcommand{\lt}{\left}
\newcommand{\rt}{\right}
\renewcommand{\leq}{\leqslant}
\renewcommand{\geq}{\geqslant}
\renewcommand{\subset}{\subseteq}
\renewcommand{\supset}{\supseteq}
\newcommand{\res}{\mathop{\hbox{\vrule height 7pt width .5pt depth 0pt
\vrule height .5pt width 6pt depth 0pt}}\nolimits}
\newcommand{\vhi}{\varphi}
\newcommand{\oo}{\infty}
\newcommand{\Om}{\Omega}
\newcommand{\eps}{\varepsilon}
\def\les{\lesssim}
\def\ba{\mathbf{a}}
\def\deg{\textup{deg}}
\def\W{\mathbb{W}}
\def\P{\mathrm{P}}
\def\p{\mathrm{p}}
\def\q{\mathrm{q}}
\def\curl{\mathrm{curl}\,}
\def\eh{{\eps_h}}
\def\bgamma{\boldsymbol{\gamma}}
\def\ovsigma{\overline{\sigma}}
\def\spt{\mathrm{spt}}
\def\loc{\mathrm{loc}}
\begin{document}


\title[\bf A G.L. model with topologically induced free discontinuities]{A Ginzburg-Landau model with\\ topologically induced free discontinuities}

\author{Michael Goldman}
\address{Laboratoire Jacques-Louis Lions (CNRS, UMR 7598), Universit\'e Paris Diderot, F-75005, Paris, France}
\email{goldman@math.univ-paris-diderot.fr}

\author{Beno\^it Merlet}
\address{Laboratoire P. Painlev\'e (CNRS UMR 8524), Universit\'e Lille 1, F-59655 Villeneuve d'Ascq Cedex, France}
\email{benoit.merlet@math.univ-lille1.fr}

\author{Vincent Millot}
\address{Laboratoire Jacques-Louis Lions (CNRS, UMR 7598), Universit\'e Paris Diderot, F-75005, Paris, France}
\email{millot@math.univ-paris-diderot.fr}


\begin{abstract}
We study a variational model which combines features of the Ginzburg-Landau model in 2D and of the Mumford-Shah functional.  As in the classical Ginzburg-Landau theory, 
a prescribed number of point vortices appear in the small energy regime; the model allows for discontinuities, and the energy penalizes their length. The novel phenomenon here is that the vortices have a fractional degree $1/m$ with $m\geq2$ prescribed. Those vortices must be connected by line discontinuities to form clusters of total integer degrees. The vortices and line discontinuities are therefore coupled through a topological constraint. 
As in the Ginzburg-Landau model, the energy is parameterized by a small length scale $\eps>0$. We perform a complete $\Gamma$-convergence analysis of the model as $\eps\downarrow0$ in the small energy regime. We then study the structure of  minimizers of the limit problem. In particular, we show that the line discontinuities of a minimizer solve a variant of the Steiner problem. We finally prove that for small $\eps>0$,  the minimizers of the original problem have the same structure away from the limiting vortices.  
\end{abstract}

\maketitle


\tableofcontents


\section{Introduction}\label{S1}
The purpose of this article is to study the asymptotic behavior of  a family of functionals combining aspects of both Ginzburg-Landau \cite{BBH,SS} and Mumford-Shah \cite{AFP,fuscoreview,lemenant} functionals in dimension two.  
Those extend the standard Ginzburg-Landau energy, and give rise to the formation of vortex points connected by line defects in the small energy regime. Interestingly,  vortices and  line defects are coupled through  topological constraints.
 
To be more specific, let us  introduce the mathematical context. We consider for $m\in \N$, $m\geq 2$, the group of $m$-th roots of unity 
$\mathbf{G}_m=\lt\{1,\mathbf{a},\mathbf{a}^2,\ldots,\mathbf{a}^{m-1}\rt\}$ with $\mathbf{a}:=e^{2i\pi/m}$. We are interested in maps taking values in  the quotient space $\C/\mathbf{G}_m$. We identify  $\C/\mathbf{G}_m$ with the round cone
\[
 \mathcal{N}:=\Big\{(z,t)\in\mathbb{C}\times\mathbb{R}: t=|z|\sqrt{m^2-1}\Big\}\subset \R^3\,
\]
by means of the map $\P  :\, \C\to \mathcal{N}$ defined as 
\[
 \P(z):=\frac{1}{m}\Big(\p(z),|z|\sqrt{m^2-1}\Big)\quad \text{with }  \p(z):=\frac{z^m}{|z|^{m-1}}\,.
\]
The map $\P$ induces an isometry between $\C/\mathbf{G}_m$ and  $\mathcal{N}$, and 
restricted to $\C\setminus\{0\}$ it defines a covering map of $\mathcal{N}\setminus\{0\}$ of degree $m$. For a given open set $\Omega$ and $p\ge 1$  we can thus say that $u\in W^{1,p}(\Omega, \C/\mathbf{G}_m)$ if $\P(u)\in W^{1,p}(\Omega,\mathcal{N})$ (where we say that a map $v\in W^{1,p}(\Omega,\mathcal{N})$ if $v$ takes values in $\mathcal{N}$ and $v\in W^{1,p}(\Omega,\R^3)$). 
\vskip3pt

\begin{figure}[h]
\includegraphics[width=11cm]{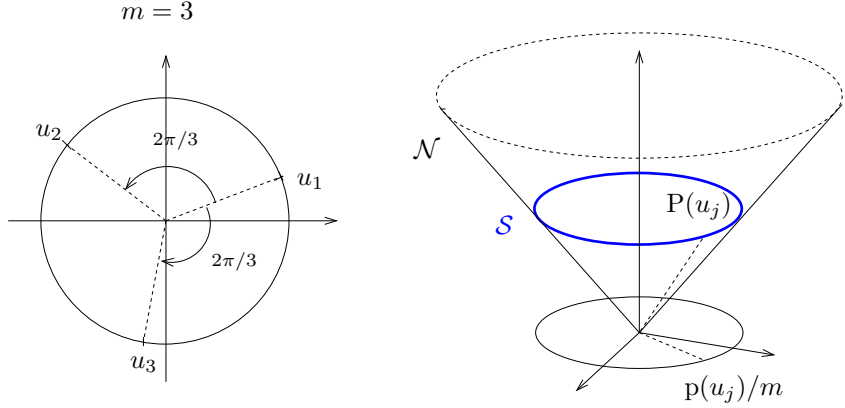} 
\caption{The cone $\mathcal{N}$ and the projection $\P$. $\P(u_1)=\P(u_2)=\P(u_3)$.\label{figN}}
\end{figure}

For a simply connected smooth bounded domain  $\Omega\subset\R^2$ and a ``small'' parameter $\eps>0$, the standard Ginzburg-Landau energy over $\Omega$ of a vector valued $W^{1,2}$-map reads 
$$E_\eps(u):= \frac{1}{2}\int_{\Omega} |\nabla u|^2 +\frac{1}{2\eps^2}(1-|u|^2)^2\,dx\,.$$
Here, the main functional under investigation is defined for $u\in SBV^2(\Omega)$ satisfying  the constraint $\P(u)\in W^{1,2}(\Omega;\mathcal{N})$ by 
\begin{equation}\label{intro:Feps}
 F_\eps^0(u):= E_\eps\big(\P(u)\big)\,
 +{\mathcal H}^1(J_u)\,,
\end{equation}
where $J_u$ denotes the jump set of $u$  (see \cite{AFP} and Section~\ref{S23} below). We stress that $F_\eps^0$ extends $E_\eps$, that is $F_\eps^0(u)=E_\eps(u)$ whenever $u\in W^{1,2}(\Omega)$, which comes from the isometric character of $\P$. In the same way $F_\eps^0$ appears as a Mumford-Shah type functional since 
$$F_\eps^0(u)=\frac{1}{2}\int_{\Omega} |\nabla u|^2 +\frac{1}{2\eps^2}(1-|u|^2)^2\,dx +{\mathcal H}^1(J_u)\,,$$
where $\nabla u$ denotes the absolutely continuous part of the measure $Du$. The constraint  $\P(u)\in W^{1,2}(\Omega;\mathcal{N})$ rephrases the fact that the functional is restricted to the class
$\big\{u\in SBV^2(\Omega): u^+/u^-\in \mathbf{G}_m\text{ on $J_u$}\big\}$. 
In particular, only specific discontinuities in the orientation are allowed.  The case $m=2$, which consists in identifying $u$ and $-u$, is of special interest as it appears in many physical models, see Section \ref{secmotiv} below. 
\vskip3pt

We also consider an Ambrosio-Tortorelli regularization of \eqref{intro:Feps} where the jump set $J_u$ is (formally) replaced by the zero set $\{\psi\sim 0\}$ of some scalar phase field function $\psi$, and the length ${\mathcal H}^1(J_u)$ by a suitable energy of $\psi$. We introduce a second small parameter $\eta$ and consider
for $u\in L^2(\Om)$ and $\psi\in W^{1,2}(\Om;[0,1])$ satisfying $\P(u)\in W^{1,2}(\Omega;\mathcal{N})$ { and} $u\psi\in W^{1,2}(\Omega)$, the functional 
\begin{equation}\label{intro:Feteps}
 F_\eps^\eta(u,\psi):=
 E_\eps\big(\P(u)\big) 
 +\frac{1}{2}\int_{\Omega} \eta |\nabla \psi|^2+\frac{1}{\eta} (1-\psi)^2\,dx\,.
\end{equation}
Compared to the original Ambrosio-Tortorelli  functional  \cite{AmbTor90,AmbTor}, $u$ and $\psi$ are only coupled  through the constraint $u\psi\in W^{1,2}(\Omega)$, and { not} in the functional itself. As for $F^0_\eps$, the functional $ F_\eps^\eta$ extends $E_\eps$ in the sense that $F_\eps^\eta(u,1)=E_\eps(u)$ whenever $u\in W^{1,2}(\Omega)$.
\vskip3pt

We aim to study low energy states (in particular minimizers) of  the functionals $ F_\eps^0$ and $F_\eps^\eta$ under  Dirichlet boundary conditions of the form $u=g$ on $\partial \Omega$ for a prescribed smooth $g\in C^\oo(\partial \Omega;\Ss^1)$. 
Concerning $ F_\eps^0$, we work in the class $\mathcal{G}_g(\Omega)$ of maps satisfying  $\P(u)=\P(g)$ on $\partial \Omega$. Then, we penalize possible deviations from $g$ on $\partial \Omega$ by considering the modified energy 
$$F^0_{\eps,g}(u):= F_\eps^0(u)+\mathcal{H}^1\big(\{u\not=g\}\cap\partial\Omega\big)\,.$$
Notice that such a penalization is necessary in order to have lower semi-continuity of the functional (see for instance \cite{GMS}). 
For  the functional $F_\eps^\eta$, we restrict ourselves to  admissible pairs $(u,\psi)$ satisfying $u\psi=g$ and $\psi=1$ on~$\partial\Omega$, and write  $\mathcal{H}_g(\Omega)$ the corresponding class. 
In this setting, the functionals $F^0_{\eps,g}$ and $F_\eps^\eta$ still extend $E_\eps$ restricted to $W^{1,2}_g(\Omega)$, so that 
\begin{equation}\label{compminval}
\min_{\mathcal{G}_g(\Omega)} F^0_{\eps,g}\leq \min_{W^{1,2}_g(\Omega)} E_\eps\quad\text{and} \quad\min_{\mathcal{H}_g(\Omega)} F^\eta_{\eps,g}\leq \min_{W^{1,2}_g(\Omega)} E_\eps \,.
\end{equation}
As in the classical Ginzburg-Landau theory \cite{BBH}, we assume that the winding number (or degree) is strictly positive, i.e., 
\[
d:=\deg(g,\partial \Omega)>0\,.
\]
In this way, $g$ does not admit a continuous $\Ss^1$-valued extension to $\Omega$. This topological obstruction is responsible for the formation of {\it vortices} (point singularities)
in any  configuration of small energy  $E_\eps$  as $\eps\to 0$, and the minimum value of  $E_\eps$ over $W^{1,2}_g$ is given by $\pi d |\log \eps|$ at first order. 
In view of \eqref{compminval}, creating   discontinuities in the orientation  may lead to configurations of  smaller energy. Indeed, direct constructions of competitors show that the minimum value of $F^0_{\eps,g}$ or $F_\eps^\eta$ is less than $\frac{\pi d}{m}|\log \eps|$ at first order,
and thus (almost) minimizers must have line singularities (or "diffuse" line singularities for $F_\eps^\eta$), at least for $\eps$ (and $\eta$) small enough.

\subsection{Heuristics} 
The starting point is the identity 
\[
E_\eps\big(\P(u)\big)=\frac{1}{m^2}E_\eps\big(\p(u)\big)+ \frac{m^2-1}{m^2}E_\eps\big(|\p(u)|\big)\,.
\] 
Following the standard theory of the Ginzburg-Landau functional \cite{BBH,SS}, one may expect that for  configurations $u$ of small energy, the leading term is $\frac{1}{m^2}E_\eps\big(\p(u)\big)$, and
that $\p(u)$ has (classical) Ginzburg-Landau energy $E_\eps$  close to the one of the minimizers under the boundary condition $\p(u)=\p(g)$ on $\partial\Omega$. Since $\p(g)=g^m$, its topological degree equals $md$, and $\p(u)$ should have $md$ distinct vortices of degree $+1$, i.e., $md$ distinct points $x_k$ in $\Omega$ such that  $\p(u)(x_k)=0$ and   
\[
\p(u)(x)\, \sim\, \alpha_k\dfrac{x-x_k}{|x-x_k|}\quad\mbox{ for $\eps \ll |x-x_k|\ll 1$ and some constant $\alpha_k\in \Ss^1$}\,.
\]
In terms of $E_\eps$, the energetic cost of each vortex is $\pi |\log\eps|$ at leading order, and therefore $E_\eps\big(\P(u)\big)$ should be less than  $\frac{\pi d}{m}|\log \eps|$, again at leading order.  This discussion led us to consider the energy regimes 
\begin{equation}\label{energregintr}
F^0_{\eps,g}(u)\leq \frac{\pi d}{m} |\log\eps|+ O(1) \quad\text{and}\quad F^\eta_\eps(u,\psi)\leq  \frac{\pi d}{m} |\log\eps| +O(1)
\end{equation}
for $u\in\mathcal{G}_g(\Omega)$ or $(u,\psi)\in\mathcal{H}_g(\Omega)$, respectively. Once again, it corresponds to the energy regime of $md$ vortices of degree $+1$ in the variable $\p(u)$. By an elementary topological argument, one can see that any pre-image by $\p$ of $\frac{x-x_k}{|x-x_k|}$ must have at least one discontinuity line departing from $x_k$, and has a (formal) winding number around $x_k$ equal to $1/m$ (in other words, the phase has a jump of $2\pi/m$ around $x_k$).  For this reason, any configuration $u$ satisfying \eqref{energregintr} must be discontinuous. In the sharp interface case~\eqref{intro:Feps}, we actually expect that each connected component of the jump set $J_u$ connects $mk$ vortices for some $k\in \{1,\cdots,d\}$, since the winding number around any such connected component must be an integer.  A similar picture should hold in the diffuse case~\eqref{intro:Feteps} with $J_u$ replaced by the zero set $\{\psi=0\}$. 
The energy associated with discontinuities is their length (or diffuse length), and there should be a competition between this term which favors clustered vortices and the so-called renormalized energy from Ginzburg-Landau theory which is a repulsive (logarithmic) point interaction.

\subsection{Motivation}\label{secmotiv}
Our original motivation for studying the functionals \eqref{intro:Feteps} and \eqref{intro:Feps} stemmed from the analysis of the defect patterns observed in the so-called ripple or $P_{\beta'}$
phase in biological membranes such as  lipid bilayers \cite{Sackmann,belfeiglev,LenzSchmidt,LubMkintosh,ruppelsackman}. In this phase, which is intermediate between the gel and the liquid phase, periodic corrugations are
observed at  the surface of the membranes (see \cite{ruppelsackman} for instance).
\begin{figure}
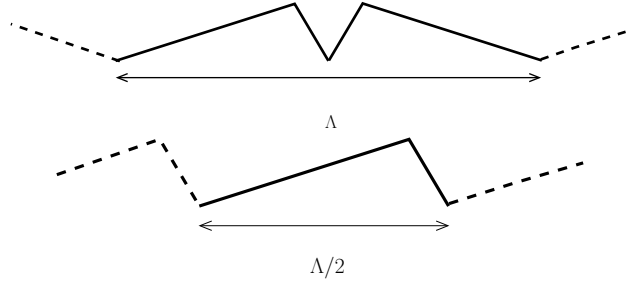

 \resizebox{8.2cm}{!}{\input{LambdaPhase.pdf_t}} 
 \hspace{0.5cm}
  \resizebox{7.0cm}{!}{\input{LambdaS2Phase.pdf_t}}
   \caption{Top: profile of the $\Lambda-$phase. Bottom: profile of the $\Lambda/2-$phase.} \label{Lambdaphase}
 \end{figure}
Two different kinds of periodic sawtooth profiles are observed. A symmetric  one and an asymmetric one respectively
called  $\Lambda$ and  $\Lambda/2-$phases (see Figure~\ref{Lambdaphase} for a schematic representation of a cross-section). 
In the asymmetric phase, only defects of integer degree are allowed while
in the symmetric phase half integer degree vortices are also permitted. Since two vortices of degree $1/2$ have an energetic cost of order $\frac{\pi}{2} |\log \eps|$ (where $\eps$ is the lengthscale of the vortex) while a vortex of 
degree $1$ has a cost of order $\pi |\log \eps|$, it is expected that even in the regime where the $\Lambda/2-$phase is favored (which happens for nearly flat membranes), 
a phase transition occurs around the defects with the nucleation of  a small island of $\Lambda-$phase leading to the formation of  two vortices of degree $1/2$ (see Figure \ref{fig:demivortex}). 
In the model proposed by \cite{belfeiglev}, the order parameter is given by $f(\vhi)$, where $f$ is a fixed profile (corresponding to the one on the right part of Figure
\ref{Lambdaphase}) and $\vhi$ is the phase modulation. Their functional corresponds to $F_\eps^\eta$, for $\eps=\eta$, $m=2$ and $u=\nabla \vhi$ (so that $u$ represents the local 
speed at which the profile $f$ is modulated). In \cite{belfeiglev}, the authors further  argue that the constraint of $u$ being a gradient can be relaxed so that we recover completely our model. 
 \begin{figure}\begin{center}
 \resizebox{8.cm}{!}{\input{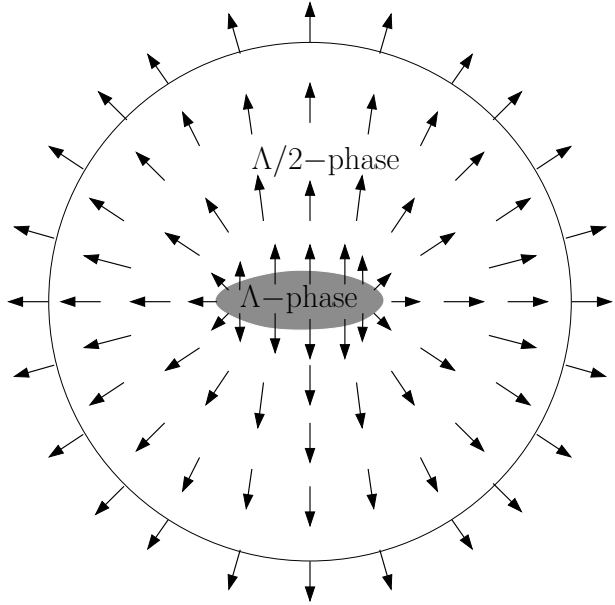}} 
   \caption{Creation of two vortices of degree $1/2$.} \label{fig:demivortex}
 \end{center}
 \end{figure}
 
We also point out that~ \eqref{intro:Feps} and ~\eqref{intro:Feteps} have connections with many other models appearing in the literature. As an example, we can mention the issue of orientability of
Sobolev vector fields into~$\R\mathbf{P}^1$, see \cite{BallZar}. 
More generally, our functionals resemble the ones suggested recently to model liquid crystals where both points and lines singularities appear, see \cite{Bedford}. Similarly to \cite{BallZar}, a central issue here is to find square roots (and more generally $m-$th roots) of $W^{1,p}$-functions into $\Ss^1$  (see \cite{IL2017}),  and this is intimately related to the question of lifting of Sobolev functions into  $\Ss^1$, see \cite{BoBrMi,Demengel,BMP,Merletlift,IgnDav}. 

While completing this article, we have been aware of the work \cite{BadCicLuPo}, where  the authors perform an analogous $\Gamma$-convergence analysis
for a discrete model, obtaining in the continuous  limit almost the same functional as ours. These authors were  motivated by applications to liquid crystals, micromagnetics, and crystal plasticity, and we refer to their introduction for more references on the physical literature.

\subsection{Main results}
Our first main theorem is
a $\Gamma$-convergence result in the energy regime \eqref{energregintr} (we refer to \cite{DalMaso,Braides} for a complete exposition on $\Gamma$-convergence theory). To describe the limiting functional, we need to introduce the following objects. First, set $\mathcal{A}_d$ to be the family of all atomic measures of the form $\mu=2\pi\sum_{k=1}^{md}\delta_{x_k}$, for some $md$ distinct points $x_k\in \Omega$.  To $\mu\in \mathcal{A}_d$, we associate the so-called {\sl canonical harmonic map} $v_\mu$ defined by 
$$v_\mu(x):= e^{i\vhi_\mu(x)}\prod _{k=1}^{md}\frac{x-x_k}{|x-x_k|}\quad\text{with} \quad\begin{cases}
\Delta\vhi_\mu=0 &\text{in $\Omega$}\,,\\
v_\mu=g^m & \text{on $\partial\Omega$}\,.
\end{cases}$$
In turn, the {\sl renormalized energy} $\mathbb{W}(\mu)$ can be defined as the finite part of the energy of $v_\mu$, i.e.,  
$$\mathbb{W}(\mu):=\lim_{r\downarrow0} \left\{\frac{1}{2} \int_{\Omega \setminus B_r(\mu)}|\nabla v_\mu|^2\,dx-\pi md|\log r|\right\}\,,$$
and we refer to \eqref{formulW} for its explicit expression. 

We provide below a concise version of the $\Gamma$-convergence result, complete  statements can be found in Theorem \ref{Gammadiff} and  Theorem \ref{Gammasharp}.  

\begin{theorem}\label{intro:theoGamma}
The functionals  $\{F_{\eps,g}^0 -\frac{\pi d}{m} |\log \eps|\}$ and $\{F_\eps^\eta -\frac{\pi d}{m} |\log \eps|\}$ (respectively restricted to $\mathcal{G}_g(\Omega)$ and $\mathcal{H}_g(\Omega)$)  $\Gamma$-converge in the strong $L^1$-topology as $\eps\to 0$ and $\eta\to 0$ to the functional 
 \begin{equation*}
  F_{0,g}(u):= \frac{1}{2m^2}\int_{\Omega} |\nabla \vhi|^2\,dx+\frac{1}{m^2} \W(\mu)+md\bgamma_m +{\mathcal H}^1\big(J_u) +{\mathcal H}^1(\{u\neq g\}\cap \partial \Omega\big)
 \end{equation*}
 defined for $u\in SBV(\Omega;\Ss^1)$ such that $u^m=e^{i\vhi} v_\mu$  for some $\mu\in \mathcal{A}_d$ and $\vhi\in W^{1,2}(\Omega)$ satisfying $e^{i\varphi}=1$ on $\partial \Omega$. 
The constant $\bgamma_m$, referred to as {\sl core energy} (see \eqref{defbgammVnew}), only depends on $m$.  
\end{theorem}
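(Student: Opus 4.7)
The plan is to exploit the isometric identity
\[
E_\eps\bigl(\P(u)\bigr) = \frac{1}{m^2}E_\eps\bigl(\p(u)\bigr) + \frac{m^2-1}{m^2} E_\eps\bigl(|\p(u)|\bigr),
\]
which transfers the Ginzburg-Landau part of the energy onto the auxiliary $\C$-valued lift $\p(u_\eps) = u_\eps^m/|u_\eps|^{m-1} \in W^{1,2}(\Omega;\C)$. This lift has boundary trace $g^m$ of integer degree $md$, and the bound $E_\eps(\p(u_\eps)) \leq m\pi d|\log\eps|+O(1)$ places it precisely in the classical Ginzburg-Landau regime of $md$ vortices of degree $+1$. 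Standard vortex-ball analysis then yields, along a subsequence, a limiting measure $\mu = 2\pi\sum_{k=1}^{md}\delta_{x_k}\in \mathcal{A}_d$, a convergence $\p(u_\eps)/v_\mu \to e^{i\vhi}$ for some $\vhi\in W^{1,2}(\Omega)$ with $e^{i\vhi}=1$ on $\partial\Omega$, and an $SBV$-limit $u$ of $u_\eps$ satisfying $u^m = e^{i\vhi}v_\mu$.

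For the lim inf inequality, I would apply vortex-ball estimates to $\p(u_\eps)$ on small disks $B_\rho(x_k)$ to extract the divergent contribution $\frac{\pi d}{m}|\log\eps|$, while on the complement $\Omega\setminus\bigcup B_\rho(x_k)$ one recovers (after passing $\eps\to 0$ and then $\rho\to 0$) both the renormalized energy $\frac{1}{m^2}\W(\mu)$ and the absolutely continuous phase contribution $\frac{1}{2m^2}\int|\nabla\vhi|^2$. The core constant $\bgamma_m$ arises from a blow-up cell problem at each vortex, where --- crucially for the present model --- admissible profiles are $SBV$ maps carrying a line jump emanating from the origin so as to accommodate the fractional $1/m$ winding of $u$; the asymmetry with the classical BBH core is precisely what defines $\bgamma_m$. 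The length contribution $\mathcal{H}^1(J_u)$ is then recovered through lower semicontinuity of $\mathcal{H}^1$ on $SBV$ in the sharp case, and through an Ambrosio-Tortorelli-type lower bound adapted to the coupling $u_\eps\psi_\eps\in W^{1,2}$ in the diffuse case. The boundary penalty $\mathcal{H}^1(\{u\neq g\}\cap\partial\Omega)$ is extracted from the trace on $\partial\Omega$ of $J_{u_\eps}$, respectively $\{\psi_\eps\simeq 0\}$.

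For the lim sup, I would build an explicit recovery sequence as follows: choose a system $\Sigma$ of Lipschitz arcs partitioning the $md$ prescribed vortices into $d$ clusters of cardinality $m$, so that each cluster carries integer total degree $+1$ for a $\C$-valued map. Outside small cores $B_\eps(x_k)$, define $u_\eps$ as a smooth $\p$-section of $e^{i\vhi}v_\mu$ on $\Omega\setminus\Sigma$ --- this is possible precisely because each cluster has integer total degree --- and inside each core glue in the optimal vortex profile realizing $\bgamma_m$. In the sharp case $J_{u_\eps}=\Sigma$, so $\mathcal{H}^1(J_{u_\eps})=\mathcal{H}^1(\Sigma)$; in the diffuse case, define $\psi_\eps$ via the one-dimensional optimal Modica-Mortola profile in a tubular neighborhood of $\Sigma$, which delivers $\mathcal{H}^1(\Sigma)$ in the limit. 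Approximating a general limit $(u,\mu,\vhi,J_u)$ by polygonal jump configurations and smooth phases, together with a standard diagonal argument, closes the upper bound.

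The hardest step will be to propagate the topological coupling through the limit: at each $\eps$ every zero of $\p(u_\eps)$ must lie at an endpoint of a component of $J_{u_\eps}$ (respectively of $\{\psi_\eps\simeq 0\}$), and one must rule out scenarios where jump lines collapse onto vortices, disappear in the interior, or escape to the boundary. This is particularly delicate in the diffuse case, where $\{\psi_\eps\simeq 0\}$ is not a priori of finite perimeter at fixed $\eps$, and the $SBV$-limit of $u_\eps$ must be reconstructed from the joint concentration of $\p(u_\eps)$, $\psi_\eps$, and the distributional Jacobian $J\p(u_\eps)$. A good-ball/bad-ball covering simultaneously tracking vortex mass and diffuse length, combined with refined Jacobian estimates, should furnish the necessary control.
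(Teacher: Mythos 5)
Your overall architecture coincides with the paper's: the compactness and lower bound are obtained exactly as you describe, by transferring the energy onto $\p(u_\eps)$ via the isometric splitting, running the classical vortex-ball/renormalized-energy machinery of Sandier--Serfaty and Alicandro--Ponsiglione on this lift (supplemented by a uniform $W^{1,p}$ bound), extracting the modified core constant from a rescaled cell problem, and recovering the length terms by $SBV$ lower semicontinuity in the sharp case and by a coarea/level-set argument on $\psi_\eps$ in the diffuse case. (One cosmetic difference: the paper defines $\bgamma_m$ in \eqref{defbgammVnew} as a minimization over $W^{1,2}(B_R;\mathcal{N})$ maps, i.e.\ over the cone-valued relaxation, and only afterwards shows this infimum is attained up to $o(1)$ by $SBV^2$ lifts with a single jump arc; your formulation with $SBV$ profiles carrying a line jump is equivalent at the level of the constant, since the jump inside an $\eps$-core contributes $O(\eps)$ to the length.)

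The genuine gap is in the $\Gamma$-$\limsup$. For a \emph{general} $u$ in the domain of $F_{0,g}$ you invoke ``approximating by polygonal jump configurations and a standard diagonal argument,'' but this is precisely the step the paper identifies as unavailable: the standard density results for $SBV$ functions with smooth or polygonal jump sets (Cortesani--Toader and its variants) modify $u$ in ways that are incompatible with the rigid constraint $u^m=e^{i\vhi}v_\mu$, and they give no control on the constraint after approximation. The paper's substitute is Lemma~\ref{Alternative2}: one replaces $u$ by a minimizer $u_\sharp$ of $\mathcal{H}^1(J_u)+\lambda\int_\Omega|u-u_*|^2$ \emph{within the class of maps having the same $m$-th power}, and proves by a comparison/isoperimetric argument (separately near and away from ${\rm spt}\,\mu$) that $J_{u_\sharp}$ satisfies the Ahlfors lower density bound $\mathcal{H}^1(J_{u_\sharp}\cap B_r(x))\geq r/2$. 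This regularity is not optional: in the diffuse case the Ambrosio--Tortorelli construction of $\psi_h$ via the optimal one-dimensional profile in a tubular neighborhood of $J_{u_\sharp}$ requires the Minkowski-content identity $|J_{u_\sharp}+B_r|/(2r)\to\mathcal{H}^1(J_{u_\sharp})$, which fails for a general rectifiable jump set. Also note that the connected components of $J_u$ for a general admissible $u$ need only carry a multiple of $m$ vortices (possibly $0$ or $2m$, $3m$, \dots), so a recovery construction tied to clusters of exactly $m$ vortices does not cover the whole domain of the limit functional. Your closing concern about jump lines ``collapsing onto vortices or escaping to the boundary'' is, by contrast, not an obstruction at the $\Gamma$-convergence stage: the constraint $u^+/u^-\in\mathbf{G}_m$ survives the limit automatically through $\P(u_\eps)\in W^{1,2}$, and the topological linking of vortices by jump lines only becomes relevant when analyzing minimizers of the limit problem.
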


We point out that there is of course a compactness result companion to Theorem \ref{intro:theoGamma}. Namely, if a sequence $\{u_\eps\}$ satisfies the energy bound  \eqref{energregintr}, and is uniformly bounded in $L^\infty(\Omega)$, then $\{u_\eps\}$ converges up to a subsequence in $L^1(\Omega)$, and $\{\p(u_\eps)\}$ converges (again up to a subsequence) in the weak $W^{1,p}$-topology  for every $p<2$. 
As can be expected, the proof of Theorem \ref{intro:theoGamma} 
combines ideas coming from the study of the Ginzburg-Landau functional \cite{BBH,SS,ColJer,AlicPon,XinLin,JerSon}, together with ideas  from free discontinuities problems \cite{AFP, braidesfree, BCS,AmbTor}. 
Concerning the compactness part, we have included complete proofs to provide  a rather self-contained exposition. 
Although some estimates (such as the $W^{1,p}$ bound, see Lemma~\ref{lemboundWq})  are certainly known to the Ginzburg-Landau community
(see for instance \cite{ColJer,XinLin}), they have never been used in the context of $\Gamma$-convergence.     
The $\Gamma$-$\liminf$ inequality is a relatively standard combination of techniques developed in \cite{ColJer,AlicPon,BCS}, while the construction of recovery sequences is a much more
delicate issue. The main difficulty comes  from  the the constraint $u^m= e^{i\vhi} v_\mu$, which prevents us to apply directly the existing approximation results by functions with a smooth jump set,  
see e.g. \cite{CorToa,BraiCP,PratFusDep,BelChGol}). Our approach uses a (new) regularization technique (see Lemma~\ref{Alternative2}) which is somehow reminiscent of \cite{AmbTor90} and could be of independent  interest. Another difficulty comes from the optimal profile problem defining the core energy $\bgamma_m$. The underlying minimization problem involves the Ginzburg-Landau energy of $\mathcal{N}$-valued maps, and one has to find almost minimizers which can be lifted into  $\C$-valued maps in $SBV^2$, see Section \ref{secoptprof}.  
\vskip3pt

The $\Gamma$-convergence result applies to minimizers of either $F^0_{\eps,g}$ or $F^\eta_\eps$ (whose existence is proven in Theorems \ref{exist} \& \ref{existsharp}). It shows that they 
converge in $L^1(\Omega)$ to a minimizer $u$ of $F_{0,g}$.  Our second main result deals with the characterization of such minimizer $u$. It is based on the following observations. 
First, from the explicit form of $F_{0,g}$, it follows that $\varphi=0$ in the representation $u^m=e^{i\varphi}v_\mu$. In particular, $u$ can be characterized as a solution of the minimization problem
$$\min\Bigg\{\frac{1}{m^2}\mathbb{W}(\mu)+{\mathcal H}^1\big(J_u) +{\mathcal H}^1(\{u\neq g\}\cap \partial \Omega\big): u\in SBV(\Omega;\Ss^1)\,,\;u^m=v_\mu\text{ for some }\mu\in\mathcal{A}_d\Bigg\}\,.  $$ 
In turn, this later can be equivalently rewritten as 
$$\min_{\mu\in\mathcal{A}_d} \min \Bigg\{\frac{1}{m^2}\mathbb{W}(\mu)+{\mathcal H}^1\big(J_u) +{\mathcal H}^1(\{u\neq g\}\cap \partial \Omega\big): u\in SBV(\Omega;\Ss^1)\,,\;u^m=v_\mu\Bigg\}\,.$$
As a consequence, fixing $\mu\in\mathcal{A}_d$ and solving 
$$ L(\mu):=\min \Bigg\{{\mathcal H}^1\big(J_u) +{\mathcal H}^1(\{u\neq g\}\cap \partial \Omega\big): u\in SBV(\Omega;\Ss^1)\,,\;u^m=v_\mu\Bigg\}\,,$$
we are left with a finite dimensional problem to recover the minimizers of $F_{0,g}$.  

Given $\mu\in\mathcal{A}_d$, we compare in Theorem \ref{intro:regmin}  below the minimization problem $L(\mu)$ with the following variant of the Steiner problem (see e.g. \cite{GilPol}): 
\begin{multline*}
\Lambda(\mu):=\min\Big\{\mathcal{H}^1(\Gamma): \Gamma\subset\overline\Omega \text{ compact with } {\rm spt}\, \mu \subset \Gamma \\
\textrm{ and every connected component $\Sigma$ satisfies }
{\rm Card}(\Sigma\cap{\rm spt}\,\mu)\in m\mathbb{N}\Big\}\,. 
\end{multline*}
We shall see that any minimizer $\Gamma$ of $\Lambda(\mu)$ is made of at most $d$ disjoint Steiner trees, i.e., 
connected trees made of a finite union of segments meeting either at points of ${\rm spt}\,\mu$, or at triple junction making a $120^\circ$ angle.  From now on, when talking about triple junctions we always implicitly include this condition on the angles. 
\vskip3pt

Our second main result is the following theorem, in which we assume $\Omega$ to be convex (to avoid issues at the boundary). 

\begin{theorem}\label{intro:regmin} 
Assume  that $\Omega$ is convex.  For every $\mu\in \mathcal{A}_d$,  $L(\mu) = \Lambda(\mu)$. 
Moreover, if $u$ is a minimizer for $L(\mu)$, then its jump set $J_u$ is a minimizer for $\Lambda(\mu)$, $u\in C^\infty(\overline\Omega\setminus J_u)$, and $u=g$ on $\partial\Omega$.  Vice-versa, 
if $\Gamma$ is a minimizer for $\Lambda(\mu)$, then there exists a minimizer $u$ for $L(\mu)$ such that~$J_u=\Gamma$.  
\end{theorem}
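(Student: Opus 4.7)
I would prove $L(\mu) = \Lambda(\mu)$ via two opposite inequalities, each of which also yields one half of the characterization of minimizers.

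\emph{Lower bound.} Given an admissible competitor $u$ for $L(\mu)$, introduce the compact set
\[
\Gamma \;:=\; \overline{J_u} \cup \overline{\{u \neq g\}\cap \partial\Omega} \cup \spt\mu \;\subset\; \overline\Omega,
\]
for which $\calH^1(\Gamma) \leq \calH^1(J_u) + \calH^1(\{u \neq g\}\cap\partial\Omega)$. To show that $\Gamma$ is admissible for $\Lambda(\mu)$, I would extend $u$ across $\partial\Omega$ into a thin outer annulus by the radial extension of $g$, producing $\hat u \in SBV$ of a slightly larger domain with $\hat u = u$ on $\Omega$, $\hat u^m$ smooth away from $\spt\mu$, and $J_{\hat u} = J_u \cup (\{u \neq g\}\cap \partial\Omega)$. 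For each connected component $\Sigma$ of $\Gamma$, I would enclose $\Sigma$ by a simple closed curve $\gamma$ meeting $\Gamma$ only inside $\Sigma$ (possible in the enlarged domain even when $\Sigma$ touches $\partial\Omega$). Along $\gamma$ the map $\hat u$ is continuous and $\bbS^1$-valued, so $\deg(\hat u,\gamma)\in\bbZ$, whereas $\deg(\hat u^m,\gamma)=\deg(v_\mu,\gamma)=\operatorname{Card}(\Sigma\cap\spt\mu)$ since $\gamma$ encloses exactly the vortices lying in $\Sigma$. This yields $\operatorname{Card}(\Sigma\cap\spt\mu)=m\deg(\hat u,\gamma)\in m\bbN$, proving the divisibility condition and hence $\Lambda(\mu)\leq \calH^1(\Gamma)\leq L(\mu)$. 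When $u$ is an $L$-minimizer, equality forces $\Gamma$ to be a $\Lambda$-minimizer and $J_u=\Gamma\cap\Omega$ modulo $\calH^1$-null.

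\emph{Upper bound.} For the reverse inequality, take a minimizer $\Gamma$ of $\Lambda(\mu)$. A Steiner-type analysis combined with the convexity of $\Omega$ shows that $\Gamma$ is a disjoint union of Steiner trees lying in the convex hull of $\spt\mu$, hence strictly inside $\Omega$, and $\Omega\setminus\Gamma$ is connected (a disjoint union of planar trees does not disconnect a simply connected domain). The admissibility condition on $\Gamma$ implies that any loop in $\Omega\setminus\Gamma$ encloses a multiple of $m$ vortices, so $v_\mu$ admits a smooth $\bbS^1$-valued $m$-th root $u$ on $\Omega\setminus\Gamma$, unique up to a global multiplicative factor in $\mathbf{G}_m$. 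Since $\partial\Omega$ is connected and $u^m=g^m$ there, this factor can be chosen to enforce $u=g$ on $\partial\Omega$. Across each edge of $\Gamma$, $u$ jumps by a $\mathbf{G}_m$-element; the Steiner minimality of $\Gamma$ ensures that every edge is essential for the divisibility constraint (its removal would produce a component with a number of vortices not divisible by $m$), whence each jump is non-trivial and $J_u=\Gamma$. The resulting $u$ is admissible for $L(\mu)$ with $\calH^1(J_u)=\calH^1(\Gamma)=\Lambda(\mu)$ and no boundary jump, proving $L(\mu)\leq\Lambda(\mu)$ and furnishing an $L$-minimizer with $J_u=\Gamma$.

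\emph{Smoothness, boundary condition, and main obstacle.} The regularity $u\in C^\infty(\overline\Omega\setminus J_u)$ follows by local smooth extraction of $m$-th roots of $v_\mu$ via the implicit function theorem applied to $u^m=v_\mu$ (using $|u|\equiv 1$), together with smoothness of $g$ on the portion of $\partial\Omega$ away from $J_u$. The identity $u=g$ on the whole of $\partial\Omega$ is immediate once one observes that $\Gamma\cap\partial\Omega=\emptyset$ for any $\Lambda$-minimizer. The main technical difficulty lies in the upper-bound construction: the lifting $u$ must simultaneously (i) be globally single-valued on $\Omega\setminus\Gamma$ (from the divisibility condition on every loop), (ii) match the boundary datum $g$ (a single $\mathbf{G}_m$-freedom per component suffices thanks to the connectedness of $\Omega\setminus\Gamma$, itself a consequence of convexity), and (iii) have jump set equal to $\Gamma$ rather than a proper subset, which requires that every edge carry nontrivial monodromy—a property equivalent to Steiner minimality. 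Coordinating these three requirements is where the interplay between the Steiner structure of $\Gamma$, the topology of $\Omega$, and the $\mathbf{G}_m$-valued jump constraint becomes essential.
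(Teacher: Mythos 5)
Your upper-bound construction ($L(\mu)\le\Lambda(\mu)$ together with the existence of an $L$-minimizer whose jump set is a prescribed $\Lambda$-minimizer $\Gamma$) is essentially the paper's argument: the paper builds the $m$-th root explicitly by summing phase functions with $2\pi$ jumps along the tree paths $D_k$, whereas you invoke the monodromy/covering-space criterion on the connected open set $\Omega\setminus\Gamma$; these are equivalent, and your observation that every edge of $\Gamma$ carries nontrivial monodromy by minimality is exactly the paper's Remark on $\Lambda(\mu)$-minimizers. This half is sound.

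The lower bound, however, has a genuine gap. You define $\Gamma:=\overline{J_u}\cup\overline{\{u\neq g\}\cap\partial\Omega}\cup\spt\mu$ for an \emph{arbitrary} competitor $u$ and assert $\mathcal{H}^1(\Gamma)\le\mathcal{H}^1(J_u)+\mathcal{H}^1(\{u\neq g\}\cap\partial\Omega)$. For a general $u\in SBV(\Omega;\Ss^1)$ with $u^m=v_\mu$ the jump set is merely countably $\mathcal{H}^1$-rectifiable; it can be dense in an open set, so $\mathcal{H}^1(\overline{J_u})$ may be strictly larger than (indeed infinitely larger than) $\mathcal{H}^1(J_u)$, and your compact competitor for $\Lambda(\mu)$ then costs too much. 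Moreover, with possibly infinitely many accumulating connected components of $\overline{J_u}$, the separating curve $\gamma$ enclosing exactly one component cannot in general be drawn, so the degree argument does not apply componentwise. Restricting to a minimizer $u$ of $L(\mu)$ does not rescue the argument by itself: one must first \emph{prove} that the jump set of a minimizer is essentially closed, locally a finite union of segments, and compactly contained in $\Omega$. This is precisely where the paper spends most of its effort: (a) a comparison with the map $u_\Gamma$ and a contraction $\Phi=\tfrac12(\mathrm{id}+\Pi_\omega)$ toward a convex neighborhood of the convex hull of $\spt\mu$, using $|\nabla\Phi|\le 1-c_\omega\,\rmdist(\cdot,\overline\omega)$ and the area formula to force $\rmdist(J_u,\partial\Omega)>0$ and $u=g$ on $\partial\Omega$ (this is where convexity of $\Omega$ enters the lower bound, not only the upper bound); and (b) a local reduction, away from $\spt\mu$, to a minimal Caccioppoli partition of a ball, whose regularity theory yields that $J_u$ is locally a finite union of segments meeting at triple junctions, hence compact. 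Only after these two steps can the degree count $\mathrm{Card}(\Sigma\cap\spt\mu)=\deg(v_\mu,\partial A_0)-\sum_n\deg(v_\mu,\partial A_n)\in m\N$ be run on each (of the finitely many) components. Your proposal asserts the regularity ($u\in C^\infty(\overline\Omega\setminus J_u)$, $u=g$ on $\partial\Omega$, $J_u$ closed) as consequences rather than proving them, and the claim that $u=g$ on $\partial\Omega$ ``is immediate once one observes that $\Gamma\cap\partial\Omega=\emptyset$ for any $\Lambda$-minimizer'' conflates the two directions: that observation concerns $\Lambda$-minimizers, not the a priori behaviour of an $L$-minimizer's jump set near $\partial\Omega$.
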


To complete the picture, we shall give several examples illustrating the fact that the geometry of minimizers for $\Lambda(\mu)$ strongly depends on $m$, $d$, and the location of ${\rm spt}\,\mu$.
In the case $m=2$, a minimizer for $\Lambda(\mu)$ is always given by a disjoint union of $d$ segments connecting the points of ${\rm spt}\,\mu$ (see Proposition \ref{propminconnect}). 
However, for $m\geq 3$ and $d\geq2$, minimizers are  not always the disjoint union of $d$ Steiner trees containing exactly $m$ vortices (see Proposition \ref{propexmin} and Proposition \ref{propexmin2}).
\vskip3pt

In our third and last main result, we use the characterization of the minimizers of $F_{0,g}$ provided by Theorem \ref{intro:regmin} to show that for $\eps>0$ small enough, minimizers of $F^0_{\eps,g}$ have essentially the same structure away from the limiting vortices.

\begin{theorem}\label{intro:theoregeps}
Assume that $\Omega$ is convex. Let $\eps_h\to 0$, and let $u_h$ be a minimizer of $F_{\eh,g}^0$ over ${\mathcal G}_g(\Omega)$. Assume that $u_h\to u$ in $L^1(\Omega)$ as $h\to\infty$ for some minimizer $u$ of $F_{0,g}$. Setting $\mu:=\curl j(u^m)$, for every $\sigma>0$ small enough and every $h$ large enough,  the following holds:  
\begin{enumerate}
\item[(i)] $J_{u_h}\setminus B_\sigma(\mu)$ is a compact subset of $\Omega\setminus  B_\sigma(\mu)$ made of finitely many  segments,  meeting by three at an angle of  $120^{\circ}$ (i.e., triple junctions). 
\vskip3pt
\item[(ii)] $u_h\in C^\infty\big(\overline \Omega \setminus(B_\sigma(\mu)\cup J_{u_h}) \big)$ and $u_h=g$ on $\partial \Omega$. 
\end{enumerate}
In addition, 
\begin{enumerate}
\item[(iv)] $ J_{u_h}$ converges in the Hausdorff distance to $J_{u}$. 
\vskip3pt
\item[(v)] $u_h\to u$ in $C^k_{\rm loc}(\Omega\setminus J_{u})\cap C^{1,\alpha}_{\rm loc}(\overline \Omega\setminus J_{u})$  for every $k\in \mathbb{N}$ and $\alpha\in(0,1)$. 
\end{enumerate}
\end{theorem}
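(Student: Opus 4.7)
By Theorem~\ref{intro:regmin} applied to the limit minimizer~$u$, the jump set $J_u$ is a disjoint union of at most $d$ Steiner trees whose edges are closed segments meeting only at points of $\supp\mu$ or at triple junctions at $120^\circ$; moreover $u \in C^\infty(\overline\Omega \setminus J_u)$ and $u = g$ on $\partial\Omega$. Fix $\sigma > 0$ smaller than half the minimal distance between two distinct vortices and smaller than the distance from any vortex to every triple junction and every non-incident edge. Then $J_u \setminus \overline{B_\sigma(\mu)}$ consists of finitely many segments and finitely many triple-junction points, all at positive distance from $\overline{B_\sigma(\mu)}$. The convergence of minima from Theorem~\ref{intro:theoGamma} gives $F^0_{\eps_h,g}(u_h) - \frac{\pi d}{m}|\log \eps_h| \to F_{0,g}(u)$. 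Combined with the vortex-ball estimates underlying the $\Gamma$-$\liminf$ inequality (which attribute the amount $\frac{\pi d}{m}|\log\eps_h| + md\,\bgamma_m + \frac{1}{m^2}\W(\mu) + o_\sigma(1)$ to the energy inside $B_\sigma(\mu)$), this yields energy localization: for every open set $U$ compactly contained in $\Omega \setminus \overline{B_\sigma(\mu)}$,
$$
\frac12 \int_U |\nabla \P(u_h)|^2\, dx \longrightarrow \frac{1}{2m^2} \int_U |\nabla u^m|^2\, dx, \qquad {\mathcal H}^1(J_{u_h}\cap U) \longrightarrow {\mathcal H}^1(J_u\cap U),
$$
together with $|\P(u_h)| \to 1$ uniformly on compact subsets of $U \setminus J_u$.

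\textbf{Paragraph 2 (Quasi-minimality of $J_{u_h}$ and segment structure).}
The global minimality of $u_h$ translates into a local comparison principle on $\Omega \setminus \overline{B_\sigma(\mu)}$: given a ball $B_r(x_0)$ in this set and any admissible competitor $K' \subset B_r(x_0)$ for $J_{u_h}$, one constructs (by keeping $\P(u_h)$ unchanged outside $B_r(x_0)$ and performing a harmonic replacement compatible with the topological constraint inside) a test map $\tilde u \in \mathcal{G}_g(\Omega)$ with $J_{\tilde u} = (J_{u_h}\setminus B_r(x_0)) \cup K'$ whose Ginzburg-Landau energy in $B_r(x_0)$ differs from that of $u_h$ by at most $C r^{2}$, using the uniform Dirichlet bound from Paragraph~1 and the standard gradient estimate for harmonic replacements. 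Hence
$$
{\mathcal H}^1(J_{u_h}\cap B_r(x_0)) \leq {\mathcal H}^1(K') + C r^{2},
$$
which is the Almgren--David quasi-minimality of $J_{u_h}$ (see~\cite{AFP,lemenant}). Standard regularity theory for planar quasi-minimal sets implies that, for every $h$ large enough depending on $\sigma$, $J_{u_h} \cap (\Omega \setminus \overline{B_\sigma(\mu)})$ is a finite union of $C^{1,\alpha}$ arcs meeting by three at $120^\circ$. A blow-up at any interior point of such an arc -- in which the Dirichlet contribution becomes negligible after rescaling and the limit is a length-minimizer in the disk -- forces each arc to be locally a straight segment, which together with the global Hausdorff convergence of Paragraph~3 yields that each arc is a closed segment, proving~(i).

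\textbf{Paragraph 3 (Hausdorff convergence, interior and boundary regularity).}
Ahlfors regularity of planar quasi-minimizers, combined with the ${\mathcal H}^1$-convergence from Paragraph~1, forbids both concentration of $J_{u_h}$ outside $J_u$ and the creation of spurious small components; this gives the Hausdorff convergence $J_{u_h}\to J_u$ on $\overline\Omega \setminus B_\sigma(\mu)$, establishing~(iv). On each connected component $V$ of $\Omega \setminus (J_{u_h} \cup \overline{B_\sigma(\mu)})$, $u_h$ is the smooth lift of $\P(u_h)$ through the local covering $\p$ and solves the Ginzburg-Landau equation $-\Delta u_h = \eps_h^{-2}(1-|u_h|^2) u_h$ on $V$; classical elliptic regularity up to the now-smooth arcs of $J_{u_h}$ proves smoothness on $\overline\Omega\setminus(J_{u_h}\cup B_\sigma(\mu))$. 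The equality $u_h = g$ on $\partial \Omega$ follows from minimality: any arc of $\{u_h\neq g\}\cap \partial\Omega$ of positive length could be removed at a net gain by extending $g$ smoothly inside a thin tubular neighborhood, contradicting the choice of $u_h$. This proves~(ii). Finally, (v) follows from uniform interior elliptic estimates on compact subsets of $\overline\Omega \setminus J_u$ avoiding the vortices, in combination with the Hausdorff convergence $J_{u_h}\to J_u$ and the strong $W^{1,2}_{\loc}(\Omega\setminus J_u)$ convergence of $\P(u_h)$ to $u^m$.

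\emph{The main obstacle} is the rigidity step in Paragraph~2 promoting $C^{1,\alpha}$ quasi-minimal arcs to straight segments meeting at perfect $120^\circ$ triple junctions: although this is classical for the pure Mumford-Shah/Steiner functional, here one must confirm that the Ginzburg-Landau bulk term -- which couples $J_{u_h}$ to the topological charge distribution around the vortices -- does not pollute the blow-up limits, and that the combinatorial structure of triple junctions of $J_{u_h}$ stabilizes onto that of $J_u$ as $h\to\infty$.
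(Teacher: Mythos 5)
Your overall architecture (localize the energy away from the vortices, extract a local minimality property of $J_{u_h}$, upgrade to regularity, pass to the limit) is reasonable, and Paragraphs~1 and~3 are broadly consistent with what must be done. But Paragraph~2 contains a genuine gap, which you flag yourself as ``the main obstacle'' without closing it. Two distinct problems. First, the competitor construction: an arbitrary set $K'$ is not admissible, because any competitor must remain in $\mathcal{G}_g(\Omega)$, i.e.\ its jumps must be by elements of $\mathbf{G}_m$ and its jump set must be compatible with the fractional winding carried by $\p(u_h)$; moreover the replacement inside $B_r(x_0)$ must solve a Ginzburg--Landau problem (not a harmonic one) at the scale $\eps_h\ll r$, and the claim that the resulting energy difference is $O(r^2)$ is precisely what requires proof. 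Second, and decisively: even granting Almgren--David quasi-minimality with gauge $Cr^2$, the planar regularity theory only yields that $J_{u_h}$ is locally a $C^{1,\alpha}$ curve (or a $C^{1,\alpha}$ perturbation of a propeller); a quasi-minimizer with a nonzero gauge need not have a straight jump set (Mumford--Shah minimizers are the standard counterexample), so statement~(i) --- finitely many \emph{exact} segments meeting at \emph{exact} $120^{\circ}$ --- cannot follow from this route. Your blow-up shows the tangent objects are segments, not that the sets themselves are segments at fixed $h$.

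The paper closes this gap with a different mechanism: the Lassoued--Mironescu splitting. On a well-chosen ball one writes $u_h=\phi_h w_h$, where $w_h$ minimizes the Ginzburg--Landau energy with the trace of $u_h$ as boundary datum; the energy identity of Lemma~\ref{LM}, combined with the Wente estimate of Lemma~\ref{propwente} which controls the cross term $\int j(\p(\phi_h))\cdot j(w_h)\,dx$ by a small multiple of $\int|\nabla\phi_h|^2\,dx$, gives the lower bound
\[
F^0_{\eps_h}(u_h,B_r)\;\geq\; E_{\eps_h}(w_h,B_r)+\frac18\int_{B_r}|\nabla\phi_h|^2\,dx+\mathcal{H}^1(J_{\phi_h}\cap B_r)
\]
with \emph{no} $O(r^2)$ error. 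Comparing with the explicit competitor equal to $w_h$ times a $\mathbf{G}_m$-valued piecewise constant function then forces $\nabla\phi_h\equiv 0$, so $\phi_h$ takes values in the finite group $\mathbf{G}_m$ and $J_{\phi_h}=J_{u_h}$ is an exact minimizer of a partition/Steiner problem in $B_r$ (identified at triple points via the calibrations of Mora and Alberti--Bouchitt\'e--Dal Maso). Exact segments and exact $120^{\circ}$ angles then come from classical minimal-cluster regularity, not from quasi-minimality. To salvage your route you would need to replace the $Cr^2$ gauge by an identically vanishing one, which is essentially what the splitting accomplishes; as written, the proposal does not prove~(i), and the boundary statement in~(ii) (removing $\{u_h\neq g\}\cap\partial\Omega$ ``at a net gain'') likewise needs the constrained-versus-unconstrained comparison carried out in the paper rather than a one-line removal argument.
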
 
In proving Theorem \ref{intro:theoregeps}, we actually show a  stronger result that we now briefly describe (see Theorem \ref{theoregeps}, Remark~\ref{betdescripthm3}, and Section \ref{sec:sketch}). In each (sufficiently small) ball $B_r(x)\subset \Omega\setminus  B_\sigma(\mu)$ and $\eps$ small enough,  $u_\eps$ is bounded away from zero, and it can
be decomposed as $u_\eps=\phi_\eps w_\eps $ where $\phi_\eps\in SBV^2(B_r(x))$ and 
$w_\eps$ is minimizing the classical Ginzburg-Landau energy $E_\eps(\cdot,B_r(x))$ with respect to its own boundary condition (and as a consequence, $w_\eps$ is smooth).     
The proof of this decomposition relies on the energy splitting discovered by Lassoued \& Mironescu \cite{LasMi}. Combined with the classical Wente estimate \cite{Wente,BrezCor}, it leads to a  lower expansion of the energy of the form 
\[
 F_\eps^0(u_\eps,B_r(x))\geq E_\eps(w_\eps,B_r(x)) +\frac{1}{\alpha}\left(\int_{B_r(x)} |\nabla \phi_\eps|^2\,dx +\alpha{\mathcal H}^1\big(J_{\phi_\eps}\cap B_r(x)\big)\right)\,,
\]
for some constant $\alpha>0$ (see Proposition \ref{propLM}). Using suitable competitors, we  deduce that  $\phi_\eps$ is a Dirichlet minimizer the Mumford-Shah functional in $B_r(x)$. Applying the calibration results of \cite{AlBouDal,Mora}, we infer that $\phi_\eps$ takes values into the finite set ${\bf G}_m$, reducing the problem to a minimal partition problem in $B_r(x)$.  The classical regularity results on two dimensional minimal clusters 
then yield the announced  geometry of the jump set. 
\vskip5pt

The paper is organized as follows. Section~\ref{S2} is devoted to a full set of preliminary results. First, we present some fine properties of the $BV$-functions under investigation, 
and then we prove existence of minimizers for $F_\eps^\eta$ and $F^0_{\eps,g}$. In a third part, we provide all the material and results concerning the Ginzburg-Landau energy that we shall use. The  $\Gamma$-convergence result of Theorem \ref{intro:theoGamma} is the object of Section~\ref{S3}. In Section~\ref{S4}, we prove Theorem \ref{intro:regmin}  and give the aforementioned examples of $\Lambda(\mu)$-minimizers.  In the last Section \ref{S5}, we return to the analysis of minimizers of $F^0_{\eps,g}$, and prove Theorem~\ref{intro:theoregeps}.

\section{Preliminaries} \label{preliminaries}\label{S2}	                         

\subsection{Conventions and notation}
\label{S21}
Throughout the paper we identify the complex plane $\C$ with~$\R^2$. 
We say that a property holds a.e. if it holds outside a set of Lebesgue measure zero.

\begin{itemize}
\item For $a,b\in\R^2$, we write $a\wedge b:=\det(a,b)$;
\item For $a\in\R^2$ and $M=(b_1,\ldots,b_n)\in\mathcal{M}_{2\times n}(\R)$, we write  
$$a\wedge M:=\,^t(a\wedge b_1,\ldots, a\wedge b_n)\in\R^n\,;$$
\item For $M\in\mathcal{M}_{d\times n}(\R)$, we write $|M|:=|{\rm tr}(M\,^tM)|^{1/2}$;
\item For $a=(a_1,a_2)\in \R^2$ we let $a^\perp:=(-a_2,a_1)$
\item for a set $\Omega\subset \R^2$, we call $\nu$ its external normal and $\tau$ its tangent chosen so that $(\nu,\tau)$ is a direct basis (in particular $\nu^\perp=\tau$ and $\partial \Omega$ is oriented counterclockwise); 
\item The distributional derivative is denoted by $Df$;
\item For $v\in \R^n$, we let $\partial_v f:=D f (v)$ be the partial derivative of $f$ in the direction $v$ and if $v=e_l$ is a vector of the canonical basis of $\R^n$ then we simply write $\partial_l f:=\partial_{e_l} f$;
\item $\nabla f=(\partial_l f_k)_{k,l}$ is the Jacobian matrix of the vector valued function $f$;
\item For $j=(j_1,j_2)$, we denote by $\curl j:= \partial_1 j_2-\partial_2 j_1$ the rotational of $j$;
\item For $A\subset\R^n$, we denote by $B_r(A)$ the tubular neighborhood of $A$ of radius $r$. For a measure $\mu$, we simply write $B_r(\mu):= B_r(\spt\,  \mu)$;  
\item In most of the paper, we work with $\Omega$ a given bounded open and simply connected set. Nevertheless, since in sections \ref{S4} and \ref{S5} we will require that $\Omega$ is convex, we will repeat at the beginning of each section the hypothesis we are making on $\Omega$;
\item We shall not relabel subsequences if no confusion arises. 
\end{itemize}

\subsection{Finite subgroups of $\Ss^1$ and isometric cones.}
\label{S22}
Given an integer $m\geq 2$, we denote by $\mathbf{G}_m$ the subgroup of $\Ss^1$ made of all $m$-th roots of unity, i.e., 
\[
\mathbf{G}_m=\big\{1,\mathbf{a},\mathbf{a}^2,\ldots,\mathbf{a}^{m-1}\big\}\quad \text{with }\mathbf{a}:=e^{2i\pi/m}\,.
\]
We consider the quotient space $\C/\mathbf{G}_m$ endowed with the canonical distance
\[
{\rm dist}\big([z_1],[z_2]\big):=\min_{z_1\in[z_1],\,z_2\in[z_2]} |z_1-z_2|=\min_{k=0,\ldots,m-1}|z_1-\mathbf{a}^kz_2|\,,
\]
where $[z]$ is the equivalence class of $z\in\C$. We note that $\C/\mathbf{G}_m$ is isometrically embedded into $\R^3\simeq\C\times\R$ by means of the Lipschitz mapping $\P:\C\to \R^3$ given by 
$$\P(z):=\frac{1}{m}\Big(\p(z),|z|\sqrt{m^2-1}\Big)\quad \text{where }  \p(z):=\frac{z^m}{|z|^{m-1}} \,.$$
In this way we  identify $\C/\mathbf{G}_m$ with the round cone of $\R^3$,
\[
\mathcal{N}:=\P(\C)=\Big\{(x,t)\in\R^2\times\R: t=|x|\sqrt{m^2-1}\Big\}\,, 
\]
and one has ${\rm dist}\big([z_1],[z_2]\big)={\rm d}_{\mathcal{N}}\big(\P(z_1),\P(z_2)\big)$ for every $z_1,z_2\in \C$, where ${\rm d}_{\mathcal{N}}$ denotes the geodesic distance on $\mathcal{N}$ induced by the Euclidean metric (in particular, $|\P(z)|=|z|$ for every $z\in\C$). 
Similarly, $\Ss^1/\mathbf{G}_m$ coincides with the horizontal circle
\[
\mathcal{S}:=\big\{(x,t)\in\mathcal{N}: |x|=1/m, t=\sqrt{1-1/m^2}\big\}=\P(\Ss^1) \,.
\]
Note that in the case $m=2$, $\mathcal{S}\simeq \Ss^1/\{\pm1\}$ is the real projective line $\R\bf{P}^1$. Finally, we point out that $\P$ is smooth away from the origin, and since   $\P$ is isometric, 
 \begin{equation}\label{modnabl}
 |\nabla\P(z)v|=|v| \quad\text{for every $v\in\R^2$ and every $z\in\C \setminus\{0\}$}\,,
 \end{equation}
where $\nabla\P(z)\in\mathcal{M}_{3\times2}(\R)$ is the differential of  $\P$ at $z$ represented in real coordinates. Similarly, we write 
$\nabla\p(z)\in \mathcal{M}_{2\times2}(\R)$ for the differential of $\p$ at $z$.


\subsection{$BV$ and $SBV$ functions,  weak Jacobians}
\label{S23}
Concerning functions of bounded variations, their  fine properties, and standard notations, we refer to~\cite{AFP}. Let us briefly introduce the main properties and definitions used in the paper. For an open subset $\Omega$ of $\R^2$, we first recall that $BV(\Omega,\R^q)$ is the space of functions of bounded variation in $\Omega$, i.e., functions $u\in L^1(\Omega,\R^q)$ for which the distributional derivative $Du$ is a finite (matrix valued) Radon measure on $\Omega$. 
We recall that for a function $u\in BV(\Omega,\R^q)$, we have the following decomposition
\[
 Du= \nabla u dx + D^j u+ D^c u\,,
\]
where 
\begin{equation}\label{formDju}
D^ju:=(u^+-u^-)\otimes\nu_u\,\,\mathcal{H}^1\res J_u\,.
\end{equation}
The functions $u^\pm$ denote the traces of $u$ on the jump set  $J_u$ which is a countably $\mathcal{H}^1$-rectifiable set. Since all the properties we will consider are oblivious to modifications of $J_u$ on sets of zero $\mathcal{H}^1$ measure, we shall not distinguish between $J_u$ and the singular set of $u$ (usually denoted as $S_u$). 
In particular, when $J_u$ is regular or a finite union of polygonal curves, we will also not distinguish between $J_u$ and its closure so that we shall often consider it as a compact set. Analogously, for sets $E$ of finite perimeter, i.e.,  such that $\chi_E\in BV(\Omega)$, we simply denote by $\partial E$  the reduced boundary. 
\vskip5pt

 The space $SBV(\Omega,\R^q)$ is defined as the subspace of $BV(\Omega,\R^q)$ made of functions $u$ satisfying $D^cu\equiv 0$. For a finite exponent $p\geq 1$, the subspace $SBV^p(\Omega,\R^q)\subset SBV(\Omega,\R^q)$ is defined as 
\[
SBV^p(\Omega,\R^q):=\Big\{u\in SBV(\Omega,\R^q): \nabla u\in L^p(\Omega)\text{ and }\mathcal{H}^{1}(J_u)<\infty\Big\}\,.
\]
\vskip3pt

\begin{remark}[pre-Jacobian]
For a smooth function $u$, we define the pre-Jacobian of $u$ as 
\[j(u):= u \wedge \nabla u=2 \det (\nabla u),\] 
which also writes $j(u)={\rm Re} (iu\nabla \bar u)$ in complex notation. Notice that if $u=\rho e^{i\theta}$ for some smooth functions $\rho$ and $\theta$,  then $j(u)=\rho^2 \nabla \theta$ so that $j(u)$ measures  the variation of the phase.
In particular, if $\Omega$ is simply connected and $u$ takes values into $\Ss^1$, then $\curl j(u)=0$ and we can  write $j(u)=\nabla \theta$, hence  recovering the phase $\theta$.  
\end{remark}

To our purposes,  we need to extend the notion of pre-Jacobian to $BV$-maps.
\begin{definition}
For $u\in BV(\Omega)$, we define the pre-Jacobian of $u$ to be the measurable vector field
\[
j(u):=u\wedge \nabla u \,,
\] 
where $\nabla u$ is the absolutely continuous part of $Du$. It belongs to $ L^1(\Omega)$ whenever $u\in L^\infty(\Omega)$ or $\nabla u\in L^2(\Omega)$ (since $BV(\Omega)$ is continuously embedded in $L^2(\Omega)$). 
\end{definition}

\begin{lemma}\label{struct}
Let $u\in BV(\Omega)$. Then $V:=\P(u)$ and $v:=\p(u)$  are of  bounded variation in~$\Omega$, and
\begin{enumerate}
\item[(i)] $V(x)\in\mathcal{N}$ for a.e. $x\in\Omega$;
\vskip5pt

\item[(ii)] $J_{V}\subset J_u$; 
\vskip5pt

\item[(iii)]  $\big(V^+,V^-, \nu_{V}\big) =\big(\P(u^+),\P(u^-), \nu_{u}\big) $ on  $J_{V}$; 
\vskip5pt

\item[(iv)] $\P\big(u^+(x)\big)=\P\big(u^-(x)\big)$ for every $x\in J_u \setminus J_{V}$; 
\vskip5pt

\item[(v)] $\left|\nabla V\right|= |\nabla u|$ a.e. in $\Omega$;
\vskip5pt

\item[(vi)] $\left|D^cV\right|=|D^cu|$;
\vskip5pt

\item[(vii)] $j(v)=m j(u)$ a.e. in $\Omega$.
\end{enumerate}
\end{lemma}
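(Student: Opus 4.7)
The plan is to view everything as an instance of the Ambrosio--Dal Maso chain rule for compositions of $BV$ maps with Lipschitz functions, applied separately to $\P$ and $\p$. Both maps are globally Lipschitz: $\P$ is $1$-Lipschitz as a map $\C\to\R^3$ (the quotient metric on $\C/\mathbf{G}_m$, to which $\P$ is isometric, is pointwise dominated by the Euclidean metric), and $\p$ is Lipschitz thanks to \eqref{modnabl} combined with the positive $1$-homogeneity $\p(\lambda z)=\lambda\p(z)$. In particular $V\in BV(\Omega;\R^3)$ and $v\in BV(\Omega;\R^2)$, and (i) is immediate from $\P(\C)=\mathcal{N}$. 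Items (ii)--(iv) then follow from the jump part of the chain rule, which reads $D^jV=(\P(u^+)-\P(u^-))\otimes\nu_u\,\mathcal{H}^1\res J_u$; comparing with \eqref{formDju} applied to $V$ identifies $J_V$ (up to $\mathcal{H}^1$-negligible sets) with $\{x\in J_u:\P(u^+)\neq\P(u^-)\}$, and preserves both the traces and the normal orientation.

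For (v), the chain rule yields $\nabla V=\nabla\P(u)\nabla u$ a.e.\ on $\{u\neq 0\}$, and \eqref{modnabl} applied column by column immediately gives $|\nabla V|=|\nabla u|$ there. On $\{u=0\}$ the classical Stampacchia property (valid for the absolutely continuous part of a $BV$ derivative) yields $\nabla u=0$ a.e., and since $\P$ is Lipschitz $\nabla V=0$ a.e.\ there too. The same reasoning applied to the Cantor part of the chain rule, $D^cV=\nabla\P(\tilde u)\,D^cu$, yields (vi), provided that $|D^cu|(\{\tilde u=0\})=0$ so that the non-differentiability of $\P$ at the origin is immaterial; this can be obtained via a coarea/slicing argument applied to the scalar $BV$ function $|u|\geq 0$, or by noting that at a point where $\tilde u=0$ and $x\notin J_u$, $u$ is approximately continuous with vanishing approximate limit, so the Cantor part of $Du$ cannot concentrate there.

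For (vii), the key is the pointwise algebraic identity $\p(z)\wedge\nabla\p(z)h=m(z\wedge h)$ for every $z\in\C\setminus\{0\}$ and $h\in\R^2$, which is a one-line calculation in polar coordinates using $\p(re^{i\theta})=re^{im\theta}$ and the orthonormal frame $\{e_r,e_\theta\}$. Applying this identity column by column to $\nabla u$ on $\{u\neq 0\}$, and using the Stampacchia argument on $\{u=0\}$, gives $j(v)=m\,j(u)$ a.e. I expect the only genuine obstacle to be the Cantor-part claim in (vi)---the vanishing of $|D^cu|$ on the bad set $\{\tilde u=0\}$---since every other step is a routine application of the chain rule combined with the isometric identity \eqref{modnabl}.
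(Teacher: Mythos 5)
Your proposal is correct and follows essentially the same route as the paper: the $1$-Lipschitz property of $\P$ for items (i)--(iv), a chain rule localized away from $\{u=0\}$ (the paper makes this rigorous by replacing $\P$ with $C^1$ truncations $\P_k$ agreeing with $\P$ on $\{|z|>2^{-k}\}$ and invoking locality of $BV$ derivatives) for (v)--(vi), and the identity $\p(z)\wedge(\nabla\p(z)X)=m\,z\wedge X$ for (vii). The one point you flag as a possible obstacle, namely that $|D^cu|$ does not charge $\{\tilde u=0\}\setminus J_u$, is exactly the statement $|Du|(Z_u)=0$ of \cite[Proposition 3.92]{AFP}, which is what the paper cites.
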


\begin{proof}
The fact that $V\in BV(\Omega;\R^3)$, as well as items {\it(i)}, {\it(ii)}, {\it(iii)}, and {\it(iv)}, is a direct consequence of the $1$-Lipschitz property of $\P$, see \cite[proof of Theorem 3.96]{AFP}. Moreover, $|D V|\leq |Du|$. It now remains to prove {\it(v)}, {\it(vi)}, and {\it(vii)}. 
Recall that, by \cite[Proposition 3.92]{AFP}, we have $|Du|(Z_u)=0 $ where 
$$Z_u:=\big\{x\in\Omega \setminus J_u: u(x)=0\big\}\,.$$ 
For $k\in \N$, we set 
$$A_0:=\big\{x\in\Omega \setminus J_u: | u(x)|>1\big\}\,, \quad A_k:=\big\{x\in\Omega \setminus J_u: 2^{-k}<| u(x)|\leq 2^{-k+1}\big\}\,,$$
so that $\Omega \setminus Z_u=\cup_kA_k$ with a disjoint union.  
Then, for each $k\in\N$, we consider $\P_k\in C^1(\C;\R^3)$ such that
$\P_k(z)=\P(z)$ whenever $|z|>2^{-k}$. Using the chain-rule formula in $BV$ (see \cite[Theorem 3.96]{AFP}), for $\P_k( u)$ and the locality of the derivative of a $BV$ function (see \cite[Remark~3.93]{AFP}), we readily obtain {\it(v)} and {\it(vi)}.  

To prove {\it(vii)}, we first notice that for $z\in\C \setminus\{0\}$ and $X\in\R^2$, we have 
\begin{equation*}
 \p(z)\wedge (\nabla \p(z)X)=m z\wedge X\,.
\end{equation*} 
Therefore, if $x\in\Omega \setminus Z_u$ is a Lebesgue point for $\nabla u$ and $\nabla V$, we have for each $l\in\{1,2\}$, 
$$v(x)\wedge \partial_lv(x)=\p(u(x))\wedge\Big(\nabla\p(u(x))\partial_lu(x)\Big)=m u(x)\wedge \partial_lu(x) \,,$$
and the proof is complete. 
\end{proof}

\begin{corollary}\label{cor1}
If $u\in BV(\Omega)$  is such that $\P(u)\in W^{1,p}(\Omega;\mathcal{N})$ for some $p\geq 1$, then  $u\in SBV(\Omega)$ and $\nabla u\in L^p(\Omega)$.  Moreover, $u^\pm(x)\neq 0$ for every $x\in J_u$, and $u^+(x)/u^-(x)\in \mathbf{G}_m$. If, in addition, $|u|\geq \delta$ a.e. in $\Omega$\, for some $\delta>0$, 
then $u\in SBV^p(\Omega)$ and $|D^ju|\geq \delta |\mathbf{a}-1|\mathcal{H}^1\res{J_u}$. 
\end{corollary}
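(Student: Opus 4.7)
Set $V:=\P(u)$. Since by hypothesis $V\in W^{1,p}(\Omega;\mathcal{N})$, one has $D^cV=0$ and (up to $\mathcal{H}^1$-negligible sets) $J_V=\emptyset$, while $|\nabla V|\in L^p(\Omega)$. First I would combine these observations with Lemma~\ref{struct}: item~(vi) gives $|D^cu|=|D^cV|=0$, so $u\in SBV(\Omega)$; item~(v) gives $|\nabla u|=|\nabla V|\in L^p(\Omega)$. This takes care of the first assertion.

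Next, I would use item~(iv) of Lemma~\ref{struct}: for every $x\in J_u\setminus J_V=J_u$, $\P(u^+(x))=\P(u^-(x))$. Since $\P$ vanishes only at $0$, the two traces $u^\pm(x)$ are either both zero or both nonzero; but being equal to zero simultaneously would force $u^+(x)=u^-(x)$, contradicting $x\in J_u$. Hence $u^\pm(x)\neq0$. Because $\P^{-1}(\P(z))=\{\mathbf{a}^kz:k=0,\ldots,m-1\}$ for $z\neq 0$, it follows at once that $u^+(x)/u^-(x)\in\mathbf{G}_m$.

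For the refined statement under $|u|\geq\delta$ a.e., I would argue that the traces inherit the lower bound, i.e., $|u^\pm(x)|\geq\delta$ for $\mathcal{H}^1$-a.e.\ $x\in J_u$ (standard fact about approximate limits of a.e.-bounded $BV$ functions). Writing $u^+(x)=\mathbf{a}^{k(x)}u^-(x)$ with $k(x)\in\{1,\ldots,m-1\}$, one has
\[
|u^+(x)-u^-(x)|=|u^-(x)|\,|\mathbf{a}^{k(x)}-1|\geq\delta\min_{1\leq k\leq m-1}|\mathbf{a}^k-1|=\delta|\mathbf{a}-1|,
\]
the minimum being attained at $k=1$ (and $k=m-1$), as $|\mathbf{a}^k-1|=2\sin(k\pi/m)$. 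Plugging this into the jump part formula $D^ju=(u^+-u^-)\otimes\nu_u\,\mathcal{H}^1\res J_u$ yields the pointwise bound $|D^ju|\geq\delta|\mathbf{a}-1|\mathcal{H}^1\res J_u$. Finiteness of $\mathcal{H}^1(J_u)$ then follows from $|Du|(\Omega)<\infty$, which together with $\nabla u\in L^p$ gives $u\in SBV^p(\Omega)$. The only mild subtlety is the transfer of the $L^\infty$ lower bound from $u$ to its traces on $J_u$; otherwise the whole statement is a direct unpacking of Lemma~\ref{struct}.
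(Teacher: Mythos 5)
Your proposal is correct and follows essentially the same route as the paper: items (v), (vi) of Lemma~\ref{struct} for the $SBV$ and $L^p$ claims, item (iv) for the non-vanishing of the traces and the $\mathbf{G}_m$ relation, and the same pointwise estimate $|u^+-u^-|\geq\delta\min_k|\mathbf{a}^k-1|=\delta|\mathbf{a}-1|$ combined with \eqref{formDju} for the jump bound and the finiteness of $\mathcal{H}^1(J_u)$. No gaps.
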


\begin{proof}
The fact that $u\in SBV(\Omega)$ and $\nabla u\in L^p(\Omega)$ is a direct consequence of {\it(vi)} and {\it(v)} in Lemma~~\ref{struct}, respectively.  Next, assume that $u^+(x)=0$ for some $x\in J_u$. Then {\it(iv)} in Lemma~~\ref{struct} yields $u^-(x)=0$, so that $x\not\in J_u$. Hence $u^\pm$ does not vanish on $J_u$. 
Moreover from {\it(iv)} in Lemma~~\ref{struct}, we directly infer that  $u^+/u^-\in  \mathbf{G}_m \setminus\{1\}$ on $J_u$. 

Finally, if $|u|\geq \delta>0$ a.e. in $\Omega$, then  $|u^\pm|\geq \delta$ on $J_u$. Therefore, for every $x\in J_u$ we have 
$$|u^+(x)-u^-(x)|\geq \delta |u^+(x)/u^-(x)-1|\geq \delta \min_{k=1,\ldots,m-1} |\mathbf{a}^k-1| = \delta |\mathbf{a}-1|\,,$$
and thus $|D^ju|\geq \delta |\mathbf{a}-1|\mathcal{H}^1\res{J_u}$ by~\eqref{formDju}. In particular, $\mathcal{H}^1(J_u)<\infty$ and $u\in SBV^p(\Omega)$. 
\end{proof}

\begin{definition}[weak Jacobian]
For an open set $\Omega\subset\R^2$ and $u\in BV(\Omega)$ such that $j(u)\in L^1(\Omega)$, the weak Jacobian of $u$ is defined as the distributional curl in $\Omega$ of the vector field $j(u)$. It belongs to $(C^{0,1}_0(\Omega))^*$, and its action on a Lipschitz function $\phi\in C^{0,1}_0(\Omega)$ that vanishes on the boundary is 
$$\langle {\rm curl}\,j(u),\phi\rangle=-\int_\Omega j(u)\cdot\nabla^\perp\phi\,dx\,.$$
\end{definition}

\begin{lemma}\label{lemstructure}
Assume that $\Omega\subset\R^2$ is simply connected. Let $u_1,u_2\in SBV(\Omega;\Ss^1)$ be such that $\p(u_k)\in W^{1,1}(\Omega;\Ss^1)$ for $k=1,2$. Then, the following properties are equivalent
\begin{enumerate}
\vskip3pt
\item[(i)] $\curl\,j(u_1)= \curl\,j(u_2)$ in $\mathcal{D}^\prime(\Omega)$; 
\vskip3pt
\item[(ii)]  there exist $\vhi\in W^{1,1}(\Omega)$ and  a Caccioppoli partition $\{E_k\}_{k=1}^m$ of $\Omega$ (see e.g. \cite[Chapter 4, Section 4.4]{AFP}) such that 
\begin{equation}\label{Caccioprep}
u_2=\left(\sum_{k=1}^m{\bf a}^k\chi_{E_k}\right)e^{i\vhi} u_1 \,.
\end{equation}
\end{enumerate}
In addition, if\, $\P(u_1)=\P(u_2)$ and (i) holds, then $\vhi$ is a multiple constant of $2\pi/m$. 
\end{lemma}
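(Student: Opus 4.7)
The plan is to transfer everything, via Lemma~\ref{struct}{\it(vii)} (which reads $j(\p(u_k)) = m\, j(u_k)$ a.e.), to the Sobolev maps $\p(u_k)\in W^{1,1}(\Omega;\Ss^1)$, so that (i) becomes the equivalent statement $\curl j(\p(u_1)) = \curl j(\p(u_2))$ in $\mathcal{D}^\prime(\Omega)$. Throughout I shall use the product rule $j(fg) = |g|^2 j(f) + |f|^2 j(g)$, which holds for bounded $W^{1,1}$ $\C$-valued maps $f,g$ and follows at once from $j(u)=\mathrm{Im}(\bar u\nabla u)$ and the Leibniz rule.

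For (ii)\,$\Rightarrow$\,(i), I would raise \eqref{Caccioprep} to the $m$-th power. Since $\mathbf{a}^{km}=1$ and the $E_k$ partition $\Omega$, the prefactor disappears and one gets $\p(u_2)=e^{im\vhi}\p(u_1)$. Applying the product rule with $|\p(u_1)|=|e^{im\vhi}|=1$ yields $j(\p(u_2)) = j(\p(u_1)) + m\nabla\vhi$, and taking the distributional curl annihilates the gradient term, giving $\curl j(\p(u_1)) = \curl j(\p(u_2))$, i.e.\ (i). For (i)\,$\Rightarrow$\,(ii), I introduce $w:=\p(u_2)\overline{\p(u_1)}\in W^{1,1}(\Omega;\Ss^1)$, well defined as a product in the Banach algebra $W^{1,1}\cap L^\infty$. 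The product rule gives $j(w)=j(\p(u_2))-j(\p(u_1))$, whence $\curl j(w)=0$ in $\mathcal{D}^\prime(\Omega)$. Since $\Omega$ is simply connected, the $W^{1,1}$-lifting theorem for $\Ss^1$-valued Sobolev maps with vanishing distributional vorticity (see \cite{BoBrMi,Demengel}) provides $\psi\in W^{1,1}(\Omega)$ with $w=e^{i\psi}$. Setting $\vhi:=\psi/m\in W^{1,1}(\Omega)$, I get $u_2^m = e^{im\vhi}u_1^m$, so the function $\xi:= u_2\bar u_1 e^{-i\vhi}$ satisfies $|\xi|=1$ and $\xi^m=1$ a.e., hence $\xi\in\mathbf{G}_m$ a.e. By Corollary~\ref{cor1}, $u_1,u_2\in SBV^1(\Omega;\Ss^1)$; Ambrosio's chain rule applied to the Lipschitz (on the unit bidisk) map $(z_1,z_2)\mapsto z_2\bar z_1$ gives $u_2\bar u_1\in SBV\cap L^\infty$, and multiplication by $e^{-i\vhi}\in W^{1,1}\cap L^\infty$ preserves this class, so $\xi\in SBV(\Omega)\cap L^\infty$. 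Its level sets $E_k:=\{\xi=\mathbf{a}^k\}$, $k=1,\ldots,m$, are then sets of finite perimeter, forming a Caccioppoli partition of $\Omega$, and \eqref{Caccioprep} follows from the very definition of $\xi$.

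For the addendum, assume in addition $\P(u_1)=\P(u_2)$. Since $\P$ realises the isometric embedding of $\C/\mathbf{G}_m$ into $\R^3$, at a.e.\ point $u_2/u_1\in\mathbf{G}_m$; combined with the factorisation $u_2=\xi e^{i\vhi}u_1$ already obtained and $\xi\in\mathbf{G}_m$, this forces $e^{i\vhi}\in\mathbf{G}_m$ a.e., i.e.\ $\vhi\in\frac{2\pi}{m}\Z$ a.e. A $W^{1,1}(\Omega)$ function taking values in a countable discrete subset of $\R$ has zero gradient a.e., so by connectedness of $\Omega$ it is constant, and $\vhi$ is a constant multiple of $2\pi/m$. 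The main obstacle in this scheme is the production of the Sobolev lift $\psi$: for $W^{1,p}$ with $p\geq 2$ one integrates $j(w)$ along paths, but in the present $W^{1,1}$ setting the existence of a lift assuming only $\curl j(w)=0$ and simple connectedness of $\Omega$ is a much more delicate 2D result; once granted, the remainder is an interplay between standard $SBV$ calculus and the pointwise algebra of the finite group $\mathbf{G}_m$.
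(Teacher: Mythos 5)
Your proof is correct and follows essentially the same route as the paper's: both reduce to the unimodular quotient $\p(u_2)\overline{\p(u_1)}$ (which, since $|u_k|=1$, is exactly the paper's $\p(u_2\overline{u_1})$), invoke the same $W^{1,1}$ lifting result of Demengel/Bourgain--Brezis--Mironescu under the vanishing-curl hypothesis, and extract the Caccioppoli partition from the resulting $\mathbf{G}_m$-valued $SBV$ function. The minor variations (computing $j$ on $\p(u_2)$ rather than on $u_2$ for (ii)$\Rightarrow$(i), and phrasing the addendum via $u_2/u_1\in\mathbf{G}_m$ rather than via $\widetilde v\equiv 1$) are cosmetic.
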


\begin{proof}
Define $\widetilde u:=u_2\overline u_1\in SBV(\Omega;\Ss^1)$ and $\widetilde v:=\p(\widetilde u)\in W^{1,1}(\Omega;\Ss^1)$. By  Corollary~~\ref{cor1}, we have $\mathcal{H}^1(J_{\widetilde u})<\infty$. Then  Lemma~~\ref{struct}, together with the fact that $\p(\tilde u)=\p(u_2)\overline{\p(u_1)}$, leads to 
$$j(\widetilde v)=j\big(\p(u_2)\,\overline{\p(u_1)}\,\big) =j(\p(u_2))-j(\p(u_1))=m\big(j(u_2)-j(u_1)\big)\,.$$

If {\it(i)} holds, then $\curl\,j(\widetilde v)=0$ in $\mathcal{D}^\prime(\Omega)$. By~\cite{Demengel} (see also \cite[Theorem 7]{BMP}) there exists $\vhi\in W^{1,1}(\Omega)$ such that 
$\widetilde v=e^{im \vhi}$. Consequently, $\p(e^{-i\vhi} \widetilde u)=1$ and thus $e^{-i\vhi} \widetilde u\in BV(\Omega;{\bf G}_m)$, so that 
$e^{-i\vhi} \widetilde u= \sum_{k=0}^{m-1}{\bf a}^k\chi_{E_k}$ for some Caccioppoli partition $\{E_k\}_{k=0}^{m-1}$ of $\Omega$. This proves~\eqref{Caccioprep}.
When $\p(u_1)=\p(u_2)$, then $\widetilde v=1$, and we infer that $\vhi(x)\in \frac{2\pi}{m}\Z$ for a.e. $x\in\Omega$. Since $\vhi\in W^{1,1}(\Omega)$ we conclude that $\vhi$ is constant.

If {\it(ii)} holds, then for each $l\in\{1,2\}$, 
$$\partial_l u_2=\left(\sum_{k=0}^{m-1}{\bf a}^k\chi_{E_k}\right)e^{i\vhi} \big(\partial_l u_1+i\partial_l \vhi u_1\big)\quad\text{a.e. in $\Omega$}\,.$$ 
Consequently, $j(u_2)=j(u_1)+\nabla\vhi$ a.e. in $\Omega$, and {\it(ii)} follows. 
\end{proof}


\subsection{Energies, functional classes, and existence of minimizers} 
\label{S24}
Throughout this section, we assume that $\Omega\subset \R^2$ is a smooth, bounded, and {\sl simply connected} domain. 
For $q\in\{2,3\}$ and $\varepsilon>0$, we consider the  Ginzburg-Landau functional $E_\varepsilon:W^{1,2}(\Omega;\R^q)\to[0,\infty)$ defined by 
 \[
 E_\varepsilon(u):=\frac{1}{2}\int_\Omega|\nabla u|^2\,dx+\frac{1}{4\varepsilon^2}\int_\Omega(1-|u|^2)^2\,dx \,.
 \]
For any Borel set $A\subset \Omega$, we let 
\[
 E_\varepsilon(u,A):=\frac{1}{2}\int_A|\nabla u|^2\,dx+\frac{1}{4\varepsilon^2}\int_A(1-|u|^2)^2\,dx \,.
\]
We shall use the analogous notation for the localized version of most of the energies under consideration.   

For $u\in SBV^2(\Om)$ such that $v:=\p(u)=u^m/|u|^{m-1}\in W^{1,2}(\Omega)$, we have (by Lemma \ref{struct})
\begin{equation}\label{GLcone}
E_\eps\lt(\P(u)\rt)= \frac{1}{m^2} E_\eps(v)+\frac{m^2-1}{m^2}  E_\eps(|v|)\,=:\, G_\eps(v)\,.
\end{equation}
Equivalently, the functional $G_\eps:W^{1,2}(\Omega)\to [0,\infty)$ can be defined by 
\begin{equation}\label{defGeps}
G_\eps(v)= \frac{1}{2m^2}\int_{\Omega}|\nabla v|^2+(m^2-1)\big|\nabla |v| \big|^2\,dx+\frac{1}{4\eps^2}\int_\Omega(1-|v|^2)^2\,dx\,.
\end{equation}
For the phase field we consider the functionals $I_\eta:W^{1,2}(\Omega;[0,1])\to[0,\infty)$ defined for $\eta>0$ as 
$$I_\eta(\psi):=\frac{\eta}{2}\int_\Omega|\nabla\psi|^2\,dx+\frac{1}{2\eta}\int_\Omega(1-\psi)^2\,dx\,. $$
The classes of functions we are interested in are the following 
\[
\mathcal{H}(\Omega):=\Big\{(u,\psi)\in L^2(\Omega)\times W^{1,2}(\Omega;[0,1]):\P(u)\in W^{1,2}(\Omega;\mathcal{N})
\text{ and }\psi u \in W^{1,2}(\Omega)\Big\}\,,
\]
and 
\[
 \mathcal{G}(\Omega) :=\Big\{ u\in SBV^2(\Omega)  : \ \P(u)\in W^{1,2}(\Omega;\mathcal{N}) \Big\}\,.
\]
Notice that in the definition of ${\mathcal H}(\Om)$, the condition $\psi u\in W^{1,2}(\Om)$ degenerates on the set $\{\psi=0\}$ allowing for discontinuities of $u$. Typically, $u$  may jump through lines where $\psi$ vanishes and (since $\P(u)$ does not jump) the jump satisfies formally the constraint $\P(u^+)=\P(u^-)$ in the spirit of Lemma~~\ref{struct} ~{\it(iv)}.  
\vskip3pt

On the classes $\mathcal{H}(\Omega)$ and $ \mathcal{G}(\Omega)$, we define the functionals $F^\eta_{\varepsilon}:\mathcal{H}(\Omega)\to[0,\infty)$  and $ F^0_\eps:\mathcal{G}(\Omega)\to[0,\infty)$ by
 \begin{equation}\label{defFetaeps&F0eps}
 F^\eta_{\varepsilon}(u,\psi):=E_\varepsilon\big(\P(u)\big)+ I_\eta(\psi)\, \qquad \textrm{and} \qquad  F^0_\eps(u):= E_\eps\big(\P(u)\big) +\mathcal{H}^1(J_u)\,.
 \end{equation}
Note that $F^0_\eps$ is a functional of the type ``Mumford-Shah''. Indeed, by Lemma ~\ref{struct} we have
$$F^0_\eps(u)=\frac{1}{2}\int_\Omega|\nabla u|^2+\frac{1}{2\eps^2}(1-|u|^2)^2\,dx+\mathcal{H}^1(J_u)\,.$$
As already pointed out in the introduction,  $F^\eta_{\varepsilon}$ can be seen as an ``Ambrosio-Tortorelli'' regularization of $F^0_\eps$ (with a coupling between $u$ and $\psi$ in the class $\mathcal{H}(\Omega)$ rather than in the functional itself). 
 \vskip3pt
 
We aim to minimize  $F^\eta_{\varepsilon}$ and $F^0_\eps$ under a given Dirichlet condition on the boundary. We fix a smooth map $g:\partial\Omega\simeq\Ss^1\to\Ss^1$ of topological degree $d\geq 0$. Accordingly, we introduce the subclasses 
\begin{equation}\label{classHg}
\mathcal{H}_g(\Omega):=\Big\{(u,\psi)\in\mathcal{H}(\Omega): \psi=1\text{ and }\psi u=g \text{ on $\partial\Omega$}\Big\}\,,
\end{equation}
and
$$\mathcal{G}_g(\Omega):=\Big\{u\in\mathcal{G}(\Omega): \P(u)=\P(g) \text{ on $\partial\Omega$}\Big\}\,.$$ 
Note that in  $\mathcal{G}_g(\Omega)$ we do not impose the condition $u=g$ on $\partial\Omega$. Instead we penalize deviations from $g$ minimizing over $\mathcal{G}_g(\Omega)$ the functional
\begin{equation}\label{defFeps0g}
F^0_{\eps,g}(u):=F_\eps^0(u)+\mathcal{H}^1\big(\{u\not=g\}\cap\partial\Omega\big)
\end{equation}
in place of $F_\eps^0$. As already mentioned in the introduction, this penalization is necessary to ensure lower semi-continuity (since $F_{\eps,g}^0$ is precisely the $L^1(\Omega)$-relaxation of $F^0_\eps$).
\vskip3pt

As a warm-up, let us prove that the functionals $F^\eta_\eps$ and $F^0_{\eps,g}$ admit minimizers.
\begin{theorem}\label{exist}
The functional $F^\eta_{\varepsilon}$ admits a minimizing pair $(u_\eps,\psi_\eps)$ in $\mathcal{H}_g(\Omega)$. In addition, any such minimizer satisfies $\|u_\eps\|_{L^\infty(\Omega)}\leq 1$. 
\end{theorem}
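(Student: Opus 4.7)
The plan is the direct method of the calculus of variations, with extra care because the class $\mathcal{H}_g(\Omega)$ does not give $u$ an a priori global gradient. Non-emptiness is immediate: the $\R^2$-valued harmonic extension $g^*$ of $g$ (smooth in $\Omega$ and bounded by $1$) paired with $\psi\equiv 1$ is admissible with finite energy, since by \eqref{modnabl}, $E_\eps(\P(g^*))=\tfrac12\int|\nabla g^*|^2+\tfrac1{4\eps^2}\int(1-|g^*|^2)^2<\infty$. Now fix a minimizing sequence $(u_n,\psi_n)$. The bound on $I_\eta(\psi_n)$ yields a weakly $W^{1,2}$-convergent subsequence $\psi_n\to\psi$, with $\psi\in[0,1]$ and $\psi=1$ on $\partial\Omega$. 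Since $|\P(u_n)|=|u_n|$ and the quartic potential lies in $L^1$, $V_n:=\P(u_n)$ is bounded in $W^{1,2}(\Omega;\R^3)$, and we may assume $V_n\to V$ weakly in $W^{1,2}$ and a.e., with $V\in\mathcal{N}$ a.e.

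A crucial preliminary reduction is the radial truncation $T(z):=z/\max(|z|,1)$. Because $\mathcal{N}$ is a cone, one checks that $\P(T(u))=\P(u)$ where $|u|\leq 1$ and $\P(T(u))=\P(u)/|\P(u)|$ where $|u|>1$, i.e.\ $\P\circ T$ is the $1$-Lipschitz radial retraction of $\mathcal{N}$ onto $\mathcal{N}\cap\overline{B_1(0)}$ applied to $\P$. Hence $E_\eps(\P(T(u)))\leq E_\eps(\P(u))$, strictly so whenever $|u|>1$ on a set of positive measure, and $\mathcal{H}_g(\Omega)$ is preserved because $|g|=1$ on $\partial\Omega$ leaves the trace of $T(u)$ equal to $g$. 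Replacing $u_n$ by $T(u_n)$, we may assume $|u_n|\leq 1$; applied a posteriori to any minimizer, the same argument yields the claimed $L^\infty$-bound.

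The main technical step is a uniform $W^{1,2}$-bound on $w_n:=\psi_n u_n$ together with the consistent identification of its limit. Unlike in the standard Ambrosio--Tortorelli setting, here the coupling $\psi u\in W^{1,2}$ sits only in the class, not in the functional, so I would exploit the isometric identity \eqref{modnabl}: on $\{u_n\neq 0\}$, $u_n$ is locally $W^{1,2}$ via a smooth branch of $\P^{-1}$ on $\mathcal{N}\setminus\{0\}$, with $|\nabla u_n|=|\nabla V_n|$; on $\{u_n=0\}=\{V_n=0\}$, both $\nabla V_n$ and $\nabla w_n$ vanish a.e.\ (standard $W^{1,2}$-property on zero sets). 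With the convention $\nabla u_n:=0$ on $\{u_n=0\}$, the product rule then yields
\[
|\nabla w_n|\leq |u_n|\,|\nabla\psi_n|+\psi_n|\nabla u_n|\leq |\nabla\psi_n|+|\nabla V_n|\quad\text{a.e. in }\Omega,
\]
so $w_n$ is bounded in $W^{1,2}$; extract $w_n\to w$ weakly in $W^{1,2}$ and a.e. Define $u:=w/\psi$ on $\{\psi>0\}$ and as any measurable selection of $\P^{-1}(V)$ on $\{\psi=0\}$. Passing to the a.e.\ limit in $u_n=w_n/\psi_n$ on $\{\psi>0\}$ gives $\P(u)=V$ there, and $\P(u)=V$ on $\{\psi=0\}$ by construction; $\psi u=w$ holds on both sets. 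Thus $(u,\psi)\in\mathcal{H}(\Omega)$, and the boundary conditions pass from $w_n=g$, $\psi_n=1$ on $\partial\Omega$ by continuity of the trace under weak $W^{1,2}$-convergence, so $(u,\psi)\in\mathcal{H}_g(\Omega)$. Standard lower semicontinuity of $E_\eps$ and $I_\eta$ under weak $W^{1,2}$-convergence (convex gradient, strong $L^4$-convergence for the quartic potential) then yields $F_\eps^\eta(u,\psi)\leq\liminf F_\eps^\eta(u_n,\psi_n)$, so $(u,\psi)$ is a minimizer. The main obstacle is precisely this $W^{1,2}$-bound on $\psi_n u_n$ together with the consistent reconstruction of $u$ from $V$ and $w$, which requires both the isometric identity \eqref{modnabl} and a measurable selection on the set $\{\psi=0\}$ where $w$ alone does not determine $u$.
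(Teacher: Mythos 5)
Your overall strategy coincides with the paper's: truncate by $z\mapsto z/\max(1,|z|)$ (exactly the paper's $\widehat u_h$, which also yields the $L^\infty$ bound for any minimizer), obtain weak $W^{1,2}$ compactness of the triple $(\psi_n,\psi_nu_n,\P(u_n))$, reconstruct $u$ from the limits $w$ and $V$ using a measurable right inverse of $\P$ on $\{\psi=0\}$ (the paper uses an explicit ``unrolling map'' $\boldsymbol{\alpha}$ of the cone), and conclude by weak lower semicontinuity. The reconstruction, the boundary conditions, and the lower semicontinuity steps are all fine.

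The one step that does not survive scrutiny is your derivation of the uniform $W^{1,2}$-bound on $w_n=\psi_nu_n$. You assert that on $\{u_n\neq0\}$ the map $u_n$ is locally $W^{1,2}$ ``via a smooth branch of $\P^{-1}$ on $\mathcal{N}\setminus\{0\}$'' and then apply the product rule to $\psi_n u_n$. This is false in general: $\P$ restricted to $\C\setminus\{0\}$ is an $m$-to-one covering of $\mathcal{N}\setminus\{0\}$, and an element of $\mathcal{H}(\Omega)$ may switch between the $m$ preimage branches on an arbitrary measurable subset of $\{\psi_n=0\}$, where the constraint $\psi_nu_n\in W^{1,2}(\Omega)$ is vacuous and $u_n$ is a priori only in $L^2(\Omega)$, not even in $BV$. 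On $\{u_n\neq0\}\cap\{\psi_n=0\}$ the function $u_n$ therefore need not be locally Sobolev, so ``$\nabla u_n$'' is not defined and the product rule as written is not justified (your partition into $\{u_n\neq 0\}$ and $\{u_n=0\}$ is the wrong one; the relevant dichotomy is $\{\psi_n>0\}$ versus $\{\psi_n=0\}$). The paper's Proposition~\ref{compactetafix} obtains the same estimate without ever differentiating $u_n$: by positive homogeneity of $\P$ one has $\P(\psi_nu_n)=\psi_n\P(u_n)$, a product of two $W^{1,2}\cap L^\infty$ functions, whence $\nabla\big(\P(w_n)\big)=\psi_n\nabla V_n+\nabla\psi_n\otimes V_n$; then \eqref{modnabl} applied to $w_n\in W^{1,2}(\Omega)$ (which holds by the very definition of the class) gives $|\nabla w_n|=\big|\nabla\big(\P(w_n)\big)\big|\leq |\nabla V_n|+\|u_n\|_{L^\infty(\Omega)}|\nabla\psi_n|$ a.e. With this substitution your argument is complete.
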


\begin{theorem}\label{existsharp}
The functional $F^0_{\varepsilon,g}$ has a  minimizer $u_\eps$ in $\mathcal{G}_g(\Omega)$. In addition, any such minimizer satisfies $\|u_\eps\|_{L^\infty(\Omega)}\leq 1$. 
\end{theorem}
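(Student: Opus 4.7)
The plan is to apply the direct method of the calculus of variations in $SBV^2$, after two preparatory reductions: a truncation that gives the $L^\infty$-bound for free, and an extension to a slightly larger domain that turns the boundary penalization into (part of) the jump length of a globally defined $SBV^2$-map, so that Ambrosio's compactness and lower semicontinuity theorems apply.

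For the truncation, I would use $T:\C\to\C$, $T(z):=z/\max(1,|z|)$, which is $1$-Lipschitz with $T(\Ss^1)=\Ss^1$ and smooth outside $\overline{B_1}$. For $u\in\mathcal{G}_g(\Om)$, the chain rule in $BV$ yields $T\circ u\in SBV^2(\Om)$ with $|\nabla(T\circ u)|\leq|\nabla u|$ a.e., $J_{T\circ u}\subset J_u$, and pointwise $(1-|T\circ u|^2)^2\leq(1-|u|^2)^2$ with strict inequality on $\{|u|>1\}$. Since $|g|=1$ on $\partial\Om$, we also have $\{T\circ u\neq g\}\cap\partial\Om=\{u\neq g\}\cap\partial\Om$, and $\P(T\circ u)=\P(u)/\max(1,|u|)\in W^{1,2}(\Om;\mathcal{N})$ because $\P\circ T$ is $1$-Lipschitz. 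Hence $T\circ u\in\mathcal{G}_g(\Om)$ and $F^0_{\eps,g}(T\circ u)\leq F^0_{\eps,g}(u)$ with strict inequality unless $|u|\leq 1$ a.e. This proves at once the $L^\infty$-bound for any minimizer, and allows me to assume $|u_n|\leq 1$ along a minimizing sequence.

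For the boundary condition, I would fix a smooth domain $\Omega'\supset\overline\Om$ and a smooth extension $G:\Omega'\setminus\Om\to\Ss^1$ of $g$ (pull-back of $g$ via the nearest-point projection onto $\partial\Om$ on a tubular neighborhood). To any $u\in\mathcal{G}_g(\Om)$ with $|u|\leq 1$ I associate $\tilde u:=u\,\ind_\Om+G\,\ind_{\Omega'\setminus\Om}\in SBV^2(\Omega';\C)$. The condition $\P(u)=\P(g)=\P(G)$ on $\partial\Om$ ensures that $\P(\tilde u)$ has no jump across $\partial\Om$, so $\P(\tilde u)\in W^{1,2}(\Omega';\mathcal{N})$, while $\tilde u$ itself may jump across $\partial\Om$ on the set $\{u\neq g\}$. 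Up to $\mathcal{H}^1$-null sets,
\[
J_{\tilde u}\cap\overline\Om=J_u\cup\big(\{u\neq g\}\cap\partial\Om\big),
\]
so that $F^0_{\eps,g}(u)=E_\eps(\P(\tilde u),\Om)+\mathcal{H}^1(J_{\tilde u}\cap\overline\Om)$. Now, given a minimizing sequence $\{u_n\}$ with $|u_n|\leq 1$, the corresponding $\{\tilde u_n\}$ is bounded in $L^\infty(\Omega')$, with $\|\nabla\tilde u_n\|_{L^2(\Omega')}$ and $\mathcal{H}^1(J_{\tilde u_n})$ uniformly bounded. Ambrosio's $SBV$-compactness theorem yields, along a subsequence, $\tilde u_n\to\tilde u$ in $L^1(\Omega')$ and a.e., with $\tilde u=G$ on $\Omega'\setminus\Om$; Ambrosio's lower semicontinuity theorem gives $\mathcal{H}^1(J_{\tilde u}\cap\overline\Om)\leq\liminf_n\mathcal{H}^1(J_{\tilde u_n}\cap\overline\Om)$, weak $L^2$-convergence of $\nabla\tilde u_n$ yields lower semicontinuity of $\int|\nabla u|^2$, and Fatou handles the potential. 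Setting $u:=\tilde u_{|\Om}$, one also checks that $\{\P(u_n)\}$ is bounded in $W^{1,2}(\Om;\R^3)$ by \eqref{GLcone}, hence converges weakly to a $W^{1,2}$-map which a.e.\ agrees with $\P(u)$; this gives both $\P(u)\in W^{1,2}(\Om;\mathcal{N})$ and the trace $\P(u)=\P(g)$ on $\partial\Om$. Therefore $u\in\mathcal{G}_g(\Om)$ and $F^0_{\eps,g}(u)\leq\liminf_n F^0_{\eps,g}(u_n)$, so $u$ is a minimizer.

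The main obstacle I anticipate is precisely the lower semicontinuity of the boundary penalization $\mathcal{H}^1(\{u\neq g\}\cap\partial\Om)$, which is not stable under $L^1$-convergence of $SBV^2$-maps and is the whole reason for adding this penalization term in the first place. The extension trick circumvents this cleanly, but requires checking carefully that $\P(\tilde u)$ does not produce an artificial jump across $\partial\Om$, which is where the boundary condition $\P(u)=\P(g)$ built into $\mathcal{G}_g(\Om)$ plays its decisive role.
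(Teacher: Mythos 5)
Your proposal is correct and follows essentially the same route as the paper: the $1$-Lipschitz truncation $z\mapsto z/\max(1,|z|)$ to obtain the $L^\infty$-bound and normalize the minimizing sequence, followed by extension to a slightly larger smooth domain via $g\circ\Pi$ so that the boundary penalization becomes part of the jump set of the extended map, and then Ambrosio's $SBV$ compactness and lower semicontinuity combined with weak $W^{1,2}$-convergence of $\P(u_n)$ to recover membership in $\mathcal{G}_g(\Omega)$. The only cosmetic difference is that you emphasize explicitly that $\P(\tilde u)$ does not jump across $\partial\Omega$, a fact the paper uses implicitly.
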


The proof of Theorem ~\ref{exist} rests on the following compactness result. 

\begin{proposition}\label{compactetafix}
Let $\Omega$ be a bounded open subset of $\R^2$. Let $\{(u_h,\psi_h)\}_{h\in\N}\subset \mathcal{H}(\Omega)$ be such that 
$$\sup_{h}\left\{\|u_h\|_{L^\infty(\Omega)}+\|\nabla \psi_h\|_{L^{2}(\Omega)}+\big\|\nabla(\P( u_h))\big\|_{L^{2}(\Omega)}\right\}<\infty\,.  $$
Then, there exist a  subsequence and $(u,\psi)\in \mathcal{H}(\Omega)$ such that 
$$\big(\psi_h,\psi_h u_h,\P(u_h)\big)\rightharpoonup \big(\psi,\psi u,\P(u)\big)\quad\text{weakly in $W^{1,2}(\Omega)$}\,.$$ 
\end{proposition}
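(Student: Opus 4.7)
The plan is to extract weak $W^{1,2}$-subsequential limits of each of the three sequences $\{\psi_h\}$, $\{\psi_h u_h\}$, $\{\P(u_h)\}$ and then to recombine them into a pair $(u,\psi)\in\mathcal{H}(\Omega)$. The first and third sequences are immediate: $\{\psi_h\}$ is bounded in $W^{1,2}(\Omega;[0,1])$, and $\{\P(u_h)\}$ is bounded in $W^{1,2}(\Omega;\R^3)$ since $|\P(u_h)|=|u_h|$ is $L^\infty$-bounded by hypothesis; thus, along a subsequence, $\psi_h\rightharpoonup\psi$ and $\P(u_h)\rightharpoonup V$ weakly in $W^{1,2}$, with a.e.\ and $L^p$-strong convergence (Rellich), and $V(x)\in\mathcal{N}$ a.e.\ since $\mathcal{N}$ is closed.

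The heart of the argument will be to bound $\{\psi_h u_h\}$ in $W^{1,2}(\Omega)$, which the hypotheses do not provide directly. I would exploit that $\P$ is positively $1$-homogeneous, so that $\P(\psi_h u_h)=\psi_h\,\P(u_h)$ a.e.\ in $\Omega$; the right-hand side is bounded in $W^{1,2}$ as a product of bounded $W^{1,2}$-functions, with
$\|\nabla(\psi_h\P(u_h))\|_{L^2}\leq\|u_h\|_\infty\|\nabla\psi_h\|_{L^2}+\|\psi_h\|_\infty\|\nabla\P(u_h)\|_{L^2}\leq C$.
Combining this with the isometric identity \eqref{modnabl}, which via the chain rule for Sobolev/Lipschitz compositions and the Stampacchia property $\nabla w=0$ a.e.\ on $\{w=0\}$ yields the pointwise equality $|\nabla(\P\circ w)|=|\nabla w|$ a.e.\ in $\Omega$ for every $w\in W^{1,2}(\Omega;\C)$, applied to $w=\psi_h u_h$ gives the uniform estimate $\|\nabla(\psi_h u_h)\|_{L^2}\leq C$. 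A further subsequence then yields $\psi_h u_h\rightharpoonup w$ weakly in $W^{1,2}$, with a.e.\ and $L^p$-strong convergence.

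To identify the limit, I would observe that on $\{\psi>0\}$ the a.e.\ convergences $\psi_h\to\psi$ and $\psi_h u_h\to w$ force $u_h\to w/\psi$ a.e., whence $V=\P(w/\psi)$ a.e.\ there by continuity of $\P$; while on $\{\psi=0\}$ the inequality $|\psi_h u_h|\leq\psi_h\|u_h\|_\infty$ gives $w=0$ a.e. I would therefore set $u:=w/\psi$ on $\{\psi>0\}$ and, on $\{\psi=0\}$, $u:=s\circ V$, where $s\colon\mathcal{N}\to\C$ is a continuous right-inverse of $\P$ obtained by restriction to a fundamental sector of the $\mathbf{G}_m$-action. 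Then $\P(u)=V$ a.e.\ in $\Omega$, hence $|u|=|V|\in L^\infty(\Omega)$, and $\psi u=w$ a.e.\ (both sides vanish on $\{\psi=0\}$). Consequently $(u,\psi)\in\mathcal{H}(\Omega)$ realizes the three claimed weak limits.

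The principal obstacle is the $W^{1,2}$-bound on the middle sequence: the class $\mathcal{H}(\Omega)$ allows $u_h$ to jump wildly on $\{\psi_h=0\}$, so no naive Leibniz estimate on $\nabla(\psi_h u_h)=u_h\nabla\psi_h+\psi_h\nabla u_h$ is available. Circumventing this requires invoking simultaneously the positive homogeneity and the isometric character of $\P$, which together replace the uncontrolled $\nabla u_h$ by the controlled $\nabla\P(u_h)$.
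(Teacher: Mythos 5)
Your proof is correct and follows essentially the same route as the paper's: bound $\nabla(\psi_h u_h)$ via the homogeneity identity $\P(\psi_h u_h)=\psi_h\,\P(u_h)$ combined with the isometry \eqref{modnabl}, then reassemble the limit $u$ as $w/\psi$ on $\{\psi>0\}$ and as a right inverse of $\P$ applied to $V$ on $\{\psi=0\}$. The only quibble is that no \emph{continuous} right inverse of $\P$ on $\mathcal{N}$ exists (the covering $\Ss^1\to\mathcal{S}$ admits no global continuous section); a Borel-measurable one --- the paper's ``unrolling map'' --- is what is actually used, and measurability is all that is needed at that point.
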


\begin{proof}
Set $\phi_h:=\psi_h u_h$, $V_h:=\P(u_h)$, and notice that $\P( \phi_h)= \psi_hV_h$.  Therefore, 
$$\nabla(\P( \phi_h))=\psi_h\nabla V_h+\nabla\psi_h \otimes V_h\,. $$
By~\eqref{modnabl}, $|\nabla \phi_h|=\big| \nabla(\P( \phi_h))\big|$ a.e. in $\Omega$, and since $0\leq \psi_h\leq 1$, we infer  that 
$$\int_\Omega|\nabla\phi_h|^2\,dx \leq 2\int_{\Omega}|\nabla V_h|^2\,dx +2 \|u_h\|^2_{L^\infty(\Omega)}\int_\Omega|\nabla\psi_h|^2\,dx\,.$$
Hence $\{\psi_h\}$, $\{V_h\}$, and $\{\phi_h\}$ are bounded in $W^{1,2}(\Omega)$. Thus, we can find a subsequence such that 
$(\psi_h,\phi_h,V_h)\rightharpoonup (\psi,\phi,V)$ weakly in $W^{1,2}(\Omega)$ and a.e. in $\Omega$, for some $(\psi,\phi,V)\in W^{1,2}(\Omega;[0,1])\times W^{1,2}(\Omega)\times W^{1,2}(\Omega;\mathcal{N})$. 
On the one hand, since $\{u_h\}$ is bounded in $L^\infty(\Omega)$, the sequence $\{u_h(x)\}$ is bounded for a.e. $x\in\Omega$, and we deduce that $\phi(x)=\lim_h \psi_h(x)u_h(x)=0$ for a.e. $x\in \{\psi=0\}$. On the other hand, one has
$\lim_h u_h(x)=\phi(x)/\psi(x)$ and $V(x)=\lim_h V_h(x)=\P(\phi(x)/\psi(x))$ for a.e. $x\in\{\psi\not=0\}$.  
Now, we define $u\in L^\infty(\Omega)$ by  setting 
$$ u:= \frac{\phi}{\psi} \chi_{\{\psi\not=0\}}+\boldsymbol{\alpha}(V)\chi_{\{\psi=0\}} \,,$$
where $\boldsymbol{\alpha}:\mathcal{N}\to\C$ is the ``unrolling map" of the cone $\mathcal{N}$, i.e., 
$$\boldsymbol{\alpha}(z,t):=\begin{cases}
m|z|e^{i\theta/m} & \text{for $z=|z|e^{i\theta}\in\C \setminus\{0\}$ with $\theta\in[0,2\pi)$}\,,\\
0 & \text{otherwise}\,.
\end{cases}$$
By construction, we have $\phi=\psi u$ and $V=\P(u)$, and the proof is complete. 
 \end{proof}

\begin{proof}[Proof of Theorem ~\ref{exist}]
Let $\{(u_h,\psi_h)\}\subset \mathcal{H}_g(\Omega)$ be a minimizing sequence for $F^\eta_{\varepsilon}$ in $ \mathcal{H}_g(\Omega)$.  Since $\P(u_h)\in W^{1,2}(\Omega;\R^3)$, we have $|u_h|\in W^{1,2}(\Omega)$ and thus also $[\max(1,|u_h|)]^{-1}\in W^{1,2}(\Omega)\cap L^\infty(\Omega)$.
Since $\psi_h =1$ and $\psi_h u_h=g$ on $\partial\Omega$, we have $|u_h|=1$ on $\partial \Omega$ and then also $[\max(1,|u_h|)]^{-1}=1$ on $\partial \Omega$.
 Consequently, setting 
\begin{equation}\label{deftronc}
 \widehat u_h:=\frac{u_h}{\max(1,|u_h|)}\,,
 \end{equation}
we have $\psi_h \widehat u_h\in W^{1,2}(\Omega)$, $\psi_h \widehat u_h=g$ on $\partial\Omega$, and $\| \widehat u_h\|_{L^\infty(\Omega)}\leq 1$. Since also,
\begin{equation}\label{tronc}
\P(\widehat u_h)=\frac{\P(u_h)}{\max(1,|u_h|)}=\frac{\P(u_h)}{\max(1,|\P(u_h)|)} \in W^{1,2}(\Omega;\mathcal{N})\,,\end{equation}
we have $(\psi_h,\widehat u_h)\in \mathcal{H}_g(\Omega)$. Moreover,~\eqref{tronc} implies that 
$E_\varepsilon(\P( \widehat u_h))\leq E_\varepsilon(\P(u_h)) $ with equality if and only if $|\P(u_h)|\leq 1$ a.e. in $\Omega$ (and since $|\P(u_h)|=|u_h|$, equality holds if and only if $|u_h|\leq 1$ a.e. in $\Omega$). 

As a consequence, $F^\eta_{\varepsilon}(\widehat u_h,\psi_h)\leq F^\eta_{\varepsilon}(u_h,\psi_h)$, and thus  $\{(\widehat u_h,\psi_h)\}$ 
is also a minimizing sequence  for $F^\eta_{\varepsilon}$ in $\mathcal{H}_g(\Omega)$. Since $\|\widehat u_h\|_{L^\infty(\Omega)}\leq 1$, we can apply Proposition~~\ref{compactetafix} to find a subsequence 
such that $(\psi_h,\psi_h\widehat u_h,\P(\widehat u_h))\rightharpoonup (\psi_\eps,\psi_\eps u_\eps,\P(u_\eps))$ weakly in $W^{1,2}(\Omega)$ for some
$(u_\eps,\psi_\eps)\in\mathcal{H}(\Omega)$ with $|\P(u_\eps)|=|u_\eps|\leq 1$ a.e. in $\Omega$.  
From the continuity of the trace operator, we deduce that $\psi_\eps=1$  and $\psi_\eps u_\eps=g$ on $\partial\Omega$, that is $(u_\eps,\psi_\eps)\in\mathcal{H}_g(\Omega)$. Finally, the functional $F^\eta_{\varepsilon}$ being clearly lower semi-continuous with respect to the weak convergence in $W^{1,2}(\Omega)$, we conclude that $(u_\eps,\psi_\eps)$ minimizes $F^\eta_{\varepsilon}$ over $\mathcal{H}_g(\Omega)$. Since the truncation argument above shows that any minimizer satisfies the announced $L^\infty$ bound, the proof is complete.  
\end{proof}

\begin{proof}[Proof of Theorem ~\ref{existsharp}]
The truncation argument is identical to the one above so we may reduce ourselves to the class of functions $u$ satisfying $\|u\|_{L^\infty(\Om)}\le 1$. Let $\{u_h\}\subset \mathcal{G}_g(\Omega)$ be a minimizing sequence for $F^0_{\varepsilon,g}$.

We fix some $r_0>0$ small enough in such a way that 
\begin{equation}\label{extdomom}
\widetilde \Omega:=\big\{x\in\R^2: {\rm dist}(x,\Omega)<r_0\big\}
\end{equation}
defines a smooth domain, and that the nearest point projection on $\partial\Omega$, denoted by ${ \Pi}$, is well defined and smooth in $\big\{x\in\R^2: {\rm dist}(x,\partial\Omega)<2r_0\big\}$. We extend each $u_h$ 
to $\widetilde\Omega$ by setting $u_h(x)=g\big({\Pi}(x)\big)$ for $x\in\widetilde\Omega \setminus\Omega$. Then we have  $J_{u_h}\cap\widetilde\Omega=(J_{u_h}\cap\Omega)\cup(\{u_h\neq g\}\cap\partial\Omega)$, so that 
$$F^0_{\varepsilon}(u_h,\widetilde\Omega)=F^0_{\varepsilon,g}(u_h)+C_g \,,$$ 
for a constant $C_g$ depending only on $g$, $r_0$, and $\Omega$. Since $|\nabla u_h|=|\nabla(\P(u_h))|$ by Lemma ~\ref{struct}, we deduce that $\{\nabla u_h\}$ is bounded in $L^2(\widetilde \Omega)$. Hence we can apply \cite[Theorem 4.7 \& 4.8]{AFP} to find a  subsequence such that $u_h\rightharpoonup u_\eps$ weakly* in $BV(\widetilde\Omega)$ and a.e. in $\Omega$ to some $u_\eps\in SBV^2(\widetilde\Omega)$. From the a.e. convergence, we deduce that $u_\eps(x)=g\big({\Pi}(x)\big)$ for $x\in\widetilde\Omega \setminus\Omega$. Then, still by \cite[Theorem 4.7]{AFP},  
\begin{multline}\label{malad1148}
\liminf_{h\to\infty} \mathcal{H}^1(J_{u_h}\cap\Omega)+\mathcal{H}^1\big(\{u_h\not=g\}\cap\partial\Omega\big)=\liminf_{h\to\infty} \mathcal{H}^1(J_{u_h}\cap\widetilde\Omega)\\
\geq  
 \mathcal{H}^1(J_{u_\eps}\cap\widetilde\Omega) =\mathcal{H}^1(J_{u_\eps}\cap\Omega)+\mathcal{H}^1\big(\{u_\eps\not=g\}\cap\partial\Omega\big)\,.
 \end{multline}
Since $\{\P(u_h)\}$ is bounded in $W^{1,2}(\Omega)$ and $\P(u_h)\to \P(u_\eps)$ a.e. in $\Omega$, we infer that  $\P(u_h)\rightharpoonup \P(u_\eps)$ weakly in $W^{1,2}(\Omega)$.  
As a consequence, $u_\eps\in\mathcal{G}_g(\Omega)$. Finally, the lower semi-continuity of $E_\eps$ with respect to the weak $W^{1,2}$-convergence, together with~\eqref{malad1148}, leads to $F^0_{\eps,g}(u_\eps)\leq \liminf_hF^0_{\eps,g}(u_h)$. Hence $u_\eps$ is a minimizer of $F^0_{\eps,g}$ in  $\mathcal{G}_g(\Omega)$. 
 \end{proof}


\subsection{Asymptotic for the Ginzburg-Landau functional} 
\label{S25}
The aim of this subsection is to recall some classical facts about the asymptotic limit as $\eps\downarrow 0$ of low energy states for the Ginzburg-Landau functional $E_\eps$. In this section  we still assume that $\Omega\subset\R^2$  is a smooth, bounded, and {\sl simply connected} domain. 
Some of the material below can be found with greater details in \cite{BBH, SS, AlicPon} and the references therein. We start with the notion of renormalized energy originally introduced in \cite{BBH}.

\subsubsection{The renormalized energy and canonical harmonic maps}

Let us denote by $\mathcal{A}_{d}$ the set of all finite positive measures $\mu$ of the form 
\begin{equation}\label{defAd}
\mu=2\pi\sum_{k=1}^{md} \delta_{x_k}\,,
\end{equation}
for some $md$ distinct points $\{x_1,\ldots,x_{md}\}\subset \Omega$.

Given $\mu\in \mathcal{A}_{d}$, the canonical harmonic map $v_\mu:\Omega \setminus{\rm spt}\,\mu\to \C$ associated to $\mu$ is the map defined by 
\begin{equation}\label{definivmu}
v_\mu(x):=e^{i\vhi_\mu(x)}\prod _{k=1}^{md}\frac{x-x_k}{|x-x_k|}\,,
\end{equation}
with 
$$\begin{cases}
\Delta\vhi_\mu=0 &\text{in $\Omega$}\,,\\
v_\mu=g^m & \text{on $\partial\Omega$}\,.
\end{cases}$$
Note that $\vhi_\mu$ is a smooth function in $\overline\Omega$ uniquely determined up to constant multiple of $2\pi$. The canonical map $v_\mu$ is a smooth harmonic map from $\Omega \setminus{\rm spt}\,\mu$ into $\Ss^1$. It satisfies
\begin{equation*}
\begin{cases}
{\rm div}\, j(v_\mu) = 0\\
\curl\, j(v_\mu)=\mu
\end{cases}\quad\text{in $\mathcal{D}^\prime(\Omega)$}\,.
\end{equation*}
It turns out that  $v_\mu\in W^{1,p}(\Omega)$ for every $p\in [1,2)$, but fails to be in $W^{1,2}(\Omega)$. However, the Dirichlet energy of $v_\mu$ still have a well defined 
 finite part called {\it the renormalized energy} given by 
\begin{equation}\label{formulW}
\mathbb{W}(\mu):=-\pi\sum_{k\neq l}\log|x_k-x_l| +\frac{1}{2}\int_{\partial\Omega}g^m\wedge\frac{\partial (g^m)}{\partial\tau} \,\Phi_\mu  \,d\mathcal{H}^{1}-\pi\sum_{k=1}^{md}R_\mu(x_k)\,, 
\end{equation}
where $\Phi_\mu$ is the solution of 
$$\begin{cases}
\Delta\Phi_\mu=\mu & \text{in $\Omega$}\,,\\[8pt]
\displaystyle \frac{\partial\Phi_\mu}{\partial\nu}= g^m\wedge\frac{\partial (g^m)}{\partial\tau} & \text{on $\partial\Omega$}\,,\\[8pt]
\displaystyle \int_{\partial\Omega}\Phi_\mu \,d\mathcal{H}^1=0\,,
\end{cases}$$
and $R_\mu(x):=\Phi_{\mu}(x)-\sum_k\log|x-x_k|$\,. Note that $R_\mu$ is an harmonic function in $\Omega$, smooth up to~$\partial\Omega$. The function $\Phi_\mu$ is related to the harmonic map $v_\mu$ through the relation 
\begin{equation}
\label{jvmu}
j(v_\mu)=\nabla^\perp\Phi_\mu\,,
\end{equation} 
and $\mathbb{W}(\mu)$ is the finite part of the Dirichlet energy of $v_\mu$ in the sense that 
\begin{equation}\label{asymptcanharm}
\lim_{r\downarrow 0}\left\{\frac{1}{2} \int_{\Omega \setminus B_r(\mu)}|\nabla v_\mu|^2\,dx-\pi md|\log r|\right\}=\mathbb{W}(\mu)\,.
\end{equation}
\vskip5pt

\subsubsection{Asymptotic for low energy states}

We are now ready to state the following compactness result, which is a slight improvement of \cite[Th. 6.1]{AlicPon}. The proof is postponed to the end of this section.

\begin{theorem}\label{compAlicPon}
 For a sequence  $\eh\downarrow0$, let  $\{v_{h}\}\subset W^{1,2}_{g^m}(\Omega)$ be such that $\{v_h\}$ is bounded in $L^\infty(\Omega)$, and
\begin{equation}\label{asymptenerg}
E_{\eh}(v_h) \leq \pi m d |\log\eps_h| +O(1)\quad\text{as $h\to\infty$}\,.
\end{equation}
There exist a subsequence,  a measure $\mu\in\mathcal{A}_d$, and a phase $\vhi\in W^{1,2}(\Omega)$ such that 
\begin{enumerate}
\item[(i)] $v_h \rightharpoonup e^{i\vhi}v_\mu $ weakly in $W^{1,p}(\Omega)$ for every $p\in [1,2)$;
\vskip3pt
\item[(ii)] $v_h \rightharpoonup e^{i\vhi}v_\mu $ weakly in $W^{1,2}_{\rm loc}(\overline\Omega \setminus{\rm spt}\,\mu)$;
\vskip3pt
\item[(iii)] $e^{i\vhi}=1$ on $\partial\Omega$; 
\item[(iv)] for $r>0$ small enough,
\begin{equation}\label{lowerbBGAlicPon}
 \liminf_{h\to\infty} \left\{ E_{\eps_h}\big(v_h,B_r(\mu)\big) -\pi m d \log\frac{r}{\eps_h}\right\}\geq C_*\,,
\end{equation}
for a constant $C_*$ independent of $r$,  and 
\begin{equation*}
 \liminf_{r\downarrow 0} \liminf_{h\to \infty} \left\{\frac{1}{2} \int_{\Omega\setminus B_r(\mu)} |\nabla v_h|^2\,dx-\pi m d |\log r|\right\}\geq \frac{1}{2} \int_{\Omega} |\nabla \vhi|^2\,dx+\mathbb{W}(\mu)\,.
\end{equation*}
\end{enumerate}
Moreover, $\mu_h:=\curl\,j(v_h)\in L^1(\Omega)$ converges to $\mu=\curl\,j(e^{i\vhi}v_\mu)$ in the weak* topology of $(C^{0,1}_0(\Omega))^*$. 
\end{theorem}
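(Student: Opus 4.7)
I would follow the now classical route of Jerrard--Soner and Alicandro--Ponsiglione, adapting it to the fact that $v_h$ carries boundary degree $md$. The proof naturally splits into three steps: (a) compactness and structure of the limiting Jacobian; (b) identification of the weak limit as $e^{i\vhi}v_\mu$; (c) the two lower bounds in (iv).

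\emph{Step 1: Jacobian compactness and structure of $\mu$.} Start from $\mu_h := \curl\,j(v_h) = 2\det(\nabla v_h) \in L^1(\Omega)$. The uniform $L^\infty$ bound on $v_h$ together with the energy control allows the Jerrard--Soner estimate to apply, producing precompactness of $\{\mu_h\}$ in $(C^{0,1}_0(\Omega))^*$ with every limit of the form $\mu = 2\pi \sum_k d_k \delta_{x_k}$, $d_k \in \Z$. Pairing with a test function equal to $1$ in a neighborhood of $\overline\Omega$ and using the trace $v_h = g^m$ yields $\sum_k d_k = \deg(g^m,\partial\Omega) = md$. The Sandier--Jerrard ball construction then gives, for every small $r>0$,
\begin{equation*}
E_{\eps_h}\bigl(v_h, B_r(\mu_h)\bigr) \;\geq\; \pi\Bigl(\sum_k |d_k|\Bigr)\bigl(|\log\eps_h| - |\log r|\bigr) - C_r.
\end{equation*}
Comparing with the upper bound $E_{\eps_h}(v_h) \leq \pi md|\log\eps_h| + O(1)$ forces $\sum_k |d_k| \leq md$, hence, combined with $\sum_k d_k = md$, every $d_k = +1$ and $\mu\in \mathcal{A}_d$. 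A refined ball construction (as in \cite{SS,JerSon}) upgrades this to \eqref{lowerbBGAlicPon} with a constant $C_*$ independent of $r$.

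\emph{Step 2: Weak convergence and identification of the limit.} The previous estimate shows $E_{\eps_h}(v_h, \Omega\setminus B_r(\mu)) = O(1)$, so $\{v_h\}$ is bounded in $W^{1,2}_{\mathrm{loc}}(\overline\Omega\setminus\mathrm{spt}\,\mu)$ and converges weakly along a subsequence to some $v_*$ with $|v_*| = 1$ (the $L^\infty$ and potential terms force this a.e.). A global $W^{1,p}$ bound for $p<2$, obtained by integrating the ball construction estimate over dyadic scales as in \cite{ColJer,XinLin}, yields the weak $W^{1,p}$ convergence in (i). Passing to the limit in $\mu_h$ one gets $\curl\,j(v_*) = \mu = \curl\,j(v_\mu)$; applying the $W^{1,p}$ analogue of Lemma~\ref{lemstructure} via the lifting results of \cite{Demengel,BMP} to $v_*/v_\mu$ (whose Jacobian vanishes) produces $\vhi\in W^{1,p}_{\mathrm{loc}}$ with $v_* = e^{i\vhi}v_\mu$. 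The $W^{1,2}$ integrability of $\vhi$ comes from the $W^{1,2}_{\mathrm{loc}}$ bound outside vortices, and the trace identity $v_* = g^m = v_\mu$ on $\partial\Omega$ yields $e^{i\vhi} = 1$ on $\partial\Omega$, giving (iii).

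\emph{Step 3: Renormalized energy lower bound.} On $\Omega\setminus B_r(\mu)$, write $v_h = v_\mu e^{i\psi_h}(1+o(1))$ (with $v_\mu$ replaced by a mollified version near vortices) and expand
\begin{equation*}
\tfrac{1}{2}|\nabla v_h|^2 \;=\; \tfrac{1}{2}|\nabla v_\mu|^2 + \tfrac{1}{2}|\nabla\psi_h|^2 + j(v_\mu)\cdot\nabla\psi_h + o(1).
\end{equation*}
Because $j(v_\mu) = \nabla^\perp\Phi_\mu$ is divergence-free (cf. \eqref{jvmu}) and $\psi_h$ has boundary trace compatible with $v_h$, integration by parts eliminates the cross term up to an error that vanishes with $r$. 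Combining weak lower semi-continuity of the phase term with \eqref{asymptcanharm} produces the desired inequality
\begin{equation*}
\liminf_{r\downarrow 0}\liminf_{h\to\infty}\Bigl\{\tfrac{1}{2}\!\int_{\Omega\setminus B_r(\mu)}\!\!|\nabla v_h|^2\,dx - \pi md|\log r|\Bigr\} \;\geq\; \tfrac{1}{2}\int_\Omega|\nabla\vhi|^2\,dx + \mathbb{W}(\mu).
\end{equation*}

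\emph{Main obstacle.} The most delicate point is the matching of the vortex core contribution with the renormalized energy without any leakage across the annuli $\partial B_r(\mu)$: this requires both the refined lower bound \eqref{lowerbBGAlicPon} with constant $C_*$ \emph{independent} of $r$, and careful control of the cross term $\int j(v_\mu)\cdot\nabla\psi_h$, exploiting precisely the divergence-free structure of $j(v_\mu)$ (equivalently, that $v_\mu$ is the \emph{canonical} harmonic map). This is the refinement of \cite[Theorem~6.1]{AlicPon} alluded to in the statement.
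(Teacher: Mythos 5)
Your route is essentially the paper's: the paper delegates your Step~1 and the first inequality of (iv) to \cite[Theorems 6.1 and 4.1]{AlicPon}, proves your dyadic $W^{1,p}$ bound as Lemma~\ref{lemboundWq}, and packages your Steps~2--3 into Proposition~\ref{keypropcomp}, applied to the weak limit $v_0$ rather than to $v_h$ itself (which spares the $(1+o(1))$ modulus corrections in your Step~3 expansion). Two written steps, however, do not hold as stated.

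First, the deduction in Step~1 that ``$\sum_k|d_k|\le md$ together with $\sum_k d_k=md$ forces every $d_k=+1$'' is false: it only yields $d_k\ge 0$, and a single limiting point of degree $md$ is not excluded, whereas $\mathcal{A}_d$ requires $md$ \emph{distinct} points of degree $1$. To rule out $d_k\ge 2$ you need the quadratic-in-degree cost on annuli: near a limiting point of degree $D$, $\frac12\int_{B_r\setminus B_\rho}|\nabla v_h|^2\ge \pi D^2\log(r/\rho)-C$, which for $D\ge 2$ exceeds the budget $\pi D\log(r/\rho)+O(1)$ left over from \eqref{asymptenerg} after the core lower bound. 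This is part of what \cite[Theorem 6.1]{AlicPon} delivers. Second, the claim in Step~2 that ``the $W^{1,2}$ integrability of $\vhi$ comes from the $W^{1,2}_{\rm loc}$ bound outside vortices'' is insufficient: that bound only gives $\vhi\in W^{1,2}_{\rm loc}(\overline\Omega\setminus{\rm spt}\,\mu)$, and the point of the statement is square-integrability of $\nabla\vhi$ \emph{across} the vortices. That is exactly the output of the expansion you defer to Step~3: one expands $|\nabla v_0|^2=|\nabla\vhi|^2+|\nabla v_\mu|^2+2\nabla\vhi\cdot\nabla^\perp\Phi_\mu$ on $\Omega\setminus B_r(\mu)$ (cf.\ \eqref{jvmu}), integrates the cross term by parts against the regular part $R^k_\mu$ of $\Phi_\mu$ with the circle traces of $\vhi$ controlled as in \eqref{poincphi}, and compares with the upper bound $\frac12\int_{\Omega\setminus B_r(\mu)}|\nabla v_h|^2\le \pi md|\log r|+C$ that follows from \eqref{asymptenerg} and \eqref{lowerbBGAlicPon}. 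So your Steps~2 and~3 cannot be decoupled as presented; the paper's Proposition~\ref{keypropcomp} performs the identification of $\vhi\in W^{1,2}(\Omega)$ and the renormalized-energy lower bound in one pass.
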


The proof of this theorem relies on the following two auxiliary results. In particular, Lemma ~\ref{lemboundWq} provides an a priori $W^{1,p}$-bound for sequences of low Ginzburg-Landau energy. We believe that Proposition~\ref{keypropcomp} and Lemma \ref{lemboundWq} are already well known to experts (see in particular \cite[Theorem 1.4.4]{ColJer}). Since we did not find
clear statements and proofs in the existing literature, we have decided to provide here  (mostly) self-contained proofs. 

\begin{proposition}\label{keypropcomp}
Let $v\in W^{1,1}(\Omega;\Ss^1)$ and $\mu\in\mathcal{A}_d$ be such that 
$$\begin{cases}
\curl\,j(v)=\mu &\text{in $\mathcal{D}^\prime(\Omega)$}\,,\\
v=g^m & \text{on $\partial\Omega$}\,.
\end{cases}$$ 
If $ v\in W^{1,2}_{\rm loc}(\overline\Omega \setminus {\rm spt}\,\mu)$ and 
\begin{equation}\label{condenergren}
\liminf_{r\downarrow 0}\left\{\frac{1}{2} \int_{\Omega \setminus B_r(\mu)}|\nabla v|^2\,dx-\pi m d|\log r|\right\}<\infty\,,
\end{equation}
then $v=e^{i\vhi}v_\mu$  for some $\vhi\in W^{1,2}(\Omega)$ such that $e^{i\vhi}=1$ on $\partial\Omega$. In addition, 
$$\lim_{r\downarrow 0}\left\{\frac{1}{2} \int_{\Omega \setminus B_r(\mu)}|\nabla v|^2\,dx-\pi m d|\log r|\right\}=\frac{1}{2}\int_\Omega|\nabla\vhi|^2\,dx+ \mathbb{W}(\mu)\,.$$
\end{proposition}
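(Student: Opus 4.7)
The plan is to produce the phase $\varphi$ by a lifting argument, and then derive the $W^{1,2}$ regularity together with the energy identity by an integration by parts.

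First, I would consider the $\Ss^1$-valued map $w := v\,\overline{v_\mu}$. Since $|v|=|v_\mu|=1$, $v\in W^{1,1}$ and $v_\mu\in W^{1,p}$ for every $p<2$, $w$ belongs to $W^{1,1}(\Omega;\Ss^1)$. The chain rule for the pre-Jacobian of $\Ss^1$-valued maps (as used in the proof of Lemma~\ref{lemstructure}) gives $j(w)=j(v)-j(v_\mu)$, so by hypothesis $\curl j(w)=\mu-\mu=0$ in $\mathcal{D}'(\Omega)$. Since $\Omega$ is simply connected, Demengel's lifting theorem \cite{Demengel} (see also \cite{BoBrMi,BMP}) yields $\varphi\in W^{1,1}(\Omega)$ with $w=e^{i\varphi}$, that is, $v=e^{i\varphi}v_\mu$. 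The boundary condition $v=v_\mu=g^m$ on $\partial\Omega$ forces $e^{i\varphi}=1$ in the trace sense; since the trace of $\varphi$ is a Sobolev function on the connected curve $\partial\Omega$ taking values in $2\pi\Z$, it is constant, and after subtracting a multiple of $2\pi$ I may assume $\varphi|_{\partial\Omega}=0$.

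Next, on $\Omega\setminus\mathrm{spt}\,\mu$, where $v_\mu$ is smooth and $v\in W^{1,2}_{\rm loc}$, pointwise differentiation of $v=e^{i\varphi}v_\mu$ yields $\nabla\varphi=j(v)-j(v_\mu)$, and therefore
\[
|\nabla v|^2 \,=\, |\nabla\varphi|^2 + 2\,\nabla\varphi\cdot j(v_\mu) + |\nabla v_\mu|^2.
\]
Integrating this over $\Omega\setminus B_r(\mu)$ and using $j(v_\mu)=\nabla^\perp\Phi_\mu$ from \eqref{jvmu}, together with $\mathrm{div}\,\nabla^\perp\Phi_\mu=0$, I would rewrite the cross term as a boundary integral:
\[
\int_{\Omega\setminus B_r(\mu)}\nabla\varphi\cdot j(v_\mu)\,dx \,=\, -\sum_{k=1}^{md}\int_{\partial B_r(x_k)}\varphi\, j(v_\mu)\cdot\nu_k\, d\mathcal{H}^1,
\]
using $\varphi|_{\partial\Omega}=0$. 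The key geometric observation is that near $x_k$ one has $v_\mu(x)=e^{ih_k(x)}(x-x_k)/|x-x_k|$ with $h_k$ smooth, so the singular part $\nabla\arg(x-x_k)$ of $j(v_\mu)$ is \emph{tangential} to the circle $\partial B_r(x_k)$, and $|j(v_\mu)\cdot\nu_k|=|\nabla h_k\cdot\nu_k|$ stays bounded. Combined with the coarea identity $\int_0^{r_0}\!\big(\int_{\partial B_r(x_k)}|\varphi|\,d\mathcal{H}^1\big)dr=\int_{B_{r_0}(x_k)}|\varphi|\,dx<\infty$, this produces a sequence $r_n\downarrow 0$ along which each boundary integral vanishes.

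Combining the above with \eqref{asymptcanharm} then yields along $\{r_n\}$
\[
\tfrac12\!\int_{\Omega\setminus B_{r_n}(\mu)}\!|\nabla v|^2\,dx - \pi md|\log r_n| \,=\, \tfrac12\!\int_{\Omega\setminus B_{r_n}(\mu)}\!|\nabla\varphi|^2\,dx + \mathbb{W}(\mu) + o(1),
\]
so that the finite-liminf assumption \eqref{condenergren} forces $\nabla\varphi\in L^2(\Omega)$ by monotone convergence, hence $\varphi\in W^{1,2}(\Omega)$; passing to the limit delivers the claimed asymptotic formula and shows in particular that the $\liminf$ is an actual limit. The main difficulty is precisely this cross-term analysis: $j(v_\mu)\notin L^2$ near the vortices so Cauchy--Schwarz is useless, and the argument crucially rests on the tangentiality of the singular part of $j(v_\mu)$ to small circles around each $x_k$, which kills its flux contribution.
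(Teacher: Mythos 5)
Your overall architecture coincides with the paper's: lift $v\,\overline{v_\mu}$ to obtain $\varphi\in W^{1,1}(\Omega)$, expand $|\nabla v|^2=|\nabla\varphi|^2+|\nabla v_\mu|^2+2\,\nabla\varphi\cdot j(v_\mu)$, integrate the cross term by parts using $\mathrm{div}\,j(v_\mu)=0$, and use that the singular part of $j(v_\mu)$ is tangential to circles centered at the vortices so that only the smooth remainder contributes to the flux (the paper phrases this as $\partial_\tau\Phi_\mu=\partial_\tau R^k_\mu$ on $\partial B_r(x_k)$ with $R^k_\mu$ smooth). All of that is correct and is exactly what the paper does.

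The gap is in your control of the boundary integrals $g_k(r):=\int_{\partial B_r(x_k)}|\varphi|\,d\mathcal{H}^1$. The implication you invoke --- that $\int_0^{r_0}g_k(r)\,dr<\infty$ ``produces a sequence $r_n\downarrow 0$ along which each boundary integral vanishes'' --- is false: $g_k\equiv 1$ is integrable on $(0,r_0)$, so integrability on a finite interval says nothing about $\liminf_{r\to 0}g_k(r)$. Moreover, even granting such a sequence, there is a second mismatch: \eqref{condenergren} only bounds the renormalized energy along \emph{some} sequence $\rho_j$ realizing the liminf, which need not meet your $r_n$; to close the argument you would need either the boundary term controlled along $\rho_j$ as well, or the (true, but nowhere established in your write-up) monotonicity of $r\mapsto\frac12\int_{\Omega\setminus B_r(\mu)}|\nabla v|^2\,dx-\pi md|\log r|$, which rests on the degree lower bound on annuli. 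The paper sidesteps both difficulties by proving that $g_k(r)\to 0$ as a full limit: by the $BV$ boundary trace theorem on $B_r(x_k)$ and the embedding $W^{1,1}(\Omega)\hookrightarrow L^2(\Omega)$,
$$\int_{\partial B_r(x_k)}|\varphi|\,d\mathcal{H}^1\lesssim \int_{B_r(x_k)}|\nabla\varphi|\,dx+\frac{1}{r}\int_{B_r(x_k)}|\varphi|\,dx\lesssim \int_{B_r(x_k)}|\nabla\varphi|\,dx+\Big(\int_{B_r(x_k)}|\varphi|^2\,dx\Big)^{1/2}\longrightarrow 0\,,$$
which is \eqref{poincphi}. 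Replacing your selection of good radii by this estimate, the rest of your argument (bounding $\int_{\Omega\setminus B_r(\mu)}|\nabla\varphi|^2$, monotone convergence, and passage to the limit in the energy identity) goes through verbatim.
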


\begin{proof} 
The fact that $v=e^{i\vhi}v_\mu$ for some $\vhi\in W^{1,1}(\Om)$ with $e^{i\vhi}=1$ on $\partial \Omega$ follows as in the proof of Lemma~~\ref{lemstructure}. Moreover, 
$ v\in W^{1,2}_{\rm loc}(\overline\Omega \setminus {\rm spt}\,\mu;\Ss^1)$ yields $\vhi \in W^{1,2}_{\rm loc}(\overline\Omega \setminus {\rm spt}\,\mu)$. Let us prove that in fact $\vhi\in W^{1,2}(\Omega)$.
First notice that 
\begin{equation}\label{eq:nablav}
|\nabla v|^2= |j(v)|^2=|\nabla \vhi|^2+|j(v_\mu)|^2+2\nabla\vhi\cdot j(v_\mu)=|\nabla\vhi|^2+|\nabla v_\mu|^2+2\nabla\vhi\cdot\nabla^\perp\Phi_\mu\,, 
\end{equation}
where  the last identity follows from \eqref{jvmu}.

For each $k\in \{1,\ldots,md\}$, we set 
$$R_\mu^k(x):=\Phi_\mu(x) -\log|x-x_k| \,,$$
so that $R_\mu^k$ is a smooth harmonic function in $\overline\Omega \setminus\cup_{l\neq k}\{x_l\}$. Notice in particular that 
\[
 \partial_\tau \Phi_\mu=\partial_\tau R_\mu^k \qquad \textrm{on } \partial B_r(x_k). 
\]
Integrating by parts \eqref{eq:nablav} in $\Omega_r:=\Omega \setminus B_r(\mu)$ with $r>0$ small enough, leads to
\begin{align}
 \int_{\Omega_r}|\nabla v|^2\,dx&= \int_{\Omega_r}|\nabla\vhi|^2\,dx+ \int_{\Omega_r}|\nabla v_\mu|^2\,dx+2\sum_{k=1}^{md}\int_{\partial B_{r}(x_k)} \vhi \partial_\tau \Phi_\mu\,d\mathcal{H}^1 \nonumber\\
 &= \int_{\Omega_r}|\nabla\vhi|^2\,dx+ \int_{\Omega_r}|\nabla v_\mu|^2\,dx+2\sum_{k=1}^{md}\int_{\partial B_{r}(x_k)} \vhi \partial_\tau R^k_\mu\,d\mathcal{H}^1\,.\label{devlpmtenv}
 \end{align}
By the boundary trace theorem for $BV$ functions \cite[Theorem 3.87]{AFP}, and the embedding of $W^{1,1}$ into $L^2$,
 \begin{equation}\label{poincphi}
 \int_{\partial B_{r}(x_k)} |\vhi| \,d\mathcal{H}^1\les \int_{B_{r}(x_k)}|\nabla \vhi|+\frac{1}{r}|\vhi| \,dx\les \int_{B_{r}(x_k)}|\nabla \vhi| dx+\left(\int_{B_r(x_k)} |\vhi|^2 dx\right)^{1/2} \mathop{\longrightarrow}\limits_{r\to 0} 0\,.
 \end{equation}
 Using the smoothness of $R^k_\mu$ near $x_k$, we can combine~\eqref{asymptcanharm},~\eqref{condenergren},~\eqref{devlpmtenv}, and~\eqref{poincphi} to deduce that 
 $$\int_{\Omega_r}|\nabla\vhi|^2\,dx=O(1)\quad\text{as $r\to 0$}\,.$$
Therefore $\vhi \in W^{1,2}(\Omega)$. Going back to~\eqref{devlpmtenv}, we subtract  $\pi md|\log r |$ from both sides of this identity, and we let $r\to 0$ to reach the conclusion. 
\end{proof}

\begin{lemma}\label{lemboundWq}
 For a sequence  $\eh\downarrow 0$, let  $\{v_{h}\}\subset W^{1,2}_{g^m}(\Omega,\C)$ be such that $\{v_h\}$ is bounded in $L^\infty(\Omega)$,~\eqref{asymptenerg} holds, and for which $\mu_h:= j(v_h)$ weakly* 
 converges in $(C^{0,1}_0(\Omega))^*$ to some measure $\mu\in \mathcal{A}_d$ as $h\to\infty$. Then $\{v_h\}$ is  bounded in $W^{1,p}(\Omega)$ for every $1\leq p<2$.  
\end{lemma}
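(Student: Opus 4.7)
My plan is to combine the Sandier--Jerrard vortex ball construction with a Hölder estimate inside the balls and a comparison with a canonical harmonic map outside.

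\textbf{Vortex balls.} For each $h$, set $s_h := \eps_h^{1/2}$ and apply the standard vortex ball construction (cf. \cite{SS}) to obtain finitely many disjoint balls $B_j^h = B(a_j^h, r_j^h)\subset \Omega$ with $\sum_j r_j^h \le s_h$ that cover $\{|v_h| \le 1/2\}\cap\Omega$, and such that, setting $d_j^h := \deg(v_h/|v_h|,\partial B_j^h)$,
\[
 E_{\eps_h}(v_h,\cup_j B_j^h) \ge \pi \sum_j |d_j^h|\log(r_j^h/\eps_h) - C\sum_j |d_j^h|.
\]
Combined with the upper bound \eqref{asymptenerg} and the weak-$\ast$ convergence $\mu_h\to\mu$ in $(C^{0,1}_0(\Omega))^*$ to an atomic measure of total mass $2\pi md$, this yields a uniform bound $D_h := \sum_j |d_j^h| \le C$ (and in fact $D_h = md$ for $h$ large, with the centers $a_j^h$ clustering on $\spt\,\mu$).

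\textbf{Inside the balls.} Fix $p\in [1,2)$. Since $|\cup_j B_j^h|\le \pi s_h^2 = \pi\eps_h$, Hölder's inequality with exponents $2/p$ and $2/(2-p)$ yields
\[
 \int_{\cup_j B_j^h} |\nabla v_h|^p\,dx \le (\pi\eps_h)^{1-p/2}\bigl(2 E_{\eps_h}(v_h)\bigr)^{p/2} \le C\eps_h^{1-p/2}|\log \eps_h|^{p/2} \xrightarrow[h\to\infty]{}0,
\]
so in particular the left-hand side is uniformly bounded in $h$.

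\textbf{Outside the balls.} On $\Omega_h := \Omega\setminus\cup_j B_j^h$, $|v_h| \ge 1/2$. Let $\mu_h^\star := 2\pi\sum_j d_j^h \delta_{a_j^h}$ and let $v_h^\star$ denote the canonical harmonic map associated with $\mu_h^\star$ and boundary datum $g^m$ (cf. \eqref{definivmu}). Since $D_h$ is uniformly bounded and the Diracs of $\mu_h^\star$ remain at positive mutual distance for $h$ large (by the convergence $\mu_h^\star \to \mu$), elementary estimates on canonical harmonic maps give $\|\nabla v_h^\star\|_{L^p(\Omega)} \le C_p$ uniformly in $h$. It then suffices to bound $\|\nabla (v_h - v_h^\star)\|_{L^2(\Omega_h)}$ uniformly; expanding
\[
 \int_{\Omega_h} |\nabla v_h|^2\,dx = \int_{\Omega_h} \bigl(|\nabla (v_h-v_h^\star)|^2 + |\nabla v_h^\star|^2\bigr)dx + 2 \int_{\Omega_h} \nabla v_h^\star\cdot \nabla(v_h-v_h^\star)\,dx,
\]
and integrating the cross-term by parts (using the harmonicity of $v_h^\star$ in $\Omega_h$, the common boundary value $g^m$ on $\partial\Omega$, and the fact that $\curl j(v_h) \approx \curl j(v_h^\star)$ outside the balls), one reduces matters to the ``excess energy'' $E_{\eps_h}(v_h) - \pi m d |\log \eps_h|$, which is $O(1)$ by hypothesis. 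The triangle inequality then gives $\|\nabla v_h\|_{L^p(\Omega_h)} \le C_p$, and combining with the interior estimate concludes the proof.

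The hard part is the last step: carefully controlling the cross-term requires matching the boundary conditions and the vorticity of $v_h$ and $v_h^\star$, which in turn uses the strong mode of convergence of $\mu_h$ in $(C^{0,1}_0(\Omega))^*$ rather than just weak-$\ast$ convergence of measures.
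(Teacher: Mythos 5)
Your first two steps (vortex balls at scale $\eps_h^{1/2}$ with $D_h\leq C$, and the H\"older estimate inside the balls) are fine and the second one matches what the paper does on the innermost region. The gap is in the third step, and it is not a technicality. First, the integration by parts of the cross-term ``using the harmonicity of $v_h^\star$'' is not available: $v_h^\star$ is an $\Ss^1$-valued harmonic map, not a harmonic function, so $\Delta v_h^\star=-|\nabla v_h^\star|^2 v_h^\star\neq 0$; the by-parts produces the term $\int_{\Omega_h}|\nabla v_h^\star|^2\, v_h^\star\cdot(v_h-v_h^\star)\,dx$, with a weight $|\nabla v_h^\star|^2\sim |x-a_j^h|^{-2}$ that is not integrable near the vortex balls, plus boundary integrals of $\partial_\nu v_h^\star\cdot(v_h-v_h^\star)$ over $\partial B_j^h$ for which you have no control on $v_h-v_h^\star$ (the traces of $v_h$ on $\partial B_j^h$ may differ from those of $v_h^\star$ by wildly oscillating phases). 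Second, the assertion that the Diracs of $\mu_h^\star$ stay at positive mutual distance does not follow from the weak* convergence of $\curl j(v_h)$: at scale $s_h=\eps_h^{1/2}$ the ball construction only yields $D_h\leq 2md+o(1)$, so several balls, possibly with cancelling degrees, may collapse onto a single point of ${\rm spt}\,\mu$ while their sum still converges weakly* to $2\pi\delta_{x_k}$. In that case the renormalized energy of $\mu_h^\star$ diverges, $\tfrac12\int_{\Omega_h}|\nabla v_h^\star|^2\,dx-\tfrac{\pi md}{2}|\log\eps_h|$ is unbounded, and the claimed reduction of $\|\nabla(v_h-v_h^\star)\|_{L^2(\Omega_h)}^2$ to the $O(1)$ excess in \eqref{asymptenerg} breaks down. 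A correct implementation of your comparison strategy essentially requires the quantitative closeness estimates of Colliander--Jerrard type, i.e.\ a theorem comparable in weight to the lemma itself.

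The paper avoids the comparison map entirely. After growing the balls, it shows $md\leq D_r\leq C$ for all $r$ in the dyadic range $[c_0\eps_h^{1/4},1/6]$, whence by \eqref{genlwden} the energy outside $V_r^{\eps_h}$ is at most $\pi md|\log r|+C$; on the other hand each dyadic annulus $V_{2r}^{\eps_h}\setminus V_r^{\eps_h}$ carries at least $\pi md\log 2-o(1)$ of Dirichlet energy because the degree on it is at least $md$. Summing over the $J_h\sim|\log\eps_h|$ annuli, the upper and lower bounds match to $O(1)$, which forces $\int_{V_{r_j}^{\eps_h}\setminus V_{r_{j+1}}^{\eps_h}}|\nabla v_h|^2\,dx\leq C$ uniformly in $j$ and $h$; H\"older on each annulus and the geometric series $\sum_j r_j^{2-p}$ then give the $W^{1,p}$ bound. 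If you want to salvage your outline, replace the canonical-map comparison by this equidistribution-across-scales argument.
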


The proof of Lemma~~\ref{lemboundWq} rests on the so-called `ball construction'' in \cite[Theorem 4.1]{SS} that we now recall. 

\begin{theorem}[\cite{SS}]\label{thmlwdSS}
For any $\alpha\in(0,1)$ there exists $\eps_0(\alpha)>0$ such that, for any $\eps\in(0,\eps_0(\alpha))$ and any $v\in C^\oo({\Omega})$ satisfying $E_\eps(v)\leq \eps^{\alpha-1}$, the following holds for some universal constants ${ c}_0$, ${ c}_1$, and ${ c}_2$: 
for any $r\in \big[{ c}_0\eps^{\alpha/2},1\big)$ there exists a finite collection $\mathcal{B}_r=\big\{ B_j\big\}_{j\in J}$ of disjoint closed balls such that 
\begin{enumerate}
\item[(i)] $r=\sum_{j}r_j\,$; 
\vskip3pt

\item[(ii)] setting $\Omega_\eps:=\big\{x\in\Omega: {\rm dist}(x,\partial\Omega)>\eps\big\}$ and $V_r^\eps:=\Omega_\eps\cap\big(\cup_j   B_j\big)$, 
$$\Big\{x\in\Omega_\eps: \big||v(x)|-1\big|\geq \eps^{\alpha/4}\Big\}\subset  V_r^\eps\,;$$

\item[(iii)] setting $d_j={\rm deg}(v,\partial B_j)$ if  $ B_j \subset \Omega_\eps$, and $d_j=0$ otherwise, 
\begin{equation}\label{genlwden}
E_\eps(v, V_r^\eps)\geq \pi D_r\left(\log\frac{r}{D_r\eps} -{ c}_1\right) 
\end{equation}
whenever $D_r:=\sum_j|d_j| \neq 0$; 
\vskip3pt
\item[(iv)] the following estimate holds
\begin{equation}\label{bndtotD}
 D_r\leq { c}_2 \frac{E_\eps(v)}{\alpha|\log\eps|}\,.
 \end{equation}
\end{enumerate}
Finally, if $r_1<r_2$, then every ball of $\mathcal{B}_{r_1}$ is contained in a ball of $\mathcal{B}_{r_2}$. 
\end{theorem}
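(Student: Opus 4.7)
The plan is to prove this via the Sandier-Serfaty ``ball construction'' scheme, starting from very small balls covering the ``bad set'' of $v$ and growing them in a controlled monotone fashion while tracking the energy on growing annuli. I would organize the argument in four stages.

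Stage 1 (initial covering). Set $B_* := \{x\in\Omega_\eps : \big||v(x)|-1\big|\geq \eps^{\alpha/4}\}$. The hypothesis $E_\eps(v)\leq \eps^{\alpha-1}$ together with the potential term $\frac{1}{4\eps^2}\int(1-|v|^2)^2$ yields $|B_*|\les \eps^{1+\alpha/2}$. Since moreover $|\nabla v|$ is controlled in $L^2$, a standard pointwise Lipschitz bound (using $\|\nabla v\|_\infty \les 1/\eps$, which holds for minimizers by elliptic regularity but for general $v$ can be obtained by cutting $v$ into dyadic annular shells and using the energy bound) gives that $B_*$ may be covered by a finite family of disjoint closed balls $\mathcal{B}^{(0)}=\{B_j^{(0)}\}$ of total radius at most $r_0 := c_0\eps^{\alpha/2}$. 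This is the starting configuration.

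Stage 2 (growth/merge dynamics). I grow the balls continuously by increasing their radii in a scale-invariant way: parameterize by $t\geq 0$ and set $r_j(t)=r_j^{(0)} e^{t}$ as long as balls remain disjoint; whenever two balls become tangent, merge them into a single closed ball containing both, redefine radii so that the new collection is disjoint (and the total radius is preserved across the merge), and continue. This defines a family $\{\mathcal{B}(t)\}$ monotonically nested: every ball of $\mathcal{B}(t_1)$ is contained in a ball of $\mathcal{B}(t_2)$ for $t_1\leq t_2$. Re-parameterize by $r:=\sum_j r_j(t)$ to obtain $\mathcal{B}_r$ for $r\in[r_0,1)$; the nesting property and (i) are then immediate.

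Stage 3 (annulus lower bound and accumulation). The workhorse is the following annulus estimate: if $B_\rho(x)\subset B_R(x)\subset \Omega_\eps$ and $|v|\geq \tfrac12$ on the closed annulus with $d=\deg(v,\partial B_s)$ constant in $s\in[\rho,R]$, then by Cauchy--Schwarz on each circle and the degree identity,
\begin{equation*}
\int_{\partial B_s} |\nabla v|^2\,d\mathcal{H}^1 \;\geq\; \frac{2\pi d^2}{s}\min_{\partial B_s}|v|^2 \;\geq\; \frac{2\pi d^2}{s}\bigl(1-O(\eps^{\alpha/4})\bigr),
\end{equation*}
whence $E_\eps(v, B_R\setminus B_\rho)\geq \pi d^2\log(R/\rho)-O(d^2)$. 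Between two consecutive merge times, apply this on every growing annulus $B_{r_j(t)}\setminus B_{r_j(t_0)}$: the energy increment is at least $\pi d_j^2\,\Delta t$, hence at least $\pi |d_j|\,\Delta t$ since $d_j\in\Z$. At a merge event, algebraic additivity of degrees gives that the total unsigned degree $D(t)=\sum_j|d_j|$ is non-increasing, while the energy accumulated so far is preserved (the merged ball contains the previous ones). Integrating and summing yields $E_\eps(v,V_r^\eps)\geq \pi D_r \log(r/r_0)-c'_1 D_r$. Refining the initial covering so that its total radius scales as $D_r\eps$ (using that at most $\sim D_r$ initial ``atomic'' vortex balls of size $\eps$ are needed) converts $\log(r/r_0)$ into $\log(r/(D_r\eps))$, which is~(iii).

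Stage 4 (bound on $D_r$). Apply (iii) with any intermediate radius, say $r=\tfrac12$: using $\log(1/(D_r\eps))\geq \tfrac{\alpha}{2}|\log\eps|$ provided $D_r\leq \eps^{-\alpha/2}$, one deduces $\pi D_r\cdot\tfrac{\alpha}{2}|\log\eps|\leq E_\eps(v)+c_1 D_r$, giving (iv) with an appropriate $c_2$. The main obstacle in the whole scheme is Stage 3: one must justify carefully that the ``non-vanishing on annuli'' hypothesis is preserved throughout the growth (it holds outside $V_r^\eps$ by the very construction at Stage 1, since $|v|\geq 1-\eps^{\alpha/4}$ there) and that the merging events neither destroy the accumulated lower bound nor break the strict inclusion needed to apply the annulus estimate on the next growth interval. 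Once this bookkeeping is in place, the refinement from $\log(r/\eps)$ to $\log(r/(D_r\eps))$ is the second delicate point, and is obtained by exploiting $\sum d_j^2\geq D_r$ rather than the weaker $\sum|d_j|\geq D_r$.
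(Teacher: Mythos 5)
First, a point of comparison: the paper itself does not prove this statement at all. It is quoted verbatim from Sandier--Serfaty \cite[Theorem 4.1]{SS} and used as a black box in the proof of Lemma~\ref{lemboundWq}. So your proposal is to be measured against the standard ball-construction proof in that reference, which your outline does follow in spirit (initial covering of the bad set, monotone growth with merging, annular lower bounds, and the degree count). However, as a proof it has genuine gaps at precisely the two points you yourself flag as delicate.

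The first gap is in Stage 1. The bound $\|\nabla v\|_{L^\infty}\lesssim \eps^{-1}$ is simply false for an arbitrary smooth $v$ satisfying only $E_\eps(v)\le\eps^{\alpha-1}$ (it holds for critical points by elliptic estimates, but the theorem is stated for all smooth $v$), and the parenthetical ``dyadic shells'' remark does not produce a covering of $\{\,||v|-1|\ge\eps^{\alpha/4}\}$ by balls of total radius $\lesssim\eps^{\alpha/2}$. The correct route, as in \cite{SS}, is to apply the coarea formula to $|v|$, select a good level set whose length is controlled by $C\eps E_\eps(v)$ via $\int|\nabla|v||\,|1-|v||\le C\eps E_\eps(v)$, and then cover the enclosed region; pointwise gradient control is never available nor needed. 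The second, more serious gap is the passage from $\log(r/r_0)$ to $\log\frac{r}{D_r\eps}$ in~(iii). Your proposed fix --- ``refine the initial covering so that its total radius scales as $D_r\eps$'' --- cannot work: the theorem requires a single nested family $\{\mathcal{B}_r\}$ valid for every $r\in[c_0\eps^{\alpha/2},1)$ simultaneously, $D_r$ depends on $r$ and is not known when the initial balls are chosen, and in any case the initial balls must cover the bad set whatever $D_r$ turns out to be. In \cite{SS} the factor $D_r$ inside the logarithm comes instead from per-ball bookkeeping: each ball is assigned the quantity $\pi|d_j|\log\frac{r_j}{|d_j|\eps}$ (morally, degree $d$ can split into $|d|$ unit vortices), and one shows this collection of lower bounds grows at the right rate during expansion (using $d_j^2\ge|d_j|$) and is not destroyed at merging, via the concavity/superadditivity of $t\mapsto t\log t$ together with $|d_1+d_2|\le|d_1|+|d_2|$. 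Relatedly, your Stage 4 derivation of~(iv) presupposes $D_r\le\eps^{-\alpha/2}$ without proof; one needs an a priori bound on the total initial degree (e.g.\ via $|d_j|\le C\bigl(r_j\int_{\partial B_j}|\nabla v|^2\,d\mathcal{H}^1\bigr)^{1/2}$ and Cauchy--Schwarz over the initial family) before the logarithmic factor in~(iii) can be turned into~(iv). Until these three points are repaired, the argument is an outline of the Sandier--Serfaty proof rather than a proof.
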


\begin{proof}[Proof of Lemma~~\ref{lemboundWq}]
Since $\Omega$ is a smooth bounded domain and $g$ is smooth, any map in $W^{1,2}_{g^m}(\Om)\cap L^\infty(\Om)$ can be (strongly) approximated in the $W^{1,2}$-sense by a sequence in  $\{v\in C^\infty({\Om}):v=g^m\mbox{ on }\partial \Om\}$ which also remains bounded in $L^\infty(\Om)$. Hence, we can assume $v_h\in C^\infty({\Om})$ for each $h$. Recall that $\mu$ writes $\mu=2\pi\sum_{k=1}^{md} \delta_{x_k}$. Setting 
$$\sigma_0:=\frac{1}{4}\min\Big\{1, \min_k {\rm dist}(x_k,\partial\Omega), \min_{k\neq l}|x_k-x_l|\Big\}\,,$$ 
we may assume without loss of generality that $\sigma_0=1$. We choose $\alpha=1/2$  in Theorem~~\ref{thmlwdSS} (this choice of $\alpha$ is arbitrary).  
By~\eqref{asymptenerg}, we have $E_{\eps_h}(v_h)\leq \eps_h^{-1/2}$ for $\eps_h$ small enough, and we can therefore apply  Theorem~~\ref{thmlwdSS} to $v_h$.  

We claim that for $\eps_h$ sufficiently small,  
\begin{equation}\label{lwdDr}
D_r\geq md \quad\text{for every  $r\in \big[{ c}_0\eps_h^{1/4},1/6\big]$}\,.
\end{equation}
Let us introduce the modified function
$$\widetilde v_h:=\min\bigg\{\frac{|v_h|}{1-\eps_h^{1/8}},1\bigg\} \frac{v_h}{|v_h|}\in W^{1,2}(\Omega)\,.$$
Noticing that 
$$j(\widetilde v_h)= \min\bigg\{\frac{1}{\big(1-\eps_h^{1/8}\big)^2},\frac{1}{|v_h|^2}\bigg\}j(v_h)\,,$$
and setting $\widetilde \mu_h:=\curl\, j(\widetilde v_h)$, we estimate
\begin{align}\nonumber
\|\widetilde \mu_h-\mu_h\|_{(C^{0,1}_0(\Omega))^*}&=\sup_{\|\phi\|_{C^{0,1}_0(\Omega))}\le 1} \int_{\Omega} \lt(\min\bigg\{\frac{1}{\big(1-\eps_h^{1/8}\big)^2},\frac{1}{|v_h|^2}\bigg\}-1\rt) j(v_h)\cdot \nabla^\perp \phi dx\\\nonumber
&\le \int_{\Omega} \lt|\min\bigg\{\frac{1}{\big(1-\eps_h^{1/8}\big)^2},\frac{1}{|v_h|^2}\bigg\}-1\rt| |j(v_h)| dx\\
&\les \eps_h^{1/8} \|j(v_h)\|_{L^1(\Omega)}\mathop{\longrightarrow}\limits_{h\to\infty}0\,,\label{convmutild}
\end{align}
where in the last step we have used that since $j(v_h)=v_h\wedge \nabla v_h$,
\[
 \|j(v_h)\|_{L^1(\Omega)}\les \|v_h\|_{L^\infty(\Omega)}\|\nabla v_h\|_{L^\infty(\Omega)}\les \|v_h\|_{L^\infty(\Omega)} E_{\eps_h}^{1/2}(v_h)\stackrel{\eqref{asymptenerg}}{\les} \|v_h\|_{L^\infty(\Omega)} |\log \eps_h|^{1/2}.
\]
Given $r\in \big[{ c}_0\eps_h^{1/4},1/6\big]$, we set
$$A^{\eps_h}_r:=\big\{t\in[1/2,1-r]: \partial B_t(\mu)\cap V^{\eps_h}_r=\emptyset\big\}\,.$$
By item {\it (i)} in Theorem ~\ref{thmlwdSS}, we have $|A^{\eps_h}_r|\geq 1/2-2r\geq 1/6$.  Then, for each $k=1,\ldots,md$, we define a function $\zeta_k\in C^{0,1}(\Omega)$ compactly supported in $\Omega$ by setting 
$$\zeta_k(x):=\int^1_{\min(1,|x-x_k|)} \chi_{A^{\eps_h}_r}(t)\,dt\,. $$
Notice that $\|\zeta_k\|_{C^{0,1}(\Omega)}\leq 2$. Since $\widetilde \mu_h\to\mu$ in $(C^{0,1}_0(\Omega))^*$ by~\eqref{convmutild}, we have $\|\widetilde \mu_h-\mu\|_{(C^{0,1}_0(\Omega))^*}\leq \pi/12$ for $\eps_h$ small enough.  Consequently, by definition of $\sigma_0$ and for $\eps_h$ small,
$$\langle\widetilde\mu_h,\zeta_k\rangle\geq \langle\mu,\zeta_k\rangle -2\|\widetilde \mu_h-\mu\|_{(C^{0,1}_0(\Omega))^*}\geq 2\pi|A^{\eps_h}_r|-\pi/6\geq \pi/6\,,$$
for each $k=1,\ldots,md$. Moreover, using that for $t\in A^{\eps_h}_r$ and $x\in \partial B_t(x_k)$, $\widetilde{v}_h=\frac{v_h}{|v_h|}$, we have 
\begin{multline*}
\langle\widetilde\mu_h,\zeta_k\rangle=-\int_{B_1(x_k)} j(\widetilde v_h)\cdot\nabla^\perp\zeta_k\,dx=\int_0^1 \chi_{A^{\eps_h}_r}(t) \left(\int_{\partial B_t(x_k)} j(\widetilde v_h)\cdot\tau\,d\mathcal{H}^1\right)\,dt\\
= \int_{A^{\eps_h}_r} \left(\int_{\partial B_t(x_k)} j\lt(\frac{v_h}{|v_h|}\rt)\cdot\tau\,d\mathcal{H}^1\right)\,dt= 2\pi \int_{A^{\eps_h}_r}  {\rm deg}\big(v_h,\partial B_t(x_k)\big)\,dt\,,
\end{multline*}
and we conclude that for $\eps_h$ sufficiently small  (independently of $r$), 
$$ \int_{A^{\eps_h}_r}  {\rm deg}\big(v_h,\partial B_t(x_k)\big)\,dt\geq 1/12 \quad\text{for each $k=1,\ldots,md$}\,.$$
Hence, for each $k=1,\ldots,md$, there exists a radius $\rho_h^k\in A^{\eps_h}_r$ such that $ {\rm deg}\big(v_h,\partial B_{\rho_h^k}(x_k)\big)\neq 0$ whenever $\eps_h$ is small enough (independently of $r$). In turn, it implies the existence,  for each $k=1,\ldots,md$, of an element  $B_h^k(r)\in \mathcal{B}_r$ such that 
$B_h^k(r)\subset B_{\rho_h^k}(x_k) \subset \Omega_{\eps_h}$ and ${\rm deg}\big(v_h, \partial B_h^k(r)\big)\neq 0$, whenever $\eps_h$ is small. By the very  definition of $D_r$, we infer that~\eqref{lwdDr} holds for $\eps_h$ small (independently of $r$).  

Combining~\eqref{asymptenerg} and~\eqref{bndtotD}, we deduce that $D_r\leq C$ for some constant  $C$ independent of $\eps_h$ and $r$. Then,~\eqref{genlwden} yields for $\eps_h$ small enough, 
$$E_{\eps_h}(v_h, V_r^{\eps_h})\geq \pi m d \log\left(\frac{r}{\eps_h}\right) -C\quad\text{for every $r\in \big[{ c}_0\eps_h^{1/4},1/6\big]$}\,, $$
where $C$ is still a constant independent of $r$ and $\eps_h$. In view of~\eqref{asymptenerg}, we thus have 
\begin{equation}\label{upbdreps}
E_{\eps_h}(v_h, \Omega \setminus V_r^{\eps_h})\leq \pi m d |\log r| +C\quad\text{for every $r\in \big[{c}_0\eps_h^{1/4},1/6\big]$}\,. 
\end{equation}
Now we define on the set $\{|v_{h}|>0\}$ the map $\widehat v_{h}:=v_h/|v_h|$. Given   $r\in \big[{c}_0\eps_h^{1/4},1/12\big]$, we have $\big||v_h|-1\big|\leq \eps_h^{1/8}$ on $V_{2r}^{\eps_h} \setminus V_r^{\eps_h}$, and we can apply  \cite[Proposition 4.2]{SS} to deduce that  for $\eps_h$ sufficiently small (independently of $r$),  
$$\frac{1}{2}\int_{V_{2r}^{\eps_h} \setminus V_r^{\eps_h}}|\nabla \widehat v_h|^2\,dx\geq \sum_{k=1}^{md}\frac{1}{2}\int_{B_h^k(2r) \setminus V_r^{\eps_h}}|\nabla \widehat v_h|^2\,dx\geq  \pi md \log 2\,.$$
Therefore, if $\eps_h$ is small, using that 
\[
 |\nabla v_h|^2=|\nabla |v_h||^2+|v_h|^2|\nabla \widehat{v}_h|^2\ge |v_h|^2|\nabla \widehat{v}_h|^2;
\]
we obtain
\begin{equation}\label{dyadlwbd}
\frac{1}{2}\int_{V_{2r}^{\eps_h} \setminus V_r^{\eps_h}}|\nabla v_h|^2\,dx\geq \frac{1}{2} \int_{V_{2r}^{\eps_h} \setminus V_r^{\eps_h}}|v_h|^2|\nabla \widehat v_h|^2\,dx\geq \pi md \log 2-C\eps_h^{1/8} \,,
\end{equation}
for some constant $C$ independent of $r$ and $\eps_h$. Then set for $j\in \N$, $r_j:=2^{-j}/6$ and define 
$$J_h:=\max\big\{j\in \N: r_j\geq { c}_0\eps_h^{1/4}\big\}\,.$$
Using the fact that $V_{r_{j+1}}^{\eps_h}\subset V_{r_j}^{\eps_h}$, estimate~\eqref{upbdreps} leads to
\begin{equation}\label{dyadupbd}
\sum_{j=0}^{J_h-1} \frac{1}{2}\int_{V_{r_j}^{\eps_h} \setminus V_{r_{j+1}}^{\eps_h}}|\nabla v_h|^2\,dx\leq E_{\eps_h}(v_h, \Omega \setminus V_{r_{J_h}}^{\eps_h}) \leq  \big(\pi md \log 2\big)J_h +C \,.
\end{equation}
Since  $J_h=O(|\log\eps_h|)$, we infer from~\eqref{dyadlwbd} and~\eqref{dyadupbd} that 
\begin{equation}\label{estisupdyad}
\int_{V_{r_j}^{\eps_h} \setminus V_{r_{j+1}}^{\eps_h}}|\nabla v_h|^2\,dx \leq C(1+J_h\eps_h^{1/8} )\leq C \quad \text{for every $j=0,\ldots,J_h-1$}\,, 
\end{equation}
for a constant $C$ independent of $\eps_h$. 

Finally, fix an arbitrary $p\in [1,2)$. Noticing that $|V^{\eps_h}_{r_j}|=O(r_j^2)$, we  estimate by means of~\eqref{upbdreps},~\eqref{estisupdyad}, and H\"older's inequality, 
\begin{multline*}
\int_{\Omega \setminus V^{\eps_h}_{r_{J_h}}} |\nabla v_h|^p\,dx \leq\int_{\Omega \setminus V^{\eps_h}_{r_{0}}} |\nabla v_h|^p\,dx+\sum_{k=0}^{J_h-1} \int_{V_{r_j}^{\eps_h} \setminus V_{r_{j+1}}^{\eps_h}}|\nabla v_h|^p\,dx\\
\leq  C\left(1+\sum_{k=0}^{J_h-1}r_j^{2-p} \left(\int_{V_{r_j}^{\eps_h} \setminus V_{r_{j+1}}^{\eps_h}}|\nabla v_h|^2\,dx\right)^{p/2}\right)\leq \frac{C}{2^{2-p}-1} \,,
\end{multline*}
for some constant $C$ independent of $\eps_h$ (and $p$). Since, 
$$\int_{ V^{\eps_h}_{r_{J_h}}} |\nabla v_h|^p\,dx \leq C r^{2-p}_{J_h}\left(\int_{\Omega} |\nabla v_h|^2\,dx \right)^{p/2}\leq C  {\eps_h}^{\frac{2-p}{4}}|\log\eps_h|^{p/2}\leq C\,,$$
 we conclude that $\{v_h\}$ is indeed bounded in $W^{1,p}(\Omega)$. 
\end{proof}

\begin{proof}[Proof of Theorem~~\ref{compAlicPon}]
 In view of~\eqref{asymptenerg}, we can  apply \cite[Theorem 6.1]{AlicPon} to find a  subsequence such that $\mu_h\mathop{\rightharpoonup}\limits^* \mu$ weakly* in $(C^{0,1}_0(\Omega))^*$ for some measure $\mu=2\pi\sum_{k=1}^{md} \delta_{x_k} \in\mathcal{A}_d$. 
Moreover, for a radius $r$ satisfying
$$0<r\leq\sigma_0:=\frac{1}{4}\min\Big\{1, \min_k {\rm dist}(x_k,\partial\Omega), \min_{k\neq l}|x_k-x_l|\Big\} \,,$$
 estimate~\eqref{lowerbBGAlicPon} holds  by \cite[Theorem 4.1]{AlicPon}. Consequently,
\begin{equation}\label{bdsigm}
E_{\eps_h}\big(v_h,\Omega  \setminus B_r(\mu)\big) \leq \pi md|\log r| +C\,,
\end{equation}
for a constant $C$ independent of $r$ and $\eps_h$. 
As a consequence of~\eqref{bdsigm}, we can extract a further subsequence  such that $v_h\rightharpoonup v_0$ weakly in $W^{1,2}_{\rm loc}( \Omega \setminus{\rm spt}\,\mu)$
for some $v_0\in W^{1,2}_{\rm loc}( \Omega \setminus{\rm spt}\,\mu;\Ss^1)$. By lower semi-continuity we have 
\begin{equation}\label{lwdv0}
 \liminf_{h\to\infty}E_{\eps_h}\big(v_h,\Omega  \setminus B_r(\mu)\big) \geq \frac{1}{2}\int_{\Omega \setminus B_r(\mu)} |\nabla v_0|^2\,dx\,.
 \end{equation}
In addition, from the continuity of the trace operator we deduce that $v_0=g^m$ on $\partial \Omega$. Thanks to Lemma~~\ref{lemboundWq}, $\{v_h\}$ is also bounded in $W^{1,p}(\Omega)$ for every $1\le p<2$ so that 
$v_h\rightharpoonup v_0$ weakly in $W^{1,p}(\Omega)$ for every $p\in[1,2)$. From this convergence, we easily derive 
$$\langle \mu_h,\zeta\rangle= -\int_{\Omega}j(v_h)\cdot\nabla^\perp\zeta\,dx\mathop{\longrightarrow}\limits_{h\to\infty}  -\int_{\Omega}j(v_0)\cdot\nabla^\perp\zeta\,dx= \langle \curl\,j(v_0),\zeta\rangle\,,$$
for every $\zeta\in \mathcal{D}(\Omega)$, and thus $\curl\,j(v_0)=\mu$ in $\mathcal{D}^\prime(\Omega)$. 
Combining~\eqref{bdsigm} with~\eqref{lwdv0} yields
$$\limsup_{r\downarrow0} \left\{\frac{1}{2}\int_{\Omega \setminus B_r(\mu)} |\nabla v_0|^2\,dx-\pi m d |\log r|\right\}<\infty\,.$$
Hence, we are now in position to apply Proposition~~\ref{keypropcomp} to conclude that $v_0=e^{i\vhi}v_\mu$ for some $\vhi\in W^{1,2}(\Omega)$ satisfying $e^{i\vhi}=1$ on $\partial\Omega$, and the proof is complete.
\end{proof}

\section{The $\Gamma$-convergence results}\label{S3}			      

In this section, our main objective is to determine the $\Gamma$-limit of the functional $F^\eta_\eps$ defined in~\eqref{defFetaeps&F0eps}  as $\eta\downarrow0$ and $\eps\downarrow0$. We introduce $\widetilde F^\eta_\eps:L^1(\Omega)\times L^1(\Omega)\to (-\infty,\infty]$ given as 
\[ 
\widetilde F_{\eps}^\eta(u,\psi):=\begin{cases}
\displaystyle F_{\eps}^\eta(u,\psi)-\frac{\pi d}{m}|\log\eps| & \text{if $(u,\psi)\in\mathcal{H}_g(\Omega)$}\,,\\
\infty & \text{otherwise}\,, 
\end{cases}
\]
where $F_\eps^\eta$ and the class $\mathcal{H}_g(\Omega)$ are defined in~\eqref{defFetaeps&F0eps},\eqref{classHg} respectively. Throughout this section $\Omega\subset \R^2$ denotes a smooth, bounded, and {\sl simply connected} domain.

In a first part, we shall prove that the domain of the $\Gamma$-limit is determined by the class of functions 
\begin{multline*}
\mathcal{L}_g(\Omega):=\Big\{u\in SBV(\Omega;\Ss^1) : u^m=e^{i\vhi}v_\mu\text{ for some  $\mu\in\mathcal{A}_d$}\\
\text{and $\vhi\in W^{1,2}(\Omega)$ satisfying $e^{i\vhi}=1$ on $\partial\Omega$}\Big\}\,,
\end{multline*}
where $\mathcal{A}_d$ is  the family of measures defined in~\eqref{defAd}, and $v_\mu$ is the canonical harmonic map associated to~$\mu$ through \eqref{definivmu}. 
We emphasize that $\mathcal{L}_g(\Omega)\subset SBV^p(\Omega;\Ss^1)$ for every $p\in[1,2)$ by Corollary ~\ref{cor1}. 
In turn, the $\Gamma$-limit is given by the functional $F_{0,g}:\mathcal{L}_g(\Omega)\to \R$ defined by  
\begin{equation*}
 F_{0,g}(u):=E_0(u)+\mathcal{H}^1(J_u)
 +\mathcal{H}^1\big(\{u\neq g\}\cap\partial\Omega\big)\,,  
 \end{equation*}
 where we have set for $u^m =: e^{i\vhi} v_\mu$, 
 \begin{equation*}
 E_0(u):= \displaystyle \frac{1}{2m^2}\int_\Omega|\nabla\vhi|^2\,dx+\frac{1}{m^2}\mathbb{W}(\mu) 
+m d\bgamma_m\,. 
 \end{equation*}
In the  expression above, $\bgamma_m$ is a structural constant which is usually interpreted as the core energy of a singularity. In our context, it is defined as 
\begin{multline}\label{defbgammVnew}
\bgamma_m:=\lim_{R\to\infty} \min\Bigg\{E_1(w,B_R) -\frac{\pi}{m^2}\log R
 :  w\in W^{1,2}(B_R;\mathcal{N})\,,\\
 w(z)=\frac{1}{m}\Big(\frac{z}{|z|}, \sqrt{m^2-1}\Big) \text{ on $\partial B_R$}\Bigg\}\,.
 \end{multline}
 Existence and finiteness of this limit follows from a classical comparison argument (see Lemma ~\ref{bdinfconstgam}, and \cite[Lemma III.1]{BBH}). 
 We also note that the value of $F_{0,g}(u)$ only depends on $u$ and not on a particular representation $u^m=e^{i\vhi}v_\mu$. Indeed, one always has $\mu={\rm curl}\,j(u^m)$ and  
 $|\nabla \vhi|=|\nabla(\overline v_\mu u^m)|$.  
 \vskip3pt
 
To properly state the $\Gamma$-convergence result, it is now convenient to introduce $\widetilde F_0:L^1(\Omega)\times L^1(\Omega)\to (-\infty,\infty]$ given by
$$\widetilde F_0(u,\psi):=\begin{cases}
F_{0,g}(u) & \text{if $u\in \mathcal{L}_g(\Omega)$ and $\psi\equiv 1$}\,,\\
\infty & \text{otherwise}\,.
\end{cases}$$

\begin{theorem}\label{Gammadiff}
Let $\eps_h\downarrow0$ and $\eta_h\downarrow0$ be arbitrary sequences. The sequence of functionals $\big\{\widetilde F_{\eps_h}^{\eta_h}\big\}$ $\Gamma$-converges in the strong $\big[L^1(\Omega)\big]^2$-topology  to $\widetilde F_0$ as $h\to\infty$. More precisely:    
\vskip3pt

\begin{enumerate}
 \item[(i)] If $\{(u_h,\psi_h)\}\subset\mathcal{H}_g(\Omega)$, $\{u_h\}$ is bounded in $L^\infty(\Omega)$, and  $\sup_h \widetilde F_{\eps_h}^{\eta_h} (u_h,\psi_h)<\infty$, then there exist a  subsequence and $u\in \mathcal{L}_g(\Omega)$ with $u^m=:e^{i\vhi}v_\mu\,$ such that 
 $(u_h,\psi_h)\to (u,1)$ in $L^1(\Omega)$, $v_h:=\p(u_h)\rightharpoonup u^m$ weakly in $W^{1,p}(\Omega)$ for every $p<2$ and weakly in $W^{1,2}_{\rm loc}(\overline\Omega \setminus{\rm spt}\,\mu)$, and 
 the measures $\mu_h:=\curl j(v_h)$ weakly* converge to $\mu= \curl j(u^m)$ in the $(C^{0,1}_0(\Omega))^*$ topology. 
 \vskip3pt
 
\item[(ii)] Under the conclusions of {\it (i)},  
\begin{equation}\label{TheoliminfE}
 \liminf_{h\to \infty}  \left\{ E_{\eps_h} \big({\rm P}(u_h)\big)-\frac{\pi d}{m}|\log\eps_h|\right\}\geq E_0(u)\,,
\end{equation}
and 
\begin{equation}\label{TheoliminfI}
 \liminf_{h\to \infty}  I_{\eta_h} (\psi_h)\geq \mathcal{H}^1(J_u)+\mathcal{H}^1\big(\{u\neq g\}\cap\partial\Omega\big)\,.
\end{equation}
Moreover, if $\widetilde F_0(u,1)=\lim_h\widetilde F_{\eps_h}^{\eta_h} (u_h,\psi_h)<\infty$, then $\p(u_h)\to u^m$ strongly in $W^{1,p}(\Omega)$ for every $p<2$ and strongly  in $W^{1,2}_{\rm loc}(\overline\Omega \setminus{\rm spt}\,\mu)$, 
 \begin{equation}\label{strongconvergence}
 \lim_{h\to\infty} E_{\eps_h}\big({\rm P}(u_h),\Omega \setminus B_r(\mu)\big)= \frac{1}{2m^2}\int_{\Omega \setminus B_r(\mu)}|\nabla(u^m)|^2\,dx\quad\text{for every $r>0$}\,,
 \end{equation}
and 
 \begin{equation}\label{strongconvergencebis}
\lim_{h\to \infty} I_{\eta_h}\big(\psi_h\big)=\mathcal{H}^1(J_u)+\mathcal{H}^1\big(\{u\neq g\}\cap\partial\Omega\big)\,.
 \end{equation}
\item[(iii)]  For every $u\in \mathcal{L}_g(\Omega)$,  
there exists a sequence $\{(u_h,\psi_h)\}\subset \mathcal{H}_g(\Omega)$ such that  $u_h=g$ on $\partial\Omega$, 
$(u_h,\psi_h)\to (u,1)$ in $L^1(\Omega)$, $\p(u_h)\to u^m$ strongly in $W^{1,p}(\Omega)$  for every $p<2$ and strongly in $W^{1,2}_{\rm loc}(\overline\Omega \setminus{\rm spt}\,\mu)$, and satisfying 
\begin{align}
\label{theolimsup01}
 & \lim_{h\to \infty}\,\left\{ E_{\eps_h}\big({\rm P}(u_h)\big) -\dfrac{\pi d}m|\log \eps_h|\right\}  = E_0(u)\,,\\
  \label{theolimsup03}
&\lim_{h\to \infty}  I_{\eta_h}(\psi_h)=\mathcal{H}^1(J_u)+\mathcal{H}^1\big(\{u\neq g\}\cap\Omega\big)\,.
\end{align}
\end{enumerate}
\end{theorem}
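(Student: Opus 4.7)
The overall strategy exploits identity \eqref{GLcone}, whereby $E_\eps(\P(u))$ equals the classical Ginzburg--Landau energy of $v := \p(u)$ divided by $m^2$, plus a non-negative term in $|v|$. Thus the compactness and the bulk part of the liminf import directly from Theorem~\ref{compAlicPon} and Proposition~\ref{keypropcomp} applied to $v_h := \p(u_h)$.

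\textbf{Compactness (i).} Since $|v_h| = |u_h|$ is $L^\infty$-bounded, $v_h = g^m$ on $\partial\Omega$ (from $\psi_h = 1$ and $\psi_h u_h = g$), and the hypothesis gives $E_{\eps_h}(v_h) \leq \pi m d |\log\eps_h| + O(1)$, Theorem~\ref{compAlicPon} yields, up to subsequence, $\mu\in\mathcal{A}_d$ and $\vhi\in W^{1,2}(\Omega)$ with $e^{i\vhi}=1$ on $\partial\Omega$ such that $v_h \rightharpoonup e^{i\vhi}v_\mu$ in $W^{1,p}(\Omega)$ for $p<2$ and in $W^{1,2}_{\rm loc}(\overline\Omega\setminus\text{spt}\,\mu)$, with the advertised curl-current convergence. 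The matching lower bound \eqref{lowerbBGAlicPon} then forces $E_{\eps_h}(\P(u_h)) \geq \tfrac{\pi d}{m}|\log\eps_h| - O(1)$, hence $I_{\eta_h}(\psi_h) = O(1)$ and $\psi_h \to 1$ in $L^2$. Arguing as in Proposition~\ref{compactetafix} via the unrolling map $\balpha$, one extracts $u_h \to u$ a.e.\ and in $L^1$ with $u^m = e^{i\vhi}v_\mu$, placing $u \in \mathcal{L}_g(\Omega)$.

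\textbf{Liminf (ii).} For fixed $r>0$, split $E_{\eps_h}(\P(u_h))$ over $\Omega\setminus B_r(\mu)$ and $B_r(\mu)$. On the far field, weak lower semi-continuity applied to $v_h \rightharpoonup u^m$ in $W^{1,2}_{\rm loc}$ plus non-negativity of the $|v_h|$ terms gives a liminf $\geq \tfrac{1}{2m^2}\int_{\Omega\setminus B_r(\mu)}|\nabla(u^m)|^2\,dx$, which Proposition~\ref{keypropcomp} expands into $\tfrac{\pi d}{m}|\log r| + \tfrac{1}{2m^2}\int|\nabla\vhi|^2 + \tfrac{1}{m^2}\W(\mu) + o_r(1)$. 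Around each vortex $x_k$, a blow-up of $\P(u_h)$ at scale $\eps_h$ together with the definition \eqref{defbgammVnew} and a diagonal argument in the radius produces $\liminf[E_{\eps_h}(\P(u_h), B_\rho(x_k)) - \tfrac{\pi}{m^2}\log(\rho/\eps_h)] \geq \bgamma_m - o_\rho(1)$. Summing over the $md$ vortices and letting $r\to0$ yields \eqref{TheoliminfE}. For \eqref{TheoliminfI}, extend $(u_h,\psi_h)$ across $\partial\Omega$ by $g\circ\Pi$ and $1$, turning the boundary-deviation term into an interior jump. Then the Modica--Mortola pointwise inequality $\tfrac12(\eta|\nabla\psi_h|^2 + \eta^{-1}(1-\psi_h)^2) \geq |\nabla F(\psi_h)|$ with $F(s) := s - s^2/2$, combined with the coarea formula, reduces the problem to showing that for a.e.\ level $t$ the smooth set $\{F(\psi_h) = t\}$ asymptotically separates the two one-sided traces of $u$ across $J_u$; this follows from $\psi_h u_h \in W^{1,2}$ and $u_h \to u$ in $L^1$, and Fatou concludes. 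Both equality statements \eqref{strongconvergence} and \eqref{strongconvergencebis} follow by matching the bounds above term by term.

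\textbf{Recovery sequence (iii) and main obstacle.} A density/diagonal argument reduces the construction to $u \in \mathcal{L}_g(\Omega)$ whose jump set $J_u$ is polyhedral and disjoint from $\text{spt}\,\mu$; this preliminary approximation is the main obstacle, since the constraint $u^m = e^{i\vhi}v_\mu$ rules out applying standard $SBV$-approximation results (e.g.\ Cortesani--Toader), and is precisely what the new regularization Lemma~\ref{Alternative2} is designed to overcome. On the reduced class, glue three ingredients with standard cutoffs: on each ball $B_\rho(x_k)$, insert a rescaled near-minimizer of the core problem \eqref{defbgammVnew}, lifted from $\mathcal{N}$-valued to $\C$-valued $SBV^2$ via the construction of Section~\ref{secoptprof} (a secondary technical point); in a tubular $\eta_h$-neighborhood of $J_u \cup(\{u\neq g\}\cap\partial\Omega)$, use the Ambrosio--Tortorelli optimal profile $\psi_h(x) = 1 - \exp(-\text{dist}(x, J_u)/\eta_h)$ with $u_h$ taking the one-sided jump traces of $u$ on each side; on the bulk, keep $u_h = u$ and $\psi_h = 1$. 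Standard energy book-keeping, together with a final diagonal argument in $\rho$ and the approximation parameter, yields \eqref{theolimsup01}--\eqref{theolimsup03}.
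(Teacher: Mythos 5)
Your proposal follows the paper's overall architecture (reduce to the classical Ginzburg--Landau analysis of $v_h=\p(u_h)$ via \eqref{GLcone}, coarea for the phase field, density reduction plus gluing of core profiles for the recovery sequence), but two steps as written do not go through. First, in the compactness part you extract $u_h\to u$ in $L^1$ by ``arguing as in Proposition~\ref{compactetafix} via the unrolling map''. That argument hinges on $\psi_h u_h$ being weakly compact in $W^{1,2}(\Omega)$, which requires $\nabla\psi_h$ bounded in $L^2(\Omega)$; when $\eta_h\downarrow0$ the bound $I_{\eta_h}(\psi_h)\leq C$ only controls $\eta_h\|\nabla\psi_h\|_{L^2}^2$, so this mechanism fails. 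The correct route (Lemma~\ref{precompactresult}) is different: Cauchy--Schwarz gives $\int(1-\psi_h)|\nabla\psi_h|\leq C$, the coarea formula produces a level $t_h$ with $\mathcal{H}^1(\partial\{\psi_h<t_h\})\leq C$, one excises $E_h=\{\psi_h<t_h\}$ by setting $\widetilde u_h=(1-\chi_{E_h})u_h$, and Ambrosio's $SBV$ compactness theorem applied to $\widetilde u_h$ yields the $L^1$ limit. Without some such surgery there is no compactness for $u_h$ itself.

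Second, in the recovery sequence you claim that Lemma~\ref{Alternative2} reduces matters to $u$ with \emph{polyhedral} jump set. It does not, and no such reduction is available: the constraint $u^m=e^{i\vhi}v_\mu$ is precisely what blocks the Cortesani--Toader-type approximations, and the lemma instead delivers a competitor $u_\sharp$ with the same $u^m$ whose jump set coincides with a compact \emph{Ahlfors-regular} set $K$ (lower density bound $\mathcal{H}^1(K\cap B_r(x))\geq r/2$). That weaker conclusion is what makes the construction work, because the Ambrosio--Tortorelli tube profile built on ${\rm dist}(\cdot,K)$ is estimated through the Minkowski content identity $|K+B_r|/(2r)\to\mathcal{H}^1(K)$, valid for Ahlfors-regular sets; no polyhedral structure is needed or used. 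Relatedly, your vortex lower bound (``a blow-up at scale $\eps_h$ produces $\bgamma_m$'') skips the essential dichotomy: to compare with the core problem \eqref{defgammameps} one must first show that on some dyadic annulus $v_h$ is $W^{1,2}$-close to a rotated vortex profile (otherwise each annulus costs an extra $c_\delta$, which already beats the renormalized energy), and then modify $v_h$ near $\partial B_{\overline\sigma}(x_k)$ to match the canonical boundary datum. As stated, your blow-up step has no control on the boundary trace of $\P(u_h)$ and cannot be compared with $\gamma_m(\eps_h,\rho)$.
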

\vskip5pt
We can proceed analogously with the sharp interface  functionals $F_{\eps,g}^0$ defined in~\eqref{defFeps0g}, and introduce  
$\widetilde F_\eps^0:L^1(\Omega)\to (-\infty,\infty]$ and $\widetilde F_0:L^1(\Omega)\to (-\infty,\infty]$ defined as
$$\widetilde F_\eps^0(u):=\begin{cases}
\displaystyle F_{\eps,g}^0(u)-\frac{\pi d}{m}|\log\eps| & \text{if $u\in \mathcal{G}_g(\Omega)$}\,,\\[5pt]
\infty & \text{otherwise}\,, 
\end{cases}
\qquad 
\widetilde F_0(u):=\begin{cases}
F_{0,g}(u) & \text{if $u\in \mathcal{L}_g(\Omega)$}\,,\\
\infty & \text{otherwise}\,.
\end{cases}
 $$ 

\begin{theorem}\label{Gammasharp}
Let $\eps_h\downarrow0$  be an arbitrary sequence. The sequence of functionals $\big\{\widetilde F_{\eps_h}^{0}\big\}_{h\in \N}$ $\Gamma$-converges
in the strong $L^1(\Omega)$-topology  to $\widetilde F_0$ as $h\to\infty$. More precisely:
\vskip3pt

\begin{enumerate}
 \item[(i)] If $\{u_h\}\subset\mathcal{G}_g(\Omega)$, $\{u_h\}$ is bounded in $L^\infty(\Omega)$, and  $\sup_h \widehat F_{\eps_h}^0 (u_h)<\infty$, then there exist a  subsequence and $u\in \mathcal{L}_g(\Omega)$ with $u^m=:e^{i\vhi}v_\mu\,$ such that 
 $u_h\to u$ in $L^1(\Omega)$, $v_h:=\p(u_h)\rightharpoonup u^m$ weakly in $W^{1,p}(\Omega)$ for every $p<2$ and weakly in $W^{1,2}_{\rm loc}(\overline\Omega \setminus{\rm spt}\,\mu)$, and 
 the measures $\mu_h:=\curl j(v_h)$ weakly* converge to $\mu= \curl j(u^m)$ in the $(C^{0,1}_0(\Omega))^*$ topology. 
 \vskip3pt
 
\item[(ii)] If $\{u_h\}\subset\mathcal{G}_g(\Omega)$ is such that $u_h\to u$ in $L^1(\Omega)$, then  
\begin{equation}\label{Theoliminfshp}
 \liminf_{h\to \infty} \widetilde F_{\eps_h}^{0} (u_h)\geq \widetilde F_0(u)\,.
\end{equation}

Moreover, if $\widetilde F_0(u)=\lim_h\widetilde F_{\eps_h}^{0} (u_h)<\infty$, then $\p(u_h)\to u^m$ strongly in $W^{1,p}(\Omega)$ for every $p<2$ and strongly in $W^{1,2}_{\rm loc}(\overline\Omega \setminus{\rm spt}\,\mu)$, 
identity \eqref{strongconvergence} holds, and for every open set $A\subset \R^2$ such that $\mathcal{H}^1\big(J_u \cap(\Omega\cap \partial A)\big)+\mathcal{H}^1\big(\{u\neq g\}\cap\partial\Omega\cap \partial A\big)=0$,
 \begin{multline}\label{strongconvergencebisshp}
\lim_{h\to \infty} \mathcal{H}^1\big(J_{u_h} \cap (\Omega\cap A)\big)+\mathcal{H}^1\big(\{u_h\neq g\}\cap\partial\Omega\cap A\big)\\
=\mathcal{H}^1\big(J_u \cap(\Omega\cap A)\big)+\mathcal{H}^1\big(\{u\neq g\}\cap\partial\Omega\cap A\big).
 \end{multline}

\item[(iii)]  For every $u\in \mathcal{L}_g(\Omega)$,   
there exists a sequence $\{u_h\}\subset \mathcal{G}_g(\Omega)$ such that $u_h\to u$ in $L^1(\Omega)$ and 
\begin{equation}\label{theolimsup01shp}
 \lim_{h\to \infty} \widetilde F_{\eps_h}^{0}(u_h) = \widetilde F_0(u)\,.
\end{equation}

\end{enumerate}
\end{theorem}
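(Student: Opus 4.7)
The plan is to prove Theorem~\ref{Gammasharp} by adapting the three-step argument from the diffuse case (Theorem~\ref{Gammadiff}), exploiting the identity $E_\eps(\P(u)) = G_\eps(v)$ for $v=\p(u)$ from \eqref{GLcone}. This identity decouples the Ginzburg-Landau part, which concentrates on the vortices of $v_h$, from the sharp jump term $\mathcal{H}^1(J_{u_h})$ that now replaces the Modica-Mortola term $I_{\eta_h}(\psi_h)$, so the bulk of the work can be recycled from the diffuse case.

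For the compactness in (i), the energy bound together with \eqref{GLcone} gives $E_{\eps_h}(v_h) \leq \pi m d |\log\eps_h| + O(1)$, and Theorem~\ref{compAlicPon} supplies the vortex measure $\mu\in\mathcal{A}_d$, the phase $\vhi$, and the announced convergences of $v_h = \p(u_h)$. Matching this upper bound against the lower bound from Theorem~\ref{compAlicPon}(iv) forces the uniform control $\mathcal{H}^1(J_{u_h}) \leq C$, which is specific to the sharp setting. Extending each $u_h$ to the tubular enlargement $\widetilde\Omega$ by $g\circ\Pi$ as in the proof of Theorem~\ref{existsharp} converts the boundary penalty into a genuine interior jump in $\widetilde\Omega$, and Ambrosio's $SBV$-compactness then yields an $L^1$-convergent subsequence with limit $u$. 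Since the Ginzburg-Landau penalty forces $|u_h|\to 1$ in $L^2$, we have $|u|=1$ a.e., and the identity $u^m = e^{i\vhi}v_\mu$, hence $u\in \mathcal{L}_g(\Omega)$, is recovered from the a.e.\ and weak-$W^{1,p}$ convergences of $v_h = u_h^m/|u_h|^{m-1}$.

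The $\liminf$ inequality in (ii) decouples on a further subsequence. The Ginzburg-Landau contribution \eqref{TheoliminfE} is proved exactly as in Theorem~\ref{Gammadiff}(ii), since that bound only involves $v_h$ and $\eps_h$. The jump contribution follows from the standard $SBV$ lower semicontinuity of $u\mapsto \mathcal{H}^1(J_u)$ on $\widetilde\Omega$, using the identity $\mathcal{H}^1(J_{u_h}\cap\widetilde\Omega) = \mathcal{H}^1(J_{u_h}) + \mathcal{H}^1(\{u_h\neq g\}\cap \partial\Omega)$ valid for the extended map. When \eqref{Theoliminfshp} is actually an equality, both lower bounds must saturate separately; one then recovers the strong $W^{1,p}$-convergence of $v_h$ exactly as in Theorem~\ref{Gammadiff}(ii) and hence \eqref{strongconvergence}, while \eqref{strongconvergencebisshp} is obtained by applying the $SBV$ lower semicontinuity simultaneously on $A$ and $\widetilde\Omega \setminus \overline A$ and observing that both inequalities must be tight.

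The main obstacle is the recovery sequence in (iii). Given $u \in \mathcal{L}_g(\Omega)$ with $u^m = e^{i\vhi}v_\mu$, the first step is a density reduction: one produces $\widetilde u \in \mathcal{L}_g(\Omega)$, close to $u$ in $L^1$ and in energy, whose jump set is a finite union of smooth arcs away from $\mathrm{spt}\,\mu$. This is nontrivial because the standard $SBV$ approximation theorems do not respect the nonlinear constraint $\p(\widetilde u) \in W^{1,2}(\Omega;\Ss^1)$, and it is precisely the role played by the regularization Lemma~\ref{Alternative2}. One then builds $u_h$ on three regions: inside each disk $B_\rho(x_k)$ one pastes a scaled optimal profile realizing the core energy $\bgamma_m$ from \eqref{defbgammVnew}; outside $B_\rho(\mu)$ and away from $J_{\widetilde u}$ one takes $\widetilde u$ itself, corrected slightly near $\partial\Omega$ to enforce $\P(u_h) = \P(g)$; across $J_{\widetilde u}$ the jump is simply retained (no interpolation, in contrast with the diffuse case, where the phase field had to be driven to zero). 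The delicate point is the matching on $\partial B_\rho(x_k)$: the interior profile, valued in $\mathcal{N}$, must be lifted coherently to a $\C$-valued map on the angular sectors separated by the jump lines of $J_{\widetilde u}$ emanating from $x_k$, so that the global $u_h$ lies in $\mathcal{G}_g(\Omega)$. A direct energy accounting then yields $\widetilde F_{\eps_h}^0(u_h) \to \widetilde F_0(u)$.
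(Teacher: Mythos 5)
Your proposal is correct and follows essentially the same route as the paper: compactness via the bound $E_{\eps_h}(\p(u_h))\leq \pi md|\log\eps_h|+O(1)$, Theorem~\ref{compAlicPon}, the extension to $\widetilde\Omega$ and Ambrosio's $SBV$ compactness; the $\liminf$ by reusing \eqref{TheoliminfE} plus lower semicontinuity of the jump measure on $\widetilde\Omega$ (your "$A$ and its complement" argument is the same as the paper's weak* convergence of $\nu_h=\mathcal{H}^1\res J_{u_h}$); and the recovery sequence is exactly the one of Proposition~\ref{proplimsup}, which the paper simply reuses since \eqref{theolimsup01} and \eqref{theolimsup02} give \eqref{theolimsup01shp}. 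The only minor remark is that the matching at $\partial B_{\rho}(x_k)$ is simpler than your description suggests: one only needs $\p(u_h)$ to be continuous across that circle (achieved by choosing $\zeta_k$ with $\zeta_k^m=\xi_k$), while $u_h$ itself may jump there at a cost $O(\eps_h)$ in $\mathcal{H}^1(J_{u_h})$, so no coherent lifting over angular sectors is required.
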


As a standard consequence of these $\Gamma$-convergence results, we have the following corollaries concerning the minimizers of $F_{\eps}^\eta$ and $F^0_{\eps,g}$, whose existence was proved in Theorems ~\ref{exist} \& ~\ref{existsharp} respectively (together with the uniform $L^\infty$-bound allowing for compactness). 

\begin{corollary}
Let $\eps_h\downarrow 0$  and $\eta_h\downarrow0$ be  arbitrary sequences. For each $h\in\N$, let $(u_h,\psi_h)$  be a minimizer of $F_{\eps_h}^{\eta_h}$ in $\mathcal{H}_g(\Omega)$. There exists a  subsequence and a map $u$ minimizing $F_{0,g}$ over $\mathcal{L}_g(\Omega)$ such that $(u_h,\psi_h)\to (u,1)$ in $L^1(\Omega)$, $\p(u_h)\to u^m$ strongly in $W^{1,p}(\Omega)$ for every $p<2$ and strongly in $W^{1,2}_{\rm loc}(\overline\Omega \setminus{\rm spt}\,\mu)$ where $\mu:={\rm curl}\,j(u^m)$.  In addition, 
$$F^{\eta_h}_{\eps_h}(u_h,\psi_h)= \frac{\pi d}{m}|\log\eps_h| +F_{0,g}(u)+o(1)\quad\text{as $h\to\infty$}\,.$$
\end{corollary}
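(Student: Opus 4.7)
The plan is the standard one for passing from $\Gamma$-convergence to convergence of minimizers. First I would produce an a priori upper bound on $\widetilde F_{\eps_h}^{\eta_h}(u_h,\psi_h)$ by fixing any element $u_\star\in\mathcal{L}_g(\Omega)$ (the class is nonempty, as is evident from the recovery construction in the proof of Theorem~\ref{Gammadiff}(iii) applied to any $md$-tuple of distinct points of $\Omega$) and applying Theorem~\ref{Gammadiff}(iii) to obtain competitors $(u_h^\star,\psi_h^\star)\in\mathcal{H}_g(\Omega)$ satisfying $\widetilde F_{\eps_h}^{\eta_h}(u_h^\star,\psi_h^\star)\to F_{0,g}(u_\star)$. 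Minimality of $(u_h,\psi_h)$ then forces $\sup_h \widetilde F_{\eps_h}^{\eta_h}(u_h,\psi_h)<\infty$. Combined with the uniform bound $\|u_h\|_{L^\infty(\Omega)}\leq 1$ coming from Theorem~\ref{exist}, this places the sequence in the compactness regime of Theorem~\ref{Gammadiff}(i), giving a subsequence with $(u_h,\psi_h)\to(u,1)$ in $L^1(\Omega)$ for some $u\in\mathcal{L}_g(\Omega)$ with $\mu=\curl j(u^m)$, along with the announced weak convergences of $\p(u_h)$ and of $\curl j(\p(u_h))$.

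Next I would identify $u$ as a minimizer of $F_{0,g}$. For any $u'\in\mathcal{L}_g(\Omega)$, Theorem~\ref{Gammadiff}(iii) furnishes a recovery sequence $(u_h',\psi_h')$ with $\widetilde F_{\eps_h}^{\eta_h}(u_h',\psi_h')\to F_{0,g}(u')$. Combining the minimality of $(u_h,\psi_h)$ with the $\Gamma$-$\liminf$ inequalities \eqref{TheoliminfE}--\eqref{TheoliminfI} of Theorem~\ref{Gammadiff}(ii) yields
\[
F_{0,g}(u)\leq \liminf_h\widetilde F_{\eps_h}^{\eta_h}(u_h,\psi_h)\leq \limsup_h\widetilde F_{\eps_h}^{\eta_h}(u_h',\psi_h')=F_{0,g}(u'),
\]
so $u$ minimizes $F_{0,g}$ over $\mathcal{L}_g(\Omega)$. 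Choosing $u'=u$ forces $\lim_h\widetilde F_{\eps_h}^{\eta_h}(u_h,\psi_h)=F_{0,g}(u)$, which is precisely the asserted energy expansion.

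Finally, since the $\Gamma$-limit is attained along the extracted subsequence, the second half of Theorem~\ref{Gammadiff}(ii) applies and promotes the weak convergence of $\p(u_h)$ to the required strong convergence in $W^{1,p}(\Omega)$ for every $p<2$ and in $W^{1,2}_{\rm loc}(\overline\Omega\setminus\spt\mu)$. The argument is a textbook corollary of the $\Gamma$-convergence statement, so no serious obstacle arises at this stage; all the real work (compactness below the critical energy threshold, the $\liminf$ matching the core constant $\bgamma_m$, and the construction of recovery sequences respecting the constraint $u^m=e^{i\vhi}v_\mu$) has already been carried out in Theorem~\ref{Gammadiff}.
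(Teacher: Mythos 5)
Your proposal is correct and is precisely the standard argument the paper itself relies on: the corollary is stated there without proof as "a standard consequence" of Theorem~\ref{Gammadiff} together with the existence result and $L^\infty$ bound of Theorem~\ref{exist}, which is exactly the chain (recovery sequence for a fixed competitor $\Rightarrow$ uniform energy bound $\Rightarrow$ compactness $\Rightarrow$ $\liminf$/$\limsup$ matching $\Rightarrow$ strong convergence from the equality case of part (ii)) that you spell out. No gaps.
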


\begin{corollary}\label{corcvminsharp}
Let $\eps_h\downarrow 0$ be an arbitrary sequence. For each $h\in\N$, let $u_h$  be a minimizer of $F_{\eps_h,g}^0$ in $\mathcal{G}_g(\Omega)$. There exists a  subsequence and a map $u$ minimizing $F_{0,g}$ over $\mathcal{L}_g(\Omega)$ such that $u_h\to u$ in $L^1(\Omega)$, $\p(u_h)\to u^m$ strongly in $W^{1,p}(\Omega)$ for every $p<2$ and strongly in $W^{1,2}_{\rm loc}(\overline\Omega \setminus{\rm spt}\,\mu)$    where $\mu:={\rm curl}\,j(u^m)$. In addition, 
$$F^{0}_{\eps_h,g}(u_h)= \frac{\pi d}{m}|\log\eps_h| +F_{0,g}(u)+o(1)\quad\text{as $h\to\infty$}\,.$$
\end{corollary}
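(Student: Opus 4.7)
This is a standard ``fundamental theorem of $\Gamma$-convergence'' type consequence of Theorem \ref{Gammasharp}, so the strategy is essentially bookkeeping once compactness of minimizing sequences is secured. I would proceed in four short steps.

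\textbf{Step 1: Uniform energy bound on the minimizers.} I first need to show that $\sup_h \widetilde F^0_{\eps_h}(u_h)<\infty$, which in particular gives the upper bound required by the compactness statement \emph{(i)} of Theorem \ref{Gammasharp}. For this I would fix any element $\widetilde u\in \mathcal{L}_g(\Omega)$ (this class is non-empty: pick any $\mu\in\mathcal{A}_d$, use $v_\mu$ to define $u^m$ and cut $\Omega$ along a system of disjoint arcs connecting the vortices of $\mu$ in groups of $m$ to obtain an $SBV(\Omega;\Ss^1)$-root) and apply Theorem \ref{Gammasharp}\emph{(iii)} to obtain a recovery sequence $\{\widetilde u_h\}\subset\mathcal{G}_g(\Omega)$ satisfying $\widetilde F^0_{\eps_h}(\widetilde u_h)\to \widetilde F_0(\widetilde u)=F_{0,g}(\widetilde u)$. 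By minimality of $u_h$, this yields $\widetilde F^0_{\eps_h}(u_h)\leq \widetilde F^0_{\eps_h}(\widetilde u_h)\leq F_{0,g}(\widetilde u)+1$ for all $h$ large enough.

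\textbf{Step 2: Compactness of the minimizers.} Theorem \ref{existsharp} permits us to assume $\|u_h\|_{L^\infty(\Omega)}\leq 1$, so together with Step 1 the assumptions of Theorem \ref{Gammasharp}\emph{(i)} are satisfied. Extracting a subsequence therefore produces $u\in\mathcal{L}_g(\Omega)$, with representation $u^m=e^{i\vhi}v_\mu$ and $\mu=\curl\,j(u^m)\in\mathcal{A}_d$, such that $u_h\to u$ in $L^1(\Omega)$, $\p(u_h)\rightharpoonup u^m$ weakly in $W^{1,p}(\Omega)$ for every $p<2$ and weakly in $W^{1,2}_{\rm loc}(\overline\Omega\setminus\spt\,\mu)$, and $\mu_h=\curl j(\p(u_h))\stackrel{*}{\rightharpoonup}\mu$ in $(C^{0,1}_0(\Omega))^*$.

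\textbf{Step 3: Minimality of $u$ and the energy expansion.} For an arbitrary competitor $\widetilde u\in\mathcal{L}_g(\Omega)$, Theorem \ref{Gammasharp}\emph{(iii)} supplies $\{\widetilde u_h\}\subset\mathcal{G}_g(\Omega)$ with $\widetilde F^0_{\eps_h}(\widetilde u_h)\to \widetilde F_0(\widetilde u)$. Combining the $\Gamma$-liminf inequality \eqref{Theoliminfshp} with the minimality of $u_h$ yields
$$
\widetilde F_0(u)\,\leq\,\liminf_{h\to\infty}\widetilde F^0_{\eps_h}(u_h)\,\leq\,\limsup_{h\to\infty}\widetilde F^0_{\eps_h}(u_h)\,\leq\,\lim_{h\to\infty}\widetilde F^0_{\eps_h}(\widetilde u_h)\,=\,\widetilde F_0(\widetilde u)\,.
$$
Since $\widetilde u\in\mathcal{L}_g(\Omega)$ was arbitrary, $u$ minimizes $F_{0,g}$ over $\mathcal{L}_g(\Omega)$, and specializing to $\widetilde u=u$ gives $\lim_h \widetilde F^0_{\eps_h}(u_h)=\widetilde F_0(u)=F_{0,g}(u)$, which is precisely the announced asymptotic expansion.

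\textbf{Step 4: Strong convergence of $\p(u_h)$.} Having established the equality case $\lim_h \widetilde F^0_{\eps_h}(u_h)=\widetilde F_0(u)<\infty$, the ``moreover'' statement in Theorem \ref{Gammasharp}\emph{(ii)} immediately delivers the strong $W^{1,p}(\Omega)$-convergence of $\p(u_h)$ to $u^m$ for every $p<2$, as well as the strong $W^{1,2}_{\rm loc}(\overline\Omega\setminus\spt\,\mu)$-convergence. There is no genuine difficulty in this proof: the only point that required a touch of care is the non-emptiness of $\mathcal{L}_g(\Omega)$ used in Step 1 to produce an a priori upper bound, but this is a purely topological $m$-th root construction over $\Omega\setminus\spt\,\mu$.
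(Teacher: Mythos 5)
Your proof is correct and is exactly the standard $\Gamma$-convergence argument the paper has in mind when it calls this a "standard consequence" of Theorem \ref{Gammasharp} (the paper gives no explicit proof, relying on the existence and $L^\infty$-bound from Theorem \ref{existsharp} for compactness, precisely as you do). All four steps check out, including the care taken over the non-emptiness of $\mathcal{L}_g(\Omega)$ needed for the a priori energy bound.
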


\begin{remark}\label{remminF0}
From the definition of $F_{0,g}$, any minimizer $u$ of $F_{0,g}$ over $\mathcal{L}_g(\Omega)$ satisfies $u^m=v_\mu$ where $\mu:={\rm curl}\,j(u^m)$ (i.e., in any  representation $u^m=e^{i\vhi}v_\mu$, the phase $\vhi$ is a constant multiple of $2\pi$).  As a consequence, 
$$ F_{0,g}(u)=\frac{1}{m^2}\mathbb{W}(\mu) 
+m d\bgamma_m+\mathcal{H}^1(J_{u})
 +\mathcal{H}^1\big(\{u\neq g\}\cap\partial\Omega\big)\,.$$
\end{remark}
\vskip5pt

The rest of this section if devoted to the proofs of Theorems ~\ref{Gammadiff} \& ~\ref{Gammasharp}. Starting with Theorem ~\ref{Gammadiff}, compactness, $\Gamma$-$\liminf$, and $\Gamma$-$\limsup$ parts are proved in Subsections  ~\ref{subseccomp}, ~\ref{sec:gammainf}, and  ~\ref{sec:gammasup} respectively. The proof of Theorem ~\ref{Gammasharp} is the object of Subsection ~\ref{secproofthmgamshp}.


\subsection{Proof of Theorem ~\ref{Gammadiff}~{\it(i)}: Compactness}\label{subseccomp}

\begin{proposition}\label{compactthmfield}
Let $\eps_h\downarrow0$ and $\eta_h\downarrow0$ be arbitrary sequences. Let $\{(u_h,\psi_h)\}\subset\mathcal{H}_g(\Omega)$ be such that $\{u_h\}$ is bounded in $L^\infty(\Omega)$, and  
\begin{equation}\label{energbdassump}
\sup_h \left\{F_{\eps_h}^{\eta_h}(u_h,\psi_h)-\frac{\pi d}{m}|\log\eps_h|\right\}<\infty\,. 
\end{equation}
Then, there exist a  subsequence and $u\in \mathcal{L}_g(\Omega)$ with $u^m=:e^{i\vhi}v_\mu$ such that  
\begin{enumerate}
\item[(i)] $(u_h,\psi_h)\to (u,1)$ strongly in $L^1(\Omega)$; 
\vskip3pt
\item[(ii)] $v_h:=\p(u_h) \rightharpoonup u^m $ weakly in $W^{1,p}(\Omega)$ for every $p\in [1,2)$;
\vskip3pt
\item[(iii)] $v_h \rightharpoonup u^m $ weakly in $W^{1,2}_{\rm loc}(\overline\Omega \setminus{\rm spt}\,\mu)$
\end{enumerate}
Moreover,  $\mu_h:= \curl\,j(v_h)\in L^1(\Omega)$ converges to $\mu=\curl\,j(u^m)$ in the weak* topology of~$(C^{0,1}_0(\Omega))^*$. 
\end{proposition}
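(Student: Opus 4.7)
The plan is to reduce the compactness for $(u_h,\psi_h)$ to the classical Ginzburg--Landau compactness result of Theorem~\ref{compAlicPon} applied to the ``lifted'' sequence $v_h:=\p(u_h)$, and then to extract $u_h$ itself by a measurable-selection argument.

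First I would set $v_h:=\p(u_h)$ and use the boundary conditions $\psi_h\equiv 1$ and $\psi_h u_h=g$ on $\partial\Omega$ to conclude that $u_h=g$ and hence $v_h=g^m$ there. The isometric identity~\eqref{GLcone} reads
\[
E_{\eps_h}(\P(u_h))\,=\,\frac{1}{m^2}\,E_{\eps_h}(v_h)+\frac{m^2-1}{m^2}\,E_{\eps_h}(|v_h|),
\]
so the hypothesis \eqref{energbdassump} forces $E_{\eps_h}(v_h)\leq \pi m d |\log\eps_h|+O(1)$, while $\|v_h\|_{L^\infty}=\|u_h\|_{L^\infty}$ is uniformly bounded. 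Theorem~\ref{compAlicPon} applied to $\{v_h\}$ then furnishes, along a subsequence, a measure $\mu\in\mathcal{A}_d$ and a phase $\vhi\in W^{1,2}(\Omega)$ with $e^{i\vhi}=1$ on $\partial\Omega$ such that $v_h\rightharpoonup U_0:=e^{i\vhi}v_\mu$ weakly in $W^{1,p}$ for every $p<2$ and in $W^{1,2}_{\rm loc}(\overline\Omega\setminus\spt\mu)$, and $\mu_h=\curl j(v_h)\rightharpoonup^*\mu$ in $(C^{0,1}_0(\Omega))^*$. Items (ii), (iii) and the last assertion are then in place, provided the limit $u$ with $u^m=U_0$ is constructed.

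The next step uses the matching lower bound of Theorem~\ref{compAlicPon}~(iv): for any small $r>0$ there is $C_r$ with $E_{\eps_h}(v_h)\geq \pi m d |\log\eps_h|-C_r$. Subtracting $\frac{1}{m^2}E_{\eps_h}(v_h)$ from the upper bound on $F_{\eps_h}^{\eta_h}(u_h,\psi_h)$ yields
\[
\frac{m^2-1}{m^2}\,E_{\eps_h}(|v_h|)+I_{\eta_h}(\psi_h)\,=\,O(1).
\]
From $I_{\eta_h}(\psi_h)=O(1)$ I get $\|1-\psi_h\|_{L^2}^2\leq 2\eta_h I_{\eta_h}(\psi_h)\to 0$, hence $\psi_h\to 1$ in $L^2$ and a.e.\ along a further subsequence; analogously $(1-|v_h|^2)\to 0$ in $L^2$, so $|u_h|=|v_h|\to 1$ in $L^2$ and a.e.

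The main obstacle is the $L^1$-convergence of $u_h$ itself to a suitable $u\in\mathcal{L}_g(\Omega)$. By Rellich and $L^\infty$-interpolation, $v_h\to U_0$ in every $L^q$ ($q<\infty$) and, along a further subsequence, a.e.; together with $|u_h|^{m-1}\to 1$ a.e., this gives $u_h^m=v_h|u_h|^{m-1}\to U_0$ a.e. Off $\spt\mu$, $|U_0|=1$, so $\p^{-1}(U_0(x))$ is an $m$-point $\mathbf{G}_m$-orbit, and $u_h(x)$ is eventually $o(1)$-close to this orbit, though a priori it may oscillate among its branches. I would project $u_h$ pointwise onto the nearest branch to obtain $\widetilde u_h$ with $\widetilde u_h-u_h\to 0$ in each $L^q$, and decompose $\widetilde u_h=\mathbf{a}^{k_h}\hat u$ for a fixed measurable lift $\hat u$ (built from the unrolling map $\boldsymbol{\alpha}$ of the cone $\mathcal{N}$) and a measurable $k_h:\Omega\to\{0,\ldots,m-1\}$. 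The remaining (and delicate) task is to select a subsequence along which $k_h$ stabilizes a.e. Here the rigidity of the class $\mathcal{H}_g(\Omega)$ enters: since $\psi_h u_h\in W^{1,2}$, $u_h$ is Sobolev-continuous on each connected component of $\{\psi_h>\delta\}$, so branch changes of $\widetilde u_h$ are confined to a small neighbourhood of $\{\psi_h\sim 0\}$, whose Lebesgue measure vanishes as $\psi_h\to 1$. Dominated convergence then delivers $u_h\to u$ in $L^1$. Finally, $u\in\mathcal{L}_g(\Omega)$ follows: $|u|=1$ and $u^m=e^{i\vhi}v_\mu$ by construction, and $u\in SBV$ with $\mathcal{H}^1(J_u)<\infty$ is obtained from the standard Modica--Mortola / Ambrosio--Tortorelli lower bound $\mathcal{H}^1(J_u)\leq \liminf_h I_{\eta_h}(\psi_h)<\infty$.
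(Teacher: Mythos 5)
Your reduction to Theorem~\ref{compAlicPon} via the identity~\eqref{GLcone} is exactly the paper's first step, and it correctly delivers items \textit{(ii)}, \textit{(iii)} and the convergence of the Jacobians, as well as the bound $I_{\eta_h}(\psi_h)=O(1)$ (cf.\ Remark~\ref{remcomp}). The gap is in your treatment of item \textit{(i)}. Knowing that $u_h^m\to U_0$ a.e.\ and that $u_h$ is asymptotically close to the $m$-point orbit $\p^{-1}(U_0(x))$ does \emph{not} identify a limit $u$: writing $u_h=\mathbf a^{k_h}\hat u+o(1)$ with $k_h:\Omega\to\{0,\dots,m-1\}$ measurable, the sequence of $\mathbf G_m$-valued branch functions $\mathbf a^{k_h}$ has no reason to admit an $L^1$-convergent subsequence. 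Your rigidity argument --- that $k_h$ is locally constant on each connected component of $\{\psi_h>\delta\}$ and that branch changes are confined to a set of small Lebesgue measure --- gives no compactness either, because the number and geometry of those components are completely uncontrolled; a sequence of piecewise constant maps into a finite set converges (subsequentially) in $L^1$ only if the perimeters of the pieces are uniformly bounded. ``Dominated convergence then delivers $u_h\to u$'' presupposes an a.e.\ limit that has not been constructed, and the final appeal to an Ambrosio--Tortorelli lower bound for $\mathcal H^1(J_u)$ presupposes $u\in BV$, which is part of what must be proved.

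The missing ingredient is precisely the perimeter bound that the energy $I_{\eta_h}$ provides. By Cauchy--Schwarz, $\int_\Omega(1-\psi_h)|\nabla\psi_h|\,dx\le I_{\eta_h}(\psi_h)\le C$, and the coarea formula then yields a level $t_h\in(1/4,3/4)$ such that $E_h:=\{\psi_h<t_h\}$ satisfies $\mathcal H^1(\partial E_h\cap\Omega)\le C$ with $|E_h|\to0$. The truncated maps $\widetilde u_h:=(1-\chi_{E_h})u_h$ then belong to $SBV^p(\Omega)\cap L^\infty(\Omega)$ with $J_{\widetilde u_h}\subset\partial E_h$, $|\nabla\widetilde u_h|\le|\nabla\P(u_h)|$ bounded in $L^p$ (using Lemma~\ref{lemboundWq} for the $W^{1,p}$ bound on $v_h$), and uniformly bounded jump length; Ambrosio's $SBV$ compactness theorem gives $\widetilde u_h\to u$ in $L^1$, hence $u_h\to u$ since $\|u_h-\widetilde u_h\|_{L^1}\le\|u_h\|_{L^\infty}|E_h|\to0$. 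This is the content of Lemma~\ref{precompactresult}, and it simultaneously produces $u\in SBV$ with $\P(u)\in W^{1,p}$, from which $u\in\mathcal L_g(\Omega)$ follows via Corollary~\ref{cor1} and Proposition~\ref{keypropcomp}. Your proposal would be complete if you replaced the branch-selection heuristic by this truncation-plus-$SBV$-compactness step.
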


The proposition above partially rests on the following preliminary lemma. 

\begin{lemma}\label{precompactresult}
Let $\{(u_h,\psi_h)\}\subset \mathcal{H}(\Omega)$ be such that
$\psi_h\to 1$ a.e. in $\Omega$. Assume that for some $p\in(1,2]$, 
$$\sup_{h}\left\{\|u_h\|_{L^\infty(\Omega)}+\|\psi_h-1\|_{L^2(\Omega)}\|\nabla \psi_h\|_{L^{2}(\Omega)}+\big\|\nabla \p( u_h)\big\|_{L^{p}(\Omega)}\right\}<\infty\,.  $$
Then there exist a  subsequence and $u\in SBV^p(\Omega)$ such that $\P(u)\in W^{1,p}(\Omega;\mathcal{N})$, $u_h\to u$ strongly in  $L^1(\Omega)$, and 
$\P(u_h)\rightharpoonup\P(u)$ weakly in $W^{1,p}(\Omega)$. 
\end{lemma}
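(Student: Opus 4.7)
The plan is to combine a uniform $W^{1,p}$ bound on $\P(u_h)$ with a coarea argument that exploits the product hypothesis to produce a suitable superlevel set of $\psi_h$ with controlled perimeter. On this set $u_h$ agrees a.e.\ with a $W^{1,2}$ function, and the resulting truncation of $u_h$ is an SBV approximation with uniformly bounded SBV norm. Ambrosio's compactness theorem then concludes.

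First I would observe, using the identity $|\nabla\P(z)|^2=\tfrac{1}{m^2}|\nabla\p(z)|^2+\tfrac{m^2-1}{m^2}|\nabla|z||^2$ recorded in \eqref{GLcone}, together with $|u_h|=|\p(u_h)|$ and the pointwise chain rule $|\nabla|u_h||\le|\nabla\p(u_h)|$, that $|\nabla\P(u_h)|\le|\nabla\p(u_h)|$ a.e. Since also $|\P(u_h)|=|u_h|\le C$, the sequence $\{\P(u_h)\}$ is uniformly bounded in $W^{1,p}(\Omega;\R^3)$. Passing to a subsequence, $\P(u_h)\rightharpoonup V$ weakly in $W^{1,p}$ and pointwise a.e., with $V\in W^{1,p}(\Omega;\mathcal{N})$ by closedness of $\mathcal{N}$.

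The heart of the proof is the SBV-truncation step. By Cauchy--Schwarz the hypothesis yields $\int_\Omega(1-\psi_h)|\nabla\psi_h|\,dx\le M$, and the coarea formula applied to $\psi_h\in W^{1,2}(\Omega)$ gives
\[
\int_0^1(1-t)\,\mathcal{H}^1\bigl(\{\psi_h=t\}\cap\Omega\bigr)\,dt\le M.
\]
Restricting to $t\in(0,1/2)$ where $1-t\ge 1/2$, an averaging argument yields $s_h\in(0,1/2)$ such that $F_h:=\{\psi_h>s_h\}$ has finite perimeter in $\Omega$ with $P(F_h,\Omega)\le 4M$. Since $0\le\psi_h\le 1$ and $\psi_h\to 1$ a.e., dominated convergence gives $\|1-\psi_h\|_{L^2}\to 0$, whence $|F_h^c|\le 4\|1-\psi_h\|_{L^2}^2\to 0$. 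Set $\phi_h:=\psi_h u_h\in W^{1,2}(\Omega)$ and $T_h:=\phi_h/\max(\psi_h,s_h)$. Then $T_h\in W^{1,2}(\Omega)\cap L^\infty(\Omega)$ with $\|T_h\|_\infty\le\|u_h\|_\infty$, and $T_h=u_h$ a.e.\ on $F_h$. Define $\widetilde u_h:=T_h\chi_{F_h}$. Because $T_h\in W^{1,2}\cap L^\infty$ and $F_h$ has finite perimeter, $\widetilde u_h\in SBV(\Omega)$ with absolutely continuous gradient $\nabla\widetilde u_h=\chi_{F_h}\nabla T_h$ a.e.\ and $J_{\widetilde u_h}\subset\partial^*F_h\cap\Omega$. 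On $F_h$, the isometric character of $\P$ (Lemma~\ref{struct}) applied to $T_h$ gives $|\nabla T_h|=|\nabla\P(u_h)|$ a.e., hence $\|\nabla\widetilde u_h\|_{L^p}\le\|\nabla\P(u_h)\|_{L^p}\le C$; moreover $\mathcal{H}^1(J_{\widetilde u_h})\le 4M$ and $\|\widetilde u_h\|_\infty\le\|u_h\|_\infty$.

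Finally, Ambrosio's SBV-compactness theorem (cf.\ \cite[Theorem~4.8]{AFP}) applied to $\{\widetilde u_h\}$ yields a further subsequence and $u\in SBV^p(\Omega)$ with $\widetilde u_h\to u$ in $L^1(\Omega)$. The estimate $\|u_h-\widetilde u_h\|_{L^1}\le\|u_h\|_\infty|F_h^c|\to 0$ then upgrades this to $u_h\to u$ in $L^1(\Omega)$, hence a.e.\ along a further subsequence. Combining with $\P(u_h)\to V$ a.e., we conclude $\P(u)=V\in W^{1,p}(\Omega;\mathcal{N})$, and therefore $\P(u_h)\rightharpoonup\P(u)$ weakly in $W^{1,p}(\Omega)$. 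The main subtlety is that the hypothesis only controls the product $\|1-\psi_h\|_{L^2}\|\nabla\psi_h\|_{L^2}$ rather than each factor separately, so $\|\nabla\psi_h\|_{L^2}$ may blow up; the weighted coarea is precisely what converts this product bound into the uniform perimeter estimate on $F_h$ required to apply SBV-compactness.
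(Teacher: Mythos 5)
Your proof is correct and follows essentially the same route as the paper's: a weighted coarea argument converts the product bound on $\|1-\psi_h\|_{L^2}\|\nabla\psi_h\|_{L^2}$ into a uniform perimeter bound for a suitable level set of $\psi_h$, the truncation of $u_h$ to that set is an $SBV^p$ function with uniformly controlled gradient and jump set, and Ambrosio's compactness theorem plus the vanishing measure of the bad set identifies the limit. The only differences are cosmetic (superlevel set $\{\psi_h>s_h\}$ with $s_h\in(0,1/2)$ versus the paper's $\{\psi_h<t_h\}$ with $t_h\in(1/4,3/4)$, and your $\phi_h/\max(\psi_h,s_h)$ device in place of the paper's factorization $(\psi_hu_h)\,(1-\chi_{E_h})/\psi_h$ to exhibit the truncation as an $SBV$ function).
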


\begin{proof}
By assumption and Cauchy-Schwarz inequality, we have  
$$\int_\Omega(1-\psi_h)|\nabla\psi_h|\,dx\leq C\,,$$ 
for some constant $C$ independent of $h$. 
According to the co-area formula (see \cite[Theorem~3.40]{AFP}), 
$$\int_\Omega(1-\psi_h)|\nabla\psi_h|\,dx=\int_0^1(1-t)\mathcal{H}^1\big(\partial\{\psi_h<t\}\cap\Omega\big)\,dt
\geq \int_{1/4}^{3/4}(1-t)\mathcal{H}^1\big(\partial\{\psi_h<t\}\cap\Omega\big)\,dt \,.$$
Therefore, we can find a level $t_h\in(1/4,3/4)$ such that 
$$\int_\Omega(1-\psi_h)|\nabla\psi_h|\,dx\geq \frac{1}{4}\mathcal{H}^1(\partial E_h\cap\Omega) \,,\quad \text{with } E_h:=\{\psi_h<t_h\}\,.$$
Notice that $|E_h|\to 0$ since $\psi_h\to 1$ a.e. in $\Omega$. 

Let us now define 
$$\widetilde u_h:= (1-\chi_{E_h})u_h\,. $$
With  our choice of $E_h$, we have that 
$(1-\chi_{E_h})/\psi_h\in SBV^p(\Omega)\cap L^\infty(\Omega)$. 
Since $\psi_hu_h\in W^{1,2}(\Omega)\cap L^\infty(\Omega)$, we deduce that  $\widetilde u_h=(\psi_hu_h)(1-\chi_{E_h})/\psi_n\in SBV^p(\Omega)\cap L^\infty(\Omega)$ with 
 $J_{\widetilde u_h}\, \subset \,\partial E_h$. Since $\P(\widetilde u_h)=(1-\chi_{E_h})\P(u_h)\in SBV(\Omega;\R^3)$, we infer that 
 $$|\nabla \widetilde u_h|= \big|\nabla(\P(\widetilde u_h))\big|=(1-\chi_{E_h})\big|\nabla(\P(u_h))\big|\leq  \big|\nabla(\P(u_h))\big| \quad\text{a.e. in $\Omega$}\,.$$
 Consequently,
\begin{equation}\label{SBVbnd}
\sup_{h}\big\{\|\widetilde u_h\|_{L^\infty(\Omega)}+\|\nabla \widetilde u_h\|_{L^p(\Omega)} +\mathcal{H}^1(J_{\widetilde u_h}) \big\}<\infty\,. 
\end{equation}
Now select a  subsequence such that $\P(u_h) \rightharpoonup w$ weakly in $W^{1,p}(\Omega)$. In view of~\eqref{SBVbnd}, we can  apply Ambrosio's compactness theorem in $SBV$ 
(see e.g. \cite[Theorem 4.8 \& Remark 4.9]{AFP}) to find a further subsequence such that $\widetilde u_h\to u$ strongly in $L^1(\Omega)$ for some
$u\in SBV^p(\Omega)\cap L^\infty(\Omega)$. Then,  
$$\|u_h-u\|_{L^1(\Omega)}\leq \|\widetilde u_h-u\|_{L^1(\Omega)}+ \|u_h\|_{L^\infty(\Omega)}|E_h|\mathop{\longrightarrow}\limits_{h\to\infty} 0\,.$$
 Since $\P$ is $1$-Lipschitz, we have $\|\P(u_h)-\P(u) \|_{L^1(\Omega)}\leq  \|u_h-u\|_{L^1(\Omega)}$, and thus $w=\P(u)$. 
 \end{proof}

\begin{proof}[Proof of Proposition ~\ref{compactthmfield}]
Let us first recall~\eqref{GLcone}, that is $E_{\eps_h}(\P(u_h))= G_{\eh}(v_h)$ with $v_h:=\p(u_h)$. 
In view of~\eqref{GLcone}, assumption~\eqref{energbdassump}  implies 
\begin{equation}\label{ineqenerGL}
E_{\eps_h}(v_h) \leq \pi m d |\log\eps_h| +C \,, 
\end{equation}
that is ~\eqref{asymptenerg} holds.
In turn, we deduce from~\eqref{energbdassump} that
\begin{equation*}
\|\psi_h-1\|_{L^2(\Omega)}\|\nabla\psi_h\|_{L^2(\Omega)}\leq \frac{\eta}{2}\int_{\Omega} |\nabla \psi_h|^2 dx +\frac{1}{2\eta}\int_{\Omega} (1-\psi_h)^2 dx= I_{\eta_h}(\psi_h)\leq C\,,
\end{equation*}
for a constant $C$ independent of $\eta_h$. Clearly, it implies that $\psi_h\to 1$ a.e. $\Omega$, at least for a suitable subsequence. We are thus in position to apply first Theorem~~\ref{compAlicPon} to $\{v_h\}$, and then Lemma~~\ref{precompactresult} to $\{(u_h,\psi_h)\}$ to conclude the proof. 
\end{proof}

\begin{remark}\label{remcomp}
We emphasize that, in addition to the conclusions of Proposition~~\ref{compactthmfield}, assumption~\eqref{energbdassump} implies $\sup_h I_{\eta_h} (\psi_h)<\infty$. 
\end{remark}


\subsection{Optimal profiles and the constant $\bgamma_m$}\label{secoptprof}

In this subsection, we first study the core energy associated with one vortex in the Ginzburg-Landau energies $\{E_\eps\}$.  We consider for $R>0$ and $\eps>0$,  the minimum value
\begin{multline}\label{defgammameps}
\gamma_m(\eps,R):= \min\Bigg\{E_\eps(w,B_R) -\frac{\pi}{m^2}\log \frac{R}{\eps}
 :  w\in W^{1,2}(B_R;\mathcal{N})\,,\\
 w(z)=\frac{1}{m}\Big(\frac{z}{|z|}, \sqrt{m^2-1}\Big) \text{ on $\partial B_R$}\Bigg\}\,.
\end{multline}
In view of identity~\eqref{defGeps} defining the functional $G_\eps$, the value $\bgamma_m(\eps,R)$ can be written as 
$$\gamma_m(\eps,R)= \min\Bigg\{G_\eps(v,B_R) -\frac{\pi}{m^2}\log \frac{R}{\eps}  :  v\in W^{1,2}(B_R)\,,\;v(z)=\frac{z}{|z|} \text{ on $\partial B_R$}\Bigg\}\,.$$
Notice that, by homogeneity,
\begin{equation}\label{homogenbgamma}
 \gamma_m(\eps,R)=\gamma_m(1,R/\eps)=:\gamma_m(R/\eps)\,.
\end{equation}

We start by proving that $\gamma_m$ admits a limit as $R\to \infty$. This will be needed for the lower bounds \eqref{TheoliminfE} and \eqref{Theoliminfshp}.

\begin{lemma}\label{bdinfconstgam}
The function $R\mapsto\gamma_m(R)$ is non increasing and  the limit 
\begin{equation}\label{defgammam}
\bgamma_m:=\lim_{R\to \infty} \gamma_m(R)
 \end{equation}
is finite.
\end{lemma}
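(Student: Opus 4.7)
My plan is to prove monotonicity and a uniform lower bound separately. Throughout, I will work with the equivalent $\C$-valued formulation of $\gamma_m(\eps,R)$ in terms of the functional $G_\eps$ stated just after \eqref{defgammameps}, since this avoids any issue with lifting $\mathcal{N}$-valued maps back to $\C$. By the scaling $\gamma_m(\eps,R)=\gamma_m(R/\eps)$ in \eqref{homogenbgamma}, it suffices to work at $\eps=1$ and vary $R$.

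For the monotonicity, fix $R_1<R_2$ and let $v_1\in W^{1,2}(B_{R_1})$ be a $\delta$-near minimizer for $\gamma_m(R_1)$, so that $v_1(z)=z/|z|$ on $\partial B_{R_1}$. I will extend $v_1$ to $B_{R_2}$ by the radial formula $v(z):=z/|z|$ on the annulus $A:=B_{R_2}\setminus B_{R_1}$. This extension lies in $W^{1,2}(B_{R_2})$, agrees with $v_1$ on $\partial B_{R_1}$, and satisfies the boundary condition for $\gamma_m(R_2)$ on $\partial B_{R_2}$. On $A$ one has $|v|\equiv 1$, so the potential term and $|\nabla|v||^2$ both vanish, while $|\nabla v|^2=1/|z|^2$. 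A direct polar integration gives
\[
G_1(v,A)=\frac{1}{2m^2}\int_A \frac{1}{|z|^2}\,dx=\frac{\pi}{m^2}\log(R_2/R_1)\,,
\]
so that $\gamma_m(R_2)\leq G_1(v,B_{R_2})-\frac{\pi}{m^2}\log R_2\leq \gamma_m(R_1)+\delta$. Sending $\delta\downarrow 0$ yields $\gamma_m(R_2)\leq \gamma_m(R_1)$.

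For the uniform lower bound, the explicit form \eqref{defGeps} gives the pointwise dominance $G_1(v)\geq \frac{1}{m^2}E_1(v)$ for every admissible $v$ (the extra nonnegative terms $\frac{m^2-1}{2m^2}\int|\nabla|v||^2$ and $\frac{m^2-1}{4m^2}\int(1-|v|^2)^2$ drop). Since $v|_{\partial B_R}=z/|z|$ has degree $+1$, the classical Ginzburg--Landau vortex lower bound---obtained for instance by applying the ball construction of Theorem~\ref{thmlwdSS} to a smooth approximation of $v$, or directly from the single-vortex analysis in \cite{BBH}---gives $E_1(v,B_R)\geq \pi\log R-C$ for a universal constant $C$, valid for all $R\geq 1$. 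Subtracting $\frac{\pi}{m^2}\log R$ yields $\gamma_m(R)\geq -C/m^2$ uniformly in $R$. Together with the monotonicity, the nonincreasing function $R\mapsto\gamma_m(R)$ is bounded below, so $\bgamma_m=\lim_{R\to\infty}\gamma_m(R)$ exists and is finite.

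The only nontrivial step is the uniform lower bound, which relies on the classical vortex lower bound for degree-one Ginzburg--Landau maps; the monotonicity is an explicit extension argument and the inequality $G_1\geq E_1/m^2$ is immediate from \eqref{defGeps}.
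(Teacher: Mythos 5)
Your proof is correct and follows essentially the same route as the paper's: the paper also extends a (near-)minimizer for $\gamma_m(R_1)$ by $0$-homogeneity (which, on the annulus, is exactly your $z/|z|$ in the $G_\eps$ formulation) to get monotonicity, and obtains the lower bound from $G_\eps(v)\geq \frac{1}{m^2}E_\eps(v)$ combined with the classical degree-one lower bound of \cite{BBH}. The only cosmetic difference is that you phrase everything in the $\C$-valued $G_\eps$ formulation and use $\delta$-near minimizers rather than taking the infimum over competitors at the end.
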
 
\begin{proof}
The proof of this lemma closely follows the proof of \cite[Lemma III.1]{BBH}.  Let us first show that $\gamma_m$ is non increasing. Let $0<R_1<R_2$, and consider an admissible competitor $w$ for the minimization problem defining $\gamma_m(R_1)$. We extend $w$ by $0$-homogeneity in the annulus  $B_{R_2} \setminus B_{R_1}$, i.e., $w(x)=w(R_1x/|x|)$ for $x\in B_{R_2} \setminus B_{R_1}$. 
 By construction $w\in  W^{1,2}(B_{R_2};\mathcal{N})$, and it is an admissible competitor for $\gamma_m(R_2)$. Elementary computations then yield
\[
E_1(w, B_{R_2}) = E_{1}(w;B_{R_1})+\frac{\pi}{m^2}\log\frac{R_2}{R_1}\,.
\]
Hence, $\gamma_m(R_2)\leq E_{1}(w;B_{R_1}) -\frac{\pi}{m^2}\log R_1$. Taking the infimum with respect to $w$ yields $\gamma_m(R_2)\leq \gamma_m(R_1)$, so that 
$\gamma_m$ is indeed non increasing. 
Next, for an arbitrary $w\in W^{1,2}(B_1;\mathcal{N})$ satisfying $w(x)=(1/m)(x, \sqrt{m^2-1})$ on $\partial B_1$, we have $w=(1/m)(v,\sqrt{m^2-1}|v|)$ with $v\in W^{1,2}(B_1)$ satisfying $v(x)=x$ on $\partial B_1$.  Consequently, for $\eps>0$ we have  
\[
E_\eps(w,B_1)=G_\eps(v,B_1)\geq \dfrac1{m^2}E_\eps(v,B_1) \geq \frac{\pi}{m^2} \log \frac{1}{\eps} -C\,,
\]
for some universal constant $C$ by~\cite{BBH}. In view of~\eqref{homogenbgamma}, we infer that $\gamma_m(1/\eps)$ is bounded from below, and thus $\bgamma_m>-\oo$.
\end{proof}
The following lemma  and its subsequent corollary will allow us to construct a recovery sequence close to the vortices.
\begin{lemma}\label{approxgamma} 
 For every $v\in W^{1,2}(B_1)$ satisfying $v(x)=x$ on $\partial B_1$, there exists a sequence $\{u_k\}\subset SBV^2(B_1)$ such that  
 \begin{enumerate}
\item[(i)] $\p(u_k)\in W^{1,2}(B_1)$ and $\p(u_k)(x)=x$ in a neighborhood of $\partial B_1$; 
\item[(ii)] $J_{u_k}\subset\Sigma_k$ where $\Sigma_k$ is a smooth simple curve (i.e., a smooth image of $[0,1]$) contained in $ \overline{B_1}$;
\item[(iii)] $\p(u_k) \to v$ strongly in $W^{1,2}(B_1)$. 
\end{enumerate}
\end{lemma}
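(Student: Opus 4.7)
The plan is to define $u_k$ as a continuous lift of $v_k$ under $\p$ (so that $\p(u_k)=v_k$) for a suitable smooth approximation $v_k$ of $v$, with the branch cut $\Sigma_k$ chosen to pass through every zero of $v_k$. Since the topological obstruction to lifting disappears on a simply connected region where $v_k$ does not vanish, we only need to arrange that the zeros of $v_k$ form a finite set that can be connected to $\partial B_1$ by a single smooth simple arc.

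First, extending $v$ by the identity outside $B_1$ (permitted since $v=x$ on $\partial B_1$) and combining mollification with a cutoff near $\partial B_1$, we produce smooth maps $\tilde v_k$ with $\tilde v_k=x$ on $B_1\setminus B_{1-\delta_k}$ for some $\delta_k\downarrow 0$ and $\tilde v_k\to v$ in $W^{1,2}(B_1)$. Next, pick a cutoff $\eta_k$ equal to $1$ on $B_{1-2\delta_k}$ and $0$ on $B_1\setminus B_{1-\delta_k}$, and apply the parametric transversality theorem to the family $(x,c)\mapsto \tilde v_k(x)-c\,\eta_k(x)$ (whose derivative in $c$ has full rank on $\{\eta_k>0\}$). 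This yields, for almost every small $c\in\C$, a smooth map $v_k:=\tilde v_k-c\,\eta_k$ for which $0$ is a regular value on all of $B_1$. Choosing such $c$ small enough ensures $v_k\to v$ in $W^{1,2}(B_1)$, while $v_k=x$ on $B_1\setminus B_{1-\delta_k}$ and $v_k^{-1}(0)$ is a finite set of interior points $x_1^k,\ldots,x_{N_k}^k$, each with nondegenerate local degree $\pm 1$.

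We then build a smooth simple arc $\Sigma_k\subset\overline{B_1}$ starting from a point of $\partial B_1$ and threading through $x_1^k,\ldots,x_{N_k}^k$: concatenate smooth segments between consecutive points, perturb slightly to remove self-intersections (possible in dimension two), and round the corners. On the simply connected set $B_1\setminus\Sigma_k$, where $v_k$ does not vanish, we choose a smooth $\theta_k$ with $v_k/|v_k|=e^{i\theta_k}$ and set $u_k(x):=|v_k(x)|\,e^{i\theta_k(x)/m}$, extended arbitrarily on the $\mathcal{H}^1$-null set $\Sigma_k$. A direct computation gives $\p(u_k)=v_k$ a.e. Across $\Sigma_k$, the two branches of the lift differ by multiplication by an element of $\mathbf{G}_m\setminus\{1\}$, so $|u_k^+-u_k^-|\leq 2\|v_k\|_{L^\infty}$. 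Combined with $\mathcal{H}^1(\Sigma_k)<\infty$ and the pointwise identity $|\nabla u_k|^2=|\nabla|v_k||^2+(|v_k|/m)^2|\nabla\theta_k|^2\leq |\nabla v_k|^2$, this gives $u_k\in SBV^2(B_1)$ with $J_{u_k}\subset\Sigma_k$. Conclusions (i)--(iii) follow at once: $\p(u_k)=v_k$ belongs to $W^{1,2}(B_1)$ and equals $x$ on $B_1\setminus B_{1-\delta_k}$, the jump set lies in a smooth simple curve, and $\p(u_k)=v_k\to v$ in $W^{1,2}(B_1)$ by construction.

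The main obstacle is the approximation step above: one must produce a smooth $v_k$ that simultaneously preserves the exact boundary condition $v_k=x$ near $\partial B_1$, converges to $v$ in $W^{1,2}$, and has only finitely many nondegenerate zeros. Once this transversality is secured, the branch-cut construction, the lifting, and the $SBV^2$ verification are all routine.
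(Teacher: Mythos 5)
Your proposal is correct and follows essentially the same route as the paper: approximate $v$ by a smooth map equal to $x$ near $\partial B_1$, perturb by a small constant (times a cutoff) chosen via Sard/transversality so that the zero set is finite, cut along a smooth simple arc through the zeros, and lift $\p$ on the resulting simply connected slit domain. The only point to tighten is that transversality controls the zeros only on $\{\eta_k>0\}$, so you should choose $\eta_k$ strictly positive on all of $B_{1-\delta_k}$ (so that $\{\eta_k=0\}$ lies where $\tilde v_k=x\neq 0$); with that fixed, the argument matches the paper's, which uses a chord with both endpoints on $\partial B_1$ instead of your one-endpoint slit — an immaterial difference.
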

   
\begin{proof}
{\it Step 1.} Since the function $\widetilde  v:z\mapsto v(x)-x$ belongs to $W^{1,2}_0(B_1)$,  for each $k\in\N$ we can find $\phi_k\in C^\infty_c(B_1)$ such that 
$\|\widetilde v-  \phi_k\|_{W^{1,2}(B_1)}\leq 2^{-k} $. Implicitly, we extend  $\phi_k$ by $0$ outside $B_1$. Then, we select a sequence of radii $\{r_k\}\subset(3/4,1)$ such that  $r_k\to 1$ as $k\to\infty$, and ${\rm spt}\,\phi_k\subset B_{r_k}$. By Morse-Sard Theorem, we can find $\{c_k\}\subset \C$ with $|c_k|<(1-r_k)^2$ such that $c_k$ is a regular value of the mapping $x\mapsto x+\phi_k(x)$ for each $k\in\N$. Next, we consider for each $k\in\N$ a cut-off function $\chi_k\in C^\infty(\R^2;[0,1])$ satisfying $\chi_k(x)=1$ for $|x|\leq r_k$, $\chi_k(x)=0$ for $|x|\geq (1+r_k)/2$, and $(1-r_k)|\nabla\chi_k|\leq C$ for a constant $C$ independent of $k$. Now we define the smooth function 
$$ v_k(x):=x+\phi_k(x)-c_k \chi_k(x)\,,$$
which satisfies $v_k(x)=x$ in a neighborhood of $\partial B_1$. 
We estimate 
\[
\| v_k - v\|_{W^{1,2}(B_1)} \leq \|\widetilde v- \phi_k\|_{W^{1,2}(B_1)}
  +|c_k| \| \chi_k\|_{W^{1,2}(B_1)} 
 \leq 2^{-k} +C(1-r_k)\,.
 \]
Therefore $v_k\to v$ strongly in $W^{1,2}(B_1)$ as $k\to \infty$. 
\vskip5pt

\noindent{\it Step 2.}  Let us fix an index $k\in\N$. To complete the proof, we shall produce a map $u_k\in SBV^2(B_1)$ such that $\p(u_k)=v_k$ and   $J_{u_k}$ is contained in a closed and smooth simple curve.  First notice that our choice of $c_k$ and the fact that $\phi_k=0$ on $B_1\setminus B_{r_k}$ imply that 
$$|v_k|\geq r_k-|c_k|\geq 11/16\quad \text{in $B_1 \setminus B_{r_k}$}\,,$$
so that $\{v_k=0\}=\{v_k=0\}\cap \overline B_{r_k}=\{x+\phi_k(x)=c_k\}\cap\overline B_{r_k} $ is a finite set. Hence we can find a smooth simple curve $\Sigma_k$ contained in $\overline B_1$ such that $\Sigma_k\cap\partial B_1=\{a_k,b_k\}$ with $a_k$, $b_k$ the distinct endpoints of $\Sigma_k$, $\Sigma_k$ meets $\partial B_1$ orthogonally at $a_k$ and $b_k$, and $\{v_k=0\}\subset \Sigma_k$. In this way, $B_1 \setminus\Sigma_k=A_k^1\cup A_k^2$ where $A_k^1$ and $A_k^2$ are disjoint simply connected open sets with Lipschitz boundary. Since $v_k$ does not vanish in each $A_k^j$, it admits a smooth $m$-th square root $u_k^j$ in each $A_k^j$, which is continuous up to $\partial A_k^j$. We define
$$u_k(x):=u_k^j(x)\quad\text{if $x\in A_k^j$}\,. $$
It is then elementary to check that $u_k\in SBV^2(B_1)$. By construction, items {\it (i)}, {\it (ii)}, and {\it (iii)} hold. 
\end{proof}   

\begin{corollary}\label{corogam0}
Let $\eps_h\downarrow0$ be an arbitrary sequence. There exist $\{u_h\}\subset SBV^2(B_1)\cap L^\infty(\Omega)$ with $\|u_h\|_{L^\infty(\Omega)}\leq 1$, and a sequence of  smooth simple curves $\{\Sigma_h\}\subset \overline{B_1}$ such that $J_{u_h}\subset \Sigma_h$   for every $h\in\N$, $\p(u_h)\in W^{1,2}(\Omega)$, $\p(u_h)(x)=x$ in a neighborhood of $\partial B_1$, and 
\[
\lim_{h\to \infty} \left\{E_{\eps_h}\big(\P(u_h),B_1\big) - \frac\pi{m^2} |\log \eps_h| \right\}= \bgamma_m\,.
\]
\end{corollary}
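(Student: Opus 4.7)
The plan is to combine the defining limit of $\bgamma_m$ with Lemma~\ref{approxgamma} through a double truncation and a diagonal argument. By the scaling identity~\eqref{homogenbgamma}, $\gamma_m(\eps_h,1)=\gamma_m(1/\eps_h)\to\bgamma_m$ as $h\to\infty$, so I would start by picking for each $h$ a $(1/h)$-minimizer $w_h\in W^{1,2}(B_1;\mathcal{N})$ of the problem defining $\gamma_m(\eps_h,1)$. Since $w_h$ takes values in $\mathcal{N}$, there is a unique $v_h\in W^{1,2}(B_1)$ (essentially the first two coordinates of $m w_h$ read as a complex number) such that $w_h=\tfrac{1}{m}(v_h,|v_h|\sqrt{m^2-1})$; this $v_h$ satisfies the boundary condition $v_h(x)=x$ on $\partial B_1$, and $E_{\eps_h}(w_h,B_1)=G_{\eps_h}(v_h,B_1)$ by~\eqref{GLcone}. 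The standard truncation $\widetilde v_h:=v_h/\max(1,|v_h|)$ gives $|\widetilde v_h|\leq 1$ without changing the boundary trace (since $|v_h|=1$ on $\partial B_1$) and without increasing~$G_{\eps_h}$.

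Next I would apply Lemma~\ref{approxgamma} to $\widetilde v_h$ to get, for each $k$, a map $u_h^k\in SBV^2(B_1)$ whose jump set lies in a smooth simple curve $\Sigma_{h,k}\subset\overline{B_1}$, such that $\p(u_h^k)\in W^{1,2}(B_1)$, $\p(u_h^k)(x)=x$ in a neighborhood of $\partial B_1$, and $\p(u_h^k)\to\widetilde v_h$ strongly in $W^{1,2}(B_1)$ as $k\to\infty$. To enforce the $L^\infty$ bound I would truncate once more by setting $\widehat u_h^k:=u_h^k/\max(1,|u_h^k|)$. The key elementary observation is that $|u_h^k|=|\p(u_h^k)|$, and Corollary~\ref{cor1} ensures that the two one-sided traces of $u_h^k$ on $J_{u_h^k}$ have the same modulus; hence $\max(1,|u_h^k|)$ is continuous across the jump and the truncation stays in $SBV^2$ with $J_{\widehat u_h^k}\subset J_{u_h^k}\subset\Sigma_{h,k}$. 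Since $\p(\widehat u_h^k)=\p(u_h^k)/\max(1,|\p(u_h^k)|)$ is just the pointwise truncation of $\p(u_h^k)$ at level $1$, it stays in $W^{1,2}(B_1)$, preserves the boundary data (because $|\p(u_h^k)(x)|=|x|<1$ in $B_1$ close to $\partial B_1$), and converges strongly to $\widetilde v_h$ in $W^{1,2}(B_1)$. The uniform $L^\infty$ bound then lets the potential term pass to the limit as well, yielding $G_{\eps_h}(\p(\widehat u_h^k),B_1)\to G_{\eps_h}(\widetilde v_h,B_1)$ as $k\to\infty$.

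To conclude I would diagonalize: pick $k(h)$ so large that $G_{\eps_h}(\p(\widehat u_h^{k(h)}),B_1)\leq G_{\eps_h}(\widetilde v_h,B_1)+1/h$, and set $u_h:=\widehat u_h^{k(h)}$. By~\eqref{GLcone} and near-minimality of $w_h$,
\[
E_{\eps_h}\bigl(\P(u_h),B_1\bigr)-\frac{\pi}{m^2}|\log\eps_h|\leq \gamma_m(\eps_h,1)+\frac{2}{h}\longrightarrow\bgamma_m\,.
\]
The matching lower bound is tautological: $\P(u_h)\in W^{1,2}(B_1;\mathcal{N})$ with the correct boundary data is itself admissible in the minimization defining $\gamma_m(\eps_h,1)$, so $E_{\eps_h}(\P(u_h),B_1)-\tfrac{\pi}{m^2}|\log\eps_h|\geq \gamma_m(\eps_h,1)\to\bgamma_m$. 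The only genuinely delicate ingredient is the $SBV$ approximation, which is exactly the content of Lemma~\ref{approxgamma}; in the present corollary the real verification is that the two successive truncations interact correctly with $\p$ and preserve both the boundary condition and the jump-set structure.
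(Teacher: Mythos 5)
Your proposal is correct and follows essentially the same route as the paper's proof: take (near-)minimizers $w_h=\tfrac1m(v_h,|v_h|\sqrt{m^2-1})$ of $\gamma_m(\eps_h,1)$, approximate $v_h$ via Lemma~\ref{approxgamma}, truncate at modulus one as in~\eqref{tronc}, and diagonalize, with the lower bound coming from admissibility of $\P(u_h)$ in~\eqref{defgammameps}. The only differences are cosmetic (the paper uses exact minimizers and a single truncation after the approximation, exploiting only that truncation does not increase the energy rather than convergence of the truncated energies).
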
  

\begin{proof}
Let us fix an arbitrary $h\in\N$. Consider $w_h\in W^{1,2}(B_1,\mathcal{N})$ a solution of the minimization problem~\eqref{defgammameps} defining $\gamma_m(\eps_h,1)$ (existence easily follows from the direct method of calculus of variations), and write $w_h=(1/m)(v_h,|v_h|\sqrt{m^2-1})$ with $v_h\in W^{1,2}(B_1)$. We apply Lemma~~\ref{approxgamma} to $v_h$ to produce a sequence $\{u_{k}\}$ and curves $\{\Sigma_k\}$. The convergence property {\it (iii)} in Lemma~~\ref{approxgamma} implies that $E_{\eps_h}\big(\P(u_k)\big)\to E_{\eps_h}(w_h)$ as $k\to\infty$. Hence, we can find $k_h\in\N$ such that, setting $\widetilde u_h:=u_{k_h}$ and $\Sigma_{h}:=\Sigma_{k_h}$,
one has $J_{\widetilde u_h}\subset \Sigma_h$ and $E_{\eps_h}\big(\P(\widetilde u_h)\big)\leq E_{\eps_h}(w_h)+\eps_h$. Setting $u_h:=\widetilde u_h/\max\big(1,|\widetilde u_h|\big)$, 
we observe that  $u_h\in SBV^2(B_1)$, $\|u_h\|_{L^\infty(\Omega)}\leq 1$, and $J_{u_h}\subset J_{\widetilde u_h}\subset \Sigma_h$. As in~\eqref{tronc} we have  $\p(u_h)= \p(\widetilde u_h)/\max\big(1,|\p(\widetilde u_h)|\big)$, and we infer that $\p(u_h)\in W^{1,2}(\Omega)$, $\p(u_h) (x)=x$ in a neighborhood of $\partial B_1$, and 
\[
\gamma_m(\eps_h,1)\leq E_{\eps_h}\big(\P(u_h)\big) \leq E_{\eps_h}\big(\P(\widetilde u_h)\big)\leq E_{\eps_h}(w_h)+\eps_h \,.
\]
Since $E_{\eps_h}(w_h)=\gamma_{m}(\eps_h,1)\to \bgamma_m$ as $h\to\infty$ by~\eqref{homogenbgamma} and Lemma~~\ref{bdinfconstgam}, the conclusion follows. 
\end{proof}

To close this section,  we characterize the optimal one dimensional profile related to the energy~$I_\eta$. This will be important to go from a recovery sequence for the sharp interface functional $F^0_{\eps,g}$ to a recovery sequence of the diffuse interface functional $F^{\eta}_\eps$.

\begin{lemma}\label{profil1Dpsi}
For every  $\eta>0$, 
\[
 \min \left\{ I^{1D}_\eta(\psi):=\dfrac\eta2 \int_\R |\psi'(s)|^2\, ds +\dfrac1{2\eta} \int_\R (1-\psi(s))^2\, ds\, :\, \psi(0)=0\right\}=1\,.
\]
The minimum is uniquely achieved by $\psi_\eta(s):=\psi_\star(s/\eta)$ with $\psi_\star(s)=1-e^{-|s|}$.
\end{lemma}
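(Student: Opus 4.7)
By the substitution $\phi(t) := \psi(\eta t)$, the change of variables $t = s/\eta$ yields $I^{1D}_\eta(\psi) = I^{1D}_1(\phi)$ and is compatible with the constraint $\psi(0) = 0 \Leftrightarrow \phi(0) = 0$. Hence the problem reduces to showing that $J(\phi) := I_1^{1D}(\phi) \geq 1$ for every absolutely continuous $\phi$ with $\phi(0) = 0$, with equality only for $\phi(t) = 1 - e^{-|t|}$.

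My plan is to argue via the standard Modica--Mortola equipartition trick. Before applying it, I would make two preliminary reductions. First, the truncation $\widehat\phi := (0 \vee \phi) \wedge 1$ still satisfies $\widehat\phi(0) = 0$ and obeys $J(\widehat\phi) \leq J(\phi)$, with strict inequality as soon as $\phi$ takes values outside $[0,1]$ on a set of positive measure; this lets me restrict to admissible $\phi$ with $0 \leq \phi \leq 1$ a.e. Second, whenever $J(\phi) < \infty$ the function $1 - \phi$ lies in $H^1(\R)$, so by the one-dimensional Sobolev embedding $H^1(\R) \hookrightarrow C_0(\R)$ the continuous representative of $\phi$ satisfies $\phi(t) \to 1$ as $t \to \pm\infty$.

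The core estimate is then the chain of inequalities
\[
\tfrac{1}{2}|\phi'|^2 + \tfrac{1}{2}(1-\phi)^2 \,\geq\, |\phi'|\,(1-\phi) \,\geq\, \mathrm{sign}(t)\,\phi'(1-\phi) \,=\, -\,\mathrm{sign}(t)\,\frac{d}{dt}\!\left[\tfrac{1}{2}(1-\phi)^2\right],
\]
using Young's inequality and $1 - \phi \geq 0$. Integrating separately on $(-\infty, 0)$ and $(0, \infty)$ and evaluating the resulting boundary terms via $\phi(0) = 0$ and $\phi(\pm\infty) = 1$, each half-line contributes exactly $\tfrac{1}{2}$, so $J(\phi) \geq 1$. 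Equality in the first step requires $|\phi'| = 1-\phi$ and in the second requires $\phi'(t)\,\mathrm{sign}(t) \geq 0$; together this yields the ODE $\phi' = (1-\phi)\,\mathrm{sign}(t)$, whose unique absolutely continuous solution with $\phi(0) = 0$ is $\phi(t) = 1 - e^{-|t|} = \psi_\star(t)$. A direct computation $|\psi_\star'|^2 = (1-\psi_\star)^2 = e^{-2|t|}$ then gives $J(\psi_\star) = \int_\R e^{-2|t|}\,dt = 1$, completing the argument.

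No serious obstacle is expected, the whole proof being a textbook equipartition computation. The only delicate point is justifying the pointwise limits $\phi(\pm\infty) = 1$ underlying the boundary evaluations after integration by parts, which is precisely what the one-dimensional Sobolev embedding supplies once the $L^\infty$ truncation has been performed.
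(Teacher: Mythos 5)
Your proof is correct and follows essentially the same route as the paper: rescale to $\eta=1$, apply Young's inequality to get $\tfrac12|\phi'|^2+\tfrac12(1-\phi)^2\ge|\phi'|\,(1-\phi)$, and integrate the exact derivative of $\tfrac12(1-\phi)^2$ (the paper's $\Phi\circ\psi$) over each half-line using $\phi(0)=0$ and $\phi(\pm\infty)=1$, with the equality analysis yielding the same ODE and profile. The only cosmetic difference is that you truncate to $[0,1]$ at the outset, whereas the paper absorbs this into the definition $\Phi(t)=\tfrac12(1-|t|)^2$; both are fine.
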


\begin{proof}
By rescaling, we may assume without loss of generality that $\eta=1$. Write 
$$\Phi(t):=(1-|t|)^2/2=\int_{|t|}^1(1-s)\, ds\,.$$ 
Let $\psi:\R\to\R$ be such that $\psi(0)=0$ and $I^{1D}_1(\psi)<\infty$.
First, notice that the condition $I^{1D}_1(\psi)<\infty$ implies that $\lim_{s\to\pm\infty}\psi(s)=1$. Then, by Cauchy-Schwarz and Young inequalities, we have 
$$I^{1D}_1(\psi)\geq\int_\R |1-\psi(s)|\, |\psi'(s)|\, ds= \int_{-\infty}^0\big|(\Phi\circ \psi)'\big|\,ds+\int^{\infty}_0\big|(\Phi\circ \psi)'\big|\,ds \geq 2(\Phi(0)-\Phi(1)) =1\,.$$
We have equality in the above chain of inequalities if and only if $|\psi'|=|1-\psi|$. Using $\psi(0)=0$ and the condition $\int_\R |\psi'|^2\,ds<\infty$ leads to the optimal profile $\psi_\star$.
\end{proof}


\subsection{Proof of Theorem ~\ref{Gammadiff}~{\it(ii)}: The $\Gamma$-$\liminf$ inequality} \label{sec:gammainf}  

  \begin{proposition}\label{Gamliminfprop}
   Let $\eps_h\downarrow0$ and $\eta_h\downarrow0$ be arbitrary sequences. If $\{(u_h,\psi_h)\}\subset\mathcal{H}_g(\Omega)$ is such that $(u_h,\psi_h)\to (u,\psi)$ in $L^1(\Omega)$, then
\begin{equation}\label{Theoliminf}
\widetilde F_0(u,\psi)\leq \liminf_{h\to\infty} \widetilde F_{\eps_h}^{\eta_h}(u_h,\psi_h)\,.
\end{equation}
In addition, 
\begin{enumerate}
\item[(i)] if the liminf is finite, then $u\in \mathcal{L}_g(\Omega)$, $\psi\equiv 1$, and~\eqref{TheoliminfE}-\eqref{TheoliminfI} hold; 
\item[(ii)] if equality holds and the liminf is a finite limit, then 
$\p(u_h)$ converges  to $u^m=:e^{i\vhi}v_\mu$ strongly in $W^{1,p}(\Omega)$ for every $p<2$ and strongly in $W^{1,2}_{\rm{loc}}(\overline\Omega \setminus{\rm spt}\,\mu)$, and~\eqref{strongconvergence}-\eqref{strongconvergencebis} hold. 
\end{enumerate}
\end{proposition}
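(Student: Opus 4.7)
The plan is to extract a convergent subsequence via Proposition~\ref{compactthmfield}, then to split the Ginzburg--Landau energy into a contribution away from the limiting vortices (producing $\tfrac{1}{2m^2}\int|\nabla\vhi|^2+\tfrac{1}{m^2}\W(\mu)$), a contribution concentrated near each of the $md$ vortices of $\mu$ (producing the core energy $md\,\bgamma_m$), and to treat the phase-field energy separately via an Ambrosio--Tortorelli slicing argument (producing the two $\mathcal{H}^1$ terms).

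First we assume the liminf is finite (otherwise there is nothing to prove) and extract a subsequence attaining it. Proposition~\ref{compactthmfield} together with Remark~\ref{remcomp} then yields $u\in\mathcal{L}_g(\Omega)$ with $u^m=:e^{i\vhi}v_\mu$, $\psi\equiv 1$, $\psi_h\to 1$ a.e., and $v_h:=\p(u_h)\rightharpoonup u^m$ weakly in $W^{1,p}(\Omega)$ for every $p<2$ and in $W^{1,2}_{\rm loc}(\overline\Omega\setminus\spt\,\mu)$. For small $r>0$ such that the balls $B_r(x_k)$ are disjoint and compactly contained in $\Omega$, we split
\[
\widetilde F^{\eta_h}_{\eps_h}(u_h,\psi_h) = A_h(r)+B_h(r)+I_{\eta_h}(\psi_h),
\]
where $A_h(r):=E_{\eps_h}(\P(u_h),\Omega\setminus B_r(\mu))+\tfrac{\pi d}{m}\log r$ and $B_h(r):=E_{\eps_h}(\P(u_h),B_r(\mu))-\tfrac{\pi d}{m}\log(r/\eps_h)$, using the identity $|\log\eps_h|=|\log r|+\log(r/\eps_h)$.

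For $A_h$, identity~\eqref{GLcone} gives $E_{\eps_h}(\P(u_h),\Omega\setminus B_r(\mu))\geq \tfrac{1}{2m^2}\int_{\Omega\setminus B_r(\mu)}|\nabla v_h|^2$, and weak $W^{1,2}_{\rm loc}$-lower semicontinuity combined with Proposition~\ref{keypropcomp} applied to $u^m=e^{i\vhi}v_\mu$ yields $\lim_{r\downarrow 0}\liminf_h A_h(r)\geq \tfrac{1}{2m^2}\int_\Omega|\nabla\vhi|^2+\tfrac{1}{m^2}\W(\mu)$. For $B_h$, near each vortex $x_k$ one has $v_\mu(x)\sim\alpha_k(x-x_k)/|x-x_k|$ for some $\alpha_k\in\Ss^1$; since $v_h\rightharpoonup u^m$ weakly in $W^{1,2}(B_r(x_k)\setminus B_{r/2}(x_k))$, a Fubini--Fatou argument on concentric circles lets us select a radius $\rho_h^k\in[r/2,r]$ along which the trace of $v_h$ on $\partial B_{\rho_h^k}(x_k)$ has controlled $W^{1,2}$-norm and approximates $e^{i\vhi(x_k)}\alpha_k(x-x_k)/|x-x_k|$. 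A short explicit modification of $v_h$ in the thin annulus $B_r(x_k)\setminus B_{\rho_h^k}(x_k)$, of $o_h(1)$ energetic cost, produces a competitor with the exact boundary condition $(x-x_k)/|x-x_k|$ on $\partial B_r(x_k)$ (up to a rotation absorbed by invariance of $E_\eps$). Rescaling to $B_1$ and invoking~\eqref{defgammameps} together with~\eqref{homogenbgamma} then gives $E_{\eps_h}(\P(u_h),B_r(x_k))\geq \gamma_m(r/\eps_h)+\tfrac{\pi}{m^2}\log(r/\eps_h)-o_h(1)$; since $\gamma_m(r/\eps_h)\to\bgamma_m$ by Lemma~\ref{bdinfconstgam}, summing over $k=1,\ldots,md$ yields $\liminf_h B_h(r)\geq md\,\bgamma_m$.

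The bound~\eqref{TheoliminfI} follows from a standard Ambrosio--Tortorelli-type slicing: on $\mathcal{H}^1$-a.e.\ 1D slice transverse to $J_u$, the constraint $\psi_hu_h\in W^{1,2}$ together with the $L^\infty$-bound on $u_h$ forces $\psi_h$ to approach zero somewhere near every point of $J_u$ on that slice, so Lemma~\ref{profil1Dpsi} and Fatou give $\liminf_h I_{\eta_h}(\psi_h)\geq\mathcal{H}^1(J_u)$; an analogous slicing argument near $\partial\Omega$ (using $\psi_h=1$ and $\psi_hu_h=g$ on the boundary) contributes $\mathcal{H}^1(\{u\neq g\}\cap\partial\Omega)$. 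Combining the three bounds and letting $r\downarrow 0$ proves~\eqref{Theoliminf} and~(i). For~(ii), equality in the limit forces equality at each step: energy equality in the far-field upgrades the weak $W^{1,2}_{\rm loc}$-convergence to strong, yielding~\eqref{strongconvergence}, and combined with the $W^{1,p}$-bound of Lemma~\ref{lemboundWq} (via the equi-integrability visible in its proof) upgrades this to strong $W^{1,p}(\Omega)$-convergence for every $p<2$; equality in the AT bound yields~\eqref{strongconvergencebis}. The hard part will be the near-vortex estimate: identifying precisely the constant $md\,\bgamma_m$ (as opposed to the abstract constant of Theorem~\ref{compAlicPon}(iv)) requires the good-radius selection, an explicit cheap modification producing the exact boundary data of the $\gamma_m$ problem, and careful control of the error terms as $h\to\infty$ and $r\downarrow 0$.
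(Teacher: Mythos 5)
Your overall architecture (compactness via Proposition~\ref{compactthmfield}, a far-field/near-vortex splitting of the Ginzburg--Landau energy, and a separate treatment of $I_{\eta_h}$) matches the paper's, but the two hardest steps contain genuine gaps. First, the near-vortex bound. You select a circle $\partial B_{\rho_h^k}(x_k)$ on which the trace of $v_h$ has ``controlled $W^{1,2}$-norm and approximates'' the canonical vortex $\alpha_k(x-x_k)/|x-x_k|$, and then claim a boundary modification of $o_h(1)$ energetic cost. But at this stage you only have \emph{weak} $W^{1,2}_{\rm loc}$ convergence away from $\spt\,\mu$ (strong convergence is a conclusion of part (ii), not a hypothesis), so on a good circle the trace converges only in $C^0$ and weakly in $W^{1,2}(\partial B_\rho)$: $\|\partial_\tau v_h-\partial_\tau(\alpha_k\tfrac{x-x_k}{|x-x_k|})\|_{L^2(\partial B_\rho)}$ is bounded, not small. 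The cost of interpolating the phase in a thin annulus is then $O(1)$, not $o_h(1)$, and an $O(1)$ error destroys the identification of the constant $md\,\bgamma_m$. The paper resolves this with the dichotomy of \cite{AlicPon,ColJer}: either $v_h$ stays at $W^{1,2}$-distance $\geq\delta$ from the family of degree-one vortex profiles on each of $L$ dyadic annuli, in which case the excess $c_\delta$ per annulus already beats the target lower bound (Case 1 / \eqref{liminfK}); or on some annulus $v_h$ is \emph{strongly} $\delta$-close to such a profile (\eqref{proxdeltav}), and only then does the surgery cost $C_{\ovsigma}\delta+o_h(1)$, with $\delta\downarrow0$ taken at the end. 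Without this dichotomy your construction does not go through.

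Second, the bound \eqref{TheoliminfI}. Your justification --- ``the constraint $\psi_hu_h\in W^{1,2}$ together with the $L^\infty$-bound on $u_h$ forces $\psi_h$ to approach zero somewhere near every point of $J_u$ on that slice'' --- is not valid as stated: if $\psi_h\geq\delta_0$ on an interval of a slice, then $u_h=(\psi_hu_h)/\psi_h$ is merely continuous there, which is perfectly compatible with $u_h\to u$ in $L^1$ across a jump of $u$ unless you have \emph{uniform} slice-wise bounds on $\partial_\tau(\psi_hu_h)$ and $\partial_\tau\psi_h$; these are not available on individual slices (the global bounds are of order $|\log\eps_h|$ and $\eta_h^{-1}$), so the argument needs at minimum a localization away from $\spt\,\mu$ and a simultaneous Fubini selection of good slices, plus the usual integral-geometric reassembly. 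The paper avoids all of this: it uses $\int(1-\psi_h)|\nabla\psi_h|\leq I_{\eta_h}(\psi_h)$ and the coarea formula to extract a level set $E_h=\{\psi_h<t_h\}$ with $\tfrac{1-2\delta}{2}\mathcal{H}^1(\partial E_h)\leq I_{\eta_h}(\psi_h)$, replaces $u_h$ by $(1-\chi_{E_h})u_h\in SBV^p$, and invokes the lower semicontinuity theorem of \cite{BCS}; Lemma~\ref{profil1Dpsi} is used only for the recovery sequence. I recommend you either adopt the level-set route or supply the missing quantitative slicing estimates; and in the near-vortex step you must incorporate the closeness-to-$\mathcal{K}$ dichotomy before the boundary surgery. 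The far-field bound and the upgrade to strong convergence in part (ii) are essentially correct and in line with the paper.
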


\begin{proof}
Without loss of generality, we may assume that 
 \begin{equation}\label{liminfassump}
 \liminf_{h\to\infty} \widetilde F_{\eps_h}^{\eta_h}(u_h,\psi_h)= \lim_{h\to\infty} \widetilde F_{\eps_h}^{\eta_h}(u_h,\psi_h)<\infty\,.
\end{equation}
We may also assume that $\|u_h\|_{L^\infty(\Omega)}\leq 1$. Indeed, on the first hand~\eqref{liminfassump} clearly implies that $|u|=1$. On the other hand, the truncation argument used in the proof   of Theorem~~\ref{exist} shows that replacing $u_h$ by $\widehat u_h$ given by~\eqref{deftronc} does not increase the energy. Moreover, 
\begin{equation}\label{estiL1difftrunc}
\|u_h-\widehat u_h\|_{L^1(\Omega)}\leq \||u_h| -1\|_{L^1(\Omega)}\leq \|u_h-u\|_{L^1(\Omega)}\,,
\end{equation}  
so that $\widehat u_h\to u$ in $L^1(\Omega)$. Hence $(u_h,\psi_h)$ can be replaced by $(\widehat u_h,\psi_h)$. 

 Next, we apply  Theorem~~\ref{compactthmfield} to extract a further subsequence to obtain all the conclusions of that theorem. As a consequence, $\psi=1$ and $u\in\mathcal{L}_g(\Omega)$ with  $u^m=e^{i\vhi}v_\mu$. We  have to show that 
\begin{equation}\label{prfthmliminfinterm}
F_{0,g}(u)\leq \lim_{h\to\infty} \widetilde F_{\eps_h}^{\eta_h}(u_h,\psi_h)\,. 
\end{equation} 
We shall prove this inequality  in several steps. 
\vskip3pt

\noindent{\it Step 1.} We  first claim that~\eqref{TheoliminfI} holds. 
Notice that by Remark~~\ref{remcomp}, $\sup_h  I_{\eta_h}(\psi_h)<\infty$. We consider the larger domain $\widetilde\Omega$ defined in~\eqref{extdomom}, as
\begin{equation*}
\widetilde \Omega:=\big\{x\in\R^2: {\rm dist}(x,\Omega)<r_0\big\},
\end{equation*}
for $r_0$ small enough and we recall that 
the nearest point projection ${ \Pi}$ on $\partial\Omega$ is well defined and smooth in $\widetilde\Omega \setminus\Omega$. 
We extend $(u_h,\psi_h)$ and $u$ 
to $\widetilde\Omega$ by setting for $x\in\widetilde\Omega \setminus\Omega$, $\psi_h(x)=1$,  $u_h(x)=g\big({\Pi}(x)\big)$, and $u(x)=g\big({\Pi}(x)\big)$.  
Then, it is elementary to check that $(u_h,\psi_h)\in\mathcal{H}(\widetilde\Omega)$ and $u\in SBV^p(\widetilde\Omega)$ for every $p<2$. In addition, $\P(u_h)\rightharpoonup\P(u)$ weakly in $W^{1,p}(\widetilde\Omega)$ for every $p<2$, and 
$$\int_{\widetilde\Omega} (1-\psi_h)|\nabla\psi_h|\,dx\leq \frac{\eta_h}{2}\int_{\widetilde\Omega}|\nabla\psi_h|^2\,dx+\frac{1}{2\eta_h}\int_{\widetilde\Omega}(1-\psi_h)^2\,dx= I_{\eta_h}(\psi_h)\leq C\,.$$
Next, consider some arbitrary $\delta\in(0,1/2)$. Arguing as in the proof of Proposition~~\ref{precompactresult}, 
$$\int_{\delta}^{1-\delta}(1-t)\mathcal{H}^{1}\big(\partial\{\psi_h<t\}\cap\widetilde\Omega\big)\,dt  \leq I_{\eta_h}(\psi_h)\,,$$
 so that we can find a level $t_h\in(\delta,1-\delta)$ such that 
\begin{equation}\label{2236}
\frac{1-2\delta}{2}\mathcal{H}^1(\partial E_h\cap\widetilde\Omega)\leq I_{\eta_h}(\psi_h)\quad \text{with $E_h:=\{\psi_h<t_h\}$}\,.
\end{equation}
Notice that $E_h\subset \overline\Omega$, and $|E_h|\to 0$ since $\psi_h\to 1$ in $L^1(\Omega)$. 

Fix some $p\in (1,2)$. Defining 
$$\widetilde u_h:=(1-\chi_{E_h})u_h\,, $$
we argue as in the proof of Proposition~~\ref{precompactresult} to show that $\widetilde u_h\in SBV^p(\widetilde\Omega)\cap L^\infty(\widetilde\Omega)$ with   $J_{\widetilde u_h}\, \subset \,\partial E_h$, and that $\widetilde u_h\to u$ in $L^1(\widetilde\Omega)$. Moreover, 
$|\nabla \widetilde u_h|\leq \big|\nabla(\P(u_h))\big	|$ a.e. in $\widetilde \Omega$, so that $\{\nabla\widetilde u_h\}$ is bounded in $L^p(\widetilde\Omega)$. By \cite[Theorem 1]{BCS}, we have 
\[
\liminf_{h\to\infty}\left\{\delta\int_{\widetilde\Omega}|\nabla \widetilde u_h|^p\,dx+\frac{1-2\delta}{2}\mathcal{H}^1(\partial E_h\cap\widetilde\Omega) \right\}\geq 
\delta\int_{\widetilde\Omega}|\nabla \widetilde u|^p\,dx+(1-2\delta) \mathcal{H}^1(J_u\cap \widetilde\Omega)\,,
\]
and thus 
$$  \mathcal{H}^1(J_u\cap \widetilde\Omega)\leq \liminf_{h\to\infty} \frac{1}{2}	\mathcal{H}^1(\partial E_h\cap\widetilde\Omega)  +C\delta\,. $$
Inserting this inequality in~\eqref{2236} and letting $\delta\downarrow0$, we conclude that 
$$\liminf_{h\to\infty}  I_{\eta_h}(\psi_h)\geq  \mathcal{H}^1(J_u\cap \widetilde\Omega)\,.$$
Since $u=g\circ{\Pi}$ in $\widetilde \Omega \setminus\overline\Omega$, we have 
$J_u\subset \overline\Omega$, and thus  $ \mathcal{H}^1(J_u\cap \widetilde\Omega)=  \mathcal{H}^1(J_u\cap\Omega)+\mathcal{H}^1\big(\{u\neq g\}\cap\partial\Omega\big)$, 
and inequality~\eqref{TheoliminfI} follows.  
\vskip3pt

\noindent{\it Step 2.} We now prove the lower bound~\eqref{TheoliminfE}. 
Note that putting \eqref{TheoliminfE} and~\eqref{TheoliminfI} together leads to~\eqref{prfthmliminfinterm}. 
First, we recall that~\eqref{liminfassump} implies~\eqref{asymptenerg} with $v_h:={\rm p}(u_h)$ (by means of~\eqref{GLcone}). Then the proof of~\eqref{TheoliminfE} follows very closely the ones of \cite[Theorem 5.3]{AlicPon} and \cite[Lemma 4.1.1]{ColJer} for the classical Ginzburg-Landau functional. We provide a quite detailed proof for the reader's convenience. Write $\mu=2\pi\sum_{k=1}^{md} x_k$, and choose $\sigma>0$ in such a way  that the balls 
$B_{\sigma}(x_k)$ are contained inside $\Omega$ and are pairwise disjoint. Set 
\[
\mathcal{K}:=\left\{ v_\alpha:\C\setminus\{0\}\to\Ss^1 \mbox{ defined as } \ v(z) =\alpha \frac{z}{|z|} :  \alpha\in \Ss^1\right\}\,.
\]
For each $k\in\{1,\ldots,md\}$, our aim is to prove that either $v_h$ is $W^{1,2}$-close to $\mathcal{K}$ on $\partial B_\sigma(x_k)$, or it has  ``large" energy.
We define for $t\in(0,\sigma]$ and $w\in W^{1,2}(B_t \setminus\overline{B_{t/2}})$,
\[d_t(w,\mathcal{K}):=\min\big\{\|w-v\|_{W^{1,2}(B_t \setminus B_{t/2})} :  v\in \mathcal{K} \big\}\,.\]
It is proven in~\cite{AlicPon} that  for a given $\delta\in(0,1)$, there exists a constant $c_\delta>0$ independent of $t$ such that the condition 
$\liminf_{h} d_t(v_h(\cdot+x_k),\mathcal{K})\geq \delta$ implies  
\begin{equation}\label{liminfK}
 \liminf_{h\to \infty} \frac{1}{2} \int_{B_t(x_k) \setminus B_{t/2}(x_k)}|\nabla v_h|^2\,dx\geq \pi \log 2 +c_\delta\,. 
\end{equation}
Now let $L\in \N$ be such that 
\[
 \frac{Lc_\delta}{m^2}\geq \frac{1}{2 m^2} \int_{\Omega}|\nabla \vhi|^2\,dx+\frac{1}{m^2} \mathbb{W}(\mu)+md \bgamma_m -\frac{\pi d}{m} \log \sigma -\frac{C_*}{m^2}\,,
\]
where $c_\delta$ is the constant from~\eqref{liminfK}, and $C_*$ is the constant from~\eqref{lowerbBGAlicPon}. For $l\in \{1,\ldots,L\}$ we write $C_l(x_k):=B_{2^{1-l}\sigma}(x_k) \setminus B_{2^{-l}\sigma}(x_k)$. By the weak $W^{1,2}_{\textrm{loc}}(\overline\Omega \setminus {\rm spt} \mu)$ 
convergence of $v_h$ to $u^m=e^{i\vhi} v_\mu$, we have for each $k$ and $l$,
\begin{equation}\label{lower2}
 \liminf_{h\to \infty} \frac{1}{2}\int_{C_l(x_k)} |\nabla v_h|^2\,dx\geq \frac{1}{2} \int_{C_l(x_k)} |\nabla \vhi|^2+|\nabla v_\mu|^2\,dx\geq \pi \log 2\,. 
\end{equation}
We now have to distinguish two different cases.
\vskip2pt

\noindent{\it Case 1:} For $h$ large enough, and for each $1\leq l\leq L$, there exists at least one $k_l\in \{1,\ldots,md\}$ such that $d_{2^{1-l}\sigma}(u_h(\cdot+x_{k_l}),\mathcal{K})\geq  \delta$. Then, we estimate 
 \begin{multline*}
 E_{\eh}\big(P(u_h)\big)-\frac{\pi d}{m} |\log \eh|\geq \frac{1}{m^2}E_{\eh}(v_h)-\frac{\pi d}{m} |\log \eh|\\
 \geq  \frac{1}{m^2} \Big\{ E_{\eh}(v_h, B_{2^{-L}\sigma}(\mu))-\pi d m |\log \eh|\Big\}+\frac{1}{m^2}\sum_{l=1}^L\sum_{k=1}^{md} \frac{1}{2} \int_{C_l(x_k)} |\nabla v_h|^2\,dx\,.
 \end{multline*}
Taking the liminf in $h$, and combining~\eqref{lowerbBGAlicPon},~\eqref{liminfK}, and~\eqref{lower2} yields 
 \begin{align}
\nonumber \liminf_{h\to \infty} \left\{E_{\eh}\big(P(u_h)\big)-\frac{\pi d}{m} |\log \eh| \right\}
\nonumber &\geq \frac{\pi d }{m} \log \frac{\sigma}{2^L} +\frac{C_*}{m^2}+\frac{L}{m^2}(\pi d m \log 2 +c_\delta)\\
\nonumber  &=\frac{\pi d}{m} \log \sigma +\frac{C_*}{m^2} +\frac{L c_\delta}{m^2}\\
\label{lowerboundoption1}  &\geq  \frac{1}{2 m^2} \int_{\Omega}|\nabla \vhi|^2\,dx+\frac{1}{m^2} \mathbb{W}(\mu)+md \bgamma_m\,,
 \end{align}
and thus~\eqref{TheoliminfE} holds.
\vskip2pt

\noindent{\it Case 2:} For a  subsequence there exists $\overline{l}\in \{1,\ldots,L\}$ such that, setting $\overline{\sigma}:= 2^{1-\overline{l}}\sigma$,
 $d_{\overline{\sigma}}(u_h(\cdot+x_k),\mathcal{K})< \delta$ for every $k\in\{1,\ldots, md\}$. Let us prove that for $\eps_h$ small enough,
\begin{equation}\label{lowerboundcore}
 G_{\eh}\big(v_h, B_{\overline \sigma}(\mu)\big)-\frac{\pi d}{m}\log \frac{\overline \sigma}{\eh}\geq md\gamma_m(\eps_h,\overline \sigma)-C_{\ovsigma}\delta\,,
\end{equation}
where, here and below, $C_{\ovsigma}$ denotes a nonnegative number depending on $\ovsigma$ but not on $h$ or $\delta$.
To establish this inequality, we shall modify $v_h$ in $B_{\ovsigma}(\mu)$ without increasing the energy too much, and in such a way that it is admissible for~\eqref{defgammam}.
We can proceed on each ball $B_{\ovsigma}(x_k)$ separately, and we may assume without loss of generality that $x_k=0$. Up to a rotation, we can even assume that 
\begin{equation}\label{proxdeltav}
 \int_{B_{\overline \sigma} \setminus B_{\overline \sigma/2}} \left|\nabla v_h-\nabla (e^{i\theta})\right|^2\,dx \leq \delta^2\,,
\end{equation}
where $\theta(x)$ denotes the argument of $x/|x|$. As~\eqref{asymptenerg} holds, we infer from~\eqref{lowerbBGAlicPon} in Theorem~\ref{compAlicPon} 
that
\[
 \int_{B_{3\ovsigma/4} \setminus B_{ \ovsigma/2}} |\nabla v_h|^2+\frac{1}{\eps_h^2}(1-|v_h|^2)^2\,dx\leq  C_{\ovsigma}\,.
\]
Therefore, for every $h$ we can find  $\widetilde{\sigma}_h\in [ \ovsigma/2, 3\ovsigma/4]$ for which 
\begin{equation}\label{fubinisigmatilde}
 \int_{\partial B_{\widetilde{\sigma}_h}} |\nabla v_h|^2+\frac{1}{\eps_h^2}(1-|v_h|^2)^2\,d\mathcal{H}^1\leq C_{\ovsigma}\,,
\end{equation}
and
\begin{equation}\label{estimH1vtheta}
 \int_{\partial B_{\widetilde{\sigma}_h}} \left|\nabla v_h-\nabla (e^{i\theta})\right|^2\, d\mathcal{H}^1\leq C_{\ovsigma} \delta^2\,.
\end{equation}
From~\eqref{estimH1vtheta}  we first derive 
\begin{align}
\nonumber \int_{\partial B_{\widetilde{\sigma}_h}} |\nabla v_h|^2\, d\mathcal{H}^1&= \int_{\partial B_{\widetilde{\sigma}_h}} \left|\nabla v_h-\nabla( e^{i\theta}) +\nabla( e^{i\theta})\right|^2\, d\mathcal{H}^1\\
\nonumber &\leq  \frac{1+\delta}{\delta}\int_{\partial B_{\widetilde \sigma_h}} \left|\nabla v_h-\nabla (e^{i\theta})\right|^2\, d\mathcal{H}^1+(1+\delta)\int_{\partial B_{\widetilde \sigma_h}} \left|\nabla(e^{i\theta})\right|^2\, d\mathcal{H}^1\\
\label{strongerboundsigmatilde} &\leq C_{\ovsigma}\delta+\frac{2\pi}{\widetilde{ \sigma}_h}(1+\delta)\le C_{\ovsigma}\delta+\frac{2\pi}{\widetilde{ \sigma}_h} \,.
\end{align}
Next, by a scaling argument, one obtains 
\[
 \||u|-1\|^2_{L^{\infty}(\partial B_r)}\les C\eps_h \int_{\partial B_r} |\nabla u|^2+\frac{1}{\eps_h^2} (1-|u|^2)^2\,d\mathcal{H}^1
\]
for every $\eps_h\le r$ and $u\in W^{1,2}(\partial B_r)$.  Hence,~\eqref{fubinisigmatilde} yields  
\begin{equation}\label{inftybound}
 \| |v_h|-1\|_{L^{\infty}(\partial B_{\widetilde{\sigma}_h})}\leq C_{\ovsigma} \eps_h^{1/2}\,.
\end{equation}
We can thus write $v_h= \rho_he^{i\theta_h}$ on $\partial B_{\widetilde{\sigma}_h}$. Moreover, we have 
$\deg(v_h,\partial B_{\widetilde{\sigma}_h})=1$  by~\eqref{estimH1vtheta}, so that 
$\theta_h-\theta$ can be chosen to be single valued. Let us extend $\rho_h$ and $\theta_h$ by zero homogeneity outside $B_{\widetilde{\sigma}_h}$. For $\eps_h$ small enough, we set $\hat{\sigma}_h:= \widetilde{\sigma}_h+\eps_h^{1/2}$, and we define $\widetilde{v}_h$ in $B_{\hat{\sigma}_h}$ as
\[
 \widetilde{v}_h(x):=\begin{cases}
               v_h(x) & \textrm{in } B_{\widetilde{\sigma}_h}\,,\\[5pt]
\displaystyle               \left(\rho_h(x)\frac{\hat{\sigma}_h-|x|}{\hat{\sigma}_h-\widetilde{\sigma}_h}+\frac{|x|-\widetilde{\sigma}_h}{\hat{\sigma}_h-\widetilde{\sigma}_h} \right) e^{i\theta_h(x)} & \textrm{in } B_{\hat{\sigma}_h} \setminus B_{\widetilde{\sigma}_h}\,.
              \end{cases}
\]
From~\eqref{fubinisigmatilde} and~\eqref{inftybound} we infer that 
\begin{equation}\label{boundvepsannulus}
 G_{\eh}(\widetilde{v}_h, B_{\hat{\sigma}_h} \setminus B_{\widetilde{\sigma}_h})\leq C_{\ovsigma} \eps_h^{1/2}\,.
\end{equation}
In turn,~\eqref{strongerboundsigmatilde} and~\eqref{inftybound} yield, for $\eps_h$ small enough, 
\begin{equation}\label{strongerboundsigmahat}
 \int_{\partial B_{\hat{\sigma}_h}} |\nabla \widetilde{v}_h|^2\, d\mathcal{H}^1\leq C_{\ovsigma}\delta+\frac{2\pi}{\hat{\sigma}_h}\,.
\end{equation}
We now are left to define $\widetilde{v}_h$ in $B_{\overline \sigma} \setminus B_{\hat{\sigma}_h}$. We define for  $x\in B_{\overline \sigma} \setminus B_{\hat{\sigma}_h}$, 
\[\hat{\theta}_h(x):=\frac{\overline \sigma-|x| }{\overline \sigma -\hat{\sigma}_h}{\theta}_h+\frac{|x|-\hat{\sigma}_h}{\overline \sigma-\hat{\sigma}_h}\theta\,,\] 
i.e., the linear interpolation between $\theta_h(x)$ and $\theta(x)$. Setting $\widetilde{v}_h(x):=e^{i\hat{\theta}_h(x)}$ in $B_{\overline \sigma} \setminus B_{\hat{\sigma}_h}$,  it can be proven by means of~\eqref{strongerboundsigmahat} and elementary computations (see \cite[Lemma 4.1.1]{ColJer}) that 
\[
 \int_{B_{\overline \sigma} \setminus B_{\hat{\sigma}_h}} |\nabla \widetilde{v}_h|^2\,dx\leq 2\pi \log \frac{\overline \sigma}{\hat{\sigma}_h} +C_{\ovsigma}\delta\,.
\]
Combining this last inequality with~\eqref{boundvepsannulus}, we first derive
\begin{align*}
 G_{\eh}(v_h, B_{\overline \sigma})&= G_{\eh}(\widetilde{v}_h, B_{\overline \sigma})+G_{\eh}(v_h,B_{\overline \sigma} \setminus B_{\widetilde{\sigma}_h})-G_{\eh}(\widetilde{v}_h,B_{\overline \sigma} \setminus B_{\widetilde{\sigma}_h})\\
 &\geq G_{\eh}(\widetilde{v}_h, B_{\overline \sigma})+ \frac{1}{2m^2}\int_{B_{\overline \sigma} \setminus B_{\hat{\sigma}_h}} |\nabla v_h|^2-|\nabla \widetilde{v}_h|^2\,dx-G_{\eh}(\widetilde{v}_h,B_{\hat{\sigma}_h} \setminus B_{\widetilde{\sigma}_h})\\
 &\geq G_{\eh}(\widetilde{v}_h, B_{\overline \sigma})+\frac{1}{2m^2} \lt(\int_{B_{\overline \sigma} \setminus B_{\hat{\sigma}_h}} |\nabla v_h|^2\,dx-2\pi \log \frac{\overline \sigma}{\hat{\sigma}_h}\rt) -C_{\ovsigma}(\delta+\eps_h^{1/2})\,.
\end{align*}
From~\eqref{proxdeltav} we then obtain 
\begin{align*}
 \int_{B_{\overline \sigma} \setminus B_{\hat{\sigma}_h}} |\nabla v_h|^2\,dx&\geq \int_{B_{\overline \sigma} \setminus B_{\hat{\sigma}_h}} \left|\nabla (e^{i\theta})\right|^2\,dx+ \int_{B_{\overline \sigma} \setminus B_{\hat{\sigma}_h}} \left|\nabla v_h-\nabla( e^{i\theta})\right|^2\,dx\\
 & \qquad-2 \left(\int_{B_{\overline \sigma} \setminus B_{\hat{\sigma}_h}} \left|\nabla (e^{i\theta})\right|^2\,dx\right)^{1/2} \left(\int_{B_{\overline \sigma} \setminus B_{\hat{\sigma}_h}} \left|\nabla v_h-\nabla (e^{i\theta})\right|^2\,dx\right)^{1/2}\\
 &\geq 2\pi \log \frac{\overline \sigma}{\hat{\sigma}_h}-C_{\ovsigma} \delta\,,
\end{align*}
and we deduce that for $\eps_h$ small enough, 
\[
 G_{\eh}(v_h, B_{\overline \sigma})\geq G_{\eh}(\widetilde{v}_h, B_{\overline \sigma})-C_{\ovsigma} (\delta+\eps_h^{1/2})\,.
\]
By the  definition of $\gamma_m(\eps, \overline \sigma)$ (see~\eqref{defgammameps}), we conclude that for $\eps_h$ small, 
\[
 G_{\eh}(v_h, B_{\overline \sigma})-\frac{\pi}{m^2} \log \frac{\overline \sigma}{\eps_h}\geq \gamma_m(\eps_h,\overline \sigma)-C_{\ovsigma}(\delta+\eps_h^{1/2})\,,
\]
which proves~\eqref{lowerboundcore}.

We can now complete the proof of~\eqref{TheoliminfE}. Indeed, by~\eqref{lowerboundcore} and the convergence of $v_h$ towards $e^{i\vhi} v_\mu$ in the weak  $W^{1,2}_{\loc}(\Omega \setminus \spt \mu)$ topology,
\begin{multline}
 \liminf_{h\to \infty} \left\{G_{\eh}(v_h)-\frac{\pi d}{m} |\log \eh|\right\}\\ 
 =\ \liminf_{h \to \infty } \left\{G_{\eh}\big(v_h, \Omega  \setminus B_{\overline \sigma}(\mu)\big)-\frac{\pi d}{m} |\log \overline \sigma|\right.
   + \left.G_{\eh}(v_h,  B_{\overline \sigma}(\mu))-\frac{\pi d}{m} \log\frac{\overline \sigma}{ \eh}\right\} \\
 \ \geq\ \liminf_{h\to \infty}\left\{G_{\eh}\big(v_h, \Omega  \setminus B_{\overline \sigma}(\mu)\big)-\frac{\pi d}{m} |\log \overline \sigma|\right\}
 + md \lim_{h\to \infty}  \bgamma_m(\eps_h,\overline \sigma) -C_{\ovsigma}\delta \\
 \ \geq \frac{1}{2 m^2} \int_{\Omega \setminus B_{\overline \sigma}(\mu)} |\nabla (e^{i\vhi} v_\mu)|^2\,dx-\frac{\pi d}{m} |\log \overline \sigma| +md \bgamma_m -C_{\ovsigma} \delta\,,\label{borneinfsepar}
\end{multline}
where the last inequality follows from~\eqref{homogenbgamma} and Lemma ~\ref{bdinfconstgam}. In view of Proposition~~\ref{keypropcomp}, letting first $\delta\downarrow0$ and then $\sigma\downarrow0$ leads to~\eqref{TheoliminfE}. 
\vskip3pt

\noindent{\it Step 3.} In order to complete the proof, let us show that if equality holds in~\eqref{Theoliminf}, then~\eqref{strongconvergence} and~\eqref{strongconvergencebis} hold.  Note that ~\eqref{strongconvergence} rewrites as 
\begin{equation}\label{rew1450}
\lim_{h\to\infty} G_{\eps_h}\big(v_h,\Omega \setminus B_r(\mu)\big)= \frac{1}{2m^2} \int_{\Omega \setminus B_{r}(\mu)} |\nabla (e^{i\vhi} v_\mu)|^2\,dx \quad\forall r>0\,,  
\end{equation} 
 which, combined with the weak convergence of $\{v_h\}$ in $W^{1,2}_{\rm loc}(\overline\Omega \setminus{\rm spt}\,\mu)$ to $e^{i\vhi}v_\mu$, classically leads to its strong convergence in $W^{1,2}_{\rm loc}(\overline\Omega \setminus{\rm spt}\,\mu)$. 

From~\eqref{TheoliminfE} and~\eqref{TheoliminfI} we first infer that 
$$ \lim_{h\to \infty} I_{\eta_h} (\psi_h)=\mathcal{H}^1(J_u)+\mathcal{H}^1\big(\{u\neq g\}\cap\partial\Omega\big) \,,$$
and
$$ \lim_{h\to \infty} \left\{G_{\eh}(v_h)-\frac{\pi d}{m}|\log \eh|\right\}= \frac{1}{2m^2}\int_\Omega|\nabla\vhi|^2\,dx+\frac{1}{m^2}\mathbb{W}(\mu) 
+m d\bgamma_m\,.$$
In view of~\eqref{lowerboundoption1}, Case 2 in Step 2 above must hold. We argue by contradiction assuming that~\eqref{rew1450} does not hold.  
Then we can find a  subsequence, $\sigma_0>0$, and $\eta_0>0$ such that 
\[
 \liminf_{h\to \infty} G_{\eh}\big(v_h, \Omega  \setminus B_{\sigma_0}(\mu)\big)\geq \frac{1}{2m^2} \int_{\Omega \setminus B_{\sigma_0}(\mu)} |\nabla (e^{i\vhi}v_\mu)|^2\,dx+ \eta_0\,.
\]
By lower semi-continuity of the Dirichlet energy,  the same inequality holds for every $\sigma\in(0, \sigma_0)$. Then, for $\sigma$ and $\delta$ small enough, we have by~\eqref{borneinfsepar},   
\begin{multline*}
 \frac{1}{2m^2}\int_\Omega|\nabla\vhi|^2\,dx+\frac{1}{m^2}\mathbb{W}(\mu) +m d\bgamma_m=\lim_{h\to \infty} \left\{G_{\eh}(v_h)-\frac{\pi d}{m}|\log \eh|\right\}\\
 \geq \frac{1}{2 m^2} \int_{\Omega \setminus B_{\overline \sigma}(\mu)} |\nabla (e^{i\vhi} v_\mu)|^2\,dx-\frac{\pi d}{m} |\log \overline \sigma| +md \bgamma_m -C_{\ovsigma} \delta+ \eta_0\,,
\end{multline*}
where $\ovsigma$ is determined from $\sigma$ as in Case 2 above. Using Proposition~~\ref{keypropcomp} again, we let $\delta\to0$ and then $\sigma\to 0$ to reach a contradiction.

To conclude, it only remains to prove that $v_h\to e^{i\vhi} v_\mu$ strongly in $W^{1,p}(\Omega)$ for every $p<2$.  
Fix an arbitrary $p\in(1,2)$. Since $\{|\nabla v_h|^p\}$ is bounded in $L^1(\Omega)$, we can extract a further subsequence such that $|\nabla v_h|^p\rightharpoonup |\nabla (e^{i\vhi} v_\mu)|^p+\nu_*$  weakly* as measures for some non negative finite measure $\nu_*$ on $\Omega$. From the strong convergence of $v_h$ in $W^{1,2}_{\rm loc}(\Omega \setminus{\rm spt}\,\mu)$, 
we infer that ${\rm spt}\,\nu_*\subset {\rm spt}\,\mu$. Since  $\{\nabla v_h\}$ is also bounded in $L^{q}(\Omega)$ for every $q\in(p,2)$, we have by H\"older's inequality
$$\nu_*(\Omega)\leq\liminf_{h\to\infty}\int_{B_r(\mu)} |\nabla v_h|^p\,dx\leq Cr^{2(1-p/q)}\quad\text{for every $r>0$ and $q\in(p,2)$}\,.  $$
 Letting $r\downarrow0$ we deduce that $\nu_*\equiv 0$ which together  with the weak convergence in $W^{1,p}(\Omega)$ of $v_h$ towards $e^{i\vhi} v_\mu$ concludes the proof. 
\end{proof}


\subsection{Proof of Theorem ~\ref{Gammadiff}~{\it(iii)}: Construction of recovery sequences}\label{sec:gammasup}

In this section we prove the  $\Gamma-$limsup inequality. 
\begin{proposition}\label{proplimsup}
 Let $\eps_h\downarrow0$ and $\eta_h\downarrow0$ be arbitrary sequences.  For every $u\in \mathcal{L}_g(\Omega)$ with $u^m=:e^{i\vhi} v_\mu$,  
 there exists a sequence $\{(u_h,\psi_h)\}\subset \mathcal{H}_g(\Omega)$ such that  $\{u_h\}\subset SBV^2(\Omega)$, and 
 \begin{enumerate}
 \item[(i)]  $(u_h,\psi_h)\to (u,1)$ strongly in $L^1(\Omega)$; 
 \item[(ii)] $\p(u_h)\to u^m$ strongly in $W^{1,2}_{\rm loc}(\overline\Omega \setminus  {\rm spt}\, \mu)$ and $W^{1,p}(\Omega)$ for every $p<2$;
 \item[(iii)]~\eqref{theolimsup01} and~\eqref{theolimsup03} hold.
 \end{enumerate}
 Moreover, we can choose $u_h$ such that 
 \begin{equation}
 \label{theolimsup02}
\lim_{h\to \infty}  \mathcal{H}^1(J_{u_h}) = \mathcal{H}^1(J_u)+\mathcal{H}^1\big(\{u\neq g\}\cap\partial\Omega\big)\,.\end{equation}
\end{proposition}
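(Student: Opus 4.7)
The strategy is to build $(u_h,\psi_h)$ piecewise: away from the vortices, $u_h$ coincides with a smoothed version of $u$; inside small balls $B_\sigma(x_k)$, we install a rescaled core profile from Corollary~\ref{corogam0}; and $\psi_h$ is the Ambrosio-Tortorelli optimal profile from Lemma~\ref{profil1Dpsi}, concentrated in a tubular neighborhood of $J_{u_h}$. By a diagonal argument, it suffices to handle the case where $J_u$ is a finite disjoint union of smooth simple curves and $\varphi$ is smooth. This density reduction is delicate: standard $SBV$ density results allow arbitrary jumps, whereas one must preserve both the algebraic constraint $u^+/u^-\in\mathbf{G}_m$ on the jump set and the $W^{1,2}$-regularity of $\P(u)$. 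A dedicated regularization technique is therefore needed --- this is the main technical obstacle. The boundary condition $u_h=g$ on $\partial\Omega$ required by $\mathcal{H}_g(\Omega)$ is handled by a thin boundary layer: we modify $u_h$ inside $\{\mathrm{dist}(\cdot,\partial\Omega)<\delta_h\}$ to equal $g\circ\Pi$ (with $\Pi$ the nearest-point projection, as in the proof of Theorem~\ref{existsharp}), which adds a jump of $\mathcal{H}^1$-measure converging to $\mathcal{H}^1(\{u\neq g\}\cap\partial\Omega)$ as $\delta_h\downarrow 0$.

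For such a ``nice'' $u$, fix $\sigma>0$ so that the balls $B_\sigma(x_k)$ are pairwise disjoint, contained in $\Omega$, and disjoint from $J_u$. On $\Omega\setminus B_\sigma(\mu)$ set $u_h:=u$. In each $B_\sigma(x_k)$, apply Corollary~\ref{corogam0} at scale $\eps_h/\sigma$ to obtain $u_h^\star\in SBV^2(B_1)$ with $\p(u_h^\star)(z)=z$ in a neighborhood of $\partial B_1$ and
\[
E_{\eps_h/\sigma}\bigl(\P(u_h^\star),B_1\bigr)-\tfrac{\pi}{m^2}\log(\sigma/\eps_h)\,\longrightarrow\,\bgamma_m\,.
\]
Rescale to $w_h^k(x):=u_h^\star((x-x_k)/\sigma)$, and multiply by a smooth unimodular phase $e^{i\alpha_k(x)/m}$ chosen so that the product matches $u$ on $\partial B_\sigma(x_k)$; such an $\alpha_k$ exists because $v_\mu(x)\,|x-x_k|/(x-x_k)$ is a smooth unimodular map near $x_k$, and the contribution of $e^{i\varphi}$ is absorbed into $\alpha_k$. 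The induced perturbation of the Ginzburg-Landau energy is controlled by $\|j(w_h^k)\|_{L^1(B_\sigma)}\lesssim\sigma|\log(\sigma/\eps_h)|^{1/2}$, which is negligible if $\sigma=\sigma_h$ decays slowly enough. Summing the $md$ core contributions $\bgamma_m+\frac{\pi}{m^2}\log(\sigma/\eps_h)$ and the exterior Dirichlet energy of $u^m$, using~\eqref{formulW} and~\eqref{asymptcanharm} together with Proposition~\ref{keypropcomp} to identify
\[
\tfrac{1}{2m^2}\int_{\Omega\setminus B_\sigma(\mu)}|\nabla(u^m)|^2\,=\,\tfrac{\pi d}{m}|\log\sigma|+\tfrac{1}{m^2}\mathbb{W}(\mu)+\tfrac{1}{2m^2}\int_\Omega|\nabla\varphi|^2+o(1)
\]
as $\sigma\downarrow 0$, and letting $\sigma=\sigma_h\downarrow 0$ along a diagonal, yields the target value $E_0(u)+\frac{\pi d}{m}|\log\eps_h|+o(1)$, i.e.~\eqref{theolimsup01}.

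Finally, set $\Gamma_h:=J_{u_h}$ and define $\psi_h(x):=\psi_\star\bigl(\mathrm{dist}(x,\Gamma_h)/\eta_h\bigr)$ on $\{\mathrm{dist}(\cdot,\Gamma_h)<\delta\}$ for some small $\delta>0$, interpolated smoothly to $1$ further out, where $\psi_\star$ is the optimal 1D profile from Lemma~\ref{profil1Dpsi}. Then $\psi_h=0$ on $\Gamma_h$, so that $\psi_h u_h\in W^{1,2}(\Omega)$ with $\psi_h u_h=g$ and $\psi_h=1$ on $\partial\Omega$, giving $(u_h,\psi_h)\in\mathcal{H}_g(\Omega)$. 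The coarea formula combined with Lemma~\ref{profil1Dpsi} yields $I_{\eta_h}(\psi_h)\to\mathcal{H}^1(\Gamma_h)$, and by the boundary-layer construction this converges to $\mathcal{H}^1(J_u)+\mathcal{H}^1(\{u\neq g\}\cap\partial\Omega)$, establishing both~\eqref{theolimsup03} and~\eqref{theolimsup02}. The $L^1$ convergence in (i) and the strong $W^{1,p}(\Omega)$ ($p<2$) and $W^{1,2}_{\rm loc}(\overline\Omega\setminus\mathrm{spt}\,\mu)$ convergences of $\p(u_h)\to u^m$ in (ii) are immediate from the construction: $u_h\equiv u$ outside $B_\sigma(\mu)\cup\{\mathrm{dist}(\cdot,\partial\Omega)<\delta_h\}$, and the inner-ball contribution to the $W^{1,p}$ norm is $O(\sigma^{2-p})$ for $p<2$.
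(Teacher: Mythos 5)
Your construction follows the same architecture as the paper's (reduce to a ``nice'' $u$, glue rescaled core profiles from Corollary~\ref{corogam0} into shrinking balls around the vortices, and build $\psi_h$ by composing a 1D optimal profile with the distance to the jump set), but there are two genuine gaps.

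First, the density reduction you invoke is exactly the step you cannot skip, and you do not supply it. You reduce to the case where $J_u$ is a finite union of smooth simple curves, but no such approximation result is available here: the standard $SBV$ density theorems do not preserve the constraint $u^m=e^{i\vhi}v_\mu$ (equivalently $u^+/u^-\in\mathbf{G}_m$ on $J_u$ together with $\P(u)\in W^{1,2}$), and this is precisely why the paper develops Lemmas~\ref{lemsmoothphase} and~\ref{Alternative2}. Note moreover that even the paper does \emph{not} achieve a smooth jump set: Lemma~\ref{Alternative2} only produces, via minimization of a penalized functional $A_\lambda$ and an isoperimetric/degree argument, a representative whose jump set is Ahlfors regular ($\mathcal{H}^1(K\cap B_r(x))\geq r/2$). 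That lower density bound is then what makes $\lim_{r\downarrow0}|K+B_r|/(2r)=\mathcal{H}^1(K)$ valid and hence what justifies $I_{\eta_h}(\psi_h)\to\mathcal{H}^1(\Gamma_h)$; for a general $SBV$ jump set the Minkowski content can exceed $\mathcal{H}^1$, so your final step for \eqref{theolimsup03} is conditional on the missing reduction.

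Second, your boundary layer is not admissible. If you set $u_h:=g\circ\Pi$ on $\{d_\Omega>-\delta_h\}$ and $u_h:=u$ inside, the jump created across the level set $\{d_\Omega=-\delta_h\}$ equals $u/(g\circ\Pi)$ there, which in general does not take values in $\mathbf{G}_m$ (the relation $u/g\in\mathbf{G}_m$ holds only on $\partial\Omega$ itself). Then $\P(u_h)$ jumps across that level set and fails to be in $W^{1,2}(\Omega)$, so $(u_h,\psi_h)\notin\mathcal{H}_g(\Omega)$. The paper instead extends $u$ by $g\circ\Pi$ outside $\Omega$ and composes with an inward-pushing diffeomorphism $\phi_t$ (Lemma~\ref{lemsmoothphase}, Step 1); composition preserves membership in $\mathcal{L}_g(\Omega)$ and converts the boundary deviation set into interior jump of the correct length. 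You would need this (or an equivalent device) to recover the term $\mathcal{H}^1(\{u\neq g\}\cap\partial\Omega)$ in \eqref{theolimsup02}--\eqref{theolimsup03}. The remaining ingredients of your proposal (the phase-matching at $\partial B_\sigma(x_k)$ with the cross term controlled by $\|j(w_h^k)\|_{L^1}$, and the energy bookkeeping via Proposition~\ref{keypropcomp}) are workable, though the paper simplifies them by first normalizing $u$ so that $u^m=\xi_k(x-x_k)/|x-x_k|$ exactly near each vortex, which makes the gluing constant-phase and avoids the diagonal in $\sigma$.
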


The proof of Proposition \ref{proplimsup} relies on a suitable approximation procedure showing that maps in $\mathcal{L}_g(\Omega)$ having a compact jump set lying at a positive distance from the boundary are dense in energy. This is the purpose of the following section. The proof of Proposition \ref{proplimsup} is then performed in Section~\ref{proofofpropgamsup}.

\subsubsection{Some density results}\label{sec:density}

\begin{lemma}\label{lemsmoothphase}
Let $u\in\mathcal{L}_g(\Omega)$ with $u^m=:e^{i\vhi}v_\mu$ and $\mu=2\pi \sum_{k=1}^{md}\delta_{x_k}$. For every  $\delta>0$, there exists $u_*\in \mathcal{L}_g(\Omega)$ and constants $\xi_1,\cdots,\xi_{md}\in \Ss^1$ such that
\vskip3pt
\begin{enumerate}
\item[(i)]  ${\rm dist}(J_{u_*},\partial\Omega)>0$,  $u_*=g$ on $\partial\Omega$, and $\curl j(u_*^m)=\mu$;
\vskip3pt
\item[(ii)] $u_*^m(x)=\xi_k\displaystyle\dfrac{x-x_k}{|x-x_k|}$ in a neighborhood of $x_k$ for each $k\in\{1,\cdots,md\}$; 
\vskip3pt
\item[(iii)] $\|u-u_*\|_{L^1(\Omega)}<\delta$ and $E_0(u_*)< E_0(u) + \delta$; 
\vskip3pt
\item[(iv)] $\mathcal{H}^1(J_{u_*}) <\mathcal{H}^1(J_{u})+\mathcal{H}^1 \lt(\{u\neq g\}\cap \partial\Omega\rt)+\delta$.
\end{enumerate}
\end{lemma}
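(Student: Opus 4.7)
The construction of $u_*$ proceeds in two steps: first push the jump set of $u$ and its boundary mismatch into the interior of $\Omega$, then smooth the phase $\varphi$ in small balls around each vortex so that $u_*^m$ acquires the canonical radial form near $x_k$.

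\textbf{Step 1 (boundary adjustment).} Let $\widetilde\Omega\supset\overline\Omega$ be the tubular neighborhood of \eqref{extdomom} on which the nearest-point projection ${\Pi}$ onto $\partial\Omega$ is smooth, and extend $u$ to $\widetilde\Omega$ by $u:=g\circ{\Pi}$ on $\widetilde\Omega\setminus\Omega$, so that $u\in SBV(\widetilde\Omega;\Ss^1)$ and the extended jump set has $\mathcal H^1$-measure equal to $\mathcal H^1(J_u)+\mathcal H^1(\{u\neq g\}\cap\partial\Omega)$. Choose a compactly supported smooth vector field $X$ on $\R^2$ equal to $-\nu$ on $\partial\Omega$ and vanishing on a fixed neighborhood of $\spt\mu$, and let $\Phi_t$ denote its flow. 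For $t>0$ small define $u_1:=u\circ\Phi_t^{-1}$ on $\Omega$. Since $\Phi_t=\mathrm{id}$ near each $x_k$, one has $u_1=u$ there and $\curl j(u_1^m)=\mu$; since $\Phi_t^{-1}$ sends $\partial\Omega$ into $\widetilde\Omega\setminus\overline\Omega$, $u_1=g\circ{\Pi}\circ\Phi_t^{-1}=g$ on $\partial\Omega$; and $J_{u_1}=\Phi_t\bigl(J_u\cup(\{u\neq g\}\cap\partial\Omega)\bigr)$ lies at distance $\gtrsim t$ from $\partial\Omega$, with $\mathcal H^1$-measure converging to $\mathcal H^1(J_u)+\mathcal H^1(\{u\neq g\}\cap\partial\Omega)$ as $t\to 0$. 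The unimodular map $w:=u_1^m\,\overline{v_\mu}$ is jump-free (because $u_1^+/u_1^-\in\mathbf G_m$) with vanishing winding number around each $x_k$, hence lifts to $w=e^{i\varphi_1}$ for some $\varphi_1\in W^{1,2}(\Omega)$ with $e^{i\varphi_1}=1$ on $\partial\Omega$: near each vortex the identification $u_1=u$ gives $\varphi_1=\varphi$ modulo $2\pi$, while on the thin collar $\Omega\setminus\Phi_t(\Omega)$ both $u_1^m=g^m\circ{\Pi}\circ\Phi_t^{-1}$ and $v_\mu$ are smooth $\Ss^1$-valued with bounded gradients. Thus $u_1\in\mathcal L_g(\Omega)$ with $E_0(u_1)\to E_0(u)$ and $u_1\to u$ in $L^1(\Omega)$ as $t\to 0$.

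\textbf{Step 2 (phase smoothing).} Fix $r>0$ small enough that the balls $B_r(x_k)$ are pairwise disjoint, disjoint from $J_{u_1}$, and contained in $\{\Phi_t=\mathrm{id}\}$. From \eqref{definivmu}, the canonical harmonic map factors on each $B_r(x_k)$ as
\[
v_\mu(x)=\beta_k\,\frac{x-x_k}{|x-x_k|}\,e^{i\rho_k(x)},
\]
for some $\beta_k\in\Ss^1$ and smooth $\rho_k$ with $\rho_k(x_k)=0$. Let $c_k^r:=\dashint_{B_r(x_k)}(\varphi_1+\rho_k)\,dx$ and let $\chi_r$ be a smooth cutoff equal to $1$ on $B_{r/2}(x_k)$, supported in $B_r(x_k)$, with $|\nabla\chi_r|\les 1/r$. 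Define $\varphi_*:=\varphi_1$ outside $\bigcup_k B_r(x_k)$ and
\[
\varphi_*:=\chi_r(c_k^r-\rho_k)+(1-\chi_r)\varphi_1\qquad\text{on }B_r(x_k).
\]
Then $\varphi_*-\varphi_1\in W^{1,2}_0\bigl(\bigcup_k B_r(x_k)\bigr)$, so
\[
u_*:=e^{i(\varphi_*-\varphi_1)/m}\,u_1\ \in\ SBV(\Omega;\Ss^1)
\]
satisfies $u_*^m=e^{i\varphi_*}v_\mu$ and $J_{u_*}=J_{u_1}$ (the correction factor being $W^{1,2}$, jump-free, and of modulus one), and inherits $u_*=g$ on $\partial\Omega$. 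On $B_{r/2}(x_k)$ one has $u_*^m=\xi_k\frac{x-x_k}{|x-x_k|}$ with $\xi_k:=\beta_k e^{ic_k^r}\in\Ss^1$, giving (i) and (ii).

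\textbf{Estimates and main obstacle.} Poincar\'e's inequality on $B_r(x_k)\setminus B_{r/2}(x_k)$, the mean-zero choice of $c_k^r$, and $|\nabla\chi_r|\les 1/r$ yield
\[
\int_{B_r(x_k)}|\nabla\varphi_*|^2\,dx\les\int_{B_r(x_k)}\bigl(|\nabla\varphi_1|^2+|\nabla\rho_k|^2\bigr)\,dx,
\]
which vanishes as $r\to 0$ by absolute continuity of the integral and smoothness of $\rho_k$. Combined with Step~1, choosing first $t$ then $r$ small enough yields (iii) and (iv); the $L^1$-convergence $u_*\to u$ follows from $|u|=|u_*|=1$ and the vanishing measure of the modified regions. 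The main technical obstacle is the verification in Step~1 that $\varphi_1\in W^{1,2}(\Omega)$ globally and that $\int_\Omega|\nabla\varphi_1|^2\to\int_\Omega|\nabla\varphi|^2$ as $t\to 0$, which requires the three-region analysis sketched above (interior coincidence $u_1=u$, smooth $\Ss^1$-valued factors on the thin collar contributing $O(t)$ to the Dirichlet energy, and $W^{1,2}$-continuity of the pullback by $\Phi_t$ in between).
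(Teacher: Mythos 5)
Your proof is correct and follows essentially the same two-step strategy as the paper: extend $u$ by $g\circ\Pi$ and compose with the flow of a vector field supported away from $\spt\mu$ to push $J_u\cup(\{u\neq g\}\cap\partial\Omega)$ into the interior, then interpolate the phase to its mean value near each vortex with a cutoff and control the error by Poincar\'e's inequality. The only point to tighten is in Step 1: prescribing $X=-\nu$ only \emph{on} $\partial\Omega$ does not guarantee $\Pi\circ\Phi_t^{-1}=\Pi$ near the boundary, so the trace of $u_1$ would be $g$ precomposed with a diffeomorphism of $\partial\Omega$ rather than $g$ itself; taking $X=\pm\nabla d_\Omega$ on a collar (as the paper does) makes the flow lines normal and fixes this.
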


\begin{remark}\label{rem2054camp}
If one considers $\delta_k\downarrow0$ and $\{u_{k}\}\subset  \mathcal{L}_g(\Omega)$ the corresponding sequence provided by Lemma~~\ref{lemsmoothphase}, then $u_{k}^m\to u^m$ strongly in $W^{1,p}(\Omega)$  for every $p<2$. Indeed, writing $u_{k}^m=e^{i\vhi_k}v_\mu $, item {\it (iii)} implies the 
strong convergence of $\{\nabla \vhi_k\}$ in $L^2(\Omega)$,  which in turn yields the strong convergence of $\{u^m_{k}\}$ in $W^{1,p}(\Omega)$.
\end{remark}

\begin{proof}[Proof of Lemma ~\ref{lemsmoothphase}] 
{\it Step 1.} We start by modifying $u$ in such a way that {\it (i)}, {\it (iii)}, and {\it (iv)} hold. We proceed as follows. We consider the larger domain $\widetilde\Omega$ defined in~\eqref{extdomom}, and we recall that the nearest point projection $\Pi$ on $\partial\Omega$ is well defined and smooth in $\{x:{\rm dist}(x,\partial\Omega)<2r_0\}$. 
Denote by $d_\Omega:\R^2\to\R$ the signed distance to $\partial\Omega$, i.e., $d_\Omega(x):={\rm dist}(x,\partial\Omega)$ for $x\in\R^2 \setminus\Omega$, and $d_\Omega(x):=-{\rm dist}(x,\partial\Omega)$ for $x\in\Omega$. Note that $d_\Omega$ is smooth in $\{x:{\rm dist}(x,\partial\Omega)<2r_0\}$. Next we consider a smooth vector field $X \in C^\infty_c(\widetilde\Omega \setminus {\rm spt}\,\mu;\R^2)$ satisfying $X=\nabla d_{\Omega}$ in $\{x:{\rm dist}(x,\partial\Omega)<r_0/2\}$, and we fix $\sigma\in(0,r_0/2)$ such that $\spt \,X \subset \widetilde\Omega \setminus \overline B_{\sigma}(\mu)$. We denote by $\{\phi_t\}_{t\in\R}$ the integral flow on $\R^2$ generated by $X$. Then ${\rm spt}(\phi_t-{\rm id})\subset  \widetilde\Omega \setminus \overline B_{\sigma}(\mu)$ for every $t\in \R$, so that $\{\phi_t\}_{t\in\R}$ defines a one parameter family of smooth diffeomorphisms from  $\widetilde\Omega \setminus \overline B_{\sigma}(\mu)$ onto $\widetilde\Omega \setminus \overline B_{\sigma}(\mu)$. 

We extend $u$ to $\widetilde\Omega$ by setting $u(x):=g\big(\Pi(x)\big)$ for $x\in \widetilde\Omega \setminus\Omega$. 
Then we set $u_t:=u\circ \phi_{t}$, and we consider the restriction of $u_t$  to $\Omega$. By construction, for every $t>0$ we have 
$ u_t\in\mathcal{L}_g(\Omega)$, $u_t=g$ on $\partial\Omega$, $u_t=u$ in $B_\sigma(\mu)$, and ${\rm dist}(J_{u_t},\partial \Omega)>0$. 
 Moreover, as $t\downarrow 0$, we have $u_t\rightarrow u$ weakly* in $BV(\Omega)$, $\nabla u_t\to\nabla u$ strongly in $L^2(\Omega \setminus B_\sigma(\mu))$, and 
$$\mathcal{H}^1(J_{u_t}) \to \mathcal{H}^1(J_{u})+\mathcal{H}^1 \lt(\{u\neq g\}\cap \partial\Omega\rt)\,.$$
Given $\delta>0$, we can then choose $t>0$ small enough so that $u_t$ satisfies {\it (i)}, {\it (iii)}, and {\it (iv)} of  Lemma~\ref{lemsmoothphase}.  
\vskip3pt

\noindent{\it Step 2.} To complete the proof, we shall modify further $u_t$ in $B_\sigma(\mu)$ to achieve property {\it (ii)}. 
Fix ${\sigma'} \in(0,\sigma)$  such  that $2{\sigma'} < \min\{|x_k-x_l|:1\leq k<l\leq md\}$. From the specific form of $v_\mu$ (see~\eqref{definivmu}),  for each $k\in \{1,\cdots,dm\}$ there exists $\psi_k\in W^{1,2}(B_{\sigma'}(x_k))$ such that 
\[
u^m(x)=e^{im\psi_k(x)} \dfrac{x-x_k}{|x-x_k|}\qquad\mbox{for }x\in B_{{\sigma'}}(x_k)\,.
\]
For $\rho\geq0$ and $\theta\in[0,2\pi)$ we set $\q_m(\rho e^{i\theta}):=\rho e^{i \theta/m}$, and define  $\vartheta_k(x):=\q_m((x-x_k)/|x-x_k|)$ for $x\in B_{\sigma'}(x_k)$. 
Since $\curl j (\vartheta^m_k)=\mu=\curl j(u^m)$ in $B_{\sigma'}(x_k)$ and $\p(\vartheta_k)=\p(u)$, we can invoke Lemma~~\ref{lemstructure} to infer that there exists $\widehat{u}_k\in SBV(B_{\sigma'}(x_k),\mathbf{G}_m)$ such that
\[
u= \widehat{u}_k \vartheta_k e^{i\psi_k}\qquad\mbox{in } B_{\sigma'}(x_k)\,.
\]
We now fix a cut-off function $\chi\in C^\infty_c(B_1,[0,1])$ such that, $\chi\equiv 1 \mbox{ in } B_{1/2}$, and we set $\chi_{k,r}(x):=\chi((x-x_k)/r)$ for $r>0$. We define for $k\in \{1,\cdots,md\}$ and $r\in(0,\sigma')$,
\[
u_{k,r}(x)  := \widehat{u}_k(x) \vartheta_k(x) \exp\Big(i\psi_k(x) +i\chi_{k,r}(x)\big(\overline{\psi}_{k,r} -\psi_k(x)\big) \Big)\qquad\mbox{for }x\in B_{\sigma'}(x_k)\,,
\]
where $\overline{\psi}_{k,r}$ denotes the mean value
$$\overline{\psi}_{k,r}:=\dfrac1{\pi r^2}\int_{B_r(x_k)} \psi_k(x)\,dx\,.$$
By construction, we have
\[
u^m_{k,r}(x)=\xi_{k,r} \dfrac{x-x_k}{|x-x_k|}\qquad\mbox{for } x\in B_{r/2}(x_k)\,,
\]
with $\xi_{k,r}:=\exp(i m\overline {\psi}_{k,r})$, and  $u_{k,r}=u=u_t$ in $B_{\sigma'}(x_k) \setminus B_r(x_k)$. Finally, we set 
\[
u_*(x):=\ \begin{cases}
u_{k,r}(x) & \mbox{ if }x\in  B_{\sigma'}(x_k)\,,\\
u_t(x)&\mbox{ if }x\in  \Omega \setminus  B_{\sigma'}(\mu)\,.
 \end{cases}
\]
By means of Poincar\'e's inequality (and Step 1), it is standard to check that  for $r$ small enough, $u_*$ complies to all the requirements of the lemma.
\end{proof}

Now we show that we can substitute to $u_*$ a mapping $u_\sharp$ with a compact jump set.
\begin{lemma}\label{Alternative2}
Let $u_*\in\mathcal{L}_g(\Omega)$ be such that ${\rm dist}(J_{u_*},\partial\Omega)>0$. For every $\delta>0$, there exist $u_{\sharp} \in \mathcal{L}_g(\Omega)$, a compact set $K\subset \Omega$, and $\sigma>0$ such that  
\begin{enumerate}
\item[(i)] $\|u_\sharp-u_*\|_{L^1(\Omega)} \leq \delta$,  $u_{\sharp}^m=u^m_*$, and $\mathcal{H}^1(J_{u_\sharp} \setminus J_{u_*})=0$;
\vskip3pt
\item[(ii)]  $\mathcal{H}^1(K\triangle J_{u_\sharp})=0$; 
\vskip3pt
\item[(iii)] $\mathcal{H}^1\big(K\cap B_r(x)\big)\geq r/2\,$ for every $x\in K$ and every $r\in(0,\sigma)$.
\end{enumerate}
\end{lemma}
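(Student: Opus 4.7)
The approach is to construct $u_\sharp$ of the form $u_\sharp = \phi\, u_*$, where $\phi \in SBV(\Omega;\mathbf{G}_m)$ is a suitably chosen $\mathbf{G}_m$-valued function that cancels the jumps of $u_*$ on a small (in $\mathcal{H}^1$-measure) portion of $J_{u_*}$. Indeed, since $u_*$ is $\Ss^1$-valued, the constraint $u_\sharp^m = u_*^m$ is equivalent to $\phi := u_\sharp \overline{u_*} \in \mathbf{G}_m$ a.e.; combined with $\phi \in SBV(\Omega)$, this means $\phi = \sum_{k=0}^{m-1}\mathbf{a}^k\chi_{E_k}$ for some Caccioppoli partition $\{E_0,\dots,E_{m-1}\}$ of $\Omega$. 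The condition $\mathcal{H}^1(J_{u_\sharp}\setminus J_{u_*})=0$ then amounts to $J_\phi \subset J_{u_*}$ up to $\mathcal{H}^1$-null, while the jump of $u_\sharp=\phi u_*$ at a point of $J_{u_*}$ is cancelled precisely when $\phi^+/\phi^- = u_*^-/u_*^+$ in $\mathbf{G}_m$. The problem is thus reduced to selecting a Caccioppoli partition whose interface lies in $J_{u_*}$ and produces cancellation on a prescribed thin portion, leaving a topologically and metrically clean compact core $K$ as the jump set of $u_\sharp$.

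The first step is to identify this core $K$. Using that $J_{u_*}$ is countably $1$-rectifiable with finite $\mathcal{H}^1$-measure and compactly contained in $\Omega$, I can write, up to an $\mathcal{H}^1$-null set, $J_{u_*} = \bigcup_{i\geq 1}(\gamma_i \cap J_{u_*})$, where the $\gamma_i$ are compact $C^1$ arcs. By inner regularity of $\mathcal{H}^1\restriction \gamma_i$, each measurable set $\gamma_i\cap J_{u_*}$ is approximated from within by a finite union of closed sub-arcs of $\gamma_i$. Truncating to finitely many indices, I obtain a compact set $K\subset J_{u_*}$ which is a finite union of closed $C^1$ sub-arcs (possibly meeting at branch points), with $\mathcal{H}^1(J_{u_*}\setminus K)$ as small as desired. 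For such $K$, the lower density bound in item (iii) is automatic at some uniform scale $\sigma>0$: at interior points of any sub-arc the density approaches $1$, at endpoints the density is at least $1/2$, and at branch points it is larger, so $\mathcal{H}^1(K\cap B_r(x))\geq r/2$ for every $x\in K$ and $r\in(0,\sigma)$.

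The second and more delicate step is the construction of $\phi$. Locally, around each small arc of $J_{u_*}\setminus K$, the complement is disconnected into two pieces and one may set $\phi\in \mathbf{G}_m$ to be two constants on these pieces, chosen so that $\phi^+/\phi^- = u_*^-/u_*^+$ across the arc. The main obstacle is \emph{global consistency}: for $\phi$ to be single-valued on all of $\Omega$, the monodromy obtained by summing these local jumps along any loop in $\Omega\setminus(J_{u_*}\setminus K)$ must vanish in $\mathbf{G}_m$. To enforce this, we must refine the choice of $K$ so that every connected component of the complementary thin set $J_{u_*}\setminus K$ is a single simple arc whose endpoints lie either on $K$ or outside $\Omega$, so that no nontrivial loop encircles such a component. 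Using again that $J_{u_*}$ is essentially a finite union of $C^1$ arcs, I may adjust $K$ to be a finite union of closed sub-arcs chosen with endpoints on the original $C^1$ curves, augmented if necessary by short connecting segments of total $\mathcal{H}^1$-length of the same order as $\mathcal{H}^1(J_{u_*}\setminus K)$, so that the remainder becomes a disjoint union of simple arcs of the above form. This refinement preserves both the density bound and the smallness of $\mathcal{H}^1(J_{u_*}\setminus K)$.

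Once $K$ and $\phi$ are in place, properties (i)--(iii) are a matter of bookkeeping. Setting $u_\sharp:=\phi u_*$, one has $u_\sharp^m=\phi^m u_*^m=u_*^m$ by construction, and since $\phi \equiv 1$ outside a thin tubular neighborhood $U$ of $J_{u_*}\setminus K$ the $L^1$-bound $\|u_\sharp-u_*\|_{L^1(\Omega)}\leq 2|U|$ can be made at most $\delta$ by shrinking $U$. The inclusion $J_{u_\sharp}\subset K$ up to $\mathcal{H}^1$-null follows because $J_\phi \subset J_{u_*}$ and $\phi$ cancels $u_*$ exactly on $J_{u_*}\setminus K$; conversely $J_{u_\sharp}\supset K$ up to null because the jumps of $u_*$ on $K$ are left untouched (one may assume without loss of generality that $u_*^+/u_*^- \neq 1$ $\mathcal{H}^1$-a.e.\ on $J_{u_*}$). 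Thus item (ii) holds with the given compact $K$, and the Ahlfors lower density bound of item (iii) is inherited.
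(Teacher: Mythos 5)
Your reduction to finding a $\mathbf{G}_m$-valued $\phi=\sum_k\mathbf{a}^k\chi_{E_k}$ with $J_{u_*}\setminus K\subset J_\phi\subset J_{u_*}$ (up to $\mathcal{H}^1$-null sets) and prescribed jump values on $J_{u_*}\setminus K$ is the right way to phrase the problem, but the construction of such a $\phi$ is exactly where the argument breaks down, and the gap is not the monodromy issue you flag. The obstruction is that $J_\phi$ is the interface of a Caccioppoli partition and hence ``has no endpoints'': by the constancy theorem, a set of finite perimeter $F$ with $\partial^* F\subset\partial D$ for a disk $D$ satisfies $\partial^* F\in\{\emptyset,\partial D\}$. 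Concretely, take $m=2$ and, locally away from the vortices, $u_*=(\chi_E-\chi_{E^c})u_0$ with $u_0$ smooth and $E$ a disjoint union of disks $D_i$, so that $J_{u_*}=\bigcup_i\partial D_i$. If your $K$ contains a proper sub-arc $\partial D_1\setminus\alpha$, then any admissible $\phi$ whose jump set contains $\alpha$ and is contained in $\bigcup_i\partial D_i$ must jump across \emph{all} of $\partial D_1$ with the constant value $\phi^+/\phi^-=-1=u_*^-/u_*^+$; but then the jump of $u_\sharp=\phi u_*$ is cancelled on $\partial D_1\setminus\alpha\subset K$ as well, so $\mathcal{H}^1(K\setminus J_{u_\sharp})>0$ and (ii) fails. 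The same phenomenon rules out any $K$ obtained by inner-regular approximation of $J_{u_*}$ by finitely many sub-arcs: the portions of $J_{u_*}$ that can be cancelled are constrained to be unions of entire partition interfaces subordinate to $J_{u_*}$, and these cannot be prescribed in advance to have small measure. Your fix by ``short connecting segments'' does not help: if the segments lie outside $J_{u_*}$, then $J_{u_\sharp}\supset K$ forces $\mathcal{H}^1(J_{u_\sharp}\setminus J_{u_*})>0$, contradicting (i), and there is no reason for such segments to exist inside $J_{u_*}$.

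This is precisely why the paper does not choose $K$ first. It selects $u_\sharp$ variationally, minimizing $A_\lambda(u)=\mathcal{H}^1(J_u)+\lambda\int_\Omega|u-u_*|^2\,dx$ over the class $\{u^m=u_*^m,\ \mathcal{H}^1(J_u\setminus J_{u_*})=0\}$ for large $\lambda$; the fidelity term yields (i), and the density bound (iii) is proved \emph{a posteriori} for the minimizer's jump set by a competitor argument: away from the vortices one combines the Caccioppoli-partition structure with a slicing/coarea argument and the isoperimetric inequality to get $\mathcal{H}^1(J_{u_\sharp}\cap B_r(x))\geq r$ for $r\leq(8\lambda)^{-1}$, and near a vortex one uses a degree argument on the circles $\partial B_t(x_k)$. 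Note that your construction also ignores the vortices: since $u_\sharp^m=e^{i\vhi}v_\mu$, the set $J_{u_\sharp}$ must meet every small circle around each point of ${\rm spt}\,\mu$, a topological constraint that a purely measure-theoretic choice of arcs does not see. Your observation that a finite union of $C^1$ arcs of length bounded below satisfies the lower density estimate is fine, but it cannot be combined with an admissible $\phi$, so the approach cannot be repaired without essentially switching to a selection principle of the paper's type.
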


\begin{proof}
{\it Step 1.} Given a parameter $\lambda\geq 1$, we consider the class of maps  
$$\mathscr{A}:=\Big\{ u \in \mathcal{L}_g(\Omega): u^m=u_*^m\,,\; \mathcal{H}^1(J_u \setminus J_{u_*})=0 \Big\}\,, $$
and the functional $A_\lambda:\mathscr{A}\to[0,\infty)$ defined by 
\[
A_\lambda(u):=\mathcal{H}^1(J_u)+\lambda\int_{\Omega}|u-u_*|^2\,dx\,.
\]
 We claim that $A_\lambda$ admits at least one minimizer $u_\lambda\in\mathscr{A}$.  
Indeed,  $|\nabla u|=\big|\nabla(\P(u_*)\big|=|\nabla u_*|$ for every  $u\in\mathscr{A}$, so that $\{\nabla u: u\in\mathscr{A}\}$ is bounded in $L^p(\Omega)$ for every $p<2$. Obviously, if $\{u_k\}\subset\mathscr{A}$ is a minimizing sequence, then $A_\lambda(u_k)$ is bounded.  By the compactness theorem for $SBV$ functions~\cite[Theorem 4.7 \& 4.8, and Remark 4.9]{AFP}, we can find $u_\lambda\in SBV(\Omega)$ and a  subsequence such that $u_k\to u_\lambda$ in $L^1(\Omega)$ and a.e. in $\Omega$, and satisfying $A_\lambda(u_\lambda)\leq\liminf_k A_{\lambda}(u_k)$. From the pointwise convergence we infer that $u_\lambda$ is $\Ss^1$-valued, and $u^m_\lambda=u^m_*$.  Moreover,  the sequence of positive measures $\{\mathcal{H}^1\restr J_{u_k}\}$ weakly* converges towards $\mathcal{H}^1\restr J_{u_\lambda}$. Since $\mathcal{H}^1\restr J_{u_k}\leq \mathcal{H}^1\restr J_{u_*}$,  we have $\mathcal{H}^1\restr J_{u_\lambda}(U)\leq \liminf_k \mathcal{H}^1\restr J_{u_k}(U)\leq \mathcal{H}^1\restr J_{u_*}(U)$ for every open set $U\subset \Omega$. By outer regularity, we deduce that $\mathcal{H}^1\restr J_{u_\lambda} \leq\mathcal{H}^1\restr J_{u_*}$. Hence $u_\lambda\in\mathscr{A}$, and thus $u_\lambda$ is a minimizer of $A_\lambda$.  

Noticing that $A_\lambda(u_\lambda)\leq A_\lambda(u_*)$ and $\mathcal{H}^1(J_{u_\lambda})\leq\mathcal{H}^1(J_{u_*})$, we  now deduce that $u_\lambda\to u_*$ in $L^1(\Omega)$ as $\lambda\to\infty$. Given $\delta>0$, we  choose $\lambda$ large enough so that {\it (i)} holds with $u_\sharp:=u_\lambda$.
To complete the proof, we have to find a compact set $K$ and $\sigma>0$ such that {\it (ii)} and {\it (iii)} hold. This is the purpose of the next steps. 
\vskip3pt

\noindent{\it Step 2.}  Write $\curl j(u^m_*)=:\mu=2\pi \sum_{k=1}^{md}\delta_{x_k}$, and set 
\[
\sigma:=\min\left(\dfrac1{8\lambda} \,,\,{\rm dist}(J_{u_*},\partial \Omega)\,,\,\frac{1}{2}\min\big\{|x_k-x_l|:1\leq k<l\leq md\big\}\right)\,.
\]
We claim that for $\mathcal{H}^1$-a.e. $x\in J_{u_\sharp}$, there holds
\begin{equation}
\label{Alfhors0}
\mathcal{H}^1\big( J_{u_\sharp}\cap B_r(x)\big)\geq r \quad \mbox{for every } r \in (0,\sigma)\,.
\end{equation}
Let us first recall that for $\mathcal{H}^1$-a.e. $x\in J_{u_\sharp}$,
\begin{equation}\label{yadumondeautour}
\mathcal{H}^1\big(J_{u_\sharp}\cap B_r(x)\big)>0\quad \mbox{ for  every }r\in(0,\sigma)\,.
\end{equation}
Hence, it is enough to establish~\eqref{Alfhors0} at every $x\in J_{u_\sharp} \setminus{\rm spt}\,\mu$ such that~\eqref{yadumondeautour} holds. Let us fix such a point  $x$ and without loss of generality let us assume $x_0=0$. Setting 
$\sigma_0:=\min\big(\sigma,{\rm dist}(0,\spt\mu)\big)$, we shall distinguish the two cases $r\in(0,\sigma_0]$ and $r\in(\sigma_0,\sigma)$.

\vskip3pt

\noindent{\it Case 1: $r\in(0,\sigma_0]$.} Since $B_{\sigma_0} \subset \Omega \setminus\spt\, \mu$ and $u_\sharp^m=u_*^m$,  we have  $\curl j(u_\sharp)=0$ in $\mathscr{D}^\prime(B_{\sigma_0})$ by Lemma~~\ref{struct}.  Applying Lemma~~\ref{lemstructure} in the ball $B_{\sigma_0}$ (with $u_1=1$ and $u_2=u_\sharp$), we obtain a function  $\vhi\in W^{1,1}(B_{\sigma_0})$ and  a Caccioppoli partition $\{E_k\}_{k=0}^{m-1}$ of the ball $B_{\sigma_0}$ such that 
\begin{equation*}
u_\sharp =\left(\sum_{k=0}^{m-1}{\bf a}^k\chi_{E_k}\right)e^{i\vhi}\quad\mbox{in }B_{\sigma_0}\,.
\end{equation*} 
Since $\vhi\in W^{1,1}(B_{\sigma_0})$, we have 
$$J_{u_\sharp}\cap B_{\sigma_0}=\bigcup_{k=0}^{m-1}\partial E_k \quad\text{up to an $\mathcal{H}^1$-null set\,,}$$
where $\partial E_k$ denotes the reduced boundary of $E_k$ in $B_{\sigma_0}$.  Moreover (see \cite[Remark 4.22]{AFP}), 
\begin{equation}\label{J2EB}
\mathcal{H}^1(J_{u_\sharp}\cap B_r)=\dfrac12 \sum_{k=0}^{m-1} \mathcal{H}^1(\partial E_k \cap B_r)\quad\text{for every  $r\in(0,\sigma_0)$}\,.
\end{equation}

Before going any further, let us recall a slicing property of sets of finite perimeter (see e.g. \cite[Chapter~3, Section 3.11]{AFP} for further details). 
For a Borel set $E\subset \R^2$, we first denote by $E^{1}$ the (Borel) set of points of density $1$ for $E$, i.e., 
$$E^{1}:=\left\{y\in\R^2: \lim_{\rho\downarrow0}\frac{|E\cap B_\rho(y)|}{\pi\rho^2}=1 \right\} \,.$$
For $r>0$, we write $E_r:=E^{1}\cap \partial B_r$, and we consider $E_r$ as a subset of $\partial B_r$ (in particular, $\partial E_r$ and ${\rm int}(E_r)$ denote the relative boundary and relative interior of $E_r$ in $\partial B_r$, respectively). If $E\subset\R^2$ has a finite perimeter, then for a.e. $r>0$ the following properties hold:  
\begin{equation}\label{2250B}
\text{$\partial E_r$ is finite and is equal to $\partial E\cap \partial B_r$}\,.
\end{equation}

We point out that since $\{E_k\}$ is a Caccioppoli partition of $B_{\sigma_0}$, by \cite[Theorem 4.17]{AFP} 
\begin{equation}\label{2319B}
B_{\sigma_0}=\bigcup_{k=0}^{m-1} E_k^1 \cup\bigcup_{k=0}^{m-1}\partial E_k\quad\text{up to an $\mathcal{H}^1$-null set}\,.
\end{equation}

For almost all radii $r\in(0,\sigma_0)$, \eqref{2250B} holds for each $E_k$, $k\in\{0,\ldots, m-1\}$.  We claim that for such radii
\begin{equation}\label{Claim2}
\sum_{k=0}^{m-1} \mathcal{H}^0(\partial E_k\cap\partial B_r)\geq 1.
\end{equation}
To prove~\eqref{Claim2}, let us fix such a $r$ and assume by contradiction that 
\begin{equation*}
\partial E_k\cap \partial B_r =\partial(E_k)_r=\emptyset \qquad\mbox{ for every $k\in \{0,\cdots,m-1\}$}\,.
\end{equation*}
Then each $(E_k)_r$ is either equal to $\partial B_r$ or empty, in particular  $\mathcal{H}^1(E_k\cap\partial B_r)\in\{0,2\pi r\}$. Since $\mathcal{H}^1(\partial E_k\cap \partial B_r)=0$ for each $k$ by~\eqref{2250B}, we infer from~\eqref{2319B} that  $\mathcal{H}^1(E_{k_r}\cap\partial B_r)=2\pi r$ for exactly one index $k_r\in\{0,\ldots,m-1\}$. Consequently, every point of $\partial B_r$ is a point of density $1$ for $E_{k_r}$, and of density~$0$ for $E_k$ with $k\not=k_r$. 
We then introduce the competitor 
\[
\widetilde u :={\bf a}^{k_r} e^{i\vhi}\quad\mbox{in }B_r\,,\quad \widetilde u :=u_\sharp \quad\mbox{in }\Omega \setminus B_r\,.
\]
By construction  $J_{\widetilde{u}} \setminus \overline B_r=J_{u_\sharp} \setminus \overline B_r$ and $J_{\widetilde u}\cap \overline{B_r}=\emptyset$, so that $\tilde u$ is an admissible competitor. By optimality of $u_\sharp$, we have $A_\lambda(\widetilde u)\geq A_\lambda(u_\sharp)$. Using that 
\[
 |\widetilde{u}-u_*|^2-|u_\sharp -u_*|^2= \lt(|\widetilde{u}-u_*|+|u_\sharp -u_*|\rt)\lt(|\widetilde{u}-u_*|^2-|u_\sharp -u_*|\rt)\le4 |\widetilde{u}- u_\sharp|
\]
(since $|\widetilde{u}|=|u_*|=|u_\sharp|=1$), we compute 
\begin{align*}
0\ \leq A_\lambda(\widetilde u)-A_\lambda(u_\sharp)&=\lambda\int_{B_r}|\widetilde u-u_*|^2-|u_\sharp -u_*|^2\,dx - \mathcal{H}^1(J_{u_\sharp}\cap B_r)\\
&\leq\ 4\lambda\int_{B_r}|\widetilde u-u_\sharp|\,dx - \mathcal{H}^1(J_{u_\sharp}\cap B_r) \\
&= 4\lambda \sum_{k\neq k_r} |1-{\bf a}^{k-k_r}||E_k\cap B_r|   - \dfrac12 \sum_{k=0}^{m-1}  \mathcal{H}^1(\partial E_k\cap B_r)\\
 &\leq  \dfrac12 \sum_{k\neq k_r} \Big\{16\lambda |E_k\cap B_r|   - \mathcal{H}^1(\partial E_k \cap B_r) \Big\}\,.
\end{align*}
Since $\mathcal{H}^1(\partial E_k \cap \partial B_r)=0$ and $(E_k)^1\cap\partial B_r=\emptyset$ for $k\neq k_r$, we infer that $E_k\cap B_r$ has finite perimeter and $\partial(E_k\cap B_r)=\partial E_k\cap B_r$ for $k\neq k_r$. Therefore,  
$$\sum_{k\neq k_r} \Big\{16\lambda |E_k\cap B_r|   - \mathcal{H}^1\big(\partial(E_k \cap B_r)\big) \Big\}\geq 0\,. $$
By the (two dimensional) isoperimetric inequality, we have 
\[
|E_k\cap B_r|\leq \sqrt{|B_r|}\sqrt{|E_k\cap B_r|}\leq \frac{r}{2} \mathcal{H}^1\big(\partial(E_k\cap B_r)\big)\,, 
\]
so that 
\[ (8\lambda r -1) \sum_{k\neq k_r}\mathcal{H}^1\big(\partial ( E_k\cap B_r)\big)\geq 0\,.
\]
Since $r<\sigma\leq  (8\lambda)^{-1}$, the  prefactor above is negative, and we deduce that $\mathcal{H}^1( \partial (E_k\cap B_r))=0$ for each $k\neq k_r$. As a consequence $B_r \subset E_{k_r}^1$, so that $u_\sharp=\widetilde u\,$.
In particular, $\mathcal{H}^1(J_{u_\sharp}\cap B_r)=0$ which  contradicts~\eqref{yadumondeautour}, and~\eqref{Claim2} is proved. 

By~\eqref{Claim2}, we can now infer from the coarea formula  (see \cite[Theorem II.7.7]{Maggi} for instance) that $ r\in(0,\sigma_0]\,$
\[
\sum_{k=0}^{m-1}\mathcal{H}^1(\partial E_k \cap B_r)\geq \sum_{k=0}^{m-1}\int_0^r \mathcal{H}^0(\partial E_k\cap\partial B_t)\,dt
 = \int_0^r \sum_{k=0}^{m-1} \mathcal{H}^0(\partial E_k\cap\partial B_t)\,dt \geq r. 
\]
Combining this inequality with~\eqref{J2EB} yields~\eqref{Alfhors0} for every $r\in(0,\sigma_0]$. 
\vskip3pt

\noindent{\it Case 2: $r\in(\sigma_0,\sigma)$.} In this case, we must have $\sigma_0<\sigma$, so that $\sigma_0={\rm dist}(0,{\rm spt}\,\mu)$ and $B_\sigma\cap {\rm spt}\,\mu\neq\emptyset$. 
Our choice of $\sigma$ then implies that $B_\sigma\cap {\rm spt}\,\mu$ is reduced to a singleton, i.e., there exists $k_0\in\{0,\ldots,m-1\}$ such that $B_\sigma\cap {\rm spt}\,\mu=\{x_{k_0}\}$. Moreover, $x_{k_0}\in \partial B_{\sigma_0}$. 

By the  definition of $\mathcal{L}_g(\Omega)$ and the slicing properties of $BV$-functions (see e.g. \cite[Chapter~3, Section~3.11]{AFP} for details), for a.e. $r\in(\sigma_0,\sigma)$ we have ${\rm spt}\,\mu\cap\partial B_r=\emptyset$, the trace $u_r:={u_\sharp}_{|\partial B_r}$ belongs to $SBV^2(\partial B_r;\Ss^1)$, and $J_{u_r}=J_{u_\sharp}\cap\partial B_r$ is finite. 
We claim that for every such $r$,
\begin{equation}\label{Claim3}
\mathcal{H}^0(J_{u_r})\geq 1.
\end{equation}
Indeed, let $r$ be such a radius and assume by contradiction that $J_{u_r}=\emptyset$. Then $u_r\in W^{1,2}(\partial B_r,\Ss^1)$ and thus $u_r$ is continuous.  In addition, 
the trace $(u^m)_r:={u_\sharp^m}_{|\partial B_r}$ belongs to $W^{1,2}(\partial B_r,\Ss^1)$ and it satisfies $(u_\sharp^m)_r=(u_r)^m$. Hence the topological degree $\ell$ of $(u_\sharp^m)_r$ is equal to $m$ times the topological degree of $u_r$. Moreover, we have $u_\sharp^m(x)=e^{i\psi}(x-x_{k_0})/|x-x_{k_0}|$ in $B_r$ for some $\psi\in W^{1,2}(B_r)$ satisfying $\psi_{|\partial B_r}\in W^{1,2}(\partial B_r)$. Hence  $\ell=1$ which contradicts $\ell\in m\Z$, and~\eqref{Claim3} is proved. 

By~\eqref{Claim3},  we can infer again from the coarea formula that 
$$ \mathcal{H}^1\big( J_{u_\sharp}\cap (B_r \setminus B_{\sigma_0})\big)\geq \int_{\sigma_0}^r \mathcal{H}^0(J_{u_\sharp}\cap\partial B_t)\,dt=\int_{\sigma_0}^r \mathcal{H}^0(J_{u_t})\,dt\geq  r-\sigma_0\quad\text{for every $r\in(\sigma_0,\sigma)$}\,.$$
Since $\mathcal{H}^1( J_{u_\sharp}\cap B_{\sigma_0})\geq \sigma_0$ by Case 1, we deduce that  
\[
\mathcal{H}^1( J_{u_\sharp}\cap B_r)= \mathcal{H}^1\big( J_{u_\sharp}\cap (B_r \setminus  B_{\sigma_0})\big)+\mathcal{H}^1( J_{u_\sharp}\cap B_{\sigma_0})\geq (r-\sigma_0)+\sigma_0=r \quad\text{for every $r\in(\sigma_0,\sigma)$}\,,
\]
and ~\eqref{Alfhors0} is proved  for every $r\in(\sigma_0,\sigma)$. 
\vskip3pt

\noindent{\it Step 3.}  
 We define 
\[
K:=\lt\{x\in\Omega \ : \ \mathcal{H}^1\big(J_{u_\sharp}\cap B_r(x)\big)\geq r \text{ for every } r \in (0,\sigma)
\rt\}\,.
\]
By definition, $K$ is closed and since ${\rm dist}(J_{u_*},\partial \Omega)>0$,  it is a compact subset of $\Omega$.  
 On the one hand, we can deduce from Step 2 that 
$\mathcal{H}^1(J_{u_\sharp}  \setminus K)=0$. On the other hand,
\[
\lim_{r\downarrow 0} \dfrac{\mathcal{H}^1(J_{u_\sharp}\cap B_r(x))}{r}=0\quad \text{for $\mathcal{H}^1$-a.e. $x\in \Omega \setminus J_{u_\sharp}$}\,,
\]
see e.g.~\cite[Sect. 2.9, Theorem 2.56 and (2.42)]{AFP}. In particular,  $\mathcal{H}^1(K \setminus J_{u_\sharp})=0$, and therefore $\mathcal{H}^1(K\triangle J_{u_\sharp} )=0$. 
\end{proof}

\subsubsection{Proof of Proposition ~\ref{proplimsup}}\label{proofofpropgamsup}

\begin{proof} Thanks to  Lemma ~\ref{lemsmoothphase}, Remark ~\ref{rem2054camp}, and Lemma~~\ref{Alternative2} and a diagonal argument, it is enough to make the construction for  a map $u\in \mathcal{L}_g(\Omega)$ with $u^m=:e^{i\vhi}v_\mu$ and $\mu=2\pi \sum_{k=1}^{md}\delta_{x_k}$ and such that
there exists $\sigma>0$ and a compact set $K\subset\Omega$ such that  
\begin{enumerate}
\vskip3pt
\item[(a)] ${\rm dist}(J_{u}, \partial\Omega)\geq 2\sigma$ and $u=g$ on $\partial \Omega$; 
\vskip3pt

\item[(b)] ${\rm dist}({\rm spt}\,\mu, \partial\Omega)\geq 2\sigma$ and $|x_k-x_l|\geq 2\sigma$ for $k\neq l$; 
\vskip3pt

\item[(c)] $u^m(x)=\xi_k(x-x_k)/|x-x_k|$ in each $B_\sigma(x_k)$ for some $\xi_k\in\Ss^1$; 
\vskip3pt

\item[(d)] $\mathcal{H}^1(K\triangle J_{u})=0$;
\vskip3pt

\item[(e)] ${\rm dist}(K, \partial\Omega)\geq 2\sigma$ and $\mathcal{H}^1\big(K\cap B_r(x)\big)\geq r$ for every $x\in K$ and $r\in(0,\sigma)$.  
\end{enumerate}
\vskip3pt

By a further diagonal argument, it is enough  to  fix $\delta>0$ and construct a sequence $\{(u_{h},\psi_{h})\}\subset  \mathcal{H}_g$ such that $\{u_{h}\}\subset \mathcal{G}_g(\Omega)\cap L^\infty(\Omega)$ with $\|u_{h}\|_{L^\infty(\Omega)}\leq 1$, $(u_{h},\psi_{h})\to (u,1)$ in $L^1(\Omega)$ as $h\to \infty$, $\p(u_{h})\to u^m$ strongly in $W^{1,p}(\Omega)$ for every $p<2$, and 
\begin{align}\label{limsupu*2b}
\displaystyle \limsup_{h\to\infty} \left\{E_{\eps_h}\big(\P(u_h)\big)-\frac{\pi d}{m} |\log\eps_h| \right\} &\leq  E_0(u) + \delta\,,\\
\label{limsupJu*b}
\displaystyle \limsup_{h\to\infty} \mathcal{H}^1(J_{u_h})& \leq  \mathcal{H}^1(J_{u})\,,\\
\label{limsuppsib}
\displaystyle \limsup_{h\to\infty} I_{\eta_h}(\psi_h)&\leq   (1+\delta) \mathcal{H}^1(J_{u})\,.
\end{align}
First, by Corollary~~\ref{corogam0} we can find $\eps\in(0,1)$, $\widetilde u\in SBV^2(B_1)\cap L^\infty(B_1)$ with $\|\widetilde u\|_{L^\infty(B_1)}\leq 1$, and a closed smooth curve $\Sigma\subset \overline{B_1}$ such that $\p(\widetilde u)(x)=x$ in a neighborhood of $\partial B_1$, $J_{\widetilde u}\subset\Sigma$, and 
$$E_{\eps}\big(\P(\widetilde u),B_1\big)- \frac\pi{m^2} \log \frac1{\eps} \leq \bgamma_m + \frac{\delta}{md}\,.$$
Then, for each $k\in\{1,\ldots,md\}$ we select $\zeta_k\in\Ss^1$ such that $\zeta^m_k=\xi_k$, and we set for $\eps_h\in(0,\sigma\eps)$,  
$$ u_h(x):=\begin{cases}
u(x) & \mbox{if }x\in \Omega \setminus B_{\eps_h/\eps}(\mu)\,,\\[5pt]
 \zeta_k \widetilde u\left(\frac{\eps}{\eps_h} (x-x_k)\right) \quad
 & \mbox{if }x\in  B_{\eps_h/\eps}(x_k)\,,\; k\in\{1,\cdots,dm\}\,.
\end{cases}$$
By construction, we have $u_h\in \mathcal{G}_g(\Omega)\cap L^\infty(\Omega)$ with $\|u_h\|_{L^\infty(\Omega)}\leq 1$ (since $|u|=1$), and 
$$\|u_h-u\|_{L^1(\Omega)}\leq 2\pi md (\eps_h/\eps)^2 \mathop{\longrightarrow}\limits_{h\to\infty} 0\,.$$
For $p<2$,  we estimate 
$$\|\p(u_h)-u^m\|_{W^{1,p}(\Omega)}\les \|u^m\|_{W^{1,p}(B_{\eps_h/\eps}(\mu))}+(\eps_h/\eps)^{(2-p)/p}\|\p(\widetilde u)\|_{W^{1,p}(B_1)} \mathop{\longrightarrow}\limits_{h\to\infty} 0\,. $$ 
Next, changing variables, we have for each $k\in\{1,\ldots,md\}$,  
$$E_{\eps_h}\big(\P(u_h); B_{\eps_h/\eps}(x_k)\big)= E_{\eps}\big(\P(\widetilde u);B_1\big)\leq  \frac\pi{m^2} \log \frac1{\eps} +\bgamma_m + \frac{\delta}{md}\,. $$
Consequently,
\begin{multline*}
E_{\eps_h}\big(\P(u_h)\big)- \frac{\pi d}{m}\log\frac{1}{\eps_h} \leq E_{\eps_h}\big(\P(u),\Omega \setminus B_{\eps_h/\eps}(\mu)\big)- \frac{\pi d}{m}\log\frac{\eps}{\eps_h} +md\bgamma_m+\delta\\
= \frac{1}{2m^2} \int_{\Omega \setminus B_{\eps_h/\eps}(\mu)}|\nabla(u^m)|^2\,dx - \frac{\pi d}{m}\log\frac{\eps}{\eps_h} +md\bgamma_m+\delta\,,
\end{multline*}
and~\eqref{limsupu*2b} follows from Proposition~~\ref{keypropcomp}. 

By construction, we have 
$$J_{u_h}\subset K_h:= K\cup \left( {\rm spt}\,\mu+\frac{\eps_h}{\eps}(\partial B_1\cup\Sigma)\right) \quad\text{up to an $\mathcal{H}^1$-null set}\,,$$
so that 
$$\mathcal{H}^1(J_{u_h})\leq  \mathcal{H}^1(K)+ md\frac{\eps_h}{\eps} \big(2\pi+ \mathcal{H}^1(\Sigma)\big)\,.$$
Since $\mathcal{H}^1(K)= \mathcal{H}^1(J_{u})$, ~\eqref{limsupJu*b} follows letting $h\to\infty$ in the inequality above. 
\vskip3pt

To produce the sequence $\{\psi_h\}$, we argue in a way similar to \cite[Sec. 5]{AmbTor90}.  We start by introducing a family of smooth profiles approximating the optimal profile $\psi_\star(s)=1-e^{-|s|}$ from Lemma~\ref{profil1Dpsi}. For $\lambda>0$ we define $\psi_\lambda:[0,\infty)\to[0,1]$ as 
\[
\psi_\lambda(t):=\begin{cases}
\displaystyle 1-{\rm exp}\left(-\frac{\lambda t}{\lambda-t}\right) & \text{if $t<\lambda$}\,,\\[5pt]
1 & \text{otherwise}\,.
\end{cases} 
\]
Notice that $1-\psi_\lambda$ and $\psi_\lambda'$ are supported on $[0,\lambda]$. Setting $e_\lambda(t):=\big(\psi^\prime_\lambda(t)\big)^2+\big(1-\psi_\lambda(t)\big)^2$, elementary computations yield 
$${s}_\lambda:=\int_0^\lambda e_\lambda(t)\,dt  \mathop{\longrightarrow}\limits_{\lambda\to\infty} 1\,.  $$ 
Hence we can find $\lambda>0$ such that ${s}_{\lambda}\leq 1+\delta$. Setting $d_h(x):={\rm dist}(x,K_h)$, we define $\psi_h$ as 
$$\psi_h(x):=\psi_{\lambda}\left(\frac{d_h(x)}{\eta_h}\right)\,. $$
The function $\psi_h$ is Lipschitz continuous and for $\eps_h\in(0,\sigma\eps)$ we have ${\rm dist}(K_h,\partial\Omega)>\sigma$ so that $\psi_h=1$ on $\partial \Omega$ whenever $\eta_h\in(0,\sigma/\lambda)$.  
Since $u_h\in W^{1,2}(\Omega \setminus K_h)$ and $\psi_h=0$ on $K_h$, we infer that $(u_h,\psi_{h})\in\mathcal{H}_g(\Omega)$.  

To estimate $I_{\eta_h}(\psi_h)$, we first notice that $d_h$ is $1$-Lipschitz. This leads to 
$$I_{\eta_h}(\psi_h)\leq \frac{1}{2\eta_h}\int_\Omega e_\lambda\big(d_h(x)/\eta_h\big)\,dx=\frac{1}{2\eta_h}\int_{K_h+B_{\lambda\eta_h}}e_\lambda\big(d_h(x)/\eta_h\big)\,dx\,.  $$
We use Fubini's theorem to obtain 

 \[\int_{K_h+B_{\lambda\eta_h}}e_\lambda\big(d_h(x)/\eta_h\big)\,dx=-\int_{K_h+B_{\lambda\eta_h}}\left(\int_{d_h(x)/\eta_h}^{\lambda}e_\lambda^\prime(t)\,dt \right)\,dx
 =-\int_0^{\lambda}e_\lambda^\prime(t)|K_h+B_{t\eta_h}|\,dt\,.\]
Noticing that $e_\lambda^\prime(t)\leq 0$ and 
$$|K_h+B_{t\eta_h}|\leq |K+B_{t\eta_h}|+md\left|\frac{\eps_h}{\eps}(\partial B_1\cup\Sigma)+B_{t\eta_h}\right| \,,$$ 
we derive 
\begin{equation}\label{1849camp}
I_{\eta_h}(\psi_h)\leq -\int_0^{\lambda} te_\lambda^\prime(t)\frac{|K+B_{t\eta_h}|}{2t\eta_h}\,dt-md\int_0^{\lambda}  te_\lambda^\prime(t) \frac{\left|\frac{\eps_h}{\eps}(\partial B_1\cup\Sigma)+B_{t\eta_h}\right|}{2t\eta_h}\,dt=:J^1_h+J^2_h\,.  
\end{equation} 
By the Ahlfors regularity assumption on $K$ stated in (e), we have (see e.g. \cite[Theorem~2.104]{AFP}) 
$$\lim_{r\downarrow0} \frac{|K+B_r|}{2r}=\mathcal{H}^1(K)\,. $$
Hence, by dominated convergence 
\begin{equation}\label{1850camp}
\lim_{h\to\infty} J^1_h=-\mathcal{H}^1(K)\int_0^{\lambda}te_\lambda^\prime(t) \,dt={s}_{\lambda} \mathcal{H}^1(J_{u})\leq (1+\delta) \mathcal{H}^1(J_{u})\,. 
\end{equation}

We now claim that 
\begin{equation}\label{1851camp}
\lim_{h\to\infty} J^2_h=0\,.
\end{equation}
To prove~\eqref{1851camp} we may argue on subsequences if necessary. We distinguish the two complementary cases $\lim_h(\eps_h/\eta_h)<\infty$ and $\lim_h(\eps_h/\eta_h)=\infty$. If $\lim_h(\eps_h/\eta_h)<\infty$, then we estimate for $t\in(0,\lambda)$
$$\left|\frac{\eps_h}{\eps}(\partial B_1\cup\Sigma)+B_{t\eta_h}\right|\leq \left|B_{t\eta_h+\eps_h/\eps}\right| \les  t^2\eta_h^2+ (\eps_h/\eps)^2\leq C \lambda^2 \eta_h^2\,,$$
so that $J^2_h=O(\eta_h)$ as $h\to\infty$. We now assume that $\lim_h(\eps_h/\eta_h)=\infty$, and we write for $t>0$, 
$$\frac{\left|\frac{\eps_h}{\eps}(\partial B_1\cup\Sigma)+B_{t\eta_h}\right|}{2t\eta_h}= \left(\frac{\eps_h}{\eps}\right) \frac{\left|(\partial B_1\cup\Sigma)+B_{t\eps\eta_h/\eps_h}\right|}{2t\eps\eta_h/\eps_h}\,. $$
By smoothness of $\partial B_1$ and $\Sigma$, we have 
$$\lim_{r\downarrow 0}\frac{\left|(\partial B_1\cup\Sigma)+B_r\right|}{2r}=\mathcal{H}^1(\partial B_1\cup\Sigma)\,, $$
and we conclude that $J^2_h=O(\eps_h)$ as $h\to \infty$. In both cases~\eqref{1851camp} holds true.\smallskip\\
Eventually, putting together~\eqref{1849camp},~\eqref{1850camp}, and~\eqref{1851camp} leads to~\eqref{limsuppsib}. 
\end{proof}


\subsection{Proof of Theorem ~\ref{Gammasharp}}\label{secproofthmgamshp}

The proof of Theorem ~\ref{Gammasharp} is very similar to the proof of Theorem \ref{Gammadiff} but we include a sketch of proof for the reader's convenience. It is divided as
usual into three parts: compactness, the $\Gamma$-$\liminf$ inequality, and the construction of  recovery sequences. Concerning recovery sequences, we note that they are actually 
provided by   Proposition ~\ref{proplimsup} since~\eqref{theolimsup01} and~\eqref{theolimsup02} clearly lead to~\eqref{theolimsup01shp}. We proceed with the proof of points~{\it(i)} and~{\it(ii)} of the theorem.
\begin{proof}[Compactness, proof of (i)]
For the compactness part, we argue as in the proof of Proposition ~\ref{compactthmfield} and apply in particular Theorem \ref{compAlicPon}. We can argue  in particular that $\{\nabla u_h\}$ is bounded in $L^p(\Omega)$ and that  $\mathcal{H}^1(J_{u_h})$ is bounded. Therefore, since we also know that $\|u_h\|_{L^\infty(\Omega)}$ is bounded, we may apply \cite[Theorem 4.8]{AFP} to conclude the proof as in Theorem \ref{existsharp}.\end{proof}

\begin{proof}[The $\Gamma$-$\liminf$ inequality, proof of (ii)] Without loss of generality, we can assume that  
$$ \liminf_{h\to \infty} \widetilde F_{\eps_h}^{0}(u_h) = \lim_{h\to \infty} \widetilde F_{\eps_h}^{0}(u_h) <\infty\,.$$
Moreover, the truncation argument in the proof of Theorem ~\ref{existsharp}  together with~\eqref{estiL1difftrunc} shows that $u_h$ can be replaced by $\widehat u_h$ defined in~\eqref{deftronc}. Hence we can also assume that $\|u_h\|_{L^\infty(\Omega)}\leq 1$, and the proof of {\it (i)} above applies. In particular $v_h:=\p(u_h)$ satisfies~\eqref{asymptenerg}, and we reproduce verbatim the proof of Theorem ~\ref{Gammadiff} to show that~\eqref{TheoliminfE} holds. 

Moreover, up to extending $u_h$ and $u$ to a larger domain $\widetilde \Omega$ as in the proof of Theorem \ref{existsharp}, we may assume that $\mathcal{H}^1(\{u_h\neq g\}\cap \partial \Omega)=\mathcal{H}^1(\{u\neq g\}\cap \partial \Omega)=0$.
 We consider the sequence of non negative finite measures on $ \Omega$
$$\nu_h:=\mathcal{H}^1\res (J_{u_h\cap\Omega})\,.$$
Since $\nu_h(\Omega)=\mathcal{H}^1(J_{u_h})$ is bounded, we can extract a further subsequence such that $\nu_h\rightharpoonup\nu_*$ weakly* as measures  for some non negative finite measure $\nu_*$ on $ \Omega$. Since ${\rm spt}\,\nu_h\subset\overline\Omega$, we have ${\rm spt}\,\nu_*\subset\overline\Omega$ and $\nu_h(\Omega)\to\nu_*(\Omega)$.\\ 
We claim that 
\begin{equation}\label{ber1119}
\nu_*\geq \mathcal{H}^1\res(J_u\cap\Omega) \,.
\end{equation}
Before proving~\eqref{ber1119}, we observe that it implies 
\begin{equation}\label{ber1528}
\lim_{h\to\infty}\mathcal{H}^1(J_{u_h})=\lim_{h\to\infty} \nu_h(\Omega)=\nu_*(\Omega)
\geq \mathcal{H}^1(J_u)=\mathcal{H}^1(J_{u})\,,
\end{equation}
which, combined with~\eqref{TheoliminfE}, leads to~\eqref{Theoliminfshp}. 

To prove~\eqref{ber1119}, we fix an open set $A\subset \Omega$. Consider an arbitrary compact set $K\subset A$, and choose another open set $B$ such that $K\subset B\subset\overline B\subset A$. By the proof of {\it (i)} above, we can apply~\cite[Theorem~4.7]{AFP} in the open set $B$ to derive
$$\nu_*(A)\geq \nu_*(\overline B)\geq\liminf_{h\to\infty}\nu_h(B)=\liminf_{h\to\infty} \mathcal{H}^1(J_{u_h}\cap B)\geq \mathcal{H}^1(J_{u}\cap B)\,. $$
Hence $\nu_*(A)\geq  \mathcal{H}^1(J_{u}\cap K)$, and by inner regularity it implies that $\nu_*(A)\geq  \mathcal{H}^1(J_{u}\cap A)$. By outer regularity, we conclude that~\eqref{ber1119} holds. 

Let us now assume that $F_0(u)=\lim_{h}\widehat F^0_\eps(u_h)$. In view of~\eqref{TheoliminfE} and~\eqref{ber1528}, we have 
\begin{equation}\label{ppffbercpa}
\lim_{h\to \infty} \left\{G_{\eh}(v_h)-\frac{\pi d}{m}|\log \eh|\right\}= \frac{1}{2m^2}\int_\Omega|\nabla\vhi|^2\,dx+\frac{1}{m^2}\mathbb{W}(\mu) 
+m d\bgamma_m\,,
\end{equation}
and 
\begin{equation}\label{coucoucpas}
\nu_*(\Omega)=\lim_{h\to\infty}\mathcal{H}^1(J_{u_h})
=\mathcal{H}^1(J_{u})\,.
\end{equation}
From~\eqref{ppffbercpa}, we can argue exactly as in the proof of Proposition ~\ref{Gamliminfprop}, Step 3, to deduce that $v_h\to u^m$ strongly in $W^{1,p}(\Omega)$ for every $p<2$ and  strongly in $W^{1,2}_{\rm loc}(\Omega \setminus{\rm spt}\,\mu)$, and that~\eqref{strongconvergence} holds.
Then, we infer from~\eqref{ber1119} and~\eqref{coucoucpas} that $\nu_*\equiv \mathcal{H}^1\res J_u$. 
As a consequence, if $A\subset \R^2$ is an open set such that $\nu_*(\partial A)=0$, then 
\[
\mathcal{H}^1(J_{u}\cap A)=\nu_*(A)=\lim_{h\to\infty}\nu_h(A)
=\mathcal{H}^1(J_{u_h}\cap A)\,,
\]
which proves~\eqref{strongconvergencebisshp} since $\nu_*(\partial A)=\mathcal{H}^1(J_{u}\cap \partial A)$. 
\end{proof}

\section{The limiting problem}\label{S4}				                          

The aim of this section is to study minimizers over $\mathcal{L}_g(\Omega)$ of the limiting functional $F_{0,g}$. To avoid some technical issues (at the boundary), we shall assume for simplicity that $\Omega$ is a smooth bounded {\sl convex} set. 

By Remark ~\ref{remminF0} and Lemma ~\ref{struct}, minimizers of $F_{0,g}$ over $\mathcal{L}_g(\Omega)$ coincide with solutions of 
\begin{multline}\label{minproblim}
\min\bigg\{\frac{1}{m^2} \W(\mu)+\mathcal{H}^1(J_u)+\mathcal{H}^1\big(\{u\neq g\}\cap\partial \Omega): u\in SBV(\Omega;\Ss^1)\,,\\
 \mu:=m\,{\rm curl}\,j(u)\in\mathcal{A}_d\text{ and } u^m=v_\mu \bigg\}\,.
\end{multline}
In turn,~\eqref{minproblim} amounts to solve for each $\mu\in\mathcal{A}_d$, 
\begin{equation}\label{probNd}
 L(\mu):=\min\Big\{ \mathcal{H}^1(J_u) + \mathcal{H}^1\big(\{u\neq g\}\cap \partial \Omega\big) : u\in SBV(\Omega, \Ss^1)\,,\; u^m=v_\mu \Big\}\,, 
\end{equation}
and then minimize $\frac{1}{m^2}\mathbb{W}(\cdot)+L(\cdot)$ over $\mathcal{A}_d$ which is a finite dimensional optimization problem. Let us however point out that since on the one hand 
Steiner type problems are usually very hard to solve and since on the other hand, the minimization of $\mathbb{W}$ can be rarely explicitly done (see \cite{Radulescu}), 
this finite dimensional problem does not seem   easy to handle.   
  
\vskip3pt

For the rest of this section we fix a measure $\mu\in\mathcal{A}_d$, and focus on problem~\eqref{probNd}. First, we notice that existence of minimizers in~\eqref{probNd} follows as in the proof of Lemma ~\ref{Alternative2} (Step 1) since $|\nabla v_\mu|\in L^p(\Omega)$ for every $p<2$ and $|\nabla u|=\frac{1}{m} |\nabla v_\mu|$ for any admissible competitor $u$ (see Lemma ~\ref{struct}). 
We will prove that minimizers of \eqref{probNd} are related to a variant of the Steiner problem that we now describe. 
\vskip3pt

Write $\mu=:2\pi\sum_{k=1}^{md}\delta_{x_k}$, and recall that the $x_k$'s are distinct points of the domain $\Omega$. We let 
\begin{multline}\label{verydeflambmu}
 \Lambda(\mu):=\min\lt\{ \mathcal{H}^1(\Gamma) \ : \ {\rm spt} \mu \subset \Gamma,\rt.\\
 \lt. \textrm{ and for every connected component } \Sigma \textrm{ of } \Gamma, \, {\rm Card}(\Sigma\cap {\rm spt}\,\mu)\in m\N\rt\}.
\end{multline}
Notice that since we can always remove connected components which do not contain any vortex $x_k$, we can reduce the above minimization problem to sets $\Gamma$ with the property that every connected component contains a positive number of vortices. Of course, this implies that $\Gamma$ has at most $d$ connected components. 
Each of these connected components $\Sigma$ is a competitor for the Steiner problem related to  ${\rm spt} \mu \cap \Sigma$. This shows that minimizers of \eqref{verydeflambmu} exist and are made of (at most $d$) Steiner trees i.e. finite union of segments meeting only at the vortices or at triple junctions (see \cite[Prop. 2.2]{MorMed}).

\begin{definition}\label{defdmumin}
A compact set $\Gamma$ is said to be a $\Lambda(\mu)$-minimizer if it solves \eqref{verydeflambmu}. 
\end{definition}

\begin{remark}\label{remStein}
Since we assumed that  $\Omega$ is convex, any $\Lambda(\mu)$-minimizer $\Gamma$ is contained in the convex hull of ${\rm spt}\,\mu$. More precisely, since projecting on convex sets reduces  distances, if $\Sigma$ is a connected component of 
$\Gamma$, then $\Sigma$ is contained in the convex hull of $\Sigma\cap {\rm spt}\,\mu$.  Since $\Sigma$ is a tree, we also infer the following property: if  $C\subset \Sigma$ is an open  segment, such that  $C\cap {\rm spt}\,\mu=\emptyset $, then $\Sigma \setminus C$ is made of two connected components
$A$ and $B$ satisfying  ${\rm Card}(A\cap {\rm spt}\,\mu)\not\in m\N \setminus\{0\}$ and  ${\rm Card}(B\cap {\rm spt}\,\mu)\not\in m\N \setminus\{0\}$. Otherwise, $\Gamma \setminus C$ would be an admissible competitor for $\Lambda(\mu)$ with strictly lower length, contradicting minimality. 
\end{remark}

We are now ready to prove the main result of this section, which states that  the jump set of any minimizer of~\eqref{probNd} is  a $\Lambda(\mu)-$minimizer.
\begin{theorem}\label{thmLDmu}
 Assume that $\Omega$ is a smooth, bounded, and convex open set. For every $\mu\in\mathcal{A}_d$, it holds
 \begin{equation}\label{equalNd}
   L(\mu)=\Lambda(\mu)\,.
 \end{equation}
Moreover, if $u$ is a minimizer of $L(\mu)$, then $J_u$ is a $\Lambda(\mu)$-minimizer, $u\in C^\infty(\overline\Omega \setminus J_u)$, and $u=g$ on $\partial\Omega$. 
Vice-versa, if $\Gamma$ is a $\Lambda(\mu)$-minimizer, then there exists $u$ minimizing  $L(\mu)$ with $J_u=\Gamma$.
\end{theorem}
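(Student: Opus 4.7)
The plan is to deduce the identity $L(\mu)=\Lambda(\mu)$ from two inequalities, each topological in nature, and then to derive the remaining assertions from the characterization of $\Lambda(\mu)$-minimizers recalled in Remark~\ref{remStein}. Working inside the slightly larger domain $\widetilde\Omega$ of~\eqref{extdomom} and extending every admissible $u$ by $\widetilde u:=g\circ\Pi$ on $\widetilde\Omega\setminus\Omega$ (with $\Pi$ the nearest-point projection onto $\partial\Omega$) will be convenient throughout: the extended map satisfies $\widetilde u^m=\widetilde v_\mu$, where $\widetilde v_\mu$ equals $v_\mu$ in $\Omega$ and $g^m\circ\Pi$ outside, and $J_{\widetilde u}\cap\overline\Omega$ coincides with $J_u\cup(\{u\neq g\}\cap\partial\Omega)$ up to $\mathcal{H}^1$-null sets.

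For the inequality $L(\mu)\geq \Lambda(\mu)$, given any $u$ admissible for $L(\mu)$ I would consider $\Gamma_u:=\overline{J_u\cup(\{u\neq g\}\cap\partial\Omega)}\cup\spt\mu$, which has the same $\mathcal{H}^1$-measure as the cost of $u$ (the vortices are a finite set and already lie in the closure of $J_u$ because $u$ cannot be continuous near them), and verify that $\Gamma_u$ is admissible for~\eqref{verydeflambmu}. The inclusion $\spt\mu\subset\Gamma_u$ is built in; the divisibility condition on each connected component $\Sigma$ of $\Gamma_u$ follows from enclosing $\Sigma$ in a smooth simply connected domain $D\subset\widetilde\Omega$ with $\partial D\cap\Gamma_u=\emptyset$: the trace $\widetilde u|_{\partial D}$ is then a continuous $\Ss^1$-valued map of some integer degree $k$, whereas $\widetilde v_\mu|_{\partial D}=\widetilde u^m|_{\partial D}$ has degree $mk$, and the latter equals the number of vortices enclosed by $\partial D$, which is exactly $\mathrm{Card}(\Sigma\cap\spt\mu)$ by construction of $D$. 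For the converse $L(\mu)\leq\Lambda(\mu)$ I would start from a $\Lambda(\mu)$-minimizer $\Gamma$, which by Remark~\ref{remStein} is compactly contained in $\Omega$ so that the collar $\widetilde\Omega\setminus\overline\Omega$ is included in a single connected component $C_0$ of $\widetilde\Omega\setminus\Gamma$. Since any loop in $\widetilde\Omega\setminus\Gamma$ bounds a disk in $\widetilde\Omega$ whose intersection with $\Gamma$ is a union of entire connected components of $\Gamma$, admissibility of $\Gamma$ ensures that the enclosed vortices come in multiples of $m$, so that $\widetilde v_\mu$ admits a continuous $m$-th root on every connected component of $\widetilde\Omega\setminus\Gamma$. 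I would then pick in $C_0$ the unique such root matching $g\circ\Pi$ on the collar, and any continuous $m$-th root in the other components; the resulting $\widetilde u\in SBV(\widetilde\Omega;\Ss^1)$ has $J_{\widetilde u}\subset\Gamma$ and its restriction $u$ to $\Omega$ satisfies $u^m=v_\mu$, $u=g$ on $\partial\Omega$, and $\mathcal{H}^1(J_u)\leq\mathcal{H}^1(\Gamma)=\Lambda(\mu)$.

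Once~\eqref{equalNd} is proved, the remaining claims follow fairly directly. If $u$ minimizes $L(\mu)$, the set $\Gamma_u$ built above is a $\Lambda(\mu)$-minimizer, hence (again by Remark~\ref{remStein}) compactly contained in $\Omega$; this forces $\mathcal{H}^1(\{u\neq g\}\cap\partial\Omega)=0$, i.e.\ $u=g$ on $\partial\Omega$, and identifies $J_u$ with $\Gamma_u$ as a $\Lambda(\mu)$-minimizer. The smoothness $u\in C^\infty(\overline\Omega\setminus J_u)$ then reduces to the fact that $v_\mu$ is smooth on $\overline\Omega\setminus\spt\mu\supset\overline\Omega\setminus J_u$ up to the boundary and that $z\mapsto z^m$ is a local diffeomorphism of $\Ss^1$, so that continuous branches of $v_\mu^{1/m}$ are automatically smooth and match the smooth $g$ on $\partial\Omega$. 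For the reverse existence assertion, the $u$ produced from a given $\Lambda(\mu)$-minimizer $\Gamma$ satisfies $J_u\subset\Gamma$ and $u=g$ on $\partial\Omega$; if $\mathcal{H}^1(\Gamma\setminus J_u)>0$, then $\overline{J_u}\cup\spt\mu$ would be admissible for $\Lambda(\mu)$ with strictly smaller length, contradicting the minimality of $\Gamma$. Hence $J_u=\Gamma$ and $u$ is automatically a minimizer of $L(\mu)$.

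The main obstacle will be to make the topological/degree computations rigorous at the $SBV$ level: one must carefully select the enclosing smooth domain $D$ so that $\partial D$ avoids $J_{\widetilde u}$ (and $\spt\mu$), so that the trace $\widetilde u|_{\partial D}$ is a genuine continuous $\Ss^1$-valued map with well-defined topological degree. A slicing/Fubini argument on a one-parameter family of smooth curves enclosing the connected component $\Sigma$ should provide a level at which $\partial D$ meets $J_{\widetilde u}$ in an $\mathcal{H}^1$-negligible set, which a further small perturbation renders empty while keeping $\partial D$ disjoint from the other components of $\Gamma_u$. Beyond this technical point, the argument is essentially combinatorial bookkeeping on Steiner trees, combined with the standard fact that continuous $m$-th roots of $\Ss^1$-valued smooth maps exist on simply connected domains carrying no ``ill-distributed'' vortices.
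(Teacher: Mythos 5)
Your upper-bound direction ($L(\mu)\le\Lambda(\mu)$) is sound and close in spirit to the paper's Step~1: the paper builds the competitor $u_\Gamma$ explicitly via phase functions with prescribed $2\pi$-jumps across cuts joining each vortex to $x_1$ inside the tree, while you invoke continuous $m$-th roots of $v_\mu$ on the components of $\widetilde\Omega\setminus\Gamma$. Both work, though the paper's explicit construction yields $J_{u_\Gamma}=\Gamma$ directly from Remark~\ref{remStein}, whereas your contradiction argument ("prune $\Gamma$ to $\overline{J_u}\cup\spt\mu$") still has to verify that the pruned set is admissible for $\Lambda(\mu)$, which is precisely the combinatorial content of that remark.

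The genuine gap is in the lower bound $L(\mu)\ge\Lambda(\mu)$. You set $\Gamma_u:=\overline{J_u\cup(\{u\ne g\}\cap\partial\Omega)}\cup\spt\mu$ and assert it has the same $\mathcal H^1$-measure as the cost of $u$. For a general $SBV$ competitor this is false: the jump set of an $SBV$ function need not be essentially closed, and $\mathcal H^1(\overline{J_u})$ can be strictly larger than $\mathcal H^1(J_u)$ (indeed $J_u$ can be dense in $\Omega$). The same difficulty undermines the degree computation: to read off $\deg(\widetilde u,\partial D)$ you need a curve $\partial D$ enclosing an entire connected component of $\Gamma_u$ on which the one-dimensional restriction of $\widetilde u$ has no jumps, and no slicing or perturbation argument produces such a curve when $J_u$ is not closed and its pieces are not positively separated. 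The paper circumvents all of this by proving the inequality only for minimizers, after first establishing their regularity: (i) a contraction/deformation argument toward a convex neighbourhood of the convex hull of $\spt\mu$ (this is where convexity of $\Omega$ enters) shows that $u=g$ on $\partial\Omega$ and $\mathrm{dist}(J_u,\partial\Omega)>0$; (ii) locally away from $\spt\mu$, writing $u=\bigl(\sum_k\mathbf a^k\chi_{E_k}\bigr)u_\star$ reduces the problem to a minimal Caccioppoli partition, whose classical regularity gives that $J_u$ is locally a finite union of segments meeting at triple junctions, hence compact in $\Omega$; only then is the degree argument on smooth curves enclosing a component of $J_u$ legitimate. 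Your proposal contains no substitute for either step, and your smoothness claim $u\in C^\infty(\overline\Omega\setminus J_u)$ also needs (ii): knowing that $v_\mu$ admits smooth local $m$-th roots does not tell you that $u$ coincides with a single branch off $J_u$; that is exactly the statement that the partition $\{E_k\}$ is regular.
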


\begin{proof}
{\it Step 1.} Let $\mu=2\pi\sum_{k=1}^{md}\delta_{x_k}$ be fixed. We first prove that 
$$L(\mu)\leq \Lambda(\mu)\,.$$
Consider  $\Gamma$ a $\Lambda(\mu)$-minimizer.  Treating each connected component of $\Gamma$ separately, we may assume without  loss of generality that $\Gamma$ is connected. 
Since $\R^2 \setminus \Gamma$ is connected, we can find a smooth injective curve with arc-length parameterization $\gamma_1:[0,\infty)\to\R^2$ satisfying $\gamma_1(0)=x_1$, $|\gamma_1(t)|\to\infty$ as $t\to\infty$, and $\gamma_1(0,\infty)\cap \Gamma=\emptyset$. 
Setting $D_1:=\gamma_1((0,\infty))$, we orient $D_1$ according to its parameterization $\gamma_1$ (i.e., in the direction of increasing $t$'s). Since $\R^2 \setminus \overline D_1$ is simply connected, we can find a smooth map $\vhi_1:\R^2 \setminus \overline D_1\to\R$ 
which is smooth up to $D_1$ from both sides, has a constant (oriented, pointwise defined) jump across $D_1$ equal to $2\pi$, and such that   $e^{i\vhi_1(x)}=(x-x_1)/|x-x_1|$. 
We then set $u_1:=e^{i\vhi_1/m}$. 

Since $\Gamma$ is a tree, for each $k\in\{2,\ldots,md\}$ there is a unique injective polygonal curve $\gamma_k:[0,1]\to \Gamma$ such that $\gamma_k(0)=x_k$ and $\gamma_k(1)=x_1$. Setting $D_k:=\gamma_k((0,1))$, we orient $D_k$ according to the curve $\gamma_k$. Notice that for $k\neq l$, the orientation of $D_k$ coincides with the orientation of $D_l$ on $D_k\cap D_l$. Moreover,  one has $\Gamma=\cup_{k\geq 2}\overline D_k$ by minimality of $\Gamma$. As a consequence, $\Gamma$ inherits the orientation induced by the $D_k$'s. 

Since $\R^2 \setminus (\overline D_1\cup\overline D_k)$ is simply connected, we can find a smooth map $\vhi_k:\R^2 \setminus (\overline D_1\cup\overline D_k)\to\R$,  smooth up  to $\overline D_1\cup D_k$ from both sides, with a constant (oriented) jump across $\overline D_1\cup D_k$ equal to $2\pi$, and such that   $e^{i\vhi_k(x)}=(x-x_k)/|x-x_k|$. We consider 
\[
u_{\Gamma}:=\exp \lt(\dfrac im\lt[\vhi_\mu + \sum_{k=1}^{md} \vhi_k\rt]\rt),
\]
where $\vhi_\mu$ is the map defined in ~\eqref{definivmu}. 
By construction, we have $u_\Gamma\in SBV(\Omega;\Ss^1)$ and  $u_\Gamma^m=v_\mu$, i.e., $u_\Gamma$ is an admissible competitor for $L(\mu)$. In addition, $u_\Gamma$ is smooth outside $\Gamma\cup D_1$. Since  $\frac1m\sum_k\vhi_k$ has a constant jump equal to $2\pi d$  across $D_1$, we infer that $u_\Gamma$ is actually smooth in $\overline\Omega \setminus \Gamma$, and $u={\bf a}^jg$ on $\partial \Omega$ for some $j\in\{0,\ldots,m-1\}$. 
Replacing $u_\Gamma$ by ${\bf a}^{-j}u_\Gamma$ if necessary, we can assume that $u_\Gamma=g$ on $\partial \Omega$. Now consider an arbitrary point 
$x\in\Gamma \setminus{\rm spt}\,\mu$. By Remark ~\ref{remStein}, the number of $x_k$'s before $x$, according to the orientation of $\Gamma$, is not a multiple of $m$.
This shows that the jump of $\frac1m\sum_k\vhi_k$ across $\Gamma$ at $x$ is not a multiple of $2\pi$, and consequently $x\in J_{u_\Gamma}$. Therefore $J_{u_\Gamma}=\Gamma$, 
and $L(\mu)\leq \mathcal{H}^1(J_{u_\Gamma}) =\Lambda(\mu)$.  
\vskip5pt

\noindent{\it Step 2.} 
We now  prove that 
\begin{equation}\label{finalstep}L(\mu)\ge \Lambda(\mu).\end{equation}
Let us consider an arbitrary minimizer $u$ of $L(\mu)$, and assume 
without loss of generality that $0\in\Omega$. We shall first prove in this step that 
\begin{equation}\label{claimSubordom}
u=g \text{ on }\partial \Omega\,, \text{ and }{\rm dist}(J_u,\partial \Omega)>0\,.
\end{equation}

To show that \eqref{claimSubordom} holds, we consider $\Gamma$ a $\Lambda(\mu)$-minimizer, and $u_\Gamma$ the map constructed in Step 1. We extend $u$ and $u_\Gamma$ to $\R^2$ by setting 
$\widetilde u(x)=g\circ\Pi(x)$ and $\widetilde u_\Gamma(x)=g\circ\Pi(x)$ for $x\in\R^2 \setminus \Omega$, where $\Pi$ denotes the orthogonal projection on the convex set $\overline\Omega$. 
Then, $J_{\widetilde u}\subset \overline\Omega$, $J_{\widetilde u_\Gamma}\subset \Gamma$, and $\mathcal{H}^1(J_{\widetilde u})= \mathcal{H}^1(J_{u})+\mathcal{H}^1\big(\{u\neq g\}\big)$. We can thus from now on identify $u$ (respectively $u_\Gamma$) and $\widetilde u$  (respectively $\widetilde u_\Gamma$). We define 
$$w:= u /  u_\Gamma \,.$$
Since $ u^m=v_\mu= u_\Gamma^m$, we have $w\in SBV_{\rm loc}(\R^2;\mathbf{G}_m)$ with $w=1$ in $\R^2\setminus\overline\Omega$ and we can find a  Caccioppoli partition $\{E_k\}_{k=0}^{m-1}$ of $\R^2$ such that 
$$w=\sum_{k=0}^{m-1}{\bf a}^k\chi_{E_k} \,,$$ 
with $E_k\subset \overline\Omega$ for $k=1,\ldots,m$, and $\R^2\setminus\overline\Omega\subset E_0$. Since $ u_\Gamma$ is smooth in $\overline\Omega\setminus\Gamma$, we deduce that 
\begin{equation}\label{identjumpawgam}
J_{ u}\cap(\R^2\setminus\Gamma)=\bigcup_{k=0}^{m-1}\partial E_k\setminus\Gamma\quad\text{up to an $\mathcal{H}^1$-null set}\,.
\end{equation}
Let $K$ be the convex envelope of ${\rm spt}\,\mu$. The set $K$ is then a closed polygonal subset of $\Omega$. By an elementary geometric construction, we can find a strictly convex open set 
$\omega$ with $C^1$-boundary satisfying $K\subset \omega$ and $\overline\omega\subset\Omega$, and such that the Hausdorff distance between $K$ and $\overline\omega$ is arbitrarily small. Given such~$\omega$, we consider the mapping $\Phi:\R^2\to\R^2$ defined by $\Phi(x):=\frac{1}{2}(x+\Pi_\omega(x))$, where $\Pi_\omega$ denotes the orthogonal projection on $\overline\omega$. Then $\Phi$ is a (global) $C^1$-diffeomorphism of $\R^2$ satisfying $\Phi(x)=x$ for every $x\in\overline\omega$. 

Consider now the sets $\widehat E_k:= \Phi(E_k)$ for $k=1,\ldots,m-1$, so that $\{\widehat E_k\}_{k=0}^{m-1}$ defines a Caccioppoli partition of $\R^2$. As a consequence, the map 
$$\widehat u:=\left(\sum_{k=0}^{m-1}{\bf a}^k\chi_{\widehat E_k}\right) u_\Gamma $$
is an admissible competitor for $L(\mu)$. 
By the chain rule formula for $BV$-functions and \eqref{identjumpawgam}, we have $J_{\widehat u}\setminus\overline\omega=\Phi(J_{ u}\setminus\overline\omega)$ and $J_{\widehat u}\cap\overline\omega=J_{ u}\cap\overline\omega$. The minimality of $u$ together with the  area formula (see \cite[Theorem~2.91]{AFP}) then leads to 
\begin{equation}\label{vendrfinpresq}
\mathcal{H}^1(J_{ u}\setminus\overline\omega)\leq \mathcal{H}^1(J_{\widehat u}\setminus\overline\omega)\leq\int_{J_{ u}\setminus\overline\omega}|\nabla \Phi|\,d\mathcal{H}^1 \,.
\end{equation}
 Our assumption on $\omega$ implies that  $|\nabla\Phi(x)|\leq 1-c_\omega{\rm dist}(x,\overline\omega)$ for every $x\in \overline\Omega$ and some constant $c_\omega>0$ depending only on $\omega$ and $\Omega$.
 Inserting this estimate in \eqref{vendrfinpresq} shows that $\mathcal{H}^1(J_{ u}\setminus\overline\omega)=0$, which clearly implies  \eqref{claimSubordom}.

\vskip5pt

\noindent{\it Step 3.} In this final step we show \eqref{finalstep}. Let us  fix an arbitrary ball $B_{2r}(y)\subset\Omega\setminus{\rm spt}\,\mu$. 
Since $v_\mu$ is smooth in $B_{2r}(y)$, we can find a smooth function $\varphi$ on $\overline B_{r}(y)$ such that $v_\mu= e^{i\varphi}$ in $\overline B_{r}(y)$. The map $u_\star:=e^{i\varphi/m}$ is then 
smooth on $\overline B_{r}(y)$, and satisfies $u_\star^m=v_\mu$ in $\overline B_{r}(y)$. Arguing as above, it implies that any competitor $u_{\rm comp}$ for $L(\mu)$ can be written as 
$$u_{\rm comp}=\left( \sum_{k=0}^m {\bf a}^k \chi_{F_k}\right) u_\star\quad\text{in $B_r(y)$}\,,$$
for some Caccioppoli partition $\{F_k\}_{k=0}^{m-1}$ of $B_r(y)$, and 
$$J_{u_{\rm comp}}\cap B_r(y)= \bigcup_{k=0}^{m-1}\partial F_k\cap B_r(y)\quad\text{up to an $\mathcal{H}^1$-null set} \,.$$ 
In addition, 
\begin{equation}\label{Caccioppoli}\mathcal{H}^1\big(J_{u_{\rm comp}}\cap B_r(y)\big)=\frac{1}{2}\sum_{k=0}^{m-1}\mathcal{H}^1\big(\partial F_k\cap B_r(y)\big)\,.\end{equation}
As a consequence, the minimizer $u$ of $L(\mu)$ that we consider can be written as  $u=\big(\sum_{k}{\bf a}^k\chi_{E_k}\big)u_\star$, for some  Caccioppoli partition $\{E_k\}_{k=0}^{m-1}$ of $B_r(y)$ minimizing the right-hand side of \eqref{Caccioppoli}. 
By classical results on minimal planar clusters (see for instance \cite[Theorem~5.2]{CLM}), $\cup_k\partial E_k\cap B_{r}(y)$ is locally a finite union of  segments meeting at 
 triple junctions. Since $u_\star$ is smooth in $B_r(y)$, it  implies that $J_u\cap B_{r/2}(y)=\cup_k\partial E_k\cap B_{r/2}(y)$, and $u\in C^\infty(B_{r/2}(y)\setminus J_u)$.  
\vskip3pt

We are now ready to prove that $J_u$ is a $\Lambda(\mu)$-minimizer. Let us fix for a moment $\sigma>0$ such that $\overline B_\sigma(x_k)\cap \overline B_\sigma(x_l)=\emptyset$ for $k\neq l$, and $\overline B_\sigma(\mu)\subset\Omega$. By the discussion above, $u\in C^\infty\big(\overline\Omega\setminus (J_u\cup B_{\sigma/2}(\mu))\big)$, and $J_u\setminus B_{\sigma/2}(\mu)$
is a finite union of  segments. Hence, $J_u\cup \overline B_\sigma(\mu)$ is a compact subset of $\Omega$. Since $J_u\cup \overline B_\sigma(\mu)$ converges to $J_u\cup{\rm spt}\,\mu$ as $\sigma\to 0$ in the Hausdorff distance, 
we infer from  Blaschke's theorem that $J_u\cup{\rm spt}\,\mu$ is a compact subset of $\Omega$.
Moreover, we have $J_u\supset {\rm spt}\,\mu$  since $u^m=v_\mu$. Therefore $J_u$ is a compact subset of $\Omega$, and $u\in C^\infty(\Omega\setminus J_u)$. To complete the proof, it now only remains to prove that
any connected component of $J_u$ contains a multiple of $m$ vortices (possibly equal to zero). Indeed, this would lead to $L(\mu)=\mathcal{H}^1(J_u)\geq \Lambda(\mu)$, and \eqref{finalstep} would be proven. 
Furthermore, by Step 1, we would also obtain that $J_u$ is a minimizer of $\Lambda(\mu)$.
\vskip3pt

Let us consider $\Sigma$ a connected component of $J_u$, and $A\subset\Omega$ a connected smooth open neighborhood of $\Sigma$ such that 
$(J_u\setminus\Sigma)\cap \overline A=\emptyset$.  We may write $A=A_0\setminus \cup_{n=1}^N\overline A_n$ where the $A_n$ are  connected and simply 
connected smooth open sets satisfying $\overline A_n\subset A_0$ for $n=1,\ldots, N$, 
and $\overline A_n$ are pairwise disjoint. Since $v_\mu$ and $u$ are smooth on $\partial A_n$ for $n=0,\ldots, N$, and $u^m=v_\mu$,
\[
 {\rm deg}(v_\mu, \partial A_n)=m {\rm deg}(u, \partial A_n)\in m \N
\]
and thus
\[
 {\rm Card}(\Sigma \cap{\rm spt}\,\mu)={\rm deg}(v_\mu,\partial A_0)-\sum_{n=1}^N{\rm deg}(v_\mu,\partial A_n)\in m \N,
\]
concluding the proof.
\end{proof}

\begin{remark}\label{remsmoothLmumin}
The proof of  Theorem ~\ref{thmLDmu} shows that every $L(\mu)$-minimizer $u$ is smooth on both sides of $J_u$ away from ${\spt}\,\mu$. More precisely, one can find a radius $r>0$
such that if  $\overline B_r(x)\subset \Omega \setminus {\rm spt}\,\mu$, then 
$B_r(x) \setminus J_u$ is made of at most three connected sets and  $v_\mu=e^{i\vhi}$ in $B_r(x)$ for some smooth function $\vhi$. In each connected region of   $B_r(x) \setminus J_u$, we have $u={\bf a}^ke^{i\vhi/m}$ for some $k\in\{0,\ldots,m-1\}$. 
\end{remark}

\begin{remark}
 When $\Omega$ is simply connected but not convex,~\eqref{equalNd} still holds true if one adds the condition $\Gamma\subset \overline{\Omega}$ for the admissible sets for $\Lambda(\mu)$. 
 For minimizers, the set $\Gamma\cap\partial\Omega$ can then be non-empty.  
 The proof would follow the same lines as in the convex case 
 using  boundary regularity of minimizers for the constrained Steiner  and constrained minimal cluster problems. 
\end{remark}
\begin{remark}
Given a reference map $u_\star$ which is an admissible competitor for $L(\mu)$, we have seen that any other competitor $u$ can be written as $u=(\sum_{k=0}^{m-1}{\bf a}^k\chi_{E_k})u_\star$ for some Caccioppoli partition $\{E_k\}_{k=0}^{m-1}$ of~$\Omega$. This allows to rephrase  the minimization problem defining $L(\mu)$ as an optimal partition problem. 
Notice however that $\mathcal{H}^1(J_u)$ does not coincide in general with the boundary length of the partition plus $\mathcal{H}^1(J_{u_\star})$  since 
 possible cancellations have to be taken into account (see Figure ~\ref{fig:cancel}).
\end{remark}

\begin{figure}
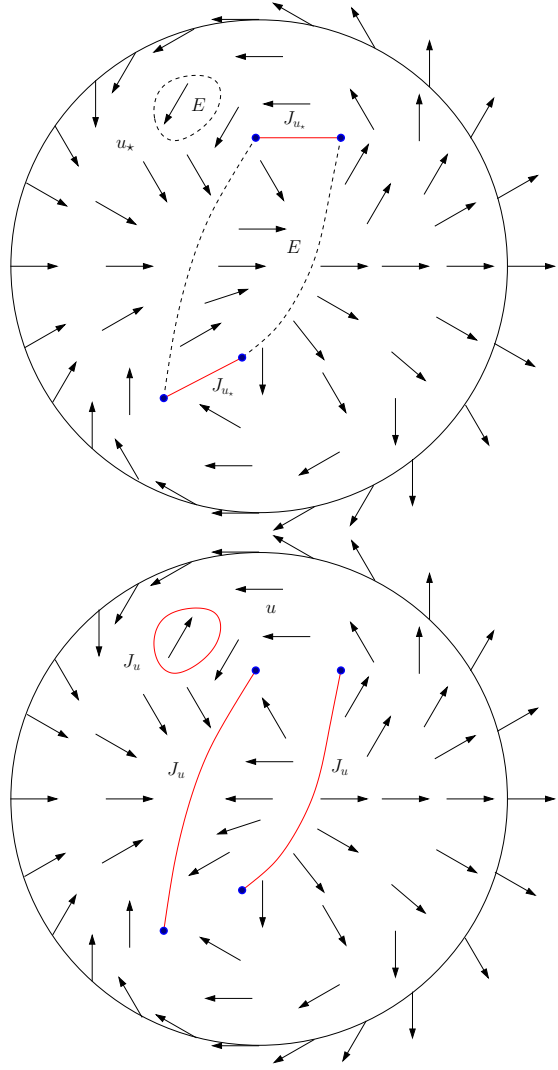
\begin{center}
  \resizebox{7.2cm}{!}{\input{SixTiers2.pdf_t}} 
  \hspace{1.cm}
   \resizebox{7.2cm}{!}{\input{SixTiers3.pdf_t}}
  \caption{Cancellations in the case $m=2$ ($E_1=E$, $E_0=\Omega \setminus E$).} \label{fig:cancel}
 \end{center}
 \end{figure}

\subsection{Structure of $\Lambda(\mu)$-minimizers.}
We now move on to the study of the $\Lambda(\mu)$-minimizers. In the case $m=2$, it reduces to a variant of the classical minimal connection problem (see for instance~\cite{BreCorLieb}). We recall that if $P:=\{p_1,\dots, p_d\}$ and $Q:=\{q_1,\dots, q_d\}$ are two sets of given points in $\R^2$, then the {\sl length of a minimal connection} between $P$ and $Q$ is defined as 
\[\min_{\sigma} \sum_{k=1}^d |p_k-q_{\sigma(k)}|\,,\]
where the minimum runs over all permutations $\sigma$ of $\{1,\dots, d\}$.

\begin{proposition}\label{propminconnect}
 Assume that $m=2$. Let $\mu\in\mathcal{A}_d$ and $\Gamma$ be a $\Lambda(\mu)$-minimizer. Then, $\Gamma$ is made of exactly $d$ disjoint segments $\Gamma_1,\cdots,\Gamma_d$, and each $\Gamma_k\cap{\rm spt}\,\mu$ contains exactly two points $\{p_k,q_k\}$. 
 In particular, $\Gamma_k=[p_k,q_k]$ for each $k$, and $\mathcal{H}^1(\Gamma)$ is the length of a minimal connection between $P=\{p_1,\cdots,p_d\}$ and $Q=\{n_1,\cdots,q_d\}$. 
\end{proposition}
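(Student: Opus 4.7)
The plan is to identify $\Lambda(\mu)$ with the length $M(\mu)$ of a minimal connection between the $2d$ points of ${\rm spt}\,\mu$, and then read off the structural statement from the equality case. For the inequality $\Lambda(\mu)\leq M(\mu)$, I would start from an optimal pairing $\sigma$ realizing $M(\mu)$ and apply a standard uncrossing argument: if two segments $[p_k,q_{\sigma(k)}]$ and $[p_l,q_{\sigma(l)}]$ met transversally at an interior point, the strict triangle inequality
\[
|p_k-q_{\sigma(l)}|+|p_l-q_{\sigma(k)}|\;<\;|p_k-q_{\sigma(k)}|+|p_l-q_{\sigma(l)}|
\]
would contradict optimality of $\sigma$. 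Hence an optimal pairing consists of pairwise non-crossing, and therefore pairwise disjoint, segments, providing an admissible competitor for $\Lambda(\mu)$ of total length $M(\mu)$.

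For the reverse inequality, I would rely on an elementary combinatorial fact about trees: in any finite tree carrying a set $V$ of marked points with $|V|=2k$, one can partition $V$ into $k$ pairs whose connecting paths in the tree are pairwise edge-disjoint. This is proved by a short induction on $k$ (root the tree and process bottom-up, pairing two marked descendants through their lowest common ancestor), and amounts to the classical decomposition of the minimum $T$-join in a tree. Applying this to each connected component of an arbitrary admissible $\Gamma$ yields a pairing $\{(p_k,q_k)\}_{k=1}^d$ of ${\rm spt}\,\mu$ whose $\Gamma$-paths are edge-disjoint. Denoting by $\ell_\Gamma(p,q)$ the length along $\Gamma$ of the (unique) such path, edge-disjointness gives $\sum_k\ell_\Gamma(p_k,q_k)\leq\mathcal{H}^1(\Gamma)$, while $|p_k-q_k|\leq\ell_\Gamma(p_k,q_k)$ yields $M(\mu)\leq\sum_k|p_k-q_k|\leq\mathcal{H}^1(\Gamma)$, and therefore $M(\mu)\leq\Lambda(\mu)$.

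Now fix a minimizer $\Gamma$. Collapsing the above chain to equalities forces first that each $\Gamma$-path from $p_k$ to $q_k$ coincides with the straight segment $[p_k,q_k]$ (in particular it cannot pass through a $120^\circ$ Steiner junction, where straightness would fail), and secondly that the edge-disjoint paths exhaust $\Gamma$, so $\Gamma=\bigcup_{k=1}^d[p_k,q_k]$. These segments are pairwise disjoint: they cannot share a vortex endpoint since the pairing is an involution on ${\rm spt}\,\mu$; they cannot overlap on a subsegment by edge-disjointness; and they cannot cross transversally, since a transverse crossing would either create a degree-$4$ junction in $\Gamma$, ruled out by the Steiner-tree structure recalled after Definition~\ref{defdmumin}, or could be removed by uncrossing to produce a strictly shorter admissible competitor. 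Consequently $\Gamma$ consists of exactly $d$ disjoint segments, each meeting ${\rm spt}\,\mu$ in the two vortices it joins, and $\mathcal{H}^1(\Gamma)=M(\mu)$.

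The main obstacle I foresee is the tree lemma on edge-disjoint pairings underlying the lower bound. It is elementary but must be stated with some care, because marked points may sit at interior nodes of the tree rather than only at leaves; the bottom-up induction handles this case uniformly, whereas an approach based on direct exchange arguments (swapping edges at triple junctions of $\Gamma$) runs into degenerate collinear configurations that are delicate to rule out in one step.
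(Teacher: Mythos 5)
Your argument is correct in substance but follows a genuinely different route from the paper. The paper works directly on the minimizer: using the correspondence with $L(\mu)$ from Theorem~\ref{thmLDmu}, it writes $u=(\chi_E-\chi_{E^c})u_\star$ locally away from ${\rm spt}\,\mu$ and invokes perimeter minimality to conclude that $\Gamma$ has no triple junctions outside the vortices, hence is a finite union of segments meeting only at vortices; it then shows by a parity count that the number $J$ of branches at any vortex is odd (each branch must carry an odd number of vortices, else it could be deleted), so $J\geq 3$ would give two non-collinear branches that can be shortcut, forcing $J=1$. You instead prove the identity $\Lambda(\mu)=M(\mu)$ with $M(\mu)$ the minimal perfect-matching length, via the $T$-join decomposition of each Steiner tree into edge-disjoint paths pairing the vortices, and then read the whole structure off the equality case. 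Your route is more combinatorial and avoids the detour through $L(\mu)$ and minimal-partition regularity, and it yields the clean identity $\Lambda(\mu)=M(\mu)$ as a by-product; the paper's route is shorter given that Theorem~\ref{thmLDmu} is already in place, and its parity argument at a vortex is essentially the same parity that makes your $T$-join lemma work. Note that for the lower bound you only need the tree lemma on minimizers (which the paper has shown to be finite unions of Steiner trees), not on arbitrary admissible competitors, so the looseness in that phrase is harmless.

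One case in your disjointness analysis is not covered: two segments $[p_k,q_k]$ and $[p_l,q_l]$ of the optimal matching could meet at a single point $z$ that is an \emph{endpoint} of one and an \emph{interior} point of the other (a T-contact at a vortex). This is not excluded by the involution property, nor by edge-disjointness, nor by the Steiner structure (which permits arbitrary degree at vortices), and your transverse-crossing dichotomy does not apply to it. It is, however, eliminated by the same strict re-pairing inequality: if $z=q_k\in(p_l,q_l)$, then $|p_k-q_l|+|q_k-p_l|\leq|p_k-q_k|+|p_l-q_l|$ and $|p_k-p_l|+|q_k-q_l|\leq|p_k-q_k|+|p_l-q_l|$, and both cannot be equalities simultaneously (the two equality conditions place $p_k$ on opposite sides of $q_k$ along the line through $p_l,q_l$), so one re-pairing strictly beats $M(\mu)$, a contradiction. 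Excluding this case is also what guarantees that each segment meets ${\rm spt}\,\mu$ only at its two endpoints, so it should be stated explicitly.
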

\begin{proof}
 Let $\Gamma$ be a $\Lambda(\mu)$-minimizer, and let us prove that every connected component $\Gamma_k$ of $\Gamma$ contains exactly two points of ${\rm spt}\,\mu$. It would obviously imply that each $\Gamma_k$ is a  segment, and that $\mathcal{H}^1(\Gamma)$ is the length of minimal connection by the definition \eqref{verydeflambmu} of $\Lambda(\mu)$.  
 
 To prove the claim, we start with the following observation. By Theorem \ref{thmLDmu}, we can find a map $u$ achieving $L(\mu)$ and such that $J_u=\Gamma$. Then, consider an arbitrary open ball $B_{2r}(x)\subset \Omega\setminus {\rm spt}\,\mu$. Since $v_\mu=u^2$ is smooth in that ball and ${\rm deg}(v_\mu,\partial B_{2r}(r))=0$, we can find $u_\star\in C^\infty(B_{2r}(x);\Ss^1)$ such that $u_\star^2=v_\mu$. Arguing as in the proof of Theorem    \ref{thmLDmu}, we infer that $u=(\chi_E-\chi_{E^c})u_\star$ in $B_r(x)$ for some set $E$ having a minimizing perimeter in $B_r(x)$. By minimality, $\partial E\cap B_{r/2}(x)= J_u\cap B_{r/2}(x)=\Gamma\cap B_{r/2}(x)$ is smooth, and thus $\Gamma\cap B_{r/2}(x)$ does not contain triple junctions. Hence $\Gamma$ is a finite union of segments, only intersecting at points of ${\rm spt}\,\mu$.

Let us now consider $\Gamma_k$ a connected component of $\Gamma$. Assume by contradiction that there is a point $x\in \Gamma_k\cap \spt\,\mu$ such that $J\geq 2$ segments meet at  $x$ (if $J=1$ for every point of $ \Gamma_k\cap \spt\,\mu$, then there is nothing to prove).  For $j\in\{1,\ldots,J\}$,
let $C_j:=[x,y_j]$ be the segments in $\Gamma_k$ departing from $x$. 
Denote by $n_j$ the number of points in ${\rm spt}\,\mu$ belonging to the connected component of $\Gamma_k \setminus \{x\}$ containing   $C_j\setminus\{x\}$. Notice that each $n_j$
must be odd (otherwise one could remove the corresponding segment $C_j$ from~$\Gamma$, thus contradicting minimality).
Moreover, the cardinal of ${\rm spt}\,\mu\cap \Gamma_k$ is even, and since it is equal to $1+\sum_{j=1}^J n_j$, we deduce that $J$ is odd.  
Hence $J\geq 3$, and among the segments $C_j$, at least two of them 
are not collinear. Assume without loss of generality that $C_1$ and $C_2$ are not collinear. Then we can replace $C_1$ and 
$C_2$ by the segment $[y_1,y_2]$ to obtain a competitor with strictly lower length than $\Gamma$ (see Figure ~\ref{fig:mindegtwo}),  which again contradicts minimality.  This establishes that  $J=1$, and concludes the proof. 
 \end{proof}
 
  \begin{figure}\begin{center}
 \resizebox{8.cm}{!}{\input{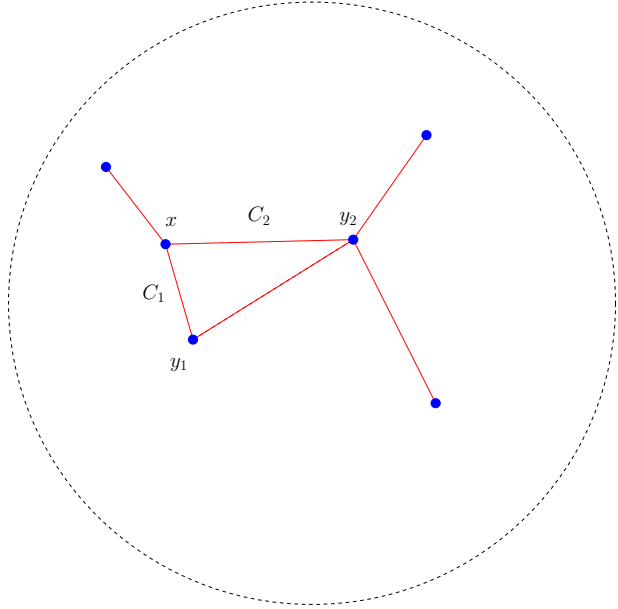}} 
   \caption{Construction of a competitor.} \label{fig:mindegtwo}
 \end{center}
 \end{figure}

The case $m\geq 3$ is more involved, and it is no longer true that any $\Lambda(\mu)$-minimizer is a disjoint union of $d$ Steiner trees. 

\begin{proposition}\label{propexmin}
 Assume that  $d\in\{2,3,4\}$ and  $m\geq d+1$. There exists $\mu\in\mathcal{A}_d$ such that every $\Lambda(\mu)$-minimizer is connected. 
\end{proposition}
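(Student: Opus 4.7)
The plan is to construct $\mu$ by placing the $md$ vortices in $d+1$ small clusters, one centered at each vertex $V_1,\ldots,V_{d+1}$ of a regular $(d+1)$-gon of side length $R$ inscribed in $\Omega$. Each cluster has diameter $\eps\ll R$ and is assigned at most $m-1$ vortices; the overall distribution $(s_1,\ldots,s_{d+1})$ of cluster sizes is any partition of $md$ with $s_j\leq m-1$. Such a partition exists precisely because the inequality $(d+1)(m-1)\geq md$ is equivalent to $m\geq d+1$. Call the resulting atomic measure $\mu\in\mathcal{A}_d$.

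For the upper bound, the Steiner tree $T_\star$ of $\{V_1,\ldots,V_{d+1}\}$ together with short arcs collecting the points of each cluster yields a connected admissible competitor for $\Lambda(\mu)$ with total length $\ell_d R + O(\eps)$, where $\ell_d$ denotes the Steiner tree length of the regular $(d+1)$-gon of unit side. Explicit computation (minimizing over the finitely many full Steiner topologies with at most $d-1$ Steiner points) gives $\ell_2=\sqrt{3}$, $\ell_3=1+\sqrt{3}$, and $\ell_4\simeq 3.89$. The key numerical fact is that $\ell_d<d$ for each $d\in\{2,3,4\}$.

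For the lower bound, let $\Gamma$ be any admissible competitor for $\Lambda(\mu)$ with $k\geq 2$ connected components $\Sigma_1,\ldots,\Sigma_k$. Writing $\#(\Sigma_i\cap\spt\,\mu)=n_i m$ with $\sum_i n_i=d$ and $n_i\geq 1$, the pigeonhole principle forces $\Sigma_i$ to intersect at least $\lceil n_im/(m-1)\rceil\geq n_i+1\geq 2$ distinct clusters, since no cluster contains as many as $m$ vortices. Consequently $\mathcal{H}^1(\Sigma_i)$ is bounded from below, up to an $O(\eps)$ term for collecting the clusters, by the Steiner tree length of the cluster-positions it visits on the $(d+1)$-gon. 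The problem thus reduces to the finite combinatorial check: for every tuple $(n_i)$ with $n_i\geq 1$ and $\sum n_i=d$, and every assignment of cluster-sets $J_i\subset\{1,\ldots,d+1\}$ satisfying $|J_i|\geq n_i+1$ and $\bigcup_i J_i=\{1,\ldots,d+1\}$, the sum $R\sum_i \mathrm{St}(J_i)$ strictly exceeds $\ell_d R$, where $\mathrm{St}(J)$ is the Steiner tree length of the vertices indexed by $J$ on the regular $(d+1)$-gon of unit side.

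The main obstacle is this last verification, which amounts to a finite enumeration of partition profiles. For $d=2$ the only disconnected option is two components each visiting a pair of triangle vertices, with combined length $2R>\sqrt{3}\,R=\ell_2 R$. For $d=3$ and $d=4$ one enumerates the admissible tuples $(n_i)$ and cluster assignments, bounding each $\mathrm{St}(J_i)$ using the Fermat-point formula for three-point subsets and an explicit two-Steiner-point topology for four-point subsets of the square (resp.\ pentagon); the worst-case profiles (for instance, $k=2$ with sizes $m$ and $(d-1)m$) leave a strictly positive slack over $\ell_d R$, which is where the restriction $d\leq 4$ enters via the inequalities $\sqrt{3}<2$, $1+\sqrt{3}<3$, $\ell_4<4$. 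Once this finite check is carried out, taking $\eps$ sufficiently small absorbs all $O(\eps)$ errors, ensuring that every $\Lambda(\mu)$-minimizer is connected.
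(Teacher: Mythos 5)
Your choice of $\mu$ is essentially the paper's: clusters of at most $m-1$ vortices at the vertices of a regular $(d+1)$-gon, with $m\geq d+1$ entering exactly through $(d+1)(m-1)\geq md$. Where you genuinely diverge is the lower bound for disconnected competitors. The paper avoids all numerical Steiner computations: it augments a disconnected competitor by the $md$ spokes joining each vortex to its nearest vertex, observes that every component of the augmented set then carries a number of vortices divisible by both $m$ and $m-1$ (hence equal to $md$), so the augmented set is connected; letting $\eps\to 0$ yields an admissible competitor for the Steiner problem of the $(d+1)$-gon in which some terminal has degree at least two, which is strictly longer than the unique minimizer (all of whose terminals have degree one, by the cited structure results for the triangle, square and pentagon). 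That qualitative fact is where $d\leq 4$ enters for the paper. Your route replaces this soft argument by a quantitative enumeration over partition profiles; this is a legitimate and in some ways more explicit alternative.

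The gap is that the decisive finite check is asserted, not performed, and the cases you skip are exactly the tight ones. The inequalities you single out ($\sqrt3<2$, $1+\sqrt3<3$, $\ell_4<4$) only dispose of the maximally disconnected profile ($k=d$ components, each meeting a pair of clusters). For $d=4$ the profile $k=2$, $(n_1,n_2)=(2,2)$ forces two components each meeting three pentagon vertices; the cheapest option is two triples of consecutive vertices, each of Fermat length $\bigl(\tfrac{2+\varphi^2}{2}+\sqrt3\sin108^\circ\bigr)^{1/2}\approx 1.989$, for a total $\approx 3.978$ against $\ell_4\approx 3.891$ --- a margin of about two percent that cannot be waved away. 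The profile $(n_1,n_2)=(1,3)$ requires a lower bound close to $2.9$ for the Steiner length of four pentagon vertices, where the Steiner-ratio bound $(\sqrt3/2)\cdot\mathrm{MST}\approx 2.60$ is insufficient; and your proposed tool for four-point subsets, ``an explicit two-Steiner-point topology,'' bounds $\mathrm{St}(J_i)$ from \emph{above}, whereas the argument needs a bound from \emph{below}, i.e.\ a Melzak-type minimization over all full topologies and their degenerations. None of this is unfixable, but as written the verification on which the restriction $d\leq 4$ rests in your scheme is not actually established.
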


\begin{proof}
For clarity reason,  we shall start by giving full details of the proof for $d=2$ and $m=3$. We will then explain how to extend this construction to the other cases. Let $Y_1$, $Y_2$, and $Y_3$ be three equidistant  points on the unit circle. The unique solution to the Steiner problem for  connecting  these three points is given by the triple junction $\Sigma$.  
 Given $\eps\ll1$,  let $\{x_1, \dots, x_6\}$ be such that (see Figure~~\ref{ex:Y})
 \[|x_1-Y_1|=|x_2-Y_1|=|x_3-Y_2|=|x_4-Y_2|=|x_5-Y_3|=|x_6-Y_3|=\eps\, .\]
 \begin{figure}
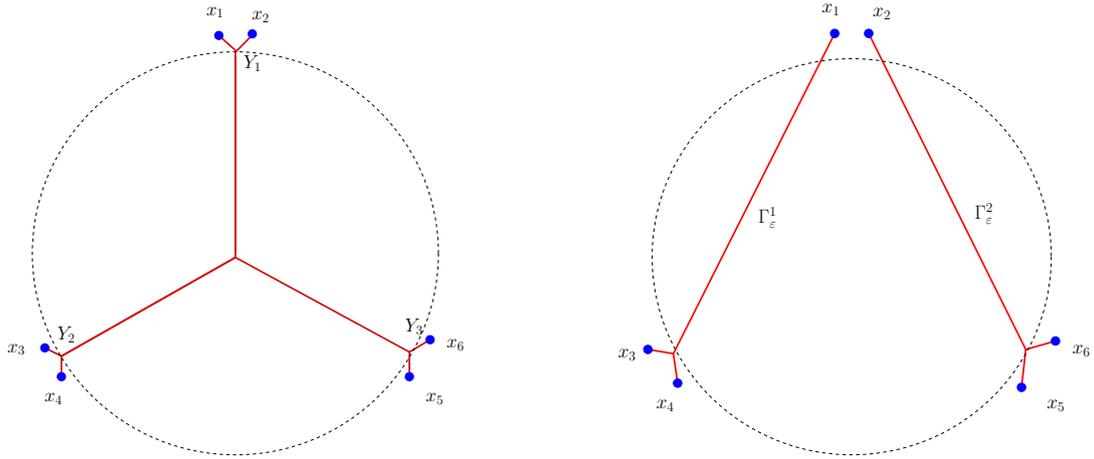

 \resizebox{5.8cm}{!}{\input{exSix.pdf_t}} 
 \hspace{2cm}
  \resizebox{6.cm}{!}{\input{exSixcompare.pdf_t}}
  \caption{An example of a connected minimizer with six vortices of degree $1/3$.} \label{ex:Y}
 \end{figure}
Consider the measure $\mu_\eps:=2\pi\sum_{k=1}^6\delta_{x_k}$, and let $\Gamma_\eps$ be a $\Lambda(\mu_\eps)$-minimizer. Set $\Sigma_\eps$ to be the union of $\Sigma$ with the segments connecting each $x_j$ to the closest $Y_i$. By minimality, 
 \begin{equation}\label{uperconnectY}
  \mathcal{H}^1(\Gamma_\eps)\leq  \mathcal{H}^1(\Sigma_\eps)\leq \mathcal{H}^1(\Sigma)+6\eps\,.
 \end{equation}
If $\Gamma_\eps$ is not connected, then it is has two connected components $\Gamma_\eps^1$ and $\Gamma_\eps^2$, each of them containing exactly three points among the $x_j$'s. 
Then, at least one of the pairs $\{x_1,x_2\}$, $\{x_3,x_4\}$ and $\{x_5,x_6\}$,  intersects both $\Gamma_\eps^1$ and $\Gamma_\eps^2$, say $\{x_1,x_2\}$. Up to a subsequence, we have that each 
$\Gamma_\eps^i$ converges to a connected set $\Gamma^i$ with $\Gamma:=\Gamma^1\cup \Gamma^2$ admissible for the Steiner problem related to $Y_1$, $Y_2$ and $Y_3$. Therefore, by \eqref{uperconnectY}
\[
 \mathcal{H}^1(\Sigma)\ge \liminf_{\eps\to 0} \mathcal{H}^1(\Gamma_\eps)\ge \mathcal{H}^1(\Gamma)\ge \mathcal{H}^{1}(\Sigma).
\]
Hence, the above inequalities are actually equalities and since
\[
 \liminf_{\eps\to 0} \mathcal{H}^1(\Gamma_\eps)=\mathcal{H}^1(\Gamma^1)+\mathcal{H}^1(\Gamma^2),
\]
we have $\mathcal{H}^1(\Gamma^1\cup\Gamma^2)=\mathcal{H}^1(\Gamma^1)+\mathcal{H}^1(\Gamma^2)$ so that $\Gamma^1$ and $\Gamma^2$ only intersect at $Y_1$.
We have thus obtained a connected graph $\Gamma$ containing $Y_1$, $Y_2$ and $Y_3$ but for which the degree of $Y_1$ is two. Since $\Sigma$ is 
the only minimizer of the Steiner problem for $(Y_1, Y_2, Y_3)$,
\[
 \mathcal{H}^1(\Gamma)>\mathcal{H}^1(\Sigma)\,,
\]
contradicting~\eqref{uperconnectY} for $\eps$ small enough.
\begin{center} 
 \begin{figure}[h]
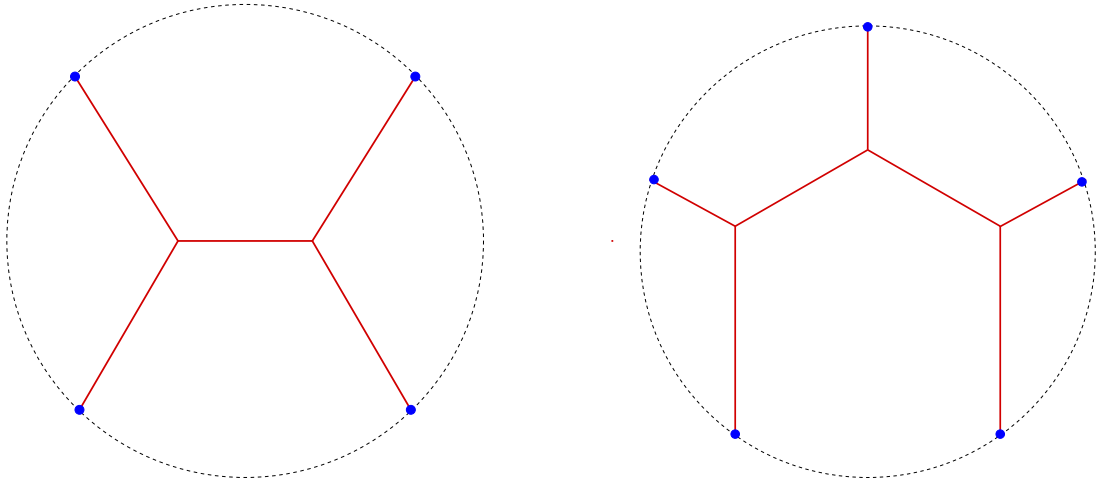

 \resizebox{8.cm}{!}{\includegraphics{Quatre1.pdf}} 
 \hspace{0.1cm}
  \resizebox{6.cm}{!}{\includegraphics{Cinq.pdf}}
  \caption{Minimizers of the Steiner problem for the vertices of the square on the left and of the regular pentagon on the right.} \label{Steinm}
 \end{figure}
 \end{center}
If now $m\geq 3$, we can repeat the same construction placing $m-1$ points close to $Y_1$, $m-1$ close to $Y_2$ and two close to $Y_3$ to construct an example
 where the minimizer is connected for $2m$ points.
 
 For $d=3,4$ and $m=d+1$, the construction is similar to the case $d=2$. For this we let $(Y_1,\dots, Y_m)$ be the vertices of a regular $m-$gone\footnote{That is a square if $m=4$ and a pentagon if $m=5$.} 
 inscribed in the unit circle and consider $(x_1, \dots, x_{md})$ points such that for every $k\in [1,m]$, 
 there are exactly $d$ of these points at distance $\eps$ from $Y_k$. Let $\Sigma$ be the minimizer of the Steiner problem for $(Y_1, \dots, Y_m)$. Then, as for the case $d=2$,
 \begin{equation}\label{uperconnectSig}
  \Lambda(\mu)\leq \mathcal{H}^1(\Sigma)+md \eps\,.
 \end{equation}
 As above, let $\Gamma_\eps$ be a minimizer for $\Lambda(\mu)$ and assume that it is not connected. Let $\widehat{\Gamma}_\eps$ be the set made of $\Gamma_\eps$ and the union of the segments joining the points $x_j$ to the nearest $Y_k$.
 Since every connected component of $\Gamma_\eps$ contains a multiple of $m$ points among the points $x_j$, it must also be the same for the connected components of $\widehat{\Gamma}_\eps$. However, at the 
 same time, it should also be a multiple of $m-1$ since each $Y_k$ is connected to the $m-1$ closest points $x_j$. Therefore, $\widehat{\Gamma}_\eps$ must be connected. Letting $\eps\to 0$, we obtain
 a set $\widehat{\Gamma}$ which is admissible for the Steiner problem for $(Y_1,\dots, Y_m)$ but for which at least one of the points $Y_k$ has degree at least two. Since all the points $Y_k$ have degree one for the minimizer 
 of the Steiner problem for the  $m-$gone  with $m=4 ,5$ (see for instance~\cite{DHW} and Figure~~\ref{Steinm}), we reach a contradiction with~\eqref{uperconnectSig}. 
 The extension to $d=4,5$ and $m>d+1$ is obtained as before by placing $m-1$ points $x_j$ at distance $\eps$ from $d$ points $Y_k$ and $d$ points $x_j$ at distance $\eps$ from the last $Y_k$.  
\end{proof}

\begin{remark}
 For $m\geq 6$, the solution of the Steiner problem for the vertices of a regular $m-$gone is known to be the $m-$gone itself minus one of its side~\cite{DHW}. For this reason our construction does not work for $d\geq 5$.
 It would be interesting to understand if it is possible to find another construction which works for every $d\in \N$.
\end{remark}

In light of Proposition~~\ref{propexmin}, one could conjecture that the maximum number of points that a $\Lambda(\mu)$-minimizer can carry is equal to $m(m-1)$. However, as the following example shows, this is again not the case.

\begin{proposition}\label{propexmin2}
 Assume that $m=d=3$. There exists $\mu\in\mathcal{A}_d$ such that every $\Lambda(\mu)$-minimizer is connected. 
\end{proposition}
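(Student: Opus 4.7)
The plan is to mimic the proof of Proposition~\ref{propexmin} but with a cluster structure adapted to the equality $m=d=3$. The argument proceeds by a divisibility obstruction combined with a degree/uniqueness argument at the limit.

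First I would choose cluster sizes $(c_1,c_2,c_3)=(1,4,4)$ and place them at the vertices $Y_1,Y_2,Y_3$ of an equilateral triangle of side $L$. The arithmetic point is that the only subsets $S\subset\{1,2,3\}$ with $\sum_{i\in S}c_i\equiv 0\pmod 3$ are $\emptyset$ and $\{1,2,3\}$; this is essentially forced, since with three positive clusters summing to $9$ and satisfying this property one must have all $c_i\equiv 1\pmod 3$ (up to the symmetric option $(2,2,5)$). Take $\mu_\eps=2\pi\sum_{k=1}^{9}\delta_{x_k}$ with the nine points placed at distance $\eps$ from the corresponding vertex, in a highly symmetric configuration within each cluster (e.g.\ equispaced on a small circle).

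The upper bound $\Lambda(\mu_\eps)\leq\mathcal{H}^1(\Sigma)+C\eps$ follows as in Proposition~\ref{propexmin} by using as competitor the Steiner Y-tree $\Sigma$ of $\{Y_1,Y_2,Y_3\}$ suitably extended to visit all nine vortices. Given a minimizer $\Gamma_\eps$, form $\widehat{\Gamma}_\eps$ by adjoining to $\Gamma_\eps$ the segments joining each $Y_k$ to the vortices of its cluster. Since every $Y_k$ is then linked to its entire cluster inside $\widehat{\Gamma}_\eps$, each connected component of $\widehat{\Gamma}_\eps$ contains whole clusters, and hence its cardinality of vortices is a subset sum of $\{1,4,4\}$. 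Combined with the fact that components of $\Gamma_\eps$ — and hence of $\widehat{\Gamma}_\eps$ — have sizes divisible by $3$, this forces the component sizes to lie in $\{0,9\}$, so $\widehat{\Gamma}_\eps$ is connected. By Golab's theorem and the uniqueness of $\Sigma$ as Steiner tree for three equidistant points, any Hausdorff subsequential limit $\widehat{\Gamma}$ of $\widehat{\Gamma}_\eps$ coincides with $\Sigma$ and each $Y_k$ has degree $1$ in it.

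Now suppose by contradiction that $\Gamma_\eps$ is disconnected for arbitrarily small $\eps$; its components have sizes $(3,6)$ or $(3,3,3)$. For every distribution in which each component meets at least two distinct clusters, the Hausdorff limits $\Gamma^i$ are connected sets containing at least two of the $Y_k$, hence of length $\geq L$ each; adding the corresponding lengths yields at least $2L$ or $L+L\sqrt{3}$, strictly exceeding the upper bound $L\sqrt{3}+C\eps$ for small $\eps$. The delicate case — and the main obstacle — is when one component is entirely contained in the $4$-vortex cluster at $Y_2$ or $Y_3$; in the limit this component collapses to a point and the degree argument breaks down. To rule this case out one needs an $O(\eps)$-order comparison: the precise symmetric placement of the four vortices in the cluster should be chosen so that the sum of the Steiner tree of any $3$-subset plus the cost of visiting the remaining single vortex along the main tree strictly exceeds the cost of visiting all four vortices from the main tree. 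Establishing this inequality by direct computation of Steiner-tree lengths for the chosen symmetric configuration — and verifying that the geometric gain at the $4$-cluster beats the loss at the $1$-cluster — is the technical heart of the argument, after which one concludes the contradiction and hence the connectedness of every $\Lambda(\mu_\eps)$-minimizer.
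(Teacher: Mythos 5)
Your skeleton is the same as the paper's: clusters of sizes $(4,4,1)$ at the vertices of an equilateral triangle, the divisibility-mod-$3$ obstruction, the order-one degree/length argument ruling out splittings in which every component meets two clusters, and the correct identification of the remaining case (a size-$3$ component swallowed inside a $4$-cluster) as the crux. But that crux is exactly what you do not prove: you defer the decisive $O(\eps)$ comparison as ``the technical heart of the argument'' without carrying it out, and the configuration you propose for the $4$-clusters (four points equispaced on a circle of radius $\eps$) appears to make the inequality \emph{false}. Indeed, for four points forming a square of side $s=\eps\sqrt{2}$, the Steiner tree of the full square has length $(1+\sqrt{3})s\approx 3.86\,\eps$, while the Steiner tree of three of its corners has length $s\sqrt{2+\sqrt{3}}\approx 2.73\,\eps$ and the main tree still reaches the remaining corner at essentially no extra cost; so detaching a size-$3$ sub-component from a $4$-cluster is locally \emph{cheaper} by roughly $\eps$, and is globally admissible (the complementary component has $6$ points). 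With your placement the minimizer would therefore likely disconnect, so the construction itself, not just the write-up, is incomplete.

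The paper closes this gap by engineering the $4$-clusters hierarchically on two scales $1\gg\eps\gg\delta$: each $4$-cluster consists of two pairs, the pairs centered at points $X_i$ at distance $\eps$ from the vertex with prescribed $120^{\circ}$ angles (so $|X_1-X_2|=\sqrt{3}\,\eps$), and each pair has internal diameter $O(\delta)$. Then any $3$-subset of a $4$-cluster necessarily spans both pairs and costs at least $\sqrt{3}\,\eps-O(\delta)$, whereas the connected competitor attaches the whole cluster for $2\eps+O(\delta)$; explicit Steiner-tree computations (lengths $3+\eps$ and $3+\tfrac{5}{2}\eps+o(\eps)$ for the two residual trees) then show every disconnection costs strictly more than the $3+4\eps+8\delta$ upper bound. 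To repair your proof you would need to replace the symmetric circular placement by such a two-scale configuration (or another for which the comparison provably goes the right way) and actually perform the length computations.
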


\begin{proof}
 The idea is to iterate the construction made above (see Figure~~\ref{ex:9.1}). Let $1\gg \eps\gg \delta$. We   first fix the points $(Y_1,Y_2, Y_3)$ as before and then choose  the points $(X_1, X_2, X_3, X_4)$ so that 
 \[|X_1-Y_1|=|X_2-Y_1|=|X_3-Y_2|=|X_4-Y_2|=\eps\,,\]
 and that all the angles are of $120$\textdegree. Let $(x_1, \dots, x_{8})$ be such that each $X_k$ is at distance $\delta$ of exactly two $x_j$'s and  let finally $x_9=Y_3$. Let $\Gamma$ be a minimizer of the corresponding $\Lambda(\mu)$. 
 As above, by comparing with  a connected competitor, it holds
 \begin{equation}\label{uperconnectYbis}
  \mathcal{H}^1(\Gamma)\leq 3+4\eps +8\delta\,.
 \end{equation}
If $\Gamma$ is not connected, then it can have either two or three connected components.
 Arguing as above, we see that the connected component containing $x_9$ must also contain at least one of the vortices close to $Y_1$, say $x_1$ and one of the vortices close to $Y_2$ say $x_8$. Let $\Gamma_1$ be this connected component. 
 If $\Gamma$ has three connected components, then each of them must contain exactly three points. Up to relabeling, this means that $\Gamma_2$ contains 
 $x_2, x_3$ and $x_4$ and $\Gamma_3$ contains $x_5, x_6$ and $x_7$. Letting $\Sigma_1$ be the triple junction connecting $x_9$, $X_1$ and $X_3$ (see Figure~~\ref{ex:9.1}), 
 we obtain that,
 \[\mathcal{H}^1(\Gamma)\geq \mathcal{H}^1(\Sigma_1)+|X_1-X_2|+|X_3-X_4|+O(\delta)\,.\]
  \begin{figure}
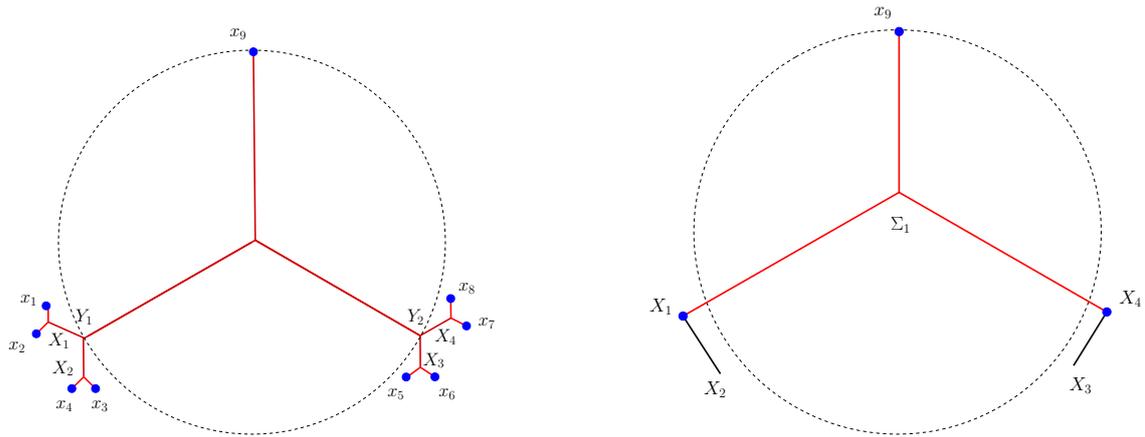

 \resizebox{6.2cm}{!}{\input{exNeuf2.pdf_t}} 
 \hspace{2cm}
  \resizebox{6.2cm}{!}{\input{exNeufcompare.pdf_t}}
   \caption{An example of a connected minimizer with nine vortices of degree $1/3$.} \label{ex:9.1}
 \end{figure}
 A simple computation gives $|X_1-X_2|=|X_3-X_4|=\sqrt{3}\eps$ and $\mathcal{H}^1(\Sigma_1)=3+\eps$ so that $\mathcal{H}^1(\Gamma)\geq 3+(1+2\sqrt{3})\eps-O(\delta)$,
 which contradicts~\eqref{uperconnectYbis} for $\delta$ and $\eps$ small enough. The cases when $\Sigma_1$ must be the triple junction connecting $X_2, x_9$ and $X_3$ or $X_1$, $x_9$ and $X_3$ can be treated analogously.
 
 If now $\Gamma$ is made of only two components, then $\Gamma_1$ must contain six points and the other connected component $\Gamma_2$ must contain the remaining three points. Without loss of generality, we can assume that
 $\{x_1, x_2, x_3, x_4, x_8 , x_9\}\subset \Gamma_1$ and $\{x_5, x_6, x_7\}\subset \Gamma_2$. Let $\Sigma_2$ be the optimal Steiner tree connecting $X_1$, $X_2$ $x_9$ and $X_4$ (see Figure~~\ref{ex:9.2}). We then have 
 \[\mathcal{H}^1(\Gamma)\geq \mathcal{H}^1(\Sigma_2)+|X_4-X_3|-O(\delta)\,.\]
 \begin{figure}
 \centering\resizebox{7.4cm}{!}{\input{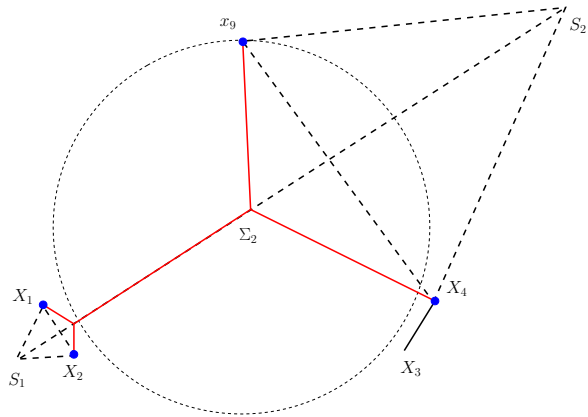}}
   \caption{The set $\Sigma_2$.} \label{ex:9.2}
 \end{figure}
 In order to compute $\mathcal{H}^1(\Sigma_2)$, we notice that since at first order, it must have length $3$, it must have at least one triple junction. If it has only one, then we are basically back to the situation of Figure~~\ref{ex:9.1}.
 Otherwise, it has exactly two triple junctions and we can obtain $\mathcal{H}^1(\Sigma_2)$ by constructing the two equilateral triangles $X_1X_2S_1$ and $X_4x_9S_2$ (see Figure~~\ref{ex:9.2}) and computing the distance $S_1S_2$ (see~\cite{pollak}). 
 After some computations using for instance complex numbers, we find that $\mathcal{H}^1(\Sigma_2)= 3+\frac{5}{2} \eps +o(\eps)$ so that 
 \[\mathcal{H}^1(\Gamma)\geq 3+\lt(\frac{5}{2}+\sqrt{3}\rt) \eps +o(\eps)-O(\delta)\,,\]
 contradicting~\eqref{uperconnectYbis} again.
\end{proof}

\begin{remark}
In light of these examples, it would be interesting to understand what is the maximal number of vortices which can be carried by a single tree, given $m\geq3$.
\end{remark}

\section{Structure of minimizers at small $\eps>0$}\label{S5}                       

The aim of this final section is to use the structure of the minimizers of the limiting functional $F_{0,g}$ given by Theorem \ref{thmLDmu} to prove that  minimizers of $F^0_{\eps,g}$ have the same structure for $\eps>0$  small enough. 
In turn, this gives an improved convergence result for minimizers as $\eps\downarrow 0$ (compare to Corollary \ref{corcvminsharp}). 
Since we will use  some tools developed for the analysis of the Mumford-Shah functional, we will only focus on the sharp interface functional $F_{\eps,g}^0$. It would be interesting to understand if similar results can be obtained for the 
``phase field approximation" $F_\eps^\eta$.

As in the previous section, we shall assume that $\Omega$ is a convex domain. The main results of this section can be summarized in the following theorem. We recall that the $L^1$-convergence of minimizers of $F_{\eh,g}^0$ towards minimizers of $F_{0,g}$ is given 
by Corollary \ref{corcvminsharp}. 

\begin{theorem}\label{theoregeps}
Let $\eps_h\to 0$, and let $u_h$ be a minimizer of $F_{\eh,g}^0$ over ${\mathcal G}_g(\Omega)$. Assume that $u_h\to u$ in $L^1(\Omega)$ as $h\to\infty$ for some minimizer $u$ of $F_{0,g}$. Setting $\mu:=\curl j(u^m)$, for every $\sigma>0$ small enough, and every $h$ large enough,  the following holds:  
\begin{enumerate}
\item[(i)] $J_{u_h}\setminus B_\sigma(\mu)$ is a compact subset of $\Omega\setminus  B_\sigma(\mu)$ made of finitely many  segments,  meeting at triple junctions. 
\vskip3pt
\item[(ii)] $u_h\in C^\infty\big(\overline \Omega \setminus(B_\sigma(\mu)\cup J_{u_h}) \big)$ and $u_h=g$ on $\partial \Omega$. 
\vskip3pt
\item[(iii)] If $B_r(x)\subset \Omega\setminus  B_\sigma(\mu)$, then there exists $\phi_h\in BV(B_r(x);\mathbf{G}_m)$ such that $u_h/\phi_h\in C^\infty(B_r(x))$. 
\end{enumerate}
In addition, 
\begin{enumerate}
\item[(iv)] $ J_{u_h}$ converges in the Hausdorff distance to $J_{u}$. 
\vskip3pt
\item[(v)] $u_h\to u$ in $C^k_{\rm loc}(\Omega\setminus J_{u})\cap C^{1,\alpha}_{\rm loc}(\overline \Omega\setminus J_{u})$  for every $k\in \mathbb{N}$ and $\alpha\in(0,1)$. 
\vskip3pt
\item[(vi)] If $B_r(x)\subset \Omega\setminus  B_\sigma(\mu)$, then there exists $\phi\in BV(B_r(x);\mathbf{G}_m)$ such that 
$u_h/\phi_h\to u/\phi$ in $C^k_{\rm loc}(B_r(x))$ for every $k\in \mathbb{N}$. 
\end{enumerate}
\end{theorem}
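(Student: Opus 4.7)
The strategy is to reduce, on small balls away from the limiting vortices, the minimality of $u_h$ for $F^0_{\eps_h,g}$ to a Mumford-Shah type minimality for a ``defect factor'' $\phi_h$ obtained by dividing $u_h$ by a smooth $m$-th root of a classical Ginzburg-Landau minimizer. Combined with calibration results and regularity of minimal partitions in dimension two, this will yield the announced polygonal/triple-junction structure and the factorization (iii). From there, Hausdorff convergence and the $C^k_{\rm loc}$-convergence of $u_h$ away from $J_u$ are consequences of the strong convergence statements in Theorem~\ref{Gammasharp}~(ii) combined with classical elliptic regularity.

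\textbf{Setup and strong convergence.} First, I would fix $\sigma>0$ small enough so that the balls $\overline{B_\sigma(x_k)}$ are pairwise disjoint and contained in $\Omega$, and so that, by Theorem~\ref{thmLDmu} applied to the $F_{0,g}$-minimizer $u$, the set $J_u\setminus B_\sigma(\mu)$ is a finite union of closed segments meeting at triple junctions with $120^\circ$ angles and $u\in C^\infty\big(\overline\Omega\setminus(J_u\cup\spt\mu)\big)$ with $u=g$ on $\partial\Omega$. From Corollary~\ref{corcvminsharp} and Theorem~\ref{Gammasharp}~(ii) applied to the minimizing sequence $\{u_h\}$, one has strong convergence $\p(u_h)\to u^m$ in $W^{1,2}_{\rm loc}(\overline\Omega\setminus\spt\mu)$, convergence of the full energy $F^0_{\eps_h,g}(u_h)-\frac{\pi d}{m}|\log\eps_h|\to F_{0,g}(u)$, and the ``length'' convergence
\[
\mathcal{H}^1(J_{u_h}\cap A)+\mathcal{H}^1\big(\{u_h\neq g\}\cap\partial\Omega\cap A\big)\longrightarrow \mathcal{H}^1(J_u\cap A)+\mathcal{H}^1\big(\{u\neq g\}\cap\partial\Omega\cap A\big)
\]
for every open $A$ with null-measure boundary trace of the limit jump set.

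\textbf{Local splitting and reduction to a Mumford-Shah problem.} The core of the proof is local. Let $B=B_r(x)\Subset \Omega\setminus B_\sigma(\mu)$ with $r$ chosen small enough that the smooth map $v_\mu=u^m$, which is $\Ss^1$-valued and smooth in a neighbourhood of $\overline B$, admits a smooth $m$-th root $u_\star=e^{i\varphi/m}\in C^\infty(\overline B;\Ss^1)$. Let $w_h\in W^{1,2}(B;\C)$ be the (smooth) minimizer of the classical Ginzburg-Landau energy $E_{\eps_h}(\cdot,B)$ with boundary data $\p(u_h)|_{\partial B}$. Since this trace converges strongly in $H^{1/2}(\partial B)$ to $v_\mu|_{\partial B}$, which is $\Ss^1$-valued, of zero degree and smooth, Wente-type estimates \cite{Wente,BrezCor} together with the non-degeneracy of $v_\mu$ imply that for $h$ large, $w_h\in C^\infty(\overline B)$, $|w_h|\geq 1/2$ on $\overline B$, and $w_h\to v_\mu$ in $C^k_{\rm loc}(B)\cap C^{1,\alpha}(\overline B)$ for every $k$ and every $\alpha\in(0,1)$. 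I can then pick a smooth $m$-th root $\widetilde w_h:=w_h^{1/m}\in C^\infty(\overline B;\C)$ and define
\[
\phi_h:= u_h/\widetilde w_h\in SBV^2(B)\,,\qquad \phi_h^{\,m}=\p(u_h)/w_h\in W^{1,2}(B;\C)\,.
\]
Since $\p(u_h)/w_h\to 1$ strongly in $W^{1,2}(B)$, Lemma~\ref{struct} together with Corollary~\ref{cor1} yield $\phi_h\in SBV^2(B)$ with $\phi_h^+/\phi_h^-\in\mathbf G_m$ on $J_{\phi_h}$, and $\phi_h\to 1$ in $L^1(B)$ with $\nabla\phi_h\to 0$ in $L^2(B)$.

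\textbf{Lower expansion à la Lassoued-Mironescu and forced Mumford-Shah minimality.} Invoking the Lassoued-Mironescu-type splitting encoded in Proposition~\ref{propLM}, one obtains a local lower bound of the form
\[
F^0_{\eps_h}(u_h,B)\geq E_{\eps_h}(w_h,B)+\frac{1}{\alpha}\left(\int_B|\nabla\phi_h|^2\,dx+\alpha\,\mathcal{H}^1(J_{\phi_h}\cap B)\right)+o_h(1)\,,
\]
for some $\alpha>0$ (the error absorbing Wente-controlled cross terms). By comparing $u_h$ in $B$ with the competitor $\widetilde\phi\,\widetilde w_h$, where $\widetilde\phi$ is any admissible $\mathbf G_m$-valued Caccioppoli partition of $B$ matching $\phi_h$ on $\partial B$, the minimality of $u_h$ combined with the strong $W^{1,2}$-convergence $w_h\to v_\mu$ forces $\phi_h$, up to $o_h(1)$, to be a local Dirichlet minimizer of the Mumford-Shah functional $\int|\nabla\cdot|^2+\alpha\mathcal{H}^1(J_\cdot)$ in $B$ with its own boundary trace, and with $\phi_h^m$ close to $1$. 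The calibration arguments of Alberti-Bouchitté-Dal Maso \cite{AlBouDal} and Mora \cite{Mora} then rule out the Sobolev part and force $\phi_h\in BV(B;\mathbf G_m)$ for $h$ large, which is exactly (iii).

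\textbf{Polygonal structure, boundary regularity and convergences.} Once $\phi_h$ is a $\mathbf G_m$-valued Caccioppoli partition minimizing its own total perimeter in $B$, the classical regularity theory for planar minimal clusters (see e.g.\ \cite{CLM}) gives that $J_{\phi_h}\cap B'$ is, for any $B'\Subset B$, a finite union of straight segments meeting only at triple junctions at $120^\circ$. Covering $\overline\Omega\setminus B_\sigma(\mu)$ by finitely many such balls yields (i). Since $\widetilde w_h$ is smooth, $u_h=\phi_h\widetilde w_h$ is smooth on each region of $B\setminus J_{\phi_h}$, which gives (ii) in the interior; at the boundary, a reflection/extension argument analogous to Step~2 of the proof of Theorem~\ref{thmLDmu}, exploiting the convexity of $\Omega$, shows that $\mathcal{H}^1(\{u_h\neq g\}\cap\partial\Omega)=0$ for $h$ large, i.e.\ $u_h=g$ on $\partial\Omega$, and standard elliptic regularity for the Ginzburg-Landau equation provides the $C^\infty$ smoothness up to $\partial\Omega$. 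Finally, the Hausdorff convergence (iv) follows from the uniform lower density bound $\mathcal{H}^1(J_{u_h}\cap B_\rho(y))\geq c\rho$ for $y\in J_{u_h}$ (a classical consequence of Mumford-Shah minimality, here transferred through the factorization $u_h=\phi_h\widetilde w_h$) combined with the length convergence stated above, while (v) and (vi) follow from Schauder estimates applied to $w_h$ and $\widetilde w_h$, the $C^{k}_{\rm loc}$ convergence $w_h\to v_\mu$, and the $L^1$-convergence $\phi_h\to \phi:=u/u_\star$.

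\textbf{Main obstacles.} The delicate point is the passage from the Lassoued-Mironescu lower bound to the genuine Mumford-Shah minimality of $\phi_h$: one has to build sufficiently flexible competitors in $B$ that respect the constraint $\p(\cdot)\in W^{1,2}(B;\mathcal{N})$ and the global degree/topology, and to control the cross terms generated by $w_h$ being only close to (not equal to) $v_\mu$. The second delicate point is the calibration step forcing $\phi_h\in BV(B;\mathbf G_m)$, which relies on the a priori $W^{1,2}$-smallness of $\nabla\phi_h$ and on matching the Mumford-Shah constant $\alpha$ with the minimum jump size $\delta|\mathbf a-1|$ dictated by Corollary~\ref{cor1}. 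Everything else---boundary regularity, Hausdorff convergence, smooth convergence---is by now standard once these two points are established.
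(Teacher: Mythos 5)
Your high-level strategy is the same as the paper's (Lassoued--Mironescu splitting, Wente control of the cross term, reduction to a minimal partition problem, calibration at triple junctions), but as written the plan has a genuine gap at its central step: you never control $J_{u_h}\cap\partial B$. The paper's argument hinges on a coarea/Fubini selection of a radius $\rho_h$ for which $\mathcal{H}^0(J_{u_h}\cap\partial B_{\rho_h})$ equals exactly $0$, $2$ or $3$ (matching the local structure of $J_u$), combined with a degree argument (using $u_h^+/u_h^-\in\mathbf G_m$ and $\deg(\p(u_h),\partial B_{\rho_h})=0$) that rules out one crossing in the segment case, and with the energy identity $\mathcal H^1(J_{\phi_h}\cap B)=L_h$ obtained by slicing (segment case) or by the Mora calibration (triple-junction case). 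Without this circle selection you cannot define a $W^{1,2}$ boundary datum $g_h$ that is an $m$-th root of $\p(u_h)$ on $\partial B$, and --- more importantly --- your competitor $\widetilde\phi\,\widetilde w_h$ creates an uncontrolled jump on $\partial B$, so the comparison with $u_h$ does not close. Saying that $\phi_h$ is ``up to $o_h(1)$ a Dirichlet minimizer of Mumford--Shah'' is precisely the statement that needs proof, and it is obtained in the paper by an exact inequality chain, not by an approximate one.

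Two further points would need repair. First, your $w_h$ minimizes $E_{\eps_h}$ with datum $\p(u_h)|_{\partial B}$ and you then take $\widetilde w_h=w_h^{1/m}$; but then $\phi_h^m=u_h^m/w_h=\p(u_h)\,|u_h|^{m-1}/w_h$, not $\p(u_h)/w_h$, and the splitting identity of Lemma~\ref{LM} is derived for $u_h=w_h\phi_h$ with $w_h$ solving the Ginzburg--Landau equation \emph{at the level of $u_h$} with a boundary datum $g_h$ obtained by unfolding the (finitely many) jumps of $u_h$ on the selected circle --- this is what makes the boundary terms cancel in the integration by parts. Also $\phi_h\to1$ in $L^1(B)$ is false when $B$ meets $J_u$. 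Second, the assertion $u_h=g$ on $\partial\Omega$ does not follow from the deformation argument of Theorem~\ref{thmLDmu} (which only controls jump length, whereas here deforming $u_h$ changes the Dirichlet and potential terms); the paper instead proves, on good boundary half-balls, the identity between the constrained minimization of $G_{\eps_h}$ with datum $\p(g_h)$ and the unconstrained minimization of $E_{\eps_h}$ with datum $g_h$, using the small oscillation of $g_h$ and an explicit Lipschitz left inverse of $\p$; this is what forces $\mathcal H^1(\{u_h\neq g\}\cap\partial\Omega)=0$.
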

\vskip3pt

\begin{remark}\label{betdescripthm3}
In the proof of Theorem \ref{theoregeps}, we are actually going to  prove a stronger result on the structure of $u_h$ (see Section \ref{sec:sketch}).  As a consequence, it solves in $\Omega\setminus B_\sigma(\mu)$ the Ginzburg-Landau system with free discontinuities
\begin{equation*}
\begin{cases}
\displaystyle -\Delta u_h=\frac{1}{\eps_h^2}(1-|u_h|^2)u_h & \text{in $\Omega\setminus(B_\sigma(\mu)\cup J_{u_h})$}\,,\\[8pt]
(u_h^+)^m=(u^-_h)^m & \text{on $J_{u_h}\setminus B_\sigma(\mu)$}\,,\\
u_h= g & \text{on $\partial\Omega$}\,.
\end{cases}
\end{equation*} 
\end{remark}
\vskip3pt

\begin{remark}
As a consequence of items (iii) and (vi) above, we have $\p(u_h)\in C^\infty(\overline \Omega\setminus B_\sigma(\mu))$ for $h$ large enough, and $\p(u_h)\to u^m$ 
in $C^k_{\rm loc }(\Omega\setminus B_\sigma(\mu))\cap C^{1,\alpha}(\overline\Omega\setminus B_\sigma(\mu))$. 
\end{remark}

\begin{remark}
In the case $m=2$,  $J_u$ is made of $d$ disjoint segments connecting points of ${\rm spt}\,\mu$, see Proposition \ref{propminconnect}. In particular, $J_u$ contains no triple junctions. Concerning the minimizer $u_h$ of $F_{\eps,g}^0$, it implies that (for $h$ large enough), the set $J_{u_h}\setminus B_\sigma(\mu)$ is made of $d$ disjoint segments connecting components of $\overline B_\sigma(\mu)$. 
\end{remark}

\begin{remark}
 Let us notice that in the case $\deg(g,\partial \Omega)=0$, Theorem~\ref{theoregeps} shows that for $\eps$ small enough, the minimizer of $F^0_{\eps,g}$ is unique and smooth, i.e. there is no jump set, and it coincides with the unique minimizer of $E_\eps$ over $W^{1,2}_g(\Omega)$ (see \cite{Yezu}). 
 For the classical Mumford-Shah functional, similar results were obtained using calibration methods \cite{AlBouDal} (see also \cite[Th. 3.1]{fuscoreview} for a simple proof originally due to Chambolle).  
\end{remark}

\begin{remark}
Theorem \ref{theoregeps} does not provide regularity results near ${\rm spt}\,\mu$. Nevertheless, repeating verbatim the proof of~\cite[Theorem 3.1]{BucLuc}, one can prove that 
 for every $\eps>0$ and every minimizer $u$ of $F_{\eps,g}^0$, the jump set  is essentially closed, that is $\mathcal{H}^1(\overline{J_u}\setminus J_u)=0$. 
Since $\Omega\subset \R^2$, the proof  of this result only requires the simplest forms of \cite[Lemma 2.3 \& Lemma 2.4]{BucLuc}. 
\end{remark}

\begin{remark}
\label{rem:difficultcore}
It would be interesting to study the behavior of the minimizers $u_h$ close to the vortices, i.e., in $B_\sigma(\mu)$. One could expect that there is only one point in each component of $B_\sigma(\mu)$ where $u_h$ vanishes, and that the jump set of $u_h$ is a union of Steiner trees connecting those zeroes in the spirit of the $\Lambda(\mu)$ minimization problem. In this direction, a first step may consist in 
understanding the optimal profile problem 
\begin{multline*}
\gamma_m^\#(\eps,R):=\min \bigg\{
E_\eps\big(\P(u),B_R\big) + {\mathcal H}^1(J_u \cap B_R) -\dfrac\pi{m^2} \log \dfrac{R}\eps :   \\
u\in\mathcal{G}(B_R)\,,\; \P(u)(z)=\frac{1}{m}\Big(\frac{z}{|z|},\sqrt{m^2-1}\Big)\mbox{ on }\partial B_R
\bigg\}\,.
\end{multline*}
Considering a solution $u$ of this problem, one may ask if $|u|$ is radial, increasing, vanishing at the origin, and  if  $J_u$ is just a segment joining the origin to the boundary. It seems to be  a difficult
question since it combines both issues related to the presence of an expected singularity in the jump set in the spirit of the so called crack tip  (see for instance \cite{David}) for the Mumford-Shah functional, with the fact
that $\P(u)$ should have the same regularity as minimizing harmonic maps with values into the singular  cone $\mathcal{N}$. Such harmonic maps satisfy non standard elliptic equations, and are usually more singular than minimizing harmonic maps with values into a smooth target \cite{Gromov,Lindefects,HardtLin, AlperHL}\footnote{quoting \cite{Lindefects}: ''Unfortunately, the equations
satisfied by $s$ and $u$ are so bad that no existing result can be applied``.}.
\end{remark}

\subsection{Sketch of the proof of Theorem~\ref{theoregeps}.}\label{sec:sketch}
Before starting the proof of Theorem~\ref{theoregeps}, let us explain the strategy. Away from $\spt \mu$, the limiting function $v_\mu$ is smooth. Therefore, 
if we consider a small enough ball $B_r(x)$ outside $B_\sigma(\mu)$, then the oscillation of $v_\mu$ on this ball is very small. By the strong convergence in $W^{1,2}(B_r(x))$ of $v_h$ to $v_\mu$ (recall Theorem~~\ref{Gammasharp}),
this will still be true for $v_h$ on $\partial B_r(x)$ (actually, on $\partial B_{\rho_h}(x)$ for some $\rho_h\sim r$). Hence\footnote{Notice that actually, some care is needed in the choice of $g_h$ to guarantee that no jump is created at the boundary.}, we can  find $g_h\in W^{1,2}(\partial B_r(x))$ 
such that $\p(g_h)=v_h$ on $\partial B_r(x)$. Considering $w_h$ a solution of the Ginzburg-Landau equation
\begin{equation}\label{GLeq}
\lt\{
 \begin{array}{rcl}
  -\Delta w_h&=&\frac{1}{\eh^2}(1-|w_h|^2)w_h\quad \textrm{in } B_r(x)\\
  w_h&=&g_h \quad  \textrm{in }\partial B_r(x),
 \end{array}
 \rt.
\end{equation}
we aim at proving that in $B_r(x)$, $u_h=\lt(\sum_{k=0}^{m-1} \ba^k \chi_{E_{h}^k}\rt)w_h $ where the $E_{h}^k$ are pairwise disjoint and  satisfy (up to a relabeling)
\begin{itemize}
\item[(i)] if $B_{r}(x)\cap J_{u}=\emptyset$, then $E_h^0=B_r(x)$ i.e. $u_h= w_h$ in $B_r(x)$;
\item[(ii)] if $B_{r}(x)\cap J_{u}$ is a segment then 
$u_h=(\chi_{E_h^0}+\ba^k(1-\chi_{E_{h}^0}) )w_h$ for some $ k\neq 0$ with $\partial E_h^0 \cap B_r(x)$ a segment;
\item[(iii)] if $B_{r}(x)\cap J_{u}$ contains a triple point then  $u_h=(\chi_{E_h^0}+\ba^{k_1} \chi_{E_h^{k_1}}+\ba^{k_2} \chi_{E_h^{2}} ) w_h$ with $0<k_1<k_2\leq m-1$ 
and $\partial E_{h}^0\cup \partial E_{h}^{k_1}\cup \partial E_{h}^{k_2}$ a triple junction in $B_r(x)$.
\end{itemize}
In order to show that indeed $u_h$ 
is of this form, a powerful tool that we introduce in the next section is the Lassoued-Mironescu decomposition argument~\cite{LasMi} which allows to conveniently split the energy into a Ginzburg-Landau term and a Mumford-Shah type energy.

 \subsection{Ginzburg-Landau minimizers and energy splitting}
 In this section, we consider a radius $r>0$, a sequence $\eps_h\to0$, and a sequence of boundary conditions $\{g_h\}\subset W^{1,2}(\partial B_r)\cap L^\infty(\partial B_r)$ satisfying 
 \begin{equation}\label{cond1Linftygh}
 \|g_h\|_{L^\infty}(\partial B_r) \leq 1\,,
 \end{equation}
 \begin{equation}\label{condghenergunif}
 \int_{\partial B_r} |\partial_\tau g_h|^2+\frac{1}{\eps^2_h}(1-|g_h|^2)^2\,d\mathcal{H}^1 \leq C\,, 
 \end{equation}
 for some constant $C>0$ independent of $\eps_h$. We further assume that 
 \begin{equation}\label{unifcvgh}
 g_h\to g_\star\quad\text{uniformly on $\partial B_r$ as $h\to \infty$\,,} 
 \end{equation}
 for some $g_\star\in W^{1,2}(\partial B_r;\Ss^1)$ satisfying 
 \begin{equation}\label{conddeg0g*}
 {\rm deg}(g_\star,\partial B_r)=0 \,.
 \end{equation}
 From this last assumption, we can write $g_\star=e^{i\varphi_\star}$ for some harmonic function $\varphi_\star\in W^{1,2}(B_r)$ (which is unique up to a constant multiple of $2\pi$).  As in \cite{BBHarticle}, the map 
 $$w_\star:=e^{i\varphi_\star}\in W^{1,2}_{g_\star}(B_r;\Ss^1) $$
 is the unique solution of the minimization problem 
 \begin{equation}\label{pbharmmaplimtw}
 \min_{W^{1,2}_{g_\star}(B_r;\Ss^1)}\int_{B_r}|\nabla w|^2\,dx\,.
 \end{equation}
 We are now interested in the minimization problem 
 \begin{equation}\label{minpbbdgh}
 \min_{w\in W_{g_h}^{1,2}(B_r)} E_{\eps_h}(w,B_r)\,.
 \end{equation}
 We recall that minimizers of \eqref{minpbbdgh} are in particular solutions of \eqref{GLeq}.
 We shall make an essential use of the following proposition. It constitutes a slight extension of \cite[Theorem~2]{BBHarticle} to the case of a boundary condition which merely belongs to $W^{1,2}(\partial B_r)$.
Since the estimates obtained in  \cite[Theorem~2]{BBHarticle} only depend  on the $W^{1,2}(\partial B_r)$ bounds satisfied by $g_h$, the proof of Proposition~\ref{buildGLcompet}   readily follows  from 
\cite[Theorem~2]{BBHarticle} together with  an approximation argument (to regularize the boundary condition).
 
 \begin{proposition}\label{buildGLcompet}
 Assume that  \eqref{cond1Linftygh}, \eqref{condghenergunif}, \eqref{unifcvgh}, and \eqref{conddeg0g*} hold. There exists $\{w_h\}\subset W^{1,2}(B_r)\cap C^0(\overline B_r)\cap C^\infty(B_r)$ such that $w_h$ solves \eqref{minpbbdgh}, and 
 \begin{equation*}
 w_h\to w_\star\quad\text{strongly in $W^{1,2}(B_r)$}\,,
 \end{equation*}
 \begin{equation*}
|w_h|\to 1 \quad\text{uniformly in $\overline B_r$}\,,
 \end{equation*}
 \begin{equation*}
w_h\to w_\star\quad\text{in $C^k_{\rm loc}(B_r)$ for every $k\in\mathbb{N}$}\,.
 \end{equation*}
 \end{proposition}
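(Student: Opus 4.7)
The plan is to reduce the statement to the smooth-boundary case \cite[Theorem~2]{BBHarticle} by a regularization-and-stability argument. First, for each $h$ fixed, standard direct methods yield at least one minimizer $w_h$ of \eqref{minpbbdgh}. Indeed, since $g_h\in W^{1,2}(\partial B_r)\hookrightarrow C^{1/2}(\partial B_r)$ (one-dimensional embedding), $g_h$ admits a $W^{1,2}(B_r)$-extension with controlled energy, so the admissible set is non-empty. Coercivity of $E_{\eps_h}$ and weak $W^{1,2}$-lower semi-continuity give existence. Because $|g_h|\leq 1$, the truncation $w_h/\max(1,|w_h|)$ is admissible and does not increase the energy, hence we may assume $\|w_h\|_{L^\infty}\leq 1$. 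The Euler-Lagrange equation \eqref{GLeq} together with elliptic regularity yields $w_h\in C^\infty(B_r)$; continuity up to $\partial B_r$ follows from De~Giorgi--Nash--Moser estimates combined with the continuity of $g_h$ on $\partial B_r$.

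Next, I would regularize the boundary data. For each pair $(h,n)$, pick $g_{h,n}\in C^\infty(\partial B_r)$ with $|g_{h,n}|\leq 1$, $g_{h,n}\to g_h$ in $W^{1,2}(\partial B_r)$ and uniformly on $\partial B_r$ as $n\to\infty$, and $\deg(g_{h,n}/|g_{h,n}|,\partial B_r)=\deg(g_\star,\partial B_r)=0$ (possible for $h$ large, using that $g_h\to g_\star$ uniformly with $g_\star\in\Ss^1$). Apply \cite[Theorem~2]{BBHarticle} to obtain, for each $(h,n)$, the smooth minimizer $w_{h,n}$ of $E_{\eps_h}$ in $W^{1,2}_{g_{h,n}}(B_r)$ which satisfies all the announced estimates \emph{with constants depending only on the $W^{1,2}(\partial B_r)$-norm of $g_{h,n}$}. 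By the uniform bound \eqref{condghenergunif}, these constants are independent of $(h,n)$.

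The second step is a stability result: if $g_{h,n}\to g_h$ in $W^{1,2}(\partial B_r)$ as $n\to\infty$, $h$ fixed, then the corresponding minimizers $w_{h,n}$ converge to some minimizer $w_h$ of \eqref{minpbbdgh} in $W^{1,2}(B_r)$. This is standard: build a bridge extension between $g_{h,n}$ and $g_h$ in a thin boundary layer of width $\|g_{h,n}-g_h\|_{L^\infty}^{1/2}$ to produce competitors showing $\limsup_n \min_n E_{\eps_h}\leq \min E_{\eps_h}$ (and vice-versa), and then weak compactness plus lower semi-continuity. By a diagonal extraction one selects $n=n(h)$ with $g_{h,n(h)}\to g_\star$ uniformly and in $W^{1,2}$, and the corresponding $w_{h,n(h)}=:w_h$ inherits all three properties: strong $W^{1,2}$-convergence to $w_\star$, uniform convergence $|w_h|\to 1$, and $C^k_{\rm loc}$-convergence to $w_\star$ in $B_r$. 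Uniqueness of the limit is guaranteed by uniqueness of the $\Ss^1$-valued harmonic extension solving \eqref{pbharmmaplimtw}.

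The main obstacle is verifying that the BBH estimates are indeed quantitative in terms of $W^{1,2}(\partial B_r)$-norms only. The uniform $L^\infty$ estimate $\|1-|w_h|\|_{L^\infty(\overline B_r)}\to 0$ is the most delicate: in the smooth case it follows from a combination of the $\eta$-ellipticity / small-energy regularity (e.g.\ \cite[Theorem~III.3]{BBH}) applied at every point up to the boundary, and the boundary layer is controlled via $\|g_h\|_{W^{1,2}(\partial B_r)}$ and $\||g_h|-1\|_{L^\infty(\partial B_r)}$. Here, \eqref{condghenergunif} furnishes exactly this control, and \eqref{unifcvgh}--\eqref{conddeg0g*} ensure that the boundary trace approaches an $\Ss^1$-valued map of zero degree, so no vortex can form. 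The remaining $C^k_{\rm loc}$-convergence is then routine: once $|w_h|$ is bounded away from $0$ on compact subsets of $B_r$, the Ginzburg-Landau equation \eqref{GLeq} becomes uniformly elliptic with smooth coefficients, and standard bootstrap from the $W^{1,2}$-convergence yields convergence in every $C^k_{\rm loc}(B_r)$.
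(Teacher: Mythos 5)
Your proposal is correct and follows essentially the same route as the paper, which simply invokes \cite[Theorem~2]{BBHarticle} together with an approximation argument regularizing the boundary data, noting that the BBH estimates depend only on the $W^{1,2}(\partial B_r)$-bounds in \eqref{condghenergunif}. You have merely written out in detail the existence, truncation, regularization, and stability steps that the paper leaves implicit.
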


For the rest of this subsection, we still denote by $w_h$ a solution of \eqref{minpbbdgh} obtained from Proposition\ref{buildGLcompet}. We continue with a very useful energy decomposition, originally introduced in \cite{LasMi}. 

  \begin{lemma}\label{LM}
Let $u\in \mathcal{G}(B_r)\cap L^\infty(B_r)$ be such that  $\p(u)=\p(g_h)$ on $\partial B_r$.  For $\eps_h$ small enough, we have  $u=w_h\phi$ for some $\phi\in \mathcal{G}(B_r)\cap L^\infty(B_r)$ satisfying $\p(\phi)=1$ on $\partial B_r$, 
   \begin{equation}\label{LMeq}
    E_{\eps_h}\big(\P(u),B_r\big)= E_{\eps_h}(w_h,B_r) +\frac{1}{2} \int_{B_r}  |w_h|^2 |\nabla \phi|^2+ \frac{|w_h|^4}{2\eps_h^2}(1-|\phi|^2)^2 +\frac{2}{m}j(\p(\phi))\cdot j(w_h)\,dx\,,
       \end{equation}
   and $\mathcal{H}^1(J_{u}\cap B_r)=\mathcal{H}^1(J_\phi\cap B_r)$.
  \end{lemma}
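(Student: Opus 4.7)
By Proposition \ref{buildGLcompet}, we have $|w_h|\to 1$ uniformly on $\overline B_r$, so for $\eps_h$ small enough $|w_h|\geq 1/2$ on $\overline B_r$ and $1/w_h\in C^\infty(\overline B_r)$. The plan is to set $\phi:=u/w_h$, check that it lies in $\mathcal{G}(B_r)\cap L^\infty(B_r)$ with $\p(\phi)=1$ on $\partial B_r$ and has the claimed jump set, and then verify the identity \eqref{LMeq} by algebraic expansion combined with an integration by parts using the Euler--Lagrange equation satisfied by $w_h$.

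First, since $w_h$ is smooth and bounded away from $0$, multiplication by $1/w_h$ preserves $SBV^2\cap L^\infty$, hence $\phi\in SBV^2(B_r)\cap L^\infty(B_r)$, and $J_\phi=J_u$ up to an $\mathcal{H}^1$-null set. The map $\p$ is multiplicative ($\p(ab)=\p(a)\p(b)$), so $\p(u)=\p(w_h)\p(\phi)$; in particular $\p(\phi)\in W^{1,2}(B_r)$ because $\p(u)\in W^{1,2}$ and $\p(w_h)$ is smooth non-vanishing. The identity $|\P(z)|=|z|$ yields $|\P(\phi)|=|u|/|w_h|\in W^{1,2}$, so $\P(\phi)\in W^{1,2}(B_r;\mathcal{N})$ and $\phi\in\mathcal{G}(B_r)$. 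On the boundary, $w_h=g_h$ hence $\p(w_h)=\p(g_h)=\p(u)$, giving $\p(\phi)=1$ on $\partial B_r$.

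Next, on the absolutely continuous part, writing $u=w_h\phi$ as a product of complex numbers and using the identities $\bar w_h\nabla w_h=\tfrac12\nabla|w_h|^2+ij(w_h)$ and $\phi\overline{\nabla\phi}=\tfrac12\nabla|\phi|^2-ij(\phi)$, a direct expansion gives
\[
|\nabla u|^2=|\phi|^2|\nabla w_h|^2+|w_h|^2|\nabla\phi|^2+\tfrac12\nabla|w_h|^2\cdot\nabla|\phi|^2+2\,j(w_h)\cdot j(\phi),
\]
and $(1-|u|^2)^2=(1-|w_h|^2)^2+|w_h|^4(1-|\phi|^2)^2+2(1-|w_h|^2)|w_h|^2(1-|\phi|^2)$. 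Subtracting the candidate right-hand side of \eqref{LMeq} (and replacing $j(\p(\phi))$ by $mj(\phi)$ thanks to Lemma \ref{struct}(vii)) reduces the claim to the identity
\[
\tfrac12\!\int_{B_r}\!(|\phi|^2-1)|\nabla w_h|^2+\tfrac14\!\int_{B_r}\!\nabla|w_h|^2\cdot\nabla|\phi|^2+\tfrac{1}{2\eps_h^2}\!\int_{B_r}\!|w_h|^2(1-|w_h|^2)(1-|\phi|^2)\,dx=0.
\]

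Finally, the key (and main obstacle) is to run the integration by parts that establishes the above identity in the presence of jumps of $\phi$. The function $f:=|\phi|^2-1$ has no jump: indeed $\phi\in\mathcal{G}(B_r)$ implies $\phi^+/\phi^-\in\mathbf{G}_m$ on $J_\phi$ by Corollary \ref{cor1}, hence $|\phi^+|=|\phi^-|$, so $f\in W^{1,2}(B_r)$ with trace equal to $0$ on $\partial B_r$ (since $|\phi|=1$ there as $\p(\phi)=1$). The Euler--Lagrange equation $-\Delta w_h=\eps_h^{-2}(1-|w_h|^2)w_h$ yields $|\nabla w_h|^2=\tfrac12\Delta|w_h|^2+\eps_h^{-2}(1-|w_h|^2)|w_h|^2$. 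Plugging this into the first term above produces an exact cancellation with the last term, leaving $\tfrac14\int_{B_r}(f\,\Delta|w_h|^2+\nabla|w_h|^2\cdot\nabla f)\,dx=\tfrac14\int_{B_r}\nabla\!\cdot(f\nabla|w_h|^2)\,dx$, which by the divergence theorem equals $\tfrac14\int_{\partial B_r}f\,\partial_\nu|w_h|^2\,d\mathcal{H}^1=0$. This closes the argument, and the equality $\mathcal{H}^1(J_u\cap B_r)=\mathcal{H}^1(J_\phi\cap B_r)$ was already noted above.
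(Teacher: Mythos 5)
Your proof is correct and follows essentially the same Lassoued--Mironescu splitting as the paper: the only difference is that you test the Ginzburg--Landau equation with $(|\p(\phi)|^2-1)w_h$ in a single step, whereas the paper tests separately with $|\p(\phi)|^2 w_h$ and with $w_h$ and subtracts, which amounts to the same computation. The one point to phrase carefully is the final integration by parts: since $w_h$ is only $W^{1,2}(B_r)\cap C^0(\overline B_r)\cap C^\infty(B_r)$, the normal derivative $\partial_\nu |w_h|^2$ on $\partial B_r$ is not classically defined, so one should instead use that $f=|\phi|^2-1$ lies in $W^{1,2}_0(B_r)\cap L^\infty(B_r)$ and pair $\Delta |w_h|^2\in L^1(B_r)$ against it by approximating $f$ with $C^\infty_c$ functions bounded in $L^\infty$ (or, as the paper does, work with the weak formulation and interpret the boundary term in the $W^{-1/2,2}$--$W^{1/2,2}$ duality).
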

\begin{proof}
By Proposition  \ref{buildGLcompet}, we have $|w_h|^2\geq 1/2$ for $\eps_h$ small enough. Setting $\phi:=u/w_h$, we have $\phi \in SBV^2(B_r)\cap L^\infty(B_r)$ and
$\P(\phi)\in W^{1,2}(B_r)$, thus $\phi\in \mathcal{G}(B_r)\cap L^\infty(B_r)$. Since $|w_h|^2$ and $|\phi|^2$ belong to $W^{1,2}(B_r)$, by chain rule we have 
\[|\nabla u|^2= |\phi|^2|\nabla w_h|^2+|w_h|^2|\nabla \phi|^2 +\frac{1}{2}\nabla (|\phi|^2)\cdot \nabla( |w_h|^2) +2 j(\phi)\cdot j(w_h)\quad\text{a.e. in $B_r$}\,.\] 
Recalling that $m j(\phi)= j(\p(\phi))$ (see Lemma~\ref{struct}), and since $|\phi|^2=|\p(\phi)|^2$, we obtain
\begin{equation}\label{startingpointLM}
 \int_{B_r} |\nabla u|^2\,dx= \int_{B_r}  |\p(\phi)|^2|\nabla w_h|^2+|w_h|^2|\nabla \phi|^2
  +\frac{1}{2}\nabla (|\p(\phi)|^2)\cdot \nabla (|w_h|^2)+\frac{2}{m} j(\p(\phi))\cdot j(w_h)\,dx\,.
\end{equation}
Testing equation \eqref{GLeq} with $|\p(\phi)|^2 w_h\in W^{1,2}(B_r)$,  we derive
\begin{multline}\label{LMinter1}
 \int_{B_r} |\p(\phi)|^2|\nabla w_h|^2+\frac{1}{2}\nabla (|\p(\phi)|^2)\cdot \nabla (|w_h|^2) \,dx\\
 =\int_{\partial B_r}\partial_\nu w_h\cdot w_h\,d\mathcal{H}^1+\int_{B_r} \frac{1}{\eps_h^2} (1-|w_h|^2)|w_h|^2 |\p(\phi)|^2\,dx\,,
\end{multline}
where the first integral in the right hand side is understood in the $W^{-1/2,2}-W^{1/2,2}$ sense. Here we have also used the fact $|\p(\phi)|^2=1$ on $\partial B_r$ (so that $|\p(\phi)|^2 w_h=w_h$ on $\partial B_r$). 
Testing now  \eqref{GLeq} with $w_h$ yields 
\[\int_{\partial B_r} \partial_\nu w_h\cdot w_h\,d\mathcal{H}^1=\int_{B_r} |\nabla w_h|^2-\frac{1}{\eps_h^2}(1-|w_h|^2)|w_h|^2\,dx\,.
\]
Putting together  this identity with \eqref{startingpointLM}  and \eqref{LMinter1} leads to 
\begin{multline*}
 \int_{B_r} |\nabla u|^2+\frac{1}{2\eps_h^2}(1-|u|^2)^2\,dx=\int_{B_r} |\nabla w_h|^2+|w_h|^2|\nabla \phi|^2+\frac{2}{m} j(\p(\phi))\cdot j(w_h)\\
 +\frac{1}{\eps_h^2}\lt[ \frac{1}{2} (1-|w_h|^2 |\p(\phi)|^2)^2-(1-|w_h|^2)|w_h|^2+ (1-|w_h|^2)|w_h|^2 |\p(\phi)|^2\rt]\,dx\,.
\end{multline*}
In view of the algebraic identity 
\[\frac{1}{2}(1-a^2b^2)^2-(1-a^2)a^2+(1-a^2)a^2b^2=\frac{1}{2} (1-a^2)^2+\frac{a^4}{2}(1-b^2)^2\quad \text{ for } a,b\geq 0\,,\]
we have obtained 
\begin{multline*}
 \frac{1}{2}\int_{B_r} |\nabla u|^2+\frac{1}{2\eps_h^2}(1-|u|^2)^2\,dx=E_{\eps_h}(w_h,B_r)\\
 +\frac{1}{2} \int_{B_r} \left\{ |w_h|^2 |\nabla \phi|^2+ \frac{|w_h|^4}{2\eps_h^2}(1-|\phi|^2)^2 +\frac{2}{m}j(\p(\phi))\cdot j(w_h) \right\}\,dx\,.
 \end{multline*}
Finally, since $w_h\in W^{1,2}(B_r)$ we have $J_u\cap B_r=J_\phi\cap B_r$ (up to an $\mathcal{H}^1$-null set), and the conclusion follows.
\end{proof}

We now use Lemma~\ref{LM} to derive a a lower bound on the energy. In particular, we want to be able to control the last term in \eqref{LMeq}, which is the purpose of the following lemma.  

\begin{lemma}\label{propwente}
Assume that 
 \begin{equation}\label{hyp1}
\int_{B_r} |\nabla w_h|^2\,dx\leq \delta\,,
\end{equation}
and 
\begin{equation}\label{hyp2}
 r^{1/2}\lt(\int_{\partial B_r} |\partial_\tau g_h|^2+\frac{(1-|g_h|^2)^2}{2\eps_h^2}\,d\mathcal{H}^1\rt)^{1/2}\leq \delta\,,
\end{equation}
for some constant $\delta>0$. Then there exists a universal constant $C_\star>0$ such  that 
 \begin{equation*}
 \lt|\int_{B_r} j(\Phi)\cdot j(w_h)\,dx\rt|\leq C_\star \delta\int_{B_r} |\nabla \Phi|^2\,dx
 \end{equation*}
 for every $\Phi\in W^{1,2}(B_r)$ satisfying $\Phi=1$ on $\partial B_r$. 
\end{lemma}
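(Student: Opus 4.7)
\emph{The plan} is to use a stream-function representation for $j(w_h)$ together with Wente's inequality. The Ginzburg--Landau equation forces $\mathrm{div}\,j(w_h) = w_h\wedge \Delta w_h = 0$ in $B_r$, so the vector field $j(w_h)\in L^2(B_r)$ is divergence-free. Simple connectedness of $B_r$ provides a stream function $H_h\in W^{1,2}(B_r)$, unique up to an additive constant, with $\nabla^\perp H_h = j(w_h)$. Since $\Phi\equiv 1$ on $\partial B_r$ gives $j(\Phi)\cdot \tau = \Phi\wedge \partial_\tau\Phi = 0$ on $\partial B_r$, integrating $j(\Phi)\cdot \nabla^\perp H_h$ by parts and using $\mathrm{curl}\,j(\Phi) = 2\,\partial_1\Phi\wedge \partial_2\Phi$ yields
\[
\int_{B_r} j(\Phi)\cdot j(w_h)\,dx = -2\int_{B_r} H_h\,(\partial_1\Phi\wedge \partial_2\Phi)\,dx.
\]
Because $\Phi - 1 \in W^{1,2}_0(B_r;\C)$, the right-hand integrand is the Jacobian of two $W^{1,2}_0$ functions, and Wente's inequality (see \cite{Wente,BrezCor}) gives
\[
\lt|\int_{B_r} H_h\,(\partial_1\Phi\wedge \partial_2\Phi)\,dx\rt| \le C\|\nabla H_h\|_{L^2(B_r)}\,\|\nabla\Phi\|_{L^2(B_r)}^2.
\]
The crux of the proof is then to establish the \emph{improved} bound $\|\nabla H_h\|_{L^2(B_r)}\le C\delta$, rather than the trivial $C\sqrt\delta$ afforded by $|\nabla H_h| = |j(w_h)|\le |w_h|\,|\nabla w_h|$ and \eqref{hyp1}.

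To achieve this gain, I would split $H_h = H_h^\flat + K_h$, where $H_h^\flat\in W^{1,2}_0(B_r)$ solves the Dirichlet Poisson equation $-\Delta H_h^\flat = 2\,\partial_1 w_h\wedge \partial_2 w_h$ with zero trace, and $K_h := H_h - H_h^\flat$ is harmonic in $B_r$. Because the right-hand side has a null-Lagrangian structure, Wente's lemma yields the \emph{quadratic} bound
\[
\|H_h^\flat\|_{L^\infty(B_r)} + \|\nabla H_h^\flat\|_{L^2(B_r)} \le C\|\nabla w_h\|_{L^2(B_r)}^2 \le C\delta.
\]
The harmonic remainder $K_h$ will be controlled through its Neumann trace: from $\nabla^\perp H_h = j(w_h)$ and $w_h|_{\partial B_r} = g_h$ one computes $\partial_\nu H_h = j(w_h)\cdot \tau = g_h\wedge \partial_\tau g_h$, so
\[
\partial_\nu K_h = g_h\wedge \partial_\tau g_h - \partial_\nu H_h^\flat \qquad \text{on }\partial B_r.
\]
Hypothesis \eqref{hyp2} together with $\|g_h\|_{L^\infty}\le 1$ bounds $g_h\wedge\partial_\tau g_h$ by $C\delta/\sqrt r$ in $L^2(\partial B_r)$, hence by $C\delta$ in $H^{-1/2}(\partial B_r)$. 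For the second piece, the Wente duality estimate $\|\partial_1 w_h\wedge \partial_2 w_h\|_{H^{-1}(B_r)}\le C\|\nabla w_h\|_{L^2(B_r)}^2\le C\delta$ combined with Green's identity gives $\|\partial_\nu H_h^\flat\|_{H^{-1/2}(\partial B_r)}\le C\delta$. The standard elliptic estimate for the harmonic Neumann problem on $B_r$ then produces $\|\nabla K_h\|_{L^2(B_r)}\le C\delta$, and the desired bound $\|\nabla H_h\|_{L^2(B_r)}\le C\delta$ follows.

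The hard part is exactly this $\sqrt\delta\to \delta$ improvement: the pointwise inequality $|j(w_h)|\le |w_h||\nabla w_h|$ and \eqref{hyp1} alone lose a factor of $\sqrt\delta$, and closing the gap requires exploiting the Jacobian (null-Lagrangian) structure of $\mathrm{curl}\,j(w_h) = 2\,\partial_1 w_h\wedge \partial_2 w_h$ through Wente's lemma, together with the sharp boundary control provided by \eqref{hyp2}. Scale invariance of the two hypotheses under the Ginzburg--Landau rescaling $x\mapsto rx$, $\eps\mapsto r\eps$ ensures that the constant $C_\star$ is universal.
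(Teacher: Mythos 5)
Your proposal follows the same skeleton as the paper's proof (stream function $H_h$ with $\nabla^\perp H_h=j(w_h)$, integration by parts to reduce to $\int H_h\det\nabla\Phi$, and the splitting of $H_h$ into a Wente--Dirichlet part and a harmonic remainder), but both halves of the key estimate are executed differently, and the comparison is instructive. For the main pairing, the paper uses the pointwise bound $2|H_h||\det\nabla\Phi|\le \|H_h\|_{L^\infty}|\nabla\Phi|^2$ and therefore only needs $\|H_h\|_{L^\infty}\le C\delta$; you instead invoke the duality (Coifman--Lions--Meyer--Semmes) form of Wente's estimate, which requires $\|\nabla H_h\|_{L^2}\le C\delta$ --- a heavier tool, but legitimate here since $\det\nabla\Phi=\det\nabla(\Phi-1)$ with $\Phi-1\in W^{1,2}_0(B_r)$, so the Jacobian has zero mean and the additive normalization of $H_h$ is irrelevant. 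For the harmonic remainder, the paper exploits that its \emph{tangential} derivative equals $\partial_\tau H_h=-\nu\cdot j(w_h)=-w_h\wedge\partial_\nu w_h$ (the Dirichlet part has zero tangential derivative on $\partial B_r$), which involves the normal derivative of $w_h$ and hence forces an appeal to the Pohozaev identity to convert it into the quantity controlled by \eqref{hyp2}; it then concludes by the maximum principle. You work with the \emph{Neumann} data $\partial_\nu H_h=j(w_h)\cdot\tau=g_h\wedge\partial_\tau g_h$, which is controlled directly by \eqref{hyp2} and $\|g_h\|_{L^\infty}\le 1$, so you avoid Pohozaev altogether --- a genuine simplification on that front.

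The price of your route is the term $\partial_\nu H_h^\flat$ in the Neumann data of $K_h$, which the paper's splitting never sees, and this is the one step you state too loosely. An estimate on $\det\nabla w_h$ in $H^{-1}(B_r)=(H^1_0(B_r))^*$ does not by itself control the Neumann trace of $H_h^\flat$, because Green's identity requires testing against extensions $\tilde\zeta\in H^1(B_r)$ of $H^{1/2}(\partial B_r)$ data, which do not vanish on the boundary; in particular the constant mode of $\tilde\zeta$ pairs with
\begin{equation*}
\int_{B_r}\det\nabla w_h\,dx=\frac12\int_{\partial B_r}g_h\wedge\partial_\tau g_h\,d\mathcal{H}^1\,,
\end{equation*}
which is not zero in general. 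Fortunately this quantity is itself $O(\delta)$ by Cauchy--Schwarz and \eqref{hyp2}, so after splitting $\tilde\zeta$ into its mean and a zero-mean part the estimate $\|\partial_\nu H_h^\flat\|_{H^{-1/2}(\partial B_r)}\le C\delta$ survives; you should make this compatibility argument explicit. With that repair, and keeping track of the (correct) scaling $\|\nabla K_h\|_{L^2(B_r)}\le Cr^{1/2}\|\partial_\nu K_h\|_{L^2(\partial B_r)}$ for the zero-order part of the data, your argument closes and yields a universal constant.
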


\begin{proof}
Rescaling variables we may assume that $r=1$. Arguing by approximation as in Proposition~\ref{buildGLcompet}, we may assume  that $w_h$ is smooth. 

First, using equation \eqref{GLeq} we derive that ${\rm div}\,j(w_h)=w_h\wedge\Delta w_h=0$. 
By Hodge decomposition, we can find a smooth scalar function $H$ such that $j(w_h)=\nabla^\perp H$.  
Notice that $H$ is defined up to an additive constant  that we shall fix later on. 

By approximation, we may assume that the test function $\Phi$ is smooth. Since $\Phi$ is constant on~$\partial B_1$, the vector field $j(\Phi)$ satisfies  
$j^\perp(\Phi)\cdot \nu=0$ on~$\partial B_1$. Since $\curl j(\Phi)=2 \det \nabla \Phi$, 
$$  \int_{B_1} j(\Phi)\cdot j(w_h)\,dx =-\int_{B_1} j^\perp(\Phi)\cdot \nabla H\,dx=-\int_{B_1} 2H \det(\nabla \Phi)\,dx\,. $$
We may now estimate 
 \[
  \lt|\int_{B_1} j(\Phi)\cdot j(w_h)\,dx\rt|\leq \int_{B_1} |H| |\nabla \Phi|^2\,dx\leq \|H\|_{L^\infty(B_1)} \int_{B_1} |\nabla \Phi|^2\,dx\,, 
 \]
and we are left to prove that $\|H\|_{L^\infty(B_1)}$ is controlled by $\delta$. We consider the function $H_1$ solving 
\[\begin{cases}
   \Delta H_1  =2\det(\nabla w_h) & \textrm{in } B_1\,,\\
   H_1=0 & \textrm{on } \partial B_1\,,
  \end{cases}\]
and  set $H_2:=H-H_1$. Then, $H_2$ is harmonic in $B_1$ since
$$\Delta H_2=-{\rm div}\, j^\perp(w_h)-2\det(\nabla w_h)={\rm curl}\,j(w_h)-2\det(\nabla w_h)=0\,. $$ 
In addition, 
\begin{equation}\label{eqtocite}\partial_\tau H_2=\partial_\tau H = -\nu\cdot j(w_h)\quad \text{on $\partial B_1$}\,.\end{equation} 
Thanks to Wente's estimate (see~\cite{Wente} or \cite[Lemma A.1]{BrezCor}), there exists a universal constant $C_\sharp>0$ such that
\[\|H_1\|_{L^\infty(B_1)}\leq C_\sharp \int_{B_1} |\nabla w_h|^2\,dx\leq C_\sharp \delta\,.\]
Moreover, by the maximum principle,
\[
\inf_{B_1}H_2 = \inf_{\partial B_1} H_2\quad\text{and}\quad \sup_{B_1} H_2= \sup_{\partial B_1} H_2\,.\]
We now fix the additive constant for $H$ so that $\sup_{B_1} H_2+\inf_{B_1} H_2=0$. This yields 
\[\|H_2\|_{L^{\infty}(B_1)}=\dfrac12 \lt[\sup_{\partial B_1} H_2-\inf_{\partial B_1} H_2\rt]\leq \dfrac14\int_{\partial B_1} |\partial_\tau H_2|\,d\mathcal{H}^1\,.\]
Recalling that $|w_h|\leq 1$, we have  by \eqref{eqtocite} $|\partial_\tau H_2|\leq |w_h||\partial_\nu w_h|\leq |\partial_\nu w_h|$, so that by Cauchy-Schwarz inequality,
\[
\|H_2\|_{L^{\infty}(B_1)}\leq \dfrac{\sqrt{2\pi}}4 \lt(\int_{\partial B_1}|\partial_\nu w_h|^2\,d\mathcal{H}^1\rt)^{1/2}.
\]
Let us now recall that the Pohozaev identity  applied to equation \eqref{GLeq} (see e.g. \cite[(5.2)]{SS}) leads to  
\[
\int_{B_1}\frac{(1-|w_h|^2)^2}{\eps_h^2}\,dx=\int_{\partial B_1} |\partial_\tau w_h|^2-|\partial_\nu w_h|^2+\frac{(1-|w_h|^2)^2}{2\eps_h^2}\,d\mathcal{H}^1\,.
\]
Hence,
\[
 \int_{\partial B_1} |\partial_\nu w_h|^2\,d\mathcal{H}^1\leq \int_{\partial B_1} |\partial_\tau w_h|^2+\frac{(1-|w_h|^2)^2}{2\eps_h^2}\,\,d\mathcal{H}^1\,,
\]
which then implies 
\[\|H_2\|_{L^{\infty}(B_1)}\leq  \dfrac{\sqrt{2\pi}}4 \lt(\int_{\partial B_1} |\partial_\tau w_h |^2+\frac{(1-|w_h|^2)^2}{2\eps_h^2}\,d\mathcal{H}^1 \rt)^{1/2}\stackrel{\eqref{hyp2}}{\leq}\dfrac{\sqrt{2\pi}}4 \delta\,.\]
The conclusion now follows with $C_\star:=C_\sharp+\sqrt{2\pi}/4$. 
\end{proof}

Combining Lemma \ref{LM} and Lemma \ref{propwente} yields the following lower bound for the energy.   

\begin{proposition}\label{propLM}
 Let $C_\star$ be the constant given by Proposition~\ref{propwente}, and let $\delta>0$ be such that $C_\star\delta\leq \frac{1}{16m}$. For $\eps_h$ small enough,  
 if $w_h$ satisfies~\eqref{hyp1} and~\eqref{hyp2}, then 
 \begin{equation}\label{MainestimLM}
 F_\eps^0(u,B_r)\geq  E_{\eps_h}(w_h,B_r) +\frac{1}{8} \int_{B_r}  |\nabla \phi|^2\,dx + \mathcal{H}^1(J_\phi\cap B_r)
 \end{equation}
 for every $u\in \mathcal{G}(B_r)\cap L^\infty(B_r)$ satisfying $\p(u)=\p(g_h)$ on $\partial B_r$, where $\phi:=u/w_h \in \mathcal{G}(B_r)\cap L^\infty(B_r)$. 
\end{proposition}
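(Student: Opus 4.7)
The plan is to combine the energy splitting of Lemma \ref{LM} with the Wente-type bound of Lemma \ref{propwente} and to dispose of the cross term that appears in the decomposition. The only real point to check is that the test function $\p(\phi)$ is admissible in Lemma \ref{propwente}; everything else is essentially bookkeeping.

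First, I would use Proposition \ref{buildGLcompet} to ensure $|w_h|\to 1$ uniformly on $\overline{B}_r$; in particular, for $\eps_h$ small enough we have $|w_h|^2\geq 1/2$ on $B_r$. Lemma \ref{LM} then applies, producing a factorization $u=w_h\phi$ with $\phi\in\mathcal{G}(B_r)\cap L^\infty(B_r)$, $\p(\phi)=1$ on $\partial B_r$, $\mathcal{H}^1(J_u\cap B_r)=\mathcal{H}^1(J_\phi\cap B_r)$, and the identity \eqref{LMeq}. The non-negative term involving $(1-|\phi|^2)^2$ can then be dropped, and what remains is
\begin{equation*}
F^0_{\eps_h}(u,B_r)\geq E_{\eps_h}(w_h,B_r)+\frac{1}{2}\int_{B_r}|w_h|^2|\nabla \phi|^2\,dx+\frac{1}{m}\int_{B_r}j(\p(\phi))\cdot j(w_h)\,dx+\mathcal{H}^1(J_\phi\cap B_r).
\end{equation*}

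Next, I would apply Lemma \ref{propwente} with $\Phi:=\p(\phi)$. This map lies in $W^{1,2}(B_r)$ because $\phi\in\mathcal{G}(B_r)$ means $\P(\phi)\in W^{1,2}(B_r;\mathcal{N})$, and it satisfies $\Phi=1$ on $\partial B_r$ by Lemma \ref{LM}. Since by Lemma \ref{struct}~(v) and the cone decomposition $|\nabla \P(\phi)|^2=\frac{1}{m^2}|\nabla \p(\phi)|^2+\frac{m^2-1}{m^2}|\nabla |\phi||^2$, we have $|\nabla \p(\phi)|^2\leq m^2|\nabla \phi|^2$ a.e.\ in $B_r$, the hypothesis $C_\star\delta\leq 1/(16m)$ yields
\begin{equation*}
\left|\frac{1}{m}\int_{B_r} j(\p(\phi))\cdot j(w_h)\,dx\right|\leq mC_\star\delta\int_{B_r}|\nabla \phi|^2\,dx\leq \frac{1}{16}\int_{B_r}|\nabla \phi|^2\,dx.
\end{equation*}

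Combining these estimates with the pointwise bound $|w_h|^2\geq 1/2$, I would conclude
\begin{equation*}
F^0_{\eps_h}(u,B_r)\geq E_{\eps_h}(w_h,B_r)+\Big(\frac{1}{4}-\frac{1}{16}\Big)\int_{B_r}|\nabla \phi|^2\,dx+\mathcal{H}^1(J_\phi\cap B_r),
\end{equation*}
which proves \eqref{MainestimLM} since $\frac{1}{4}-\frac{1}{16}=\frac{3}{16}\geq \frac{1}{8}$. I do not foresee any significant obstacle here: the heavy analytic work has already been carried out in Lemmas \ref{LM} and \ref{propwente}; the main item to check is the chain of identities relating $|\nabla \p(\phi)|$ and $|\nabla \phi|$ through the cone projection, together with the multiplicativity of $\p$ used implicitly in Lemma \ref{LM} to ensure $\p(\phi)=1$ on $\partial B_r$.
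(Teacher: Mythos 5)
Your proposal is correct and follows essentially the same route as the paper: invoke Proposition~\ref{buildGLcompet} to get $|w_h|^2\geq 1/2$, apply the splitting identity \eqref{LMeq} of Lemma~\ref{LM}, control the cross term via Lemma~\ref{propwente} with $\Phi=\p(\phi)$, and convert $|\nabla\p(\phi)|^2$ into $m^2|\nabla\phi|^2$ using the isometric property of $\P$ (Lemma~\ref{struct}). The only cosmetic difference is that your bookkeeping yields the constant $3/16$ where the paper settles for $1/8$; both suffice for \eqref{MainestimLM}.
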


\begin{proof}
By Proposition \ref{buildGLcompet} and Lemma \ref{LM}, identity \eqref{LMeq} holds and $|w_h|^2\geq 1/2$ for $\eps_h$ small enough.  Applying Lemma \ref{propwente} with $\Phi:=\p(\phi)$, we derive that 
$$E_{\eps_h}\big(\P(u),B_r\big)\geq E_{\eps_h}(w_h,B_r) +\frac{1}{4}\int_{B_r} |\nabla \phi|^2+ \frac{1}{4\eps_h^2}(1-|\phi|^2)^2\,dx-\frac{1}{8m^2}\int_{B_r}|\nabla \p(\phi)|^2\,dx\,.$$
By Lemma \ref{struct} we have 
$$ \frac{1}{m^2}\int_{B_r}|\nabla \p(\phi)|^2\,dx \leq \int_{B_r}|\nabla \P(\phi)|^2\,dx= \int_{B_r}|\nabla \phi|^2\,dx\,,$$
and the conclusion follows. 
\end{proof}

\begin{remark}\label{remuniq}
Under the assumptions of Proposition~~\ref{propLM}, we can obtain that the minimizer of \eqref{minpbbdgh} is unique. 
Indeed, any solution $\widetilde w_h$ of \eqref{minpbbdgh} satisfies $\|\widetilde w_h\|_{L^\infty(B_r)}\leq 1$ by minimality. Applying \eqref{MainestimLM} to $\widetilde w_h$ then yields 
$$E_{\eps_h}(\widetilde w_h,B_r)\geq E_{\eps_h}(w_h,B_r) +\frac{1}{8} \int_{B_r}  |\nabla \phi|^2\,dx  $$
with $\phi:=\widetilde w_h/w_h$. Since $E_{\eps_h}(\widetilde w_h,B_r)= E_{\eps_h}(w_h,B_r)$, we deduce that $\phi\equiv 1$, that is $\widetilde w_h=w_h$. 
 A similar idea was used in~\cite{FarMir} to prove uniqueness results, extending those from~\cite{Yezu}.
\end{remark}

 \subsection{Proof of Theorem~~\ref{theoregeps}}
 \label{S53}
 
 This section is devoted to the proof of Theorem~\ref{theoregeps}. We fix a sequence $\eps_h\to 0$, and minimizers $u_h$ of $F^0_{\eps_h,g}$ over $\mathcal{G}_g(\Omega)$. 
 We assume that $u_h\to u$ strongly in $L^1(\Omega)$ as $h\to\infty$, where $u$ is a minimizer of $F_{0,g}$ over $\mathcal{L}_g(\Omega)$. We recall that by Theorem \ref{Gammasharp} $v_h:=\p(u_h)\to u^m$ strongly in $W^{1,p}(\Omega)$ for every $p<2$ 
 and in $W^{1,2}_{\rm loc}(\overline\Omega\setminus{\rm spt}\,\mu)$, where $\mu:={\rm curl}\,j(u^m)\in \mathcal{A}_d$. 
 According to Section~\ref{S4},  the compact set $\Gamma:=J_u\subset \Omega$ is a $\Lambda(\mu)$-minimizer in the sense of Definition \ref{defdmumin}, and thus a union of at most $d$ Steiner trees. We denote by $T\subset \Omega$ the (finite) set of Steiner points of $\Gamma\setminus {\rm spt}\,\mu$, i.e., the triple junctions of $\Gamma$ away from ${\rm spt}\,\mu$.
 We finally recall that $u\in C^\infty(\overline\Omega\setminus \Gamma)$, $u=g$ on $\partial \Omega$, and that $u^m=v_\mu\in C^\infty(\overline\Omega\setminus{\rm spt}\,\mu)$. 
  
 Writing ${\rm spt}\,\mu=:\{x_1,\ldots,x_{md}\}$ and $T:=\{y_1,\ldots,y_q\}$, we now fix  $\sigma_0>0$ satisfying 
 $$\sigma_0<\frac{1}{2}\min \bigg\{\min_{k\not=l}|x_k-x_l|\,, \min_{k\not=l}|y_k-y_l|\,, {\rm dist}(\Gamma,\partial\Omega)\,, {\rm dist}(T,{\rm spt}\,\mu) \bigg\}\,,$$
 and we set for $\sigma\in(0,\sigma_0)$, 
\begin{equation}\label{Ksigma}K_\sigma:=\|\nabla v_\mu\|_{L^\infty(\Omega\setminus B_{\sigma/4}(\mu))}\,. \end{equation}
 Moreover we fix the positive constant $\delta$ to be  
 $$\delta:=\min\big\{1/(4\sqrt{\pi}m),1/(16m C_\star)\big\}\,,$$
 $C_\star$ being the constant given by Proposition \ref{propwente}. 
For $\sigma\in(0,\sigma_0)$, we finally set 
$$r_\sigma:=\min\Big\{\frac{\sigma}{10}, \frac{\delta}{8\sqrt{\pi}K_\sigma}\Big\} \,.$$
Theorem \ref{theoregeps} is a consequence of a covering argument combined with Proposition \ref{smoothcvawaygamma}, Proposition~\ref{smoothconvontheedge} and Proposition~\ref{propmincloseTnew} which respectively give the structure of $u_h$ away from $\Gamma$, close to $\Gamma$ but away from the triple junctions and at the triple junctions.

 \subsubsection{Smoothness and convergence away from $\Gamma$}
 
 \begin{proposition}\label{smoothcvawaygamma}
 Let $\sigma\in(0,\sigma_0)$. For $h$ large enough, $u_h\in W^{1,2}_g(\Omega\setminus B_\sigma(\Gamma))$  and $u_h$ minimizes $E_{\eps_h}(\cdot,\Omega\setminus B_\sigma(\Gamma))$ under its own boundary condition. 
 In addition, $u_h\in C^{\infty}(\overline\Omega\setminus B_\sigma(\Gamma))$  and $u_h\to u$ in $C^{1,\alpha}(\overline\Omega \setminus B_\sigma(\Gamma))$ and $C^{k}_{\rm loc}(\Omega \setminus B_\sigma(\Gamma))$ for every $\alpha\in(0,1)$ and $k\in\mathbb{N}$. 
 \end{proposition}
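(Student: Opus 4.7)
The plan is to combine a covering argument with the Lassoued-Mironescu energy splitting developed in the previous subsection. Since $\overline\Omega\setminus B_\sigma(\Gamma)$ is compact and disjoint from $\spt\,\mu$ (as $\Gamma\supset\spt\,\mu$), I will cover it by finitely many balls $B_{r_\sigma/2}(x_i)$, each of radius at most $r_\sigma$; by the choice $r_\sigma\le \delta/(8\sqrt\pi K_\sigma)$, one has $\pi r_\sigma^2K_\sigma^2\le \delta^2/64$, which gives a uniform bound on the $W^{1,2}$-norm of $v_\mu$ over any such ball. The goal in each ball is to show that $J_{u_h}$ is empty for $h$ large and that $u_h$ coincides with the Ginzburg-Landau minimizer supplied by Proposition~\ref{buildGLcompet}. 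Once this is established, smoothness, minimality, and the convergences in (iv) and (v) follow directly from elliptic regularity and from the convergence statements of Proposition~\ref{buildGLcompet}.

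For an interior ball $B=B_{r_\sigma}(x_i)\subset\Omega$, the first step is to select a good radius $\rho_h\in[r_\sigma/2,r_\sigma]$ by a Fubini-type argument, so that (a) $J_{u_h}\cap\partial B_{\rho_h}(x_i)=\emptyset$ (hence the trace of $u_h$ on this circle is in $W^{1,2}$), and (b) $\rho_h \int_{\partial B_{\rho_h}(x_i)} |\partial_\tau v_h|^2 + \frac{(1-|v_h|^2)^2}{\eps_h^2}\,d\mathcal{H}^1\le \delta^2$. Condition~(a) follows from $\mathcal{H}^1(J_{u_h}\cap B)\to 0$, which is \eqref{strongconvergencebisshp} applied to a slightly larger annular open set; this ensures that the set of radii $s$ for which $J_{u_h}\cap \partial B_s(x_i)\ne \emptyset$ has Lebesgue measure tending to $0$. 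Condition~(b) follows from $v_h\to v_\mu$ strongly in $W^{1,2}(B)$ combined with $\frac{1}{\eps_h^2}(1-|v_h|^2)^2\to 0$ in $L^1(B)$ (deduced from \eqref{strongconvergence} and the split $E_\eps(\P(v))=m^{-2}E_\eps(v)+\frac{m^2-1}{m^2}E_\eps(|v|)$), by averaging over $\rho\in[r_\sigma/2,r_\sigma]$.

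Setting $g_h:=u_h|_{\partial B_{\rho_h}(x_i)}$ and letting $w_h$ be the Ginzburg-Landau minimizer on $B_{\rho_h}(x_i)$ with boundary datum $g_h$ produced by Proposition~\ref{buildGLcompet}, hypotheses~\eqref{hyp1} and~\eqref{hyp2} of Proposition~\ref{propLM} are verified from (b) (using $|\partial_\tau g_h|\le|\partial_\tau v_h|$ and $|g_h|=|v_h|$) and from $w_h\to u|_{B_{\rho_h}}$ strongly in $W^{1,2}$ together with $|\nabla u|=|\nabla v_\mu|/m\le K_\sigma/m$. Now compare $u_h$ to the competitor $\tilde u_h$ equal to $w_h$ in $B_{\rho_h}(x_i)$ and to $u_h$ outside; this is admissible in $\mathcal{G}_g(\Omega)$ because $\tilde u_h=u_h$ on $\partial B_{\rho_h}(x_i)$ and $w_h$ is smooth with $\P(w_h)\in W^{1,2}$. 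Minimality of $u_h$ yields
$$E_{\eps_h}(\P(u_h),B_{\rho_h})+\mathcal{H}^1(J_{u_h}\cap B_{\rho_h})\le E_{\eps_h}(w_h,B_{\rho_h}),$$
which combined with~\eqref{MainestimLM} and the equality $\mathcal{H}^1(J_\phi\cap B_{\rho_h})=\mathcal{H}^1(J_{u_h}\cap B_{\rho_h})$ (since $w_h$ is smooth) forces both $\nabla\phi\equiv 0$ and $\mathcal{H}^1(J_{u_h}\cap B_{\rho_h})=0$, where $\phi:=u_h/w_h$. Therefore $\phi\equiv\mathbf{a}^k$ for some $k\in\{0,\dots,m-1\}$ (the trace condition $\p(\phi)=1$ forcing $\phi|_{\partial B_{\rho_h}(x_i)}\in\mathbf{G}_m$); the $L^1$ convergence $u_h\to u$ combined with $w_h\to u$ in $W^{1,2}$ forces $k=0$, so $u_h=w_h$ on $B_{\rho_h}(x_i)$ for $h$ large. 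Conclusions (i)--(vi) in this ball are then immediate from Proposition~\ref{buildGLcompet}. Balls touching $\partial\Omega$ are handled in the same spirit by working on $B_{\rho_h}(x_i)\cap\Omega$ with the prescribed datum $g$ along the boundary segment (using $\mathcal{H}^1(\{u_h\ne g\}\cap\partial\Omega\cap B)\to 0$), plus standard $C^{1,\alpha}$ boundary regularity for the Ginzburg-Landau system with smooth Dirichlet data.

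The most delicate point is showing that Proposition~\ref{buildGLcompet} is truly applicable, namely that the boundary data $g_h$ on $\partial B_{\rho_h}(x_i)$ converge uniformly to a smooth $\Ss^1$-valued limit $g_\star$ of zero winding number. Strong $W^{1,2}$ convergence $v_h\to v_\mu$ in $\overline\Omega\setminus\spt\,\mu$ gives uniform convergence on $\partial B_{\rho_h}(x_i)$ via the one-dimensional Sobolev embedding, and in particular $|v_h|\ge 1/2$ on this circle for $h$ large, so continuous $m$-th roots of $v_h$ are well-defined there. The $L^1$ convergence $u_h\to u$ then selects the correct branch: combined with the continuity of $u_h|_{\partial B_{\rho_h}(x_i)}$ (from the choice of $\rho_h$) and of $u|_{\partial B_{\rho_h}(x_i)}$, it forces $g_h\to g_\star:=u|_{\partial B_{\rho_h}(x_i)}$ uniformly. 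The limit $g_\star$ is smooth and has zero winding number since $B_{\rho_h}(x_i)\cap\Gamma=\emptyset$, so Proposition~\ref{buildGLcompet} applies and identifies $w_\star=u|_{B_{\rho_h}(x_i)}$, closing the argument.
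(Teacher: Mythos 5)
Your interior argument is essentially the paper's own proof (its Lemma on balls $\overline B_{2r}(x_0)\subset\Omega\setminus B_\sigma(\Gamma)$): the same Fubini/coarea selection of a good radius $\rho_h$ with empty jump trace and controlled boundary energy, the same identification of $g_\star$ with $\deg(g_\star,\partial B_{\rho_h})=0$, the same comparison with the Ginzburg--Landau minimizer $w_h$ from Proposition~\ref{buildGLcompet}, and the same use of the Lassoued--Mironescu lower bound \eqref{MainestimLM} to force $\phi_h\equiv 1$. Note that with your choice $g_h:=u_h|_{\partial B_{\rho_h}}$ you actually have $\phi=1$ (not merely $\p(\phi)=1$) on the circle, so the branch selection via $L^1$ convergence, while correct, is not even needed there.

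The genuine gap is the boundary case. You dispatch it in one sentence ``in the same spirit,'' but Propositions~\ref{buildGLcompet}, \ref{propwente} and \ref{propLM} are stated and proved only on balls: the Wente estimate, the Pohozaev identity and the existence/convergence theory for Ginzburg--Landau minimizers would all have to be redone on the corner domain $B_{\rho_h}(x_i)\cap\Omega$ before the splitting can be invoked there. Moreover, the boundary case must establish an extra conclusion that your competitor comparison does not automatically deliver: that the penalty term $\mathcal{H}^1(\{u_h\neq g\}\cap\partial\Omega\cap B_{\rho_h})$ vanishes exactly, i.e.\ $u_h=g$ on $\partial\Omega\cap B_{\rho_h}$, not just that it is small. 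The paper deliberately avoids the splitting near $\partial\Omega$ and instead shows that the constrained minimum of $G_{\eps_h}$ over $W^{1,2}_{\p(g_h)}(B_{\rho_h}\cap\Omega)$ equals the minimum of $E_{\eps_h}$ over $W^{1,2}_{g_h}(B_{\rho_h}\cap\Omega)$; the key ingredients are an oscillation bound $\mathrm{osc}\, g_h\leq 1/(2m)$ on $\partial(B_{\rho_h}\cap\Omega)$ (obtained by Cauchy--Schwarz from the good-radius estimate and the smallness of $r$) and the Lipschitz left inverse $\q$ of $\p$ on the half-plane $\{\mathrm{Re}\,z\geq 0\}$, which lets one lift a minimizer $\overline v$ of the constrained problem to an admissible $\overline u=\q(\overline v)$ with $\p(\overline u)=\overline v$. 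Squeezing the chain of inequalities then kills both the interior jump and the boundary deviation simultaneously. You should either reproduce that argument or carry out the (nontrivial) adaptation of the splitting machinery to half-disks; as written, the boundary portion of the proposition is not proved.
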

 
 The proof of Proposition \ref{smoothcvawaygamma} is a direct consequence of Lemma \ref{LemfarS} and Lemma \ref{smoothcvbdry} below, together with a suitable covering  argument.

\begin{lemma}\label{LemfarS}
For $\sigma\in(0,\sigma_0)$, let $r\in(0,r_\sigma)$ and $x_0\in\Omega$ be such that  $\overline B_{2r}(x_0)\subset\Omega\setminus B_\sigma(\Gamma)$. For $h$ large enough,  $u_h\in W^{1,2}(B_{r}(x_0))$, and $u_h$ minimizes $E_{\eps_h}(\cdot,B_{r}(x_0))$ under its own boundary condition. In addition, $u_h\in C^\infty(B_{r}(x_0))$ and $u_h\to u$ in $C^{k}_{\rm loc}(B_{r}(x_0))$ for every $k\in\mathbb{N}$. 
\end{lemma}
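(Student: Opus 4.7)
The strategy is to apply the Lassoued--Mironescu energy splitting (Lemma~\ref{LM} and Proposition~\ref{propLM}) inside a ball $B_{\rho}(x_0)$ with $\rho\in(r,2r)$, and to compare $u_h$ with a smooth competitor built from Proposition~\ref{buildGLcompet}, exploiting the global minimality of $u_h$ and the smoothness of the limit $u$ on $B_{2r}(x_0)$. Since $B_{2r}(x_0)\subset\Omega\setminus B_\sigma(\Gamma)$ contains neither a vortex nor a jump of $u$, the map $u$ is a smooth $\Ss^1$-valued harmonic map there (locally of the form $\xi e^{i\psi/m}$ with $\psi$ harmonic, deduced from the representation $u^m=e^{i\vhi}v_\mu$ and the Euler--Lagrange equations for $F_{0,g}$); moreover $v_h:=\p(u_h)$ converges strongly in $W^{1,2}(B_{2r}(x_0))$ to $u^m$, and Theorem~\ref{Gammasharp}\,(ii) applied with $A=B_{2r}(x_0)$ (using $\mathcal{H}^1(J_u\cap\overline{B_{2r}(x_0)})=0$) yields $\mathcal{H}^1(J_{u_h}\cap B_{2r}(x_0))\to 0$. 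A Fubini slicing in $\rho$ then provides $\rho=\rho_h\in(r,2r)$ for which $v_h|_{\partial B_\rho}\to u^m|_{\partial B_\rho}$ uniformly (and in $W^{1,2}(\partial B_\rho)$), $\mathcal{H}^0(J_{u_h}\cap\partial B_\rho)<\infty$, and the boundary energy $\int_{\partial B_\rho}|\partial_\tau v_h|^2+\eps_h^{-2}(1-|v_h|^2)^2\,d\mathcal{H}^1$ is controlled by a multiple of $r\,K_\sigma^2$.

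Since $\deg(u^m,\partial B_\rho)=0$, for $h$ large I write $v_h=e^{i\Phi_h}$ on $\partial B_\rho$ with $\Phi_h$ converging uniformly to a continuous lift of $u^m|_{\partial B_\rho}$, and set $g_h:=\xi_h e^{i\Phi_h/m}$ with $\xi_h\in\mathbf{G}_m$ chosen so that $g_h\to u|_{\partial B_\rho}$ uniformly; by construction $\p(g_h)=v_h$ on $\partial B_\rho$, and the hypotheses~\eqref{cond1Linftygh}--\eqref{conddeg0g*} of Proposition~\ref{buildGLcompet} are satisfied with $g_\star=u|_{\partial B_\rho}$ of degree zero. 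This yields a smooth minimizer $w_h$ of $E_{\eps_h}(\cdot,B_\rho)$ with boundary datum $g_h$, converging strongly in $W^{1,2}(B_\rho)$ and smoothly on compact subsets to $w_\star=u|_{B_\rho}$ (by uniqueness of the $\Ss^1$-valued harmonic extension \eqref{pbharmmaplimtw}). Lemma~\ref{LM} then allows writing $u_h=w_h\phi_h$ in $B_\rho$ with $\phi_h\in\mathcal{G}(B_\rho)\cap L^\infty$ and $\p(\phi_h)=1$ on $\partial B_\rho$, so that $\phi_h|_{\partial B_\rho}\in\mathbf{G}_m$ a.e. The hypotheses~\eqref{hyp1},~\eqref{hyp2} of Proposition~\ref{propLM} follow from $\int_{B_\rho}|\nabla u|^2\le\pi\rho^2 K_\sigma^2$ together with the choice $r<r_\sigma\le \delta/(8\sqrt{\pi}K_\sigma)$ and from the Fubini bound above, and Proposition~\ref{propLM} then gives the lower bound~\eqref{MainestimLM}.

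The decisive comparison uses the admissible competitor $\widetilde u_h:=w_h\chi_{B_\rho}+u_h\chi_{\Omega\setminus B_\rho}\in\mathcal{G}_g(\Omega)$, whose only new jump on $\overline{B_\rho}$ is $\{\phi_h\neq 1\}\cap\partial B_\rho$. The minimality $F^0_{\eps_h,g}(u_h)\le F^0_{\eps_h,g}(\widetilde u_h)$ combined with~\eqref{MainestimLM} yields
\[
\tfrac{1}{8}\int_{B_\rho}|\nabla\phi_h|^2\,dx+\mathcal{H}^1\big(J_{\phi_h}\cap B_\rho\big)\ \le\ \mathcal{H}^1\big(\{\phi_h\neq 1\}\cap\partial B_\rho\big).
\]
The main obstacle is to upgrade this smallness into $\phi_h\equiv 1$ in $B_r(x_0)$ for $h$ large. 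Refining the Fubini choice of $\rho_h$ so that $u_h\to u$ in $L^1(\partial B_{\rho_h})$ along a subsequence, and combining with $w_h\to u$ uniformly on $\partial B_{\rho_h}$ together with $|u|=1$, I obtain $\phi_h\to 1$ in $L^1(\partial B_{\rho_h})$; since $\phi_h\in\mathbf{G}_m$ a.e.\ on the boundary with distinct values separated by $|1-\mathbf{a}|>0$, this forces $\mathcal{H}^1(\{\phi_h\neq 1\}\cap\partial B_{\rho_h})\to 0$, whence $\mathcal{H}^1(J_{\phi_h}\cap B_{\rho_h})\to 0$ and $\nabla\phi_h\to 0$ in $L^2$. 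The delicate point, which I regard as the genuine difficulty, is to conclude that $J_{\phi_h}\cap B_r(x_0)=\emptyset$ for $h$ large: this requires an Ahlfors-type density lower bound for $J_{\phi_h}$, which is a consequence of the local quasi-minimality of $\phi_h$ inherited from $u_h$ through the Lassoued--Mironescu splitting, in the spirit of the Mumford--Shah regularity theory (compare~\cite{AFP}). Once jumps are eliminated in $B_r(x_0)$, the vanishing of $\nabla\phi_h$ and the $\mathbf{G}_m$-valued boundary trace together force $\phi_h\equiv 1$ there; hence $u_h=w_h$ in $B_r(x_0)$, and the claimed smoothness, the local $E_{\eps_h}$-minimality (via Remark~\ref{remuniq}), and the $C^k_{\mathrm{loc}}$ convergence $u_h\to u$ for every $k$ all follow from Proposition~\ref{buildGLcompet}.
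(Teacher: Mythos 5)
Your overall architecture (coarea slicing for a good radius, Proposition~\ref{buildGLcompet} for the Ginzburg--Landau competitor, the Lassoued--Mironescu splitting of Lemma~\ref{LM} and Proposition~\ref{propLM}, and comparison via minimality of $u_h$) is exactly the paper's, but your execution of the final step has a genuine gap, and the gap is self-inflicted. You allow the circle $\partial B_{\rho_h}$ to meet $J_{u_h}$ in finitely many points and take for $g_h$ an abstract smooth $m$-th root of $v_h$ on the circle; as a result your boundary datum need not agree with the trace of $u_h$, your competitor creates a boundary jump $\{\phi_h\neq 1\}\cap\partial B_{\rho_h}$, and your comparison inequality only gives that $\int_{B_\rho}|\nabla\phi_h|^2$ and $\mathcal{H}^1(J_{\phi_h}\cap B_\rho)$ are small, not zero. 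To conclude $J_{\phi_h}\cap B_r(x_0)=\emptyset$ from smallness you invoke an Ahlfors-type density lower bound ``inherited from quasi-minimality, in the spirit of Mumford--Shah regularity theory'' --- but you never prove it, and it is precisely the crux: without it, a tiny nonempty jump set inside $B_r(x_0)$ for every $h$ is not excluded, and the statement that $u_h\in W^{1,2}(B_r(x_0))$ and minimizes $E_{\eps_h}$ there does not follow. Establishing such a density bound for this constrained $\mathbf{G}_m$-quotient problem is a substantial piece of work (compare Step~2 of the proof of Lemma~\ref{Alternative2}), not a citation.

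The paper avoids all of this by a sharper choice of radius. Since $\Gamma\cap\overline{B_{2r}(x_0)}=\emptyset$, \eqref{strongconvergencebisshp} gives $\mathcal{H}^1(J_{u_h}\cap B_{2r}(x_0))\le r/2$ for large $h$, so by coarea the set of $t\in(r,2r)$ with $J_{u_h}\cap\partial B_t=\emptyset$ has measure at least $r/2$; one then picks $\rho_h$ in that set, also controlling the boundary energies by Fubini. With this choice $u_h|_{\partial B_{\rho_h}}\in W^{1,2}(\partial B_{\rho_h})$, one takes $g_h$ to be the trace of $u_h$ itself (no lifting needed), the competitor $w_h\chi_{B_{\rho_h}}+u_h\chi_{\Omega\setminus B_{\rho_h}}$ creates no jump on the circle, and $\phi_h=u_h/w_h$ equals $1$ exactly on $\partial B_{\rho_h}$. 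The comparison then reads $0\ge\frac18\int_{B_{\rho_h}}|\nabla\phi_h|^2\,dx+\mathcal{H}^1(J_{\phi_h}\cap B_{\rho_h})$, forcing $\phi_h\equiv 1$ and hence $u_h=w_h$ on all of $B_{\rho_h}\supset B_r(x_0)$ for each large $h$. I recommend you replace your radius selection and boundary datum accordingly; the rest of your argument then closes without any regularity input.
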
 
 
\begin{proof}
{\it Step 1.} Without loss of generality, we may assume that $x_0=0$. Set 
 \[
  \gamma_h:= \int_{B_{2r}}|\nabla v_h- \nabla v_\mu|^2+|v_h-v_\mu|^2\,dx+ \frac{1}{2\eh^2} \int_{B_{2r}}(1-|u_h|^2)^2\,dx\,. 
 \]
By Theorem \ref{Gammasharp} and Corollary \ref{corcvminsharp}, $\gamma_h\to 0$ as $h\to \infty$. Since $\Gamma\cap\overline B_{2r}=\emptyset$, \eqref{strongconvergencebisshp} shows that 
\begin{equation}\label{11483107}
\mathcal{H}^1(J_{u_h}\cap B_{2r})\leq \frac{r}{2}
\end{equation}
for $h$ large enough.  From now on, we assume that \eqref{11483107} holds. 

By the coarea formula  (see  \cite[Theorem II.7.7]{Maggi}), we have 
\[
 \int_{r}^{2r} \mathcal{H}^0(J_{u_h}\cap \partial B_t)\, dt \leq  \mathcal{H}^1(J_{u_h}\cap B_{2r})\leq \frac{r}{2}\,.\]
Setting  
$$A_h:=\Big\{t\in (r,2r)   :  J_{u_h}\cap \partial B_{t}=\emptyset\Big\}\,,$$
we deduce that $\mathcal{H}^1(A_h)\geq r/2$. Notice that $u_h\res \partial B_t \in W^{1,2}(\partial B_t)$ for a.e. $t\in A_h$. Since 
$$\int_{A_h} \lt[\int_{\partial B_t}|\nabla v_h-\nabla v_\mu|^2+|v_h-v_\mu|^2+\frac{1}{2\eh^2} (1-|u_h|^2)^2\,d\mathcal{H}^1\rt]\,dt \leq  \gamma_h\,,$$
we can find a radius $\rho_h\in A_h$ such that $u_h\res \partial B_{\rho_h} \in W^{1,2}(\partial B_{\rho_h};\C)$ and 
\begin{equation}\label{cvvhbordboul}
\int_{\partial B_{\rho_h}}|\nabla v_h-\nabla v_\mu|^2+|v_h-v_\mu|^2+\frac{1}{2\eh^2} (1-|u_h|^2)^2\,d\mathcal{H}^1\leq \frac{2\gamma_h}{r}\,.
\end{equation}
By definition of $\p$, $|\nabla u_h|\le|\nabla v_h|$ and thus
\begin{multline*}
\int_{\partial B_{\rho_h}}|\nabla u_h|^2\,d\mathcal{H}^1\leq\int_{\partial B_{\rho_h}}|\nabla v_h|^2\,d\mathcal{H}^1\leq 2 \int_{\partial B_{\rho_h}}|\nabla v_h-\nabla v_\mu|^2\,d\mathcal{H}^1+4\pi \rho_h \|\nabla v_\mu\|^2_{L^\infty(B_{2r})} \\
\leq \frac{4\gamma_h}{r}+8\pi r K^2_{\sigma}\,,
\end{multline*}
which leads to 
\begin{equation}\label{condbdrhohcpa}
\rho_h^{1/2}\left(\int_{\partial B_{\rho_h}}|\nabla u_h|^2+\frac{1}{2\eh^2} (1-|u_h|^2)^2\,d\mathcal{H}^1\right)^{1/2}\leq \delta
\end{equation}
for $h$ large enough since $\rho_h\leq 2r_\sigma$. 
\vskip5pt

\noindent{\it Step 2.} We select a  subsequence such that $\rho_h\to \rho\in[r,2r]$. Define $g_h(x):=u_h(\rho_h x)$ for $x\in\partial B_1$. Then $g_h\in W^{1,2}(\partial B_1)\cap L^\infty(B_1)$ satisfies $|g_h|\leq 1$, and 
\begin{equation}\label{condghinprf}
\left(\int_{\partial B_1}|\partial_\tau g_h|^2+\frac{1}{2\widetilde{\eps}_h^2} (1-|g_h|^2)^2\,d\mathcal{H}^1\right)^{1/2}\leq \delta\,,
\end{equation}
where $\widetilde{\eps}_h:=\eps_h/\rho_h$. Extracting a further subsequence if necessary, we may thus assume that $g_h\to g_\star$ uniformly of $\partial B_1$ for some $g_\star\in W^{1,2}(\partial B_1;\Ss^1)$. Estimate \eqref{cvvhbordboul} yields 
\begin{equation}\label{gstarmvmu}
g^m_\star(x)=\lim_{h\to \infty}\p(g_h)(x)= \lim_{h\to \infty}v_h(\rho_h x) = v_\mu(\rho x)\quad \forall x\in \partial B_1\,.
\end{equation}
Since ${\rm deg}(v_\mu,\partial B_\rho)=0$, we deduce that ${\rm deg}(g_\star,\partial B_1)=0$. 
We are now in position to apply Proposition~\ref{buildGLcompet} to produce minimizers $w_h$ of $E_{\widetilde\eps_h}(\cdot,B_1)$ over $W^{1,2}_{g_h}(B_1)$. Then $w_h\to w_\star$ strongly in $W^{1,2}(B_1)$ where $w_\star$ is the unique solution of \eqref{pbharmmaplimtw}. We claim that 
$$w^m_\star(x)=v_\mu(\rho x)\quad \forall x\in B_1\,. $$
Indeed, recalling \eqref{definivmu}, $v_\mu(\rho x)=e^{i\psi_\mu(x)}$ for $x\in \overline B_1$ and a smooth harmonic function $\psi_\mu$ (which is unique up to a constant multiple of $2\pi$).
Moreover, $w_\star=e^{i\varphi_\star}$ for some harmonic function $\varphi_\star\in W^{1,2}(B_1)$. In view of \eqref{gstarmvmu}, we have $m\varphi_\star=\psi_\mu+2k\pi$ on $\partial B_1$ for some constant $k\in\mathbb{N}$. By uniqueness of the harmonic extension, we infer that  $m\varphi_\star=\psi_\mu+2k\pi$ in $B_1$, and the claim follows. 

As a consequence of this last identity, we deduce that 
$$\int_{B_1}|\nabla w_\star|^2\,dx=\frac{1}{m^2}\int_{B_{\rho_h}}|\nabla v_\mu|^2\,dx \leq \frac{4\pi r^2K_\sigma^2}{m^2}\leq \frac{\delta}{2}\,. $$
Since $w_h\to w_\star$ strongly in $W^{1,2}(B_1)$, we thus have for $h$ large enough 
\begin{equation}\label{condwhinprf}
\int_{B_1}|\nabla w_h|^2\,dx\leq \delta\,.
\end{equation}
\vskip5pt

\noindent{\it Step 3.} Let us define $\widehat w_h(x):=w_h( x/\rho_h)$, and consider the competitor $\widehat u_h\in \mathcal{G}_g(\Omega)$ given by 
$$\widehat u_h:=
\begin{cases} 
u_h & \text{in $\Omega\setminus B_{\rho_h}$}\,,\\
\widehat w_h& \text{in $B_{\rho_h}$}\,. 
\end{cases}$$
By minimality we have $F^0_{\eps_h,g}(u_h)\leq F^0_{\eps_h,g}(\widehat u_h)$, and since $J_{u_h}\cap \partial B_{\rho_h}=\emptyset$, we deduce that 
$$F^0_{\eps_h}(u_h,B_{\rho_h})\leq E_{\eps_h}(\widehat w_h,B_{\rho_h}) \,.$$
Setting $\widetilde u_h(x):=u_h(\rho_h x)$ and rescaling variables, we obtain 
\begin{equation}\label{fucklabel1}
E_{\widetilde \eps_h}\big(\P(\widetilde u_h),B_1\big)+\rho_h\mathcal{H}^1(J_{\widetilde u_h}\cap B_1)\leq E_{\widetilde \eps_h}( w_h,B_1) \,. 
\end{equation}
 In view of \eqref{condghinprf} and \eqref{condwhinprf} (and our choice of $\delta$), we can apply Lemma \ref{LM} and Proposition \ref{propLM} to derive that 
\begin{equation}\label{fucklabel2}
E_{\widetilde \eps_h}\big(\P(\widetilde u_h),B_1\big)+\rho_h\mathcal{H}^1(J_{\widetilde u_h}\cap B_1)\geq E_{\widetilde \eps_h}( w_h,B_1)+\frac{1}{8} \int_{B_1}  |\nabla \phi_h|^2\,dx+\rho_h\mathcal{H}^1(J_{\phi_h}\cap B_1)
\end{equation}
for $h$ large enough, where $\phi_h:=\widetilde u_h/w_h$ satisfies $\phi_h=1$ on $\partial B_1$. Putting   \eqref{fucklabel1} and \eqref{fucklabel2} together leads to 
$ \int_{B_1}  |\nabla \phi_h|^2\,dx=0=\mathcal{H}^1(J_{\phi_h}\cap B_1)$, and thus $\phi_h\equiv1$. In other words, $\widetilde u_h\equiv w_h$ for $h$ large enough. 

Scaling back to the original variables (and recalling that $u_h\to u$ in $L^1(\Omega)$), we conclude from Proposition \ref{buildGLcompet} that for $h$ large enough, $u_h$ minimizes $E_{\eps_h}(\cdot,B_{r})$ in $W^{1,2}(B_{r})$ under its own boundary condition, $u_h\in C^\infty(B_{r})$ and $u_h\to u$ in $C^k_{\rm loc}(B_{r})$ for every $k\in\mathbb{N}$.
Since the limit is unique, we deduce that these facts actually hold for the full sequence (and not only for a subsequence). 
\end{proof}

The next lemma is devoted to smoothness and convergence of $u_h$ near the boundary of $\Omega$. Since $\partial\Omega$ is assumed to be smooth, we can find a radius $r_\Omega>0$ such that 
\begin{equation}\label{smoothOmega}\mathcal{H}^1(\partial\Omega\cap B_r(x))\leq 3 r\quad\forall r\in(0,r_\Omega)\,,\;\forall x\in \partial\Omega\,.\end{equation} 
For the sake of variety, in the proof below, we do not use the energy splitting argument. Notice that, either way, it could be possible to adapt this alternative argument to prove Lemma~\ref{LemfarS}, or to adapt  the energy splitting approach to treat boundary points.

\begin{lemma}\label{smoothcvbdry}
 For $\sigma\in(0,\sigma_0)$, let $r\in(0,\min\{r_\sigma,r_\Omega\})$ and $x_0\in\partial\Omega$. For $h$ large enough, $u_h\in W^{1,2}(B_{r}(x_0)\cap\Omega)$ with $u_h=g$ on $\partial \Omega\cap B_r(x_0)$  and $u_h$ minimizes $E_{\eps_h}(\cdot,B_{r}(x_0)\cap \Omega)$ under its own boundary conditions.
 In addition, $u_h\in C^\infty(B_{r}(x_0)\cap\overline\Omega)$ and $u_h\to u$ in $C^{1,\alpha}_{\rm loc}(B_{r}(x_0)\cap \overline\Omega)$ and $C^{k}_{\rm loc}(B_{r}(x_0)\cap \Omega)$ for every $\alpha\in(0,1)$ and $k\in\mathbb{N}$.
 \end{lemma}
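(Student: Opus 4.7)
The argument parallels Lemma~\ref{LemfarS} with modifications needed at the boundary.

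\emph{Good slicing.} Without loss of generality $x_0=0$. Set
\[
\gamma_h := \int_{\Omega \cap B_{2r}} \bigl(|\nabla v_h - \nabla v_\mu|^2 + |v_h-v_\mu|^2\bigr)\,dx + \frac{1}{2\eh^2}\int_{\Omega \cap B_{2r}}(1-|u_h|^2)^2\,dx,
\]
\[
\ell_h:= \mathcal{H}^1(J_{u_h}\cap \Omega\cap B_{2r})+\mathcal{H}^1(\{u_h\neq g\}\cap \partial\Omega \cap B_{2r}).
\]
Since $\Gamma\cap \overline{B_{2r}(0)}=\emptyset$, Theorem~\ref{Gammasharp}(ii) and Corollary~\ref{corcvminsharp} give $\gamma_h\to 0$ and $\ell_h\to 0$ (the latter via \eqref{strongconvergencebisshp} applied with $A=B_{2r}(0)$). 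A Fubini/coarea argument on the arcs $\Sigma_t:=\Omega\cap\partial B_t$, $t\in(r,2r)$, exploiting \eqref{smoothOmega} and the smallness of $\ell_h$, provides $\rho_h\in(r,2r)$, $\rho_h\to\rho$, such that $J_{u_h}\cap \Sigma_{\rho_h}=\emptyset$, $u_h|_{\Sigma_{\rho_h}}\in W^{1,2}(\Sigma_{\rho_h})$, both endpoints $\partial\Omega\cap\partial B_{\rho_h}$ lie outside $\{u_h\neq g\}$, and the analogue of \eqref{condbdrhohcpa} holds.

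\emph{Smooth competitor.} Write $D_h:=B_{\rho_h}(0)\cap\Omega$ and $D:=B_\rho(0)\cap\Omega$. Let $w_h$ minimize $E_{\eh}(\,\cdot\,,D_h)$ over $W^{1,2}(D_h)$ with Dirichlet data $u_h$ on $\Sigma_{\rho_h}$ and $g$ on $\partial\Omega\cap B_{\rho_h}$ (these match at the two corner points by the previous step). Truncation yields $|w_h|\leq 1$; classical Ginzburg--Landau elliptic regularity and boundary regularity up to the smooth part of $\partial D_h$ give $w_h\in C^\infty\bigl(D_h\cup(\partial\Omega\cap B_{\rho_h})\bigr)$. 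A straightforward adaptation of Proposition~\ref{buildGLcompet} to this semi-ball setting (using that $u$ is smooth on $\overline D$ with $u=g$ on $\partial\Omega\cap B_\rho$, and that $D$ is simply connected so $u=e^{i\psi}$ for a smooth~$\psi$) yields $w_h\to u$ strongly in $W^{1,2}(D_h)$ and $|w_h|\to 1$ uniformly, hence $E_{\eh}(w_h,D_h)\to \tfrac12\int_D|\nabla u|^2\,dx$.

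\emph{Comparison and conclusion.} The glued map $\widehat u_h:=u_h\,\chi_{\Omega\setminus D_h}+w_h\,\chi_{D_h}$ lies in $\mathcal{G}_g(\Omega)$, and minimality gives
\[
F^0_{\eh}(u_h,D_h)+\mathcal{H}^1(\{u_h\neq g\}\cap\partial\Omega\cap B_{\rho_h})\leq E_{\eh}(w_h,D_h).
\]
Since $E_{\eh}(\P(u_h),D_h)\to\tfrac12\int_D|\nabla u|^2\,dx$ (Theorem~\ref{Gammasharp}(ii)), both $\mathcal{H}^1(J_{u_h}\cap D_h)$ and $\mathcal{H}^1(\{u_h\neq g\}\cap \partial\Omega\cap B_{\rho_h})$ tend to $0$. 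A density lower bound of the form $\mathcal{H}^1(J_{u_h}\cap B_s(x))\geq cs$ at every point of $\overline{J_{u_h}}\cap D_h$ and small $s$ --- obtained by replaying the above competitor construction on small (semi-)balls --- then forces $J_{u_h}\cap B_r=\emptyset$ and $\{u_h\neq g\}\cap\partial\Omega\cap B_r=\emptyset$ for $h$ large. Consequently, $u_h$ solves the Ginzburg--Landau system on $B_r\cap\Omega$ with smooth Dirichlet data $g$; classical elliptic regularity plus the strong convergence already obtained yield $u_h\in C^\infty(\overline{B_r(x_0)\cap\Omega})$, $u_h\to u$ in $C^{1,\alpha}_{\rm loc}(B_r(x_0)\cap\overline\Omega)$, and $u_h\to u$ in $C^k_{\rm loc}(B_r(x_0)\cap\Omega)$.

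\emph{Main obstacle.} The most delicate step is upgrading ``vanishing jump length'' to ``no jumps'' via the density lower bound; producing competitors at every scale near $\partial\Omega$ while respecting the degree constraint $u^+/u^-\in\mathbf{G}_m$ and the corner geometry of semi-balls requires careful bookkeeping, though the same scheme applies with minor adjustments. The Lassoued--Mironescu splitting used in Lemma~\ref{LemfarS} would bypass this difficulty by directly supplying the quantitative penalty $\tfrac18\int |\nabla\phi|^2+\mathcal{H}^1(J_\phi)$ on the deviation $\phi:=u_h/w_h$ from unity.
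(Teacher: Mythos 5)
Your slicing step, the construction of the Ginzburg--Landau competitor $w_h$ on the semi-ball, and the comparison inequality are all sound and match the paper's setup. The gap is at the crux: passing from $\mathcal{H}^1(J_{u_h}\cap D_h)+\mathcal{H}^1(\{u_h\neq g\}\cap\partial\Omega\cap B_{\rho_h})\to 0$ to the conclusion that these quantities \emph{vanish} for $h$ large. You invoke a density lower bound $\mathcal{H}^1(J_{u_h}\cap B_s(x))\geq cs$ "obtained by replaying the competitor construction on small (semi-)balls", but this is precisely the De Giorgi--Carriero--Leaci-type regularity step, and here it must be \emph{uniform in $h$}: the constant $c$ and the range of admissible scales $s$ cannot be allowed to degenerate as $\eps_h\to 0$. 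The natural obstruction is the potential term $\eps_h^{-2}(1-|u_h|^2)^2$, whose contribution on a ball of radius $s$ is not $o(s)$ uniformly in $h$, so $u_h$ is not obviously a uniform quasi-minimizer of a Mumford--Shah energy; one would have to design competitors that modify only the $\mathbf{G}_m$-valued part of $u_h$ (as in Lemma~\ref{Alternative2}, Step~2), and none of this bookkeeping is actually carried out. As written, the asymptotic statement does not yield the exact one, and the exact one is what the lemma asserts.

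The paper's proof sidesteps this entirely with an exact variational identity rather than an asymptotic energy comparison. After choosing $\rho_h$ so that the trace $g_h$ of $u_h$ on $\partial(B_{\rho_h}\cap\Omega)$ lies in $W^{1,2}$ and has oscillation at most $1/(2m)$, it shows
\[
\min_{v\in W^{1,2}_{\p(g_h)}} G_{\eps_h}(v,B_{\rho_h}\cap\Omega)=\min_{w\in W^{1,2}_{g_h}} E_{\eps_h}(w,B_{\rho_h}\cap\Omega),
\]
by lifting a minimizer $\overline v$ of the projected problem through a Lipschitz left inverse $\q$ of $\p$ (valid because $|\p(g_h)-1|\leq 1/2$ after a rotation, so $\overline v$ can be assumed to have nonnegative real part). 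Since $G_{\eps_h}(\p(\cdot))$ is exactly $F^0_{\eps_h}$ minus the jump and boundary-penalty terms, sandwiching the minimal values forces $\mathcal{H}^1(J_{u_h}\cap(B_{\rho_h}\cap\Omega))+\mathcal{H}^1(\{u_h\neq g\}\cap\partial\Omega\cap B_{\rho_h})=0$ outright --- no density bound and no convergence of energies is needed. Your closing remark is on target: the Lassoued--Mironescu splitting of Proposition~\ref{propLM} (which the paper uses for the interior Lemma~\ref{LemfarS}) would also give an exact quantitative penalty $\tfrac18\int|\nabla\phi_h|^2+\mathcal{H}^1(J_{\phi_h})$ forcing $\phi_h\equiv 1$, and either that route or the paper's lifting identity should replace the density-lower-bound step in your argument.
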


\begin{proof}
 Without loss of generality, we may assume that $x_0=0$.  As in the proof of Proposition~\ref{LemfarS}, it is enough to find  $\rho_h\in(r,2r)$ such that  \eqref{condbdrhohcpa} holds (with $\partial B_{\rho_h}\cap \Omega$ in place of $\partial B_{\rho_h}$), 
 $J_{u_h}\cap (\overline B_{\rho_h}\cap \Omega)=\emptyset$ (so that $u_h\in W^{1,2}(B_{\rho_h}\cap\Omega)$), and $u_h=g$ on $\partial\Omega\cap B_{\rho_h}$, for $h$ large enough. Indeed, for any $w\in W^{1,2}_{u_h}(B_{\rho_h})$, one can consider the competitor  $\widehat u_h\in \mathcal{G}_g(\Omega)$ given by $\widehat u_h=w$ in $B_{\rho_h}\cap\Omega$, and $\widehat u_h=u_h$ in $\Omega\setminus B_{\rho_h}$. By minimality, $F^0_{\eps_h,g}(u_h)\leq F^0_{\eps_h,g}(\widehat u_h)$, which then leads to 
 $$E_\eh(u_h,B_{\rho_h}\cap \Omega)=F^0_{\eps_h}(u_h,B_{\rho_h}\cap\Omega)\leq  F^0_{\eps_h}(w,B_{\rho_h}\cap\Omega)=E_\eh(w;B_{\rho_h}\cap \Omega)\,.$$
Hence $u_h$ minimizes $E_{\eps_h}(\cdot,B_{\rho_h})$ in $W^{1,2}(B_{\rho_h}\cap\Omega)$ under its own boundary condition. Then the remaining conclusions follow from \cite{BBHarticle} (see also \cite[Theorem A.3]{BBH}) together with the fact that $u_h\to u$ in $L^1(\Omega)$. 

We select the radius $\rho_h$ by repeating the Fubini type argument used in Step 1 of the proof  of Proposition \ref{LemfarS}. The main additional point is to
select $\rho_h$ so that $u_h$ belongs to $W^{1,2}(\partial B_{\rho_h}\cap\Omega)$ with $u_h=g$ on $\partial B_{\rho_h}\cap\partial \Omega$. This is possible via the coarea formula since \eqref{strongconvergencebisshp} implies that for $h$ large enough
$$\mathcal{H}^1(J_{u_h}\cap B_{2r}\cap\Omega)+\mathcal{H}^1(\{u_h\not=g\}\cap\partial\Omega\cap B_{2r}) \leq \frac{r}{2}.$$

By our choice of $\rho_h$, the map $g_h$ defined by $g_h:=u_h$ in $\partial B_{\rho_h}\cap\Omega$, and $g_h:=g$ in $ B_{\rho_h}\cap\partial\Omega$, belongs to $W^{1,2}(\partial (B_{\rho_h}\cap\Omega))$. In view of  \eqref{condbdrhohcpa}, for $h$ large enough we have $|g_h|\geq 1/2$ on $\partial(B_{\rho_h}\cap\Omega)$, and 
\begin{align*}
\mathop{{\rm osc}}\limits_{\partial(B_{\rho_h}\cap\Omega)} g_h & \leq \big(\mathcal{H}^1(\partial(B_{\rho_h}\cap\Omega))\big)^{1/2} \left(\int_{\partial(B_{\rho_h}\cap\Omega)}|\partial_\tau g_h|^2\,d\mathcal{H}^1\right)^{1/2}\\
& \leq \sqrt{2\pi\rho_h} \left(\int_{B_{\rho_h}\cap\partial\Omega}|\partial_\tau g|^2\,d\mathcal{H}^1+\int_{\partial B_{\rho_h}\cap\Omega}|\nabla u_h|^2\,d\mathcal{H}^1\right)^{1/2}\\
& \stackrel{\eqref{Ksigma}, \eqref{smoothOmega} \& \eqref{condbdrhohcpa}}{\leq} \sqrt{2\pi\rho_h} \left(3\rho_hK^2_\sigma+ \delta^2/\rho_h \right)^{1/2}\\
& \leq \sqrt{2\pi} \left( 6 r^2K^2_\sigma+ \delta^2\right)^{1/2}\\
& \leq \frac{1}{2m}\,.
\end{align*}
Rotating coordinates in the image if necessary, we may assume that $g_h(0)=1$, which in turn yields  
\begin{equation}\label{bougpatro}
|\p(g_h(x))-1|\leq \frac{1}{2}\qquad\forall x\in\partial(B_{\rho_h}\cap\Omega)\,.
\end{equation}

We claim that
\begin{equation}\label{eqproblems}
 \min_{v\in W^{1,2}_{\p(g_h)}(B_{\rho_h}\cap \Omega)} G_\eh(v,B_{\rho_h}\cap \Omega)=\min_{u\in W^{1,2}_{g_h}(B_{\rho_h}\cap \Omega)} E_\eh(u,B_{\rho_h}\cap \Omega)\,.
\end{equation}
Before proving this claim, let us show how \eqref{eqproblems} leads to the conclusion. By minimality of $u_h$ (and our choice of $\rho_h$), we have 
\begin{align*}
 \min_{u\in W^{1,2}_{g_h}(B_{\rho_h}\cap \Omega)} E_\eh(u,B_{\rho_h}\cap \Omega)&=\min_{u\in W^{1,2}_{g_h}(B_{\rho_h}\cap \Omega)} F^0_\eh(u,B_{\rho_h}\cap \Omega)\\
 &\geq \min_{u\in \mathcal{G}_{g_h}(B_{\rho_h}\cap \Omega)} \Big\{F^0_\eh(u,B_{\rho_h}\cap \Omega)+\mathcal{H}^1\big(\{u\not= g_h\}\cap\partial(B_{\rho_h}\cap\Omega)\big)\Big\}\\
 &= \begin{multlined}[t][9.5cm]
 G_\eh(v_h,B_{\rho_h}\cap \Omega)+\mathcal{H}^1(J_{u_h}\cap (B_{\rho_h}\cap \Omega))\\
 + \mathcal{H}^1(\{u_h\not= g\}\cap(\partial\Omega\cap B_{\rho_h}))
 \end{multlined}
 \\
 &\geq \min_{v\in W^{1,2}_{\p(g_h)}(B_{\rho_h}\cap \Omega)} G_\eh(v,B_{\rho_h}\cap \Omega)\,.
\end{align*}
Then, \eqref{eqproblems} implies that all the inequalities above are in fact equalities and as a consequence 
$$\mathcal{H}^1(J_{u_h}\cap (B_{\rho_h}\cap \Omega))+ \mathcal{H}^1(\{u_h\not= g\}\cap(\partial\Omega\cap B_{\rho_h}))=0\,.$$ 
Hence $J_{u_h}\cap (B_{\rho_h}\cap \Omega)$ is  empty, and $u_h=g$ on $\partial\Omega\cap B_{\rho_h}$. 

In view of the above chain of inequalities,  to prove~\eqref{eqproblems} it is enough to show that 
\begin{equation}\label{eqproblemsbis}
 \min_{v\in W^{1,2}_{\p(g_h)}(B_{\rho_h}\cap \Omega)} G_\eh(v,B_{\rho_h}\cap \Omega)\geq\min_{u\in W^{1,2}_{g_h}(B_{\rho_h}\cap \Omega)} E_\eh(u,B_{\rho_h}\cap \Omega)\,.
\end{equation}
We consider $\overline v$ a minimizer of the left-hand side. To establish \eqref{eqproblemsbis}, it is enough to construct $\overline u\in W^{1,2}_{g_h}(B_{\rho_h}\cap \Omega)$ 
satisfying $\p(\overline u)=\overline v$ since, in this case, $G_\eh(\overline v,B_{\rho_h}\cap \Omega)= E_\eh(\overline u,B_{\rho_h}\cap \Omega)$.  
Let $\Pi:\C\to \C$ the map defined by $\Pi(z):=|{\rm Re}(z)|+i {\rm Im}(z)$. By \eqref{bougpatro} we have $\Pi(\p(g_h))=\p(g_h)$ and  $\Pi(\overline v)\in W^{1,2}_{\p(g_h)}(B_{\rho_h}\cap\Omega)$. In addition,  $G_\eh(\Pi(\overline v),B_{\rho_h}\cap\Omega)=G_\eh(\overline v,,B_{\rho_h}\cap\Omega)$. 
Replacing $\overline v$ by $\Pi(\overline v)$ if necessary, we may thus assume that the real part of $\overline v$ is nonnegative in $B_{\rho_h}\cap\Omega$. 
Now, let us introduce the map $\q: \C\cap\{{\rm Re}(z)\geq 0\}\to \C$ defined by $\q(z)=|z|e^{i\theta/m}$ for $z=|z|e^{i\theta}$ with $\theta\in[-\pi/2,\pi/2]$. 
Then, $\q$ is Lipschitz continuous left inverse of $\p$. In view of \eqref{bougpatro} we have $\q(\p(g_h))=g_h$, and as a consequence $\overline u:=\q(\overline v)\in W_{g_h}^{1,2}(B_{\rho_h}\cap\Omega)$ with $\p(\overline u)=\overline v$. 
\end{proof}

\subsubsection{Smoothness and convergence away from triple junctions}

We continue our asymptotic analysis by considering the local behavior of $u_h$ near $\Gamma$, but away from $T\cup{\rm spt}\,\mu$.  In the statement below, we understand the convergence of half spaces in the sense of local Hausdorff convergence.   
Let us write
$$c_m:=|1-\ba|^2.$$

\begin{proposition}\label{smoothconvontheedge}
For $\sigma\in(0,\sigma_0)$, let $r\in(0,\min\{r_\sigma,c_m/32 \})$ and $x_0\in\Gamma\setminus B_\sigma(T\cup{\rm spt}\,\mu)$. 
For $h$ large enough,  there exist a half space $H_h$ and $k\in\{1,\ldots,m-1\}$ such that $u_h=:(\chi_{H_h}+{\bf a}^k\chi_{H_h^c})w_h$ with $w_h\in W^{1,2}(B_{r}(x_0))$, and $w_h$ minimizes $E_{\eps_h}(\cdot,B_{r}(x_0))$ under its own boundary conditions.
In addition, $w_h\in C^{\infty}(B_{r}(x_0))$, $H_h\to H$ for some half space $H$ satisfying $\partial H\cap B_{r}(x_0)=\Gamma\cap B_{r}(x_0)$, and $w_h\to (\chi_{H}+{\bf a}^{-k}\chi_{H^c})u$ in $C^{\ell}_{\rm loc}(B_{r}(x_0))$ for every $\ell\in\mathbb{N}$.  
\end{proposition}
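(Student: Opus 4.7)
The plan is to implement the energy-splitting strategy outlined in Section~\ref{sec:sketch}. Locally the limiting picture is simple: since $x_0\in\Gamma\setminus B_\sigma(T\cup\spt\mu)$ and $r<r_\sigma\le\sigma/10$, the ball $\overline B_{2r}(x_0)$ avoids $\spt\mu$ and meets $\Gamma$ along a single open straight segment $L\ni x_0$ (no triple junction), and Theorem~\ref{Gammasharp} gives $v_h=\p(u_h)\to u^m$ strongly in $W^{1,2}(B_{2r}(x_0))$ together with $\mathcal{H}^1\res(J_{u_h}\cap B_{2r}(x_0))\to\mathcal{H}^1\res L$ weakly as measures.

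A Fubini/coarea argument modeled on Step~1 of the proof of Lemma~\ref{LemfarS} first selects $\rho_h\in(r,2r)$ with $\rho_h\to\rho$, $J_{u_h}\cap\partial B_{\rho_h}(x_0)=\emptyset$, and the tangential energy bound \eqref{hyp2}. Since $\overline B_{2r}(x_0)\cap\spt\mu=\emptyset$, $\deg(v_h,\partial B_{\rho_h}(x_0))=0$ for $h$ large, and $v_h|_{\partial B_{\rho_h}}$ may be lifted via $\p$ to a smooth $W^{1,2}$-map $g_h$ with $\p(g_h)=v_h$ (converging uniformly, up to a subsequence, to some $g_\star=e^{i\alpha_\star}\in W^{1,2}(\partial B_\rho(x_0);\Ss^1)$ with $g_\star^m=v_\mu$ on $\partial B_\rho(x_0)$). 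Proposition~\ref{buildGLcompet} then supplies Ginzburg--Landau minimizers $w_h\in C^\infty(B_{\rho_h}(x_0))\cap C^0(\overline B_{\rho_h}(x_0))$ with boundary data $g_h$, converging to the harmonic $\Ss^1$-valued $w_\star=e^{i\alpha_\star}$ in $W^{1,2}$ and in $C^\ell_\loc$; in particular $w_\star^m=v_\mu$ in $B_\rho(x_0)$, and the bound $r\le\delta/(8\sqrt\pi K_\sigma)$ yields $\int_{B_\rho(x_0)}|\nabla w_\star|^2=m^{-2}\int_{B_\rho(x_0)}|\nabla v_\mu|^2\le\delta/2$, so hypothesis \eqref{hyp1} holds for $h$ large. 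Applying Lemma~\ref{LM} produces the decomposition $u_h=\phi_h w_h$ in $B_{\rho_h}(x_0)$ with $\phi_h\in\mathcal{G}(B_{\rho_h}(x_0))\cap L^\infty$ and $\p(\phi_h)=1$ on $\partial B_{\rho_h}(x_0)$ (so $\phi_h|_{\partial B_{\rho_h}(x_0)}\in\mathbf{G}_m$, and it is in fact constant since $J_{\phi_h}\cap\partial B_{\rho_h}(x_0)=\emptyset$), while Proposition~\ref{propLM} provides the lower bound
\[
F^0_\eh\big(u_h,B_{\rho_h}(x_0)\big)\ge E_\eh\big(w_h,B_{\rho_h}(x_0)\big)+\tfrac{1}{8}\int_{B_{\rho_h}(x_0)}|\nabla\phi_h|^2\,dx+\mathcal{H}^1\big(J_{\phi_h}\cap B_{\rho_h}(x_0)\big).
\]

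The crux is to promote the minimality of $u_h$ into a local minimality of $\phi_h$ for a Mumford--Shah type functional. For any competitor $\psi\in\mathcal{G}(B_{\rho_h}(x_0))\cap L^\infty$ with $\psi=\phi_h|_{\partial B_{\rho_h}(x_0)}$ on $\partial B_{\rho_h}(x_0)$, the map $\widehat u_h:=\psi w_h$ in $B_{\rho_h}(x_0)$ and $\widehat u_h:=u_h$ outside belongs to $\mathcal{G}_g(\Omega)$ with matching boundary traces (no extra jump on $\partial B_{\rho_h}(x_0)$), and identity~\eqref{LMeq} applied to $\widehat u_h$ together with Lemma~\ref{propwente} and $|w_h|\to 1$ uniformly yields $F^0_\eh(\widehat u_h,B_{\rho_h}(x_0))\le E_\eh(w_h,B_{\rho_h}(x_0))+C\int_{B_{\rho_h}(x_0)}|\nabla\psi|^2\,dx+\mathcal{H}^1(J_\psi\cap B_{\rho_h}(x_0))$. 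Minimality of $u_h$ then shows that $\phi_h$ is a bounded-constant Dirichlet minimizer of a Mumford--Shah functional in its admissible class. Invoking the calibration machinery of Alberti--Bouchitt\'e--Dal Maso and Mora as mentioned in the introduction, one obtains that $\phi_h$ takes values in the finite set $\mathbf{G}_m$, whereupon the problem reduces to a planar $\mathbf{G}_m$-valued minimal partition problem; the classical regularity theory invoked in the proof of Theorem~\ref{thmLDmu} then yields that $J_{\phi_h}\cap B_{\rho_h}(x_0)$ is a finite union of straight segments meeting at $120^\circ$ triple junctions.

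It remains to identify the limiting geometry. From $u_h\to u$ in $L^1$ and $w_h\to w_\star$ uniformly we deduce $\phi_h\to u/w_\star$ in $L^1(B_\rho(x_0))$; since $u^m=w_\star^m=v_\mu$ there, the limit is $\mathbf{G}_m$-valued, and the smoothness of $u$ off $L$ together with its non-trivial jump across $L\cap B_\rho(x_0)$ forces $u/w_\star=\chi_H+\ba^{-k}\chi_{H^c}$ for a unique half-plane $H$ with $\partial H\cap B_\rho(x_0)=L\cap B_\rho(x_0)$ and a unique $k\in\{1,\ldots,m-1\}$. The choice $r<c_m/32$ combined with the structural lower bound $|\phi_h^+-\phi_h^-|\ge|1-\ba|=c_m^{1/2}$ on $J_{\phi_h}$ and the Ahlfors regularity of minimal cluster interfaces rules out spurious small components, and together with the absence of triple junctions in the limit it forces, by the Hausdorff stability of planar $\mathbf{G}_m$-valued minimal clusters, $J_{\phi_h}\cap B_r(x_0)$ to be a single line segment for $h$ large and $\phi_h=\chi_{H_h}+\ba^k\chi_{H_h^c}$ with $H_h\to H$ in the local Hausdorff distance; the smoothness and $C^\ell_\loc$-convergence of $w_h$ supplied by Proposition~\ref{buildGLcompet} then complete the proof. \emph{The principal obstacle} is the calibration step: rigorously forcing $\phi_h$ to be $\mathbf{G}_m$-valued. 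Direct radial truncation onto $\mathbf{G}_m$ is hindered by the Dirichlet term $\int|\nabla\phi_h|^2$, so the cleanest route adapts the calibration framework of Alberti--Bouchitt\'e--Dal Maso to the present vector-valued constrained setting $\p(\phi_h)=1$, possibly via a scalar phase lifting of $\phi_h$.
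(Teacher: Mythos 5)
There is a genuine gap at the very first step, and it is fatal to the architecture of your argument. You select $\rho_h\in(r,2r)$ with $J_{u_h}\cap\partial B_{\rho_h}(x_0)=\emptyset$, lift $v_h$ on that circle to a $W^{1,2}$ datum $g_h$, and conclude that $\phi_h=u_h/w_h$ is \emph{constant} on $\partial B_{\rho_h}(x_0)$. But then your own energy comparison (competitor $\psi\equiv\mathrm{const}$, upper bound $E_{\eps_h}(w_h)$, lower bound $E_{\eps_h}(w_h)+\tfrac18\int|\nabla\phi_h|^2+\mathcal{H}^1(J_{\phi_h})$ from Proposition~\ref{propLM}) forces $\phi_h\equiv\mathrm{const}$, hence $J_{u_h}\cap B_{\rho_h}(x_0)=\emptyset$ --- contradicting $\mathcal{H}^1(J_{u_h}\cap B_r(x_0))\to\mathcal{H}^1(\Gamma\cap B_r(x_0))=2r>0$, which follows from \eqref{strongconvergencebisshp}. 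So either no jump-free radius with the required energy bounds exists (this is exactly what the paper proves: the set $A_h^0$ of jump-free radii has measure at most $r/6$ \emph{because} of this contradiction), or the conclusion would be false. The whole point of the proposition is that near $x_0\in\Gamma$ the circle must be crossed by $J_{u_h}$; the paper shows that most radii are crossed at \emph{exactly two} points $x_h,y_h$, ruling out a single crossing by a degree argument (a lone jump of $u_h$ on the circle has phase amplitude a nonzero multiple of $2\pi/m$, yet the total phase increment is $2\pi\deg(v_h,\partial B_\rho)/m=0$, forcing the traces to agree). The boundary datum is then $g_h:=u_h$ on one arc and ${\bf a}^{-k_h}u_h$ on the other; this is what puts $g_h$ in $W^{1,2}(\partial B_\rho)$ while leaving $\phi_h=\chi_{H_h}+{\bf a}^{k}\chi_{H_h^c}$ (non-constant) on the boundary, and the comparison is then run against the competitor $(\chi_{H_h}+{\bf a}^k\chi_{H_h^c})w_h$, whose jump cost is the chord length $L_h=|x_h-y_h|$.

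Second, the step you flag as the principal obstacle --- forcing $\phi_h$ to be ${\bf G}_m$-valued --- needs no calibration here. With the correct boundary condition the comparison gives $L_h\geq\tfrac18\int_{B_\rho}|\nabla\phi_h|^2\,dx+\mathcal{H}^1(J_{\phi_h}\cap B_\rho)$, and an elementary slicing on lines orthogonal to $[x_h,y_h]$ shows $\mathcal{H}^1(J_{\phi_h}\cap B_\rho)=L_h$ exactly: on any slice missed by $J_{\phi_h}$ the trace of $\phi_h$ must still change by the factor ${\bf a}^k$ between the two boundary arcs, costing at least $c_m/(10r)$ in tangential Dirichlet energy, and $r\ll c_m$ makes this dearer than the length saved. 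Hence $\nabla\phi_h=0$, so $\p(\phi_h)\equiv1$ and $\phi_h$ is ${\bf G}_m$-valued for free; the resulting Caccioppoli partition is identified with $(H_h,H_h^c)$ via $\mathcal{H}^1(\partial E_0\cap B_\rho)\geq L_h$ with equality iff $E_0=H_h\cap B_\rho$. (Mora's calibration enters only in the triple-junction case, Proposition~\ref{propmincloseTnew}.) The remaining ingredients of your outline --- Proposition~\ref{buildGLcompet}, Lemma~\ref{LM}, Proposition~\ref{propLM}, and the identification of the limit --- do match the paper, but the proof cannot be repaired without replacing the jump-free circle by the two-crossing circle and the attendant arc-wise definition of $g_h$.
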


\begin{proof}
{\it Step 1.} Once again we may assume that $x_0=0$. We follow the strategy used in the proof of  Lemma~\ref{LemfarS} considering 
\[
  \gamma_h:= \int_{B_{5r}}|\nabla v_h- \nabla v_\mu|^2+ |v_h-v_\mu|^2\,dx+ \frac{1}{2\eh^2} \int_{B_{5r}}(1-|u_h|^2)^2\,dx\mathop{\longrightarrow}\limits_{h\to\infty}0\,. 
 \]
By Theorem \ref{Gammasharp} and Corollary \ref{corcvminsharp} we have \eqref{strongconvergencebisshp}, and thus   $\mathcal{H}^1(J_{u_h}\cap B_{5r})\to \mathcal{H}^1(\Gamma\cap B_{5r})=10r$. As a consequence,  
$ \mathcal{H}^1(J_{u_h}\cap B_{5r})\leq 11r$ for $h$ large enough, which in turn leads to  
\begin{equation}\label{smallmass2new}
\int_{r}^{5r} \mathcal{H}^0(J_{u_h}\cap \partial B_t)\,dt \leq  \mathcal{H}^1(J_{u_h}\cap B_{5r})\leq 11r\,.
\end{equation}
Setting 
$$A_h:=A_h^0\cup A_h^1\cup A_h^2\quad\text{with}\quad A^k_h:=\Big\{t\in (r,5r)   :  \mathcal{H}^0(J_{u_h}\cap \partial B_{t})=k\Big\}\,,$$
we infer from \eqref{smallmass2new} that $\mathcal{H}^1(A_h)\geq r/3$ for $h$ large enough. Notice that $u_h \in W^{1,2}(\partial B_t)$ for a.e. $t\in A^0_h$, and $u_h \in SBV^{2}(\partial B_t)$ for a.e. $t\in A^1_h\cup A^2_h$.

We claim that $\mathcal{H}^1(A_h^0)\leq r/6$ for $h$ large enough. Indeed, assume by contradiction that $\mathcal{H}^1(A_h^0)\geq r/6$ for some subsequence. Then, we could apply the proof of Lemma~\ref{LemfarS} (choosing a good radius $\rho_h\in A^0_h$) to infer that $u_h$ is smooth in $B_r$ for $h$ large enough, and thus that $J_{u_h}\cap B_r=\emptyset$. 
However  \eqref{strongconvergencebisshp} tells us that $\mathcal{H}^1(J_{u_h}\cap B_r)\to 2r$ as $h\to \infty$, a contradiction.
We have thus proved that $\mathcal{H}^1(A_h^1\cup A_h^2)\geq r/6$ for $h$ large enough. Now we claim that for $h$ even larger, we have $\mathcal{H}^1(A_h^1)\leq r/12$. 
By contradiction again, assume that $\mathcal{H}^1(A_h^1)\geq r/12$ for some subsequence. Then, we can find a good radius $\rho_h\in A_h^1$ such that $u_h\in SBV^2(\partial B_{\rho_h})$ and 
\begin{equation}\label{blablacpacrot}
\int_{\partial B_{\rho_h}}|\nabla v_h-\nabla v_\mu|^2+|v_h-v_\mu|^2+\frac{1}{2\eh^2} (1-|u_h|^2)^2\,d\mathcal{H}^1\leq \frac{12\gamma_h}{r}\,.
\end{equation}
By our choice of $\rho_h$, there is a single point $x_h\in \partial B_{\rho_h}$ such that $u_h\in W^{1,2}(\partial B_{\rho_h}\setminus\{ x_h\})$.
Rescaling variables if necessary, we may assume without too much loss of generality that the radius $\rho_h=\rho$ is independent of $h$.  By \eqref{blablacpacrot}, $v_h\to v_\mu$
uniformly on $\partial B_\rho$. As a consequence, $|u_h|=|v_h|\geq 1/2$ on $\partial B_\rho$, and ${\rm deg}(v_h,\partial B_\rho)=0$ for $h$ large enough. 
In particular, we can write $u_h=|u_h|e^{i\varphi_h}$ on $\partial B_{\rho_b}\setminus\{ x_h\}$ for some $\varphi_h\in W^{1,2}(\partial B_{\rho_b}\setminus\{ x_h\})$.
Let $\varphi_h^\pm$ be the traces of $\varphi_h$ at $ x_h$. Since $v_h=|u_h|e^{im\varphi_h}\in W^{1,2}(\partial B_\rho)$, we have $m(\varphi_h^+-\varphi_h^-)=2\pi {\rm deg}(v_h,\partial B_\rho)=0$. 
Hence $\varphi_h^+=\varphi_h^-$, which yields $\varphi_h\in W^{1,2}(\partial B_\rho)$. 
We obtain that  $u_h\in W^{1,2}(\partial B_\rho)$  contradicting our choice $\rho\in A^1_h$.  

We may now assume that $h$ is sufficiently large so that $\mathcal{H}^1(A_h^2)\geq r/12$. Arguing as in the proof of Lemma~\ref{LemfarS}, we select a good  radius $\rho_h\in A_h^2$
so that $u_h\in SBV^2(\partial B_{\rho_h})$ and \eqref{condbdrhohcpa} holds  together with \eqref{blablacpacrot}. Here again we shall assume for simplicity $\rho_h=\rho$ 
is independent of $h$ (otherwise we rescale variables as in the proof of Lemma~\ref{LemfarS}).  We write $\{ x_h, y_h\}:= J_{u_h}\cap \partial B_\rho$, and then $\mathcal{C}^h_1$ and $\mathcal{C}_2^h$ the two (open) arcs  in $\partial B_\rho$ joining $ x_h$ and $ y_h$. 
As above, we infer from \eqref{blablacpacrot} that $|u_h|=|v_h|\geq 1/2$ on $\partial B_\rho$, and ${\rm deg}(v_h,\partial B_\rho)=0$ for $h$ large enough. Since $u_h\in W^{1,2}(\mathcal{C}^h_j)$ for $j=1,2$, 
we deduce that there exist $\varphi_h^j\in W^{1,2}(\mathcal{C}^h_j)$ such that $u_h=|u_h|e^{i\varphi_h^j}$ on $\mathcal{C}^h_j$. Denote by $\varphi_h^{j,1}$ the trace of $\varphi_h^{j}$ at $ x_h$, and $\varphi_h^{j,2}$ the trace of $\varphi_h^{j}$ at $ y_h$. Since $v_h\in W^{1,2}(\partial B_\rho)$, and $v_h=|u_h|e^{im\varphi_h^j}$ on $\mathcal{C}^h_j$,
we obtain the relation  
$$m(\varphi^{2,1}_h-\varphi_h^{1,1})=2\pi k_1^h \quad\text{and}\quad m(\varphi^{2,2}_h-\varphi_h^{1,2})=2\pi k_2^h$$
for some $k_1^h,k_2^h\in\mathbb{Z}\setminus\{0\}$.  Define $k_h\in\{1,\ldots,m-1\}$ to be such that ${\bf a}^{k_h}=e^{2i\pi k_2^h/m}$, and consider the map 
$$g_h:=\begin{cases} 
u_h & \text{in $ \mathcal{C}^h_1$}\,,\\
{\bf a}^{-k_h} u_h & \text{in $ \mathcal{C}^h_2$}\,. 
\end{cases}
$$ 
By construction we have $\p(g_h)=v_h$ and $g_h\in W^{1,2}(\partial B_\rho\setminus \{ x_h\})$. However, since ${\rm deg}(v_h,\partial B_\rho)=0$, we can argue as above (when proving that $\mathcal{H}^1(A^1_h)\leq r/12$) to show that $g_h\in W^{1,2}(\partial B_\rho)$. In addition,  \eqref{condbdrhohcpa}  yields 
\begin{equation}\label{rhoghcplukoialafin}
 \rho^{1/2}\lt(\int_{\partial B_\rho} |\partial_\tau g_h|^2+\frac{(1-|g_h|^2)^2}{2\eps_h^2}\,d\mathcal{H}^1\rt)^{1/2}\leq \delta\,.
 \end{equation}
We also notice that $\|g_h\|_{L^\infty(B_\rho)}\leq 1$ since $|u_h|\leq 1$. 
\vskip5pt

\noindent{\it Step 2.} Define $H_h$ to be the half space containing $\mathcal{C}^h_1$ and such that $x_h, y_h\in \partial H_h$. We claim that $k_h=k\in\{1,\ldots,m-1\}$ is independent of $h$ for $h$ sufficiently large, that $H_h\to H$ for some half space $H$
such that $\Gamma\cap \partial B_\rho= \partial H\cap \partial B_\rho$, and that $g_h\to g_\star$ uniformly on $\partial B_\rho$ where $g_\star\in W^{1,2}(\partial B_\rho;\Ss^1)$ is given by 
\begin{equation}\label{defg*jump1654}
g_\star:=\begin{cases}
u & \text{in $ \partial B_\rho \cap H$}\,,\\
{\bf a}^{-k}u& \text{in $ \partial B_\rho \setminus H$}\,.
\end{cases}
\end{equation}
First observe that $\liminf_h| x_h- y_h|>0$. Indeed, if for some subsequence we have $| x_h- y_h|\to 0$, then either $\chi_{H_h}\to 0$ in $L^1(\partial B_\rho)$ or $\chi_{H^c_h}\to 0$ in $L^1(\partial B_\rho)$.
Assume that $\chi_{H^c_h}\to 0$ in $L^1(\partial B_\rho)$ (the other case being analogous). From Proposition \ref{smoothcvawaygamma}, we infer that $u_h\to u$ in $L^1(\partial B_\rho)$, so that $g_h\to u$ 
in $L^1(\partial B_\rho)$. In view of \eqref{rhoghcplukoialafin}, we deduce that $u$ belongs to $W^{1,2}(\partial B_\rho)$, a contradiction.
Next, by Proposition~\ref{smoothcvawaygamma} again, $u_h\to u$ in $C^0_{\rm loc}(\partial B_\rho\setminus\Gamma)$, which now implies that $\{ x_h, y_h\}\to \Gamma\cap \partial B_\rho$ as $h\to\infty$. Writing $\Gamma\cap B_\rho =:\{ x_\star, y_\star\}$,
we may assume that $ x_h\to x_\star$ and $ y_h\to y_\star$.  In the same way, we may assume that $\mathcal{C}^h_1\to \mathcal{C}_1$ where $\mathcal{C}_1$ is an arc of $\partial B_\rho$ joining $x_\star$ and $y_\star$. This clearly implies that $H_h\to H$
where $H$ is the half space containing $\mathcal{C}_1$ and such that $ x_\star, y_\star\in \partial H$. In view of Remark \ref{remsmoothLmumin}, there exists a unique $k\in \{1,\ldots,m-1\}$ such that the map  defined in \eqref{defg*jump1654} belongs to $W^{1,2}(\partial B_\rho)$. 
Combining this fact with \eqref{rhoghcplukoialafin} and the convergence of $u_h$ toward $u$ in $L^1(\partial B_\rho)$, we deduce that $g_h\to g_\star$ uniformly in $\partial B_\rho$, and that $k_h=k$ for $h$ large enough.  

Since $g_\star^m=u^m=v_\mu$ on $\partial B_\rho$, and ${\rm deg}(v_\mu,\partial B_\rho)=0$, we derive that ${\rm deg}(g_\star,\partial B_\rho)=0$. We can now apply Proposition~\ref{buildGLcompet} to produce minimizers 
$w_h$ of $E_{\eps_h}(\cdot,B_\rho)$ over $W^{1,2}_{g_h}(B_\rho)$. Then $w_h\to w_\star$ strongly in $W^{1,2}(B_1)$ where $w_\star$ is the unique solution of \eqref{pbharmmaplimtw}. 
Arguing as in the proof of Lemma~\ref{LemfarS} (Step 2), we obtain that $w_\star^m=v_\mu$ in $B_\rho$, which leads for $h$ large enough  to 
\begin{equation}\label{smallassumedge}
\int_{B_\rho}|\nabla w_h|^2\,dx\leq \delta\,.
\end{equation}
\vskip5pt

\noindent{\it Step 3.} Consider the competitor $\widehat u_h\in\mathcal{G}_g(\Omega)$ given by 
$$\widehat u_h:=\begin{cases}
u_h & \text{in $\Omega\setminus B_\rho$}\,,\\
(\chi_{H_h}+{\bf a}^k\chi_{H_h^c})w_h & \text{in $ B_\rho$}\,.
\end{cases}$$
By minimality we have $F^0_{\eps_h,g}(u_h)\leq F^0_{\eps_h,g}(\widehat u_h)$, and since $J_{u_h}\cap \partial B_{\rho}=\{ x_h, y_h\}$, we deduce that 
\begin{equation}\label{0748jedmat}
F^0_{\eps_h}(u_h,B_{\rho})\leq F^0_{\eps_h}(\widehat u_h,B_{\rho}) =E_{\eps_h}(w_h,B_{\rho}) + L_h\,,
\end{equation}
where $L_h:=| x_h-y_h|$. Since $\p(u_h)=\p(g_h)$ on $\partial B_\rho$, and in view of \eqref{rhoghcplukoialafin} and \eqref{smallassumedge} (and our choice of~$\delta$), we can apply Lemma \ref{LM} and Proposition \ref{propLM} to derive that 
\begin{equation}\label{0748jedmatlwd}
F^0_{\eps_h}(u_h,B_{\rho})\geq  E_{\eps_h}(w_h,B_{\rho}) + \frac{1}{8}\int_{B_\rho}|\nabla\phi_h|^2\,dx+\mathcal{H}^1(J_{\phi_h}\cap B_\rho)\,,
\end{equation}
where $\phi_h:=u_h/w_h$ satisfies $\phi_h=\chi_{H_h}+{\bf a}^k\chi_{H_h^c}$ on $\partial B_\rho$ (and thus $\p(\phi_h)=1$ on $\partial B_\rho$). Putting \eqref{0748jedmat} and \eqref{0748jedmatlwd} together leads to 
\begin{equation}\label{bordelnbcplu}
L_h\geq \frac{1}{8}\int_{B_\rho}|\nabla\phi_h|^2\,dx+\mathcal{H}^1(J_{\phi_h}\cap B_\rho)\,.
\end{equation}
Let us now prove that 
\begin{equation}\label{idmeasseg}
\mathcal{H}^1(J_{\phi_h}\cap B_\rho)=L_h\,.
\end{equation}
Up to a rotation, we assume that $ x_h=(a,t)$ and $ y_h=(b,t)$ with $b-a=L_h$. For $s\in(a,b)$, we write $V_s:=\{s\}\times\R$.  Now, assume by contradiction that 
$\mathcal{H}^1(J_{\phi_h}\cap B_\rho)=L_h-\gamma$ for some $\gamma>0$. Then we infer from the coarea formula   \cite[Theorem II.7.7]{Maggi}  that 
\begin{equation}\label{intermineq1202}
 L_h-\gamma=\mathcal{H}^1(J_{\phi_h}\cap B_\rho)\geq \int_a^b\mathcal{H}^0(J_{\phi_h}\cap B_\rho\cap V_s)\,ds\,.
 \end{equation}
Set $\widetilde A_h:=\big\{s\in(a,b): J_{\phi_h}\cap B_\rho\cap V_s=\emptyset\big\}$, and recall that $\phi_h\in W^{1,2}(B_\rho\cap V_s)$ for a.e. $s\in \widetilde A_h$. From \eqref{intermineq1202} we deduce that $\mathcal{H}^1(\widetilde A_h)\geq \gamma$.
Since, $\phi_h=\chi_{H_h}+{\bf a}^k\chi_{H_h^c}$ on $\partial B_\rho$, we have $\phi_h=1$ on $\mathcal{C}^h_1\cap V_s$ and $\phi_h={\bf a}^k$ on $\mathcal{C}^h_2\cap V_s$ for a.e. $s\in(a,b)$. Therefore, 
$$\int_{B_\rho\cap V_s} |\partial_\tau\phi_h|^2\,d\mathcal{H}^1\geq \frac{|1-{\bf a}^k|^2}{2\rho} \geq \frac{c_m}{10r}\quad \text{for a.e. $s\in(a,b)$}\,.$$
Integrating with respect to $s$ (and recalling that $r<c_m/80$) yields
\begin{multline*}
\frac{1}{8}\int_{B_\rho}|\nabla\phi_h|^2\,dx+\mathcal{H}^1(J_{\phi_h}\cap B_\rho)\geq \frac{1}{8}\int_{\widetilde A_h} \int_{B_\rho\cap V_s} |\partial_\tau\phi_h|^2\,d\mathcal{H}^1\,dx+L_h-\gamma\\
\geq L_h+\lt(\frac{c_m}{80r}-1\rt)\gamma > L_h\,,
\end{multline*}
which contradicts \eqref{bordelnbcplu}. 

By combining \eqref{bordelnbcplu} and \eqref{idmeasseg} we deduce that $|\nabla p(\phi_h)|\leq m|\nabla\phi_h|=0$ in $B_\rho$. Since $\p(\phi_h)=1$ on $\partial B_\rho$, we conclude that $\p(\phi_h)=1$ in $B_\rho$. In other words, $\phi_h$ takes values in $\mathbf{G}_m$. Hence, there is a Caccioppoli partition $\{E_j\}_{j=0}^{m-1}$ of $B_\rho$ such that 
$$\phi_h=\sum_{j=0}^{m-1}{\bf a}^j\chi_{E_j}\,.$$
Recalling \cite[Section 4.4]{AFP}, we have 
\begin{equation}\label{lengthberl1653}
L_h=\mathcal{H}^1(J_{\phi_h}\cap B_\rho)=\mathcal{H}^1(\partial E_0\cap B_\rho)+\frac{1}{2}\sum_{j,\ell=1,\,j\neq \ell}^{m-1}\mathcal{H}^1(\partial E_j\cap \partial E_\ell\cap B_\rho) \,.
\end{equation}
Using that $\chi_{E_0}=\chi_{H_h}$ on $\partial B_\rho$, we have that $\mathcal{H}^1(\partial E_0\cap B_\rho)\geq L_h$ with equality if and only if $E_0=H_h\cap B_\rho$. 
Therefore $E_0=H_h\cap B_\rho$ and the sum on the right-hand side of \eqref{lengthberl1653} vanishes. Since $\chi_{E_k}=\chi_{H_h^c}$ on $\partial B_\rho$, we conclude that
$E_k=H_h^c\cap B_\rho$ and $E_j=\emptyset$ for $j\not\in\{0,k\}$. In other words, $\phi_h=\chi_{H_h}+{\bf a}^k\chi_{H_h^c}$ in $B_\rho$, and thus $u_h=(\chi_{H_h}+{\bf a}^k\chi_{H_h^c})w_h$ in $B_\rho$.  

To conclude, we observe that $w_h=(\chi_{H_h}+{\bf a}^{-k}\chi_{H_h^c})u_h$. Since $u_h\to u$ in $L^1(\Omega)$ and $H_h\to H$, Proposition~\ref{buildGLcompet} tells us that 
$w_h\to (\chi_{H_\star}+{\bf a}^{-k}\chi_{H_\star^c})u$ in  $C^\ell_{\rm loc}(B_\rho)$ for every $\ell\in\mathbb{N}$.
\end{proof}

\begin{remark}
 In order to prove \eqref{idmeasseg} one could also use a calibration argument (see \cite{AlBouDal,Mora}). However since our proof is elementary, we have decided to keep it this way.
\end{remark}

\subsubsection{Smoothness and convergence near triple junctions}

We now focus on the behavior of $u_h$ near the points of $T$, i.e., triple junctions. It will be convenient to describe a triple junction in the following way.
First write for $j=0,1,2$, $Y^j_{\rm ref}:=\big\{z\in\C\setminus\{0\}: {\rm arg}(z)\in(2j\pi/3,2(j+1)\pi/3)\big\}$. We say that an ordered triplet of open sets $(Y^0,Y^1,Y^2)$
is a triple junction if there are $x_0\in \C$  and $\theta\in[0,2\pi)$ such that  $Y^j=x_0+e^{i\theta} Y^j_{\rm ref}$ for $j=0,1,2$. 
Then, we say that $x_0$ is the center of the triple junction $(Y^0,Y^1,Y^2)$. In the statement below, we understand the convergence of triple junctions in the sense of local Hausdorff convergence. 
   
\begin{proposition}\label{propmincloseTnew}
For $\sigma\in(0,\sigma_0)$, let $r\in(0,\min\{r_\sigma,c_m/128 \})$ and $x_0\in T$. For $h$ large enough, there exist a triple junction $(Y_h^0,Y_h^1,Y_h^2)$
and distinct $k_1,k_2\in\{1,\ldots,m-1\}$ such that $u_h=:(\chi_{Y_h^0}+{\bf a}^{k_1}\chi_{Y_h^1}+{\bf a}^{k_2}\chi_{Y_h^2}) w_h$ with $w_h\in W^{1,2}(B_r(x_0))$,
and $w_h$ minimizes $E_{\eps_h}(\cdot,B_r(x_0))$ under its own boundary conditions. In addition, $w_h\in C^\infty(B_r(x_0))$, $(Y_h^0,Y_h^1,Y_h^2)\to (Y^0,Y^1,Y^2)$ 
for some triple junction   satisfying $\cup_j\partial Y^j\cap B_r(x_0)=\Gamma \cap B_r(x_0)$, and $w_h\to (\chi_{Y_h^0}+{\bf a}^{-k_1}\chi_{Y_h^1}+{\bf a}^{-k_2}\chi_{Y_h^2})u$  in $C^\ell_{\rm loc}(B_r(x_0))$ for every $\ell\in \mathbb{N}$. 
\end{proposition}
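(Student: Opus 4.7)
The plan is to adapt the strategy of Proposition \ref{smoothconvontheedge}, replacing the two-point configuration of $J_{u_h}\cap\partial B_{\rho_h}$ by a three-point one, and replacing the half-space competitor by a triple-junction competitor. Without loss of generality assume $x_0=0$. Since $x_0\in T$ is a Steiner point of $\Gamma$, we have $\mathcal{H}^1(\Gamma\cap B_{5r})=15r$ (three radii of length $5r$). The Hausdorff convergence provided by Proposition \ref{smoothconvontheedge} away from $T\cup{\rm spt}\,\mu$ (applied on annuli $B_{5r}\setminus B_{r/2}$) shows that for $h$ large, $J_{u_h}$ is, outside a small neighborhood of $0$, a union of three smooth arcs converging to the three open segments of $\Gamma\cap(B_{5r}\setminus\{0\})$. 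Consequently $\mathcal{H}^1(J_{u_h}\cap B_{5r})\leq 16r$ and, using the coarea inequality together with the three-arcs picture in $B_{5r}\setminus B_r$, one obtains that for a subset of radii $t\in(r,5r)$ of positive measure, $J_{u_h}\cap\partial B_t$ consists of \emph{exactly} three points. Indeed, rerunning the discussion in Step 1 of the proof of Proposition \ref{smoothconvontheedge} with the sets $A_h^k:=\{t:\mathcal{H}^0(J_{u_h}\cap\partial B_t)=k\}$ for $k=0,1,2,3$ rules out $A_h^0,A_h^1,A_h^2$ on a set of large measure (via Proposition \ref{smoothcvawaygamma}, Proposition \ref{smoothconvontheedge}, and the degree obstruction $\curl j(v_h)=0$ in $B_{\rho}$), leaving $\mathcal{H}^1(A_h^3)\gtrsim r$.

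Next, a Fubini-type selection within $A_h^3$ yields a radius $\rho_h\in(r,5r)$ (that, after rescaling, we may take constant $=\rho$) with $\{x_h^1,x_h^2,x_h^3\}:=J_{u_h}\cap\partial B_\rho$, $u_h\in SBV^2(\partial B_\rho)$ and
\begin{equation*}
\rho^{1/2}\Bigl(\int_{\partial B_\rho}|\partial_\tau u_h|^2+\tfrac{(1-|u_h|^2)^2}{2\eps_h^2}\,d\mathcal{H}^1\Bigr)^{1/2}\leq\delta,\qquad \int_{\partial B_\rho}|v_h-v_\mu|^2+|\nabla v_h-\nabla v_\mu|^2\,d\mathcal{H}^1\to 0.
\end{equation*}
Denote by $\mathcal{C}_h^0,\mathcal{C}_h^1,\mathcal{C}_h^2$ the three open arcs of $\partial B_\rho\setminus\{x_h^1,x_h^2,x_h^3\}$, ordered consistently with the limiting triple junction structure. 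On each $\mathcal{C}_h^j$, the condition $|u_h|\geq 1/2$ and $\deg(v_h,\partial B_\rho)=0$ (since $\curl j(v_h)=0$ near $0$) lets us write $u_h=|u_h|e^{i\varphi_h^j}$, and, exactly as in Step 2 of the proof of Proposition \ref{smoothconvontheedge}, the traces across each $x_h^i$ differ by a root of unity $\mathbf{a}^{k_i^h}$. Multiplying $u_h$ on $\mathcal{C}_h^1$ by $\mathbf{a}^{-k_1}$ and on $\mathcal{C}_h^2$ by $\mathbf{a}^{-k_2}$ (with $k_1,k_2$ chosen so the resulting map has no jump) produces $g_h\in W^{1,2}(\partial B_\rho;\C)$ with $\p(g_h)=v_h$, $\|g_h\|_{L^\infty}\leq 1$, satisfying the same bound as above. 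By Remark \ref{remsmoothLmumin} applied at the triple junction $0\in T$, there is a unique pair $(k_1,k_2)\in\{1,\dots,m-1\}^2$ with $k_1\neq k_2$ ensuring that the corresponding limit $g_\star:=(\chi_{Y^0}+\mathbf{a}^{-k_1}\chi_{Y^1}+\mathbf{a}^{-k_2}\chi_{Y^2})u$ belongs to $W^{1,2}(\partial B_\rho)$, where $(Y^0,Y^1,Y^2)$ is the limiting triple junction; thus $k_i^h=k_i$ is constant for $h$ large, $g_h\to g_\star$ uniformly, and $\deg(g_\star,\partial B_\rho)=0$. Proposition \ref{buildGLcompet} then provides the Ginzburg-Landau minimizer $w_h$ of $E_{\eps_h}(\cdot,B_\rho)$ in $W^{1,2}_{g_h}$, smooth in $B_\rho$, with $w_h^m\to v_\mu$, and for $h$ large enough, $\int_{B_\rho}|\nabla w_h|^2\leq\delta$.

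To conclude, one compares $u_h$ with the competitor $\widehat u_h:=(\chi_{Y_h^0}+\mathbf{a}^{k_1}\chi_{Y_h^1}+\mathbf{a}^{k_2}\chi_{Y_h^2})w_h$ in $B_\rho$ and $\widehat u_h=u_h$ outside, where $(Y_h^0,Y_h^1,Y_h^2)$ is the triple junction at $120^\circ$ centered at the Steiner point $z_h$ of $\{x_h^1,x_h^2,x_h^3\}$, which converges to $(Y^0,Y^1,Y^2)$. Writing $\ell_h:=\mathcal{H}^1\bigl(\bigcup_j\partial Y_h^j\cap B_\rho\bigr)$ for the triple-junction length, minimality of $u_h$ yields
\begin{equation*}
F^0_{\eps_h}(u_h,B_\rho)\leq E_{\eps_h}(w_h,B_\rho)+\ell_h,
\end{equation*}
while Lemma \ref{LM} and Proposition \ref{propLM} applied to $\phi_h:=u_h/w_h$ (well defined since $|w_h|\geq 1/2$ and $\p(\phi_h)=1$ on $\partial B_\rho$) provide
\begin{equation*}
F^0_{\eps_h}(u_h,B_\rho)\geq E_{\eps_h}(w_h,B_\rho)+\tfrac{1}{8}\!\int_{B_\rho}|\nabla\phi_h|^2\,dx+\mathcal{H}^1(J_{\phi_h}\cap B_\rho).
\end{equation*}
Combining these bounds gives $\tfrac{1}{8}\int|\nabla\phi_h|^2+\mathcal{H}^1(J_{\phi_h}\cap B_\rho)\leq\ell_h$. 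The main and only new obstacle here is to upgrade this inequality to the precise identification of $\phi_h$ as a characteristic partition matching the competitor's triple junction. The argument runs in two stages: first one shows $\nabla\phi_h=0$ a.e. in $B_\rho$, hence $\phi_h\in BV(B_\rho;\mathbf{G}_m)$, by a slicing argument identical to the one used in Step 3 of the proof of Proposition \ref{smoothconvontheedge} (the constant $r<c_m/128$ makes the tangential cost $\int|\partial_\tau\phi_h|^2$ on slices that avoid $J_{\phi_h}$ incompatible with the length deficit, using $|\mathbf{a}^{k_1}-1|^2,|\mathbf{a}^{k_2}-1|^2,|\mathbf{a}^{k_1}-\mathbf{a}^{k_2}|^2\geq c_m$); second, one has a planar minimal partition problem $\{E_0,E_{k_1},E_{k_2}\}$ of $B_\rho$ with the three prescribed boundary arcs, and the Steiner property of $120^\circ$-triple junctions (cf. the classical two-dimensional minimal-cluster regularity used in the proof of Theorem \ref{thmLDmu}, Step 3) forces $E_j=Y_h^j\cap B_\rho$, i.e.\ $\phi_h=\chi_{Y_h^0}+\mathbf{a}^{k_1}\chi_{Y_h^1}+\mathbf{a}^{k_2}\chi_{Y_h^2}$. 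The remaining conclusions, namely that $w_h$ minimizes $E_{\eps_h}$ under its own boundary condition, its smoothness, and the $C^\ell_{\rm loc}$-convergence $w_h\to(\chi_{Y^0}+\mathbf{a}^{-k_1}\chi_{Y^1}+\mathbf{a}^{-k_2}\chi_{Y^2})u$, then follow directly from Proposition \ref{buildGLcompet} and the identification $w_h=(\chi_{Y_h^0}+\mathbf{a}^{-k_1}\chi_{Y_h^1}+\mathbf{a}^{-k_2}\chi_{Y_h^2})u_h$.
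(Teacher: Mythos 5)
Your overall architecture (three boundary jump points, lifting to build $g_h$ with $\p(g_h)=v_h$, the Lassoued--Mironescu splitting via Lemma \ref{LM} and Proposition \ref{propLM}, comparison with a triple-junction competitor, then identification of $\phi_h$) matches the paper's proof. However, there is a genuine gap at the step you describe as "identical to the one used in Step 3 of the proof of Proposition \ref{smoothconvontheedge}". That slicing argument works for a single chord because the jump length $L_h$ coincides exactly with the measure of the interval of slicing parameters $s\in(a,b)$: every unit of missing jump length produces a unit measure of slices on which $\phi_h$ must transition continuously from $1$ to ${\bf a}^k$, and the Dirichlet cost $c_m/(10r)$ per such slice beats the saved length. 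For a $120^\circ$ triple junction this commensurability fails: $\ell_h=\mathcal{H}^1(\bigcup_j\partial Y_h^j\cap B_\rho)$ is the sum of three segment lengths and strictly exceeds the width of $\{x_h^1,x_h^2,x_h^3\}$ in any single direction, which is all a one-directional slicing can see (slices whose two endpoints on $\partial B_\rho$ lie in the same arc impose no transition). So from $\frac18\int|\nabla\phi_h|^2+\mathcal{H}^1(J_{\phi_h}\cap B_\rho)\leq\ell_h$ you cannot conclude $\nabla\phi_h=0$ by this route; a deficit $\gamma>0$ in jump length is not forced to reappear as Dirichlet energy on a set of slices of measure $\gamma$. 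Your second stage also presupposes the first: the Steiner/connectivity argument for the partition only applies once $\phi_h$ is known to be $\mathbf{G}_m$-valued, since a nonzero absolutely continuous part lets the phase rotate continuously and destroys the topological obstruction on slices.

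The paper closes exactly this gap with a calibration: it invokes \cite[Example 5.4]{Mora} (with $\alpha=16$) to show that $\chi_{Y_h^0}+{\bf a}^{k_1}\chi_{Y_h^1}+{\bf a}^{k_2}\chi_{Y_h^2}$ is a Dirichlet minimizer of the functional $\frac1{16}\int|\nabla\cdot|^2+\mathcal{H}^1(J_\cdot)$ in $B_r$, giving the reverse inequality $\frac1{16}\int_{B_r}|\nabla\phi_h|^2+\mathcal{H}^1(J_{\phi_h}\cap B_r)\geq\mathcal{H}^1({\bf Y}_h\cap B_r)$; combined with the lower bound from Proposition \ref{propLM} this forces $\nabla\phi_h=0$ and $\mathcal{H}^1(J_{\phi_h}\cap B_r)=\mathcal{H}^1({\bf Y}_h\cap B_r)$ simultaneously. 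This is also why the radius threshold here is $c_m/128$ rather than $c_m/32$. After that, the identification $E_j=Y_h^j\cap B_r$ proceeds essentially as you sketch (minimal partition regularity from \cite{CLM}, then connectivity of the pieces of $\bigcup_k\partial E_k$ joining the three boundary points and uniqueness of the three-point Steiner tree). A secondary remark: the paper obtains the three-arc structure of $J_{u_h}$ near $\partial B_r$ directly by applying Proposition \ref{smoothconvontheedge} in balls around the three points of $\Gamma\cap\partial B_r$ and Proposition \ref{smoothcvawaygamma} elsewhere, which makes your coarea count over $A_h^0,\dots,A_h^3$ unnecessary, though not incorrect.
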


\begin{proof}
{\it Step 1.} Without loss of generality, we may assume that $x_0=0$. From Remark \ref{remsmoothLmumin}, we infer that there exist a triple junction $(Y^0,Y^1,Y^2)$ centered in $0$ 
and distinct $k_1,k_2\in\{1,\ldots,m-1\}$ such that the map $(\chi_{Y^0}+{\bf a}^{-k_1}\chi_{Y^1}+{\bf a}^{-k_2}\chi_{Y^2})u$ is smooth in $B_{2r}$, and $\cup_j\partial Y^j\cap B_{2r}=\Gamma\cap B_{2r}$. Since the values of $k_1$ and $k_2$ play no role we will assume that $k_1=1$ and $k_2=2$ to keep notation simpler. 
We write $\{a\}:=\partial Y^2\cap \partial Y^0\cap\partial B_r$, $\{b\}:=\partial Y^0\cap \partial Y^1\cap\partial B_r$, and 
$\{c\}:=\partial Y^1\cap \partial Y^2\cap\partial B_r$.

Choosing a sufficiently small radius $0<\kappa<r/2$, we can apply  Proposition \ref{smoothconvontheedge} in the balls $B_{2\kappa}(a)$, $B_{2\kappa}(b)$, and $B_{2\kappa}(c)$, and infer that there exist half spaces $H_h^0$, $H_h^1$, and $H_h^2$ such that  $H_h^0\cap B_{2\kappa}(a)\to Y^0\cap B_{2\kappa}(a)$, $H_h^1\cap B_{2\kappa}(b)\to Y^1\cap B_{2\kappa}(b)$, $H_h^2\cap B_{2\kappa}(c)\to Y^2\cap B_{2\kappa}(c)$, and 
\begin{equation}\label{cvsidesY1}
(\chi_{H_h^0}+{\bf a}^{-2}\chi_{(H_h^0)^c})u_h\to  (\chi_{Y^0}+{\bf a}^{-2}\chi_{Y^2})u \quad\text{in $C^k_{\rm loc}(B_{2\kappa}(a))$}\,,
\end{equation}
\begin{equation}\label{cvsidesY2}
(\chi_{(H_h^1)^c}+{\bf a}^{-1}\chi_{H_h^1})u_h\to  (\chi_{Y^0}+{\bf a}^{-1}\chi_{Y^1})u \quad\text{in $C^k_{\rm loc}(B_{2\kappa}(b))$}\,,
\end{equation}
\begin{equation}\label{cvsidesY3}
({\bf a}^{-1}\chi_{(H_h^2)^c}+{\bf a}^{-2}\chi_{H_h^2})u_h\to  ({\bf a}^{-1}\chi_{Y^1}+{\bf a}^{-2}\chi_{Y^2})u \quad\text{in $C^k_{\rm loc}(B_{2\kappa}(c))$}\,.
\end{equation}
In view of Proposition  \ref{smoothcvawaygamma}, we deduce that for $h$ large enough, $J_{u_h}\cap(B_{r+\kappa}\setminus\overline B_{r-\kappa})$
is made of three (disjoint) segments, each of them  intersecting $\partial B_t$ almost orthogonally (in particular at a single point) for every $t\in(r-\kappa,r+\kappa)$.
As a consequence, for $h$ large enough the open set $(B_{r+\kappa}\setminus\overline B_{r-\kappa})\setminus J_{u_h}$ has three connected components $Z^0_h$, $Z_h^1$, and $Z_h^2$
satisfying  $Z_h^j\to (B_{r+\kappa}\setminus\overline B_{r-\kappa})\cap Y^j$. Combining \eqref{cvsidesY1}-\eqref{cvsidesY2}-\eqref{cvsidesY3} with Proposition \ref{smoothconvontheedge}, we derive that 
$$(\chi_{Z_h^0}+{\bf a}^{-1}\chi_{Z_h^1}+{\bf a}^{-2}\chi_{Z_h^2})u_h\to (\chi_{Y^0}+{\bf a}^{-1}\chi_{Y^1}+{\bf a}^{-2}\chi_{Y^2})u \quad\text{in $C^k_{\rm loc}(B_{r+\kappa}\setminus \overline B_{r-\kappa})$}\,.$$
\vskip5pt

\noindent{\it Step 2.} Arguing as the proof of Lemma \ref{LemfarS} (Step 1), we find a good radius $\rho_h\in(r,r+\kappa/2)$ such that \eqref{condbdrhohcpa} holds (for $h$ even larger). 
Rescaling variables if necessary, we may assume without too much loss of generality that $\rho_h=\rho$ is independent of $h$. To simplify, we will further assume that actually $\rho=r$. Setting
$$g_h:= (\chi_{Z_h^0}+{\bf a}^{-1}\chi_{Z_h^1}+{\bf a}^{-2}\chi_{Z_h^2})u_h \in C^\infty(\partial B_r)\,,$$
estimate \eqref{rhoghcplukoialafin} holds, $\|g_h\|_{L^\infty(B_r)}\leq 1$, and $g_h\to g_\star:=(\chi_{Y^0}+{\bf a}^{-1}\chi_{Y^1}+{\bf a}^{-2}\chi_{Y^2})u$ uniformly on $\partial B_r$.
Once again, since $g_\star^m=v_\mu$ we have ${\rm deg}(g_\star,\partial B_r)=0$. Then, we apply Proposition~\ref{buildGLcompet} to produce minimizers $w_h$ of $E_{\eps_h}(\cdot,B_r)$ over $W^{1,2}_{g_h}(B_r)$, and  $w_h\to w_\star$ strongly in $W^{1,2}(B_r)$
where $w_\star$ is the unique solution of \eqref{pbharmmaplimtw}.  
Again, as in the proof of Lemma~\ref{LemfarS} (Step 2), we obtain that $w_\star^m=v_\mu$ in $B_r$, which leads to \eqref{smallassumedge} 
for $h$ large enough. 
\vskip5pt

\noindent{\it Step 3.} By Step 1, we have $J_{u_h}\cap\partial B_r=\{ x_h, y_h, z_h\}$ for $h$ large enough, with $ x_h\to a$, $ y_h\to b$, and $ z_h\to c$. 
For $h$ large enough, we can then find a triple junction $(Y^0_h,Y^1_h,Y^2_h)$ (which might not be centered at the origin) such that 
$\{ x_h\}=\partial Y_h^2\cap \partial Y_h^0\cap\partial B_r$, $\{ y_h\}=\partial Y_h^0\cap \partial Y_h^1\cap\partial B_r$, and 
$\{ z_h\}:=\partial Y_h^1\cap \partial Y_h^2\cap\partial B_r$. Obviously, $Y^j_h\to Y^j$ as $h\to \infty$.
Notice also that $g_h=(\chi_{Y_h^0}+{\bf a}^{-1}\chi_{Y_h^1}+{\bf a}^{-2}\chi_{Y_h^2})u_h$ on $\partial B_r$. 

Next, we consider the competitor $\widehat u_h\in\mathcal{G}_g(\Omega)$ given by 
$$\widehat u_h:=\begin{cases} 
u_h & \text{in $ \Omega\setminus B_r$}\,,\\
(\chi_{Y_h^0}+{\bf a}\chi_{Y_h^1}+{\bf a}^{2}\chi_{Y_h^2})w_h & \text{in $ B_r$}\,.
\end{cases}$$
By minimality we have $F^0_{\eps_h,g}(u_h)\leq F^0_{\eps_h,g}(\widehat u_h)$, and since $J_{u_h}\cap \partial B_{r}=\{ x_h, y_h, z_h\}$, we deduce that 
\begin{equation}\label{0748jedmattripl}
F^0_{\eps_h}(u_h,B_{r})\leq F^0_{\eps_h}(\widehat u_h,B_{r}) =E_{\eps_h}(w_h,B_{r}) + \mathcal{H}^1({\bf Y}_h\cap B_r)\,,
\end{equation}
where we have set ${\bf Y}_h:=\cup_j\partial Y_h^j$. Once again $\p(u_h)=\p(g_h)$ on $\partial B_r$, and by \eqref{rhoghcplukoialafin} and \eqref{smallassumedge}, we can apply Lemma \ref{LM} and Proposition \ref{propLM} to derive that \eqref{0748jedmatlwd} holds,  
where $\phi_h:=u_h/w_h$ satisfies $\phi_h=\chi_{Y_h^0}+{\bf a}^{k_1}\chi_{Y_h^1}+{\bf a}^{k_2}\chi_{Y_h^2}$ on $\partial B_r$ (and $\p(\phi_h)=1$ on $\partial B_r$). Combining  \eqref{0748jedmatlwd}  with \eqref{0748jedmattripl} leads to 
\begin{equation}\label{bordelnbcplutripl}
\mathcal{H}^1({\bf Y}_h\cap B_r) 
\geq \frac{1}{8}\int_{B_r}|\nabla\phi_h|^2\,dx+\mathcal{H}^1(J_{\phi_h}\cap B_r)\,.
\end{equation}
Our choice of $r$ (small compare to $c_m$) allows us to use the calibration in \cite[Example 5.4]{Mora} (with $\alpha=16$) to deduce that for $h$ large enough the map 
$\chi_{Y_h^0}+{\bf a}\chi_{Y_h^1}+{\bf a}^{2}\chi_{Y_h^2}$ is a Dirichlet minimizer of the Mumford-Shah functional
\footnote{Even though the calibrations defined in~\cite{Mora} (see also \cite{AlBouDal}) are given for centered triple junctions, we can consider  restrictions to $B_r$ of calibrations
defined on a larger ball centered at the center of ${\bf Y_h}$.}  \cite[Definition 3.1)]{Mora}. 
As a consequence, 
\begin{equation}\label{lundmidcalib}
\frac{1}{16}\int_{B_r}|\nabla\phi_h|^2\,dx+\mathcal{H}^1(J_{\phi_h}\cap B_r)\geq \mathcal{H}^1({\bf Y}_h\cap B_r)\,.
\end{equation}
Putting together \eqref{bordelnbcplutripl} and \eqref{lundmidcalib} yields
$$\int_{B_r}|\nabla\phi_h|^2\,dx=0\quad \text{and} \quad \mathcal{H}^1(J_{\phi_h}\cap B_r)=\mathcal{H}^1({\bf Y}_h\cap B_r)\,. $$
Arguing as in the proof of Proposition \ref{smoothconvontheedge} (Step 3), we deduce that 
$$\phi_h=\sum_{k=0}^{m-1}{\bf a}^k\chi_{E_k} $$
for a  Caccioppoli partition $\{E_k\}_{k=0}^{m-1}$ of $B_r$ satisfying 
\begin{equation}\label{mard1134}
E_0\cap (B_r\setminus \overline B_{r-\kappa})=Z_0^h\cap B_r\,, \;E_{1}\cap (B_r\setminus \overline B_{r-\kappa})=Z_1^h\cap B_r\,,\; E_{2}\cap (B_r\setminus \overline B_{r-\kappa})=Z_2^h\cap B_r\,,
\end{equation}
and $E_k\subset B_{r-\kappa}$ for $k\not\in\{0, 1, 2\}$. 

Let us now consider an arbitrary Caccioppoli partition $\{F_k\}_{k=0}^{m-1}$ of $B_r$ such that each $F_k\triangle E_k$ is compactly contained in $B_r$, and define the competitor $\widetilde u_h\in\mathcal{G}_g(\Omega)$ by 
$$\widetilde u_h:=\begin{cases} 
u_h & \text{in $\Omega\setminus B_r$}\,,\\
\phi w_h & \text{in $B_r$}\,,
\end{cases}\quad\text{with}\quad \phi:=\sum_{k=0}^{m-1}{\bf a}^k\chi_{F_k} \,.$$
By minimality  $F^0_{\eps_h,g}(u_h)\leq F^0_{\eps_h,g}(\widetilde u_h)$, which leads as before to
$$ \mathcal{H}^1(J_{\phi_h}\cap B_r)\leq \mathcal{H}^1(J_{\phi}\cap B_r)\,.$$
As in the proof of Theorem \ref{thmLDmu} (Step 3), it implies that $\{E_k\}_{k=0}^{m-1}$ is a minimal partition of $B_r$, so that $J_{\phi_h}\cap B_r=\cup_k\partial E_k\cap B_r$ is locally a finite union of  segments 
(see  \cite[Theorem~5.2]{CLM}). Since we already know that $J_{\phi_h}$ is made of three segments in a neighborhood of $\partial B_r$, we conclude that 
$\cup_k\partial E_k\cap B_r$ is  made of finitely many segments with $\cup_k\partial E_k\cap \partial B_r=\{ x_h, y_h, z_h\}$\footnote{Here and in the rest of the proof, by an abuse of notation we identify $\partial E_k\cap  B_r$ and $\overline{\partial E_k\cap {B}_r}$. } . 
In view of \eqref{mard1134}, we have $\{ x_h, y_h\}\subset\partial E_0\cap B_r$, and the connected component of  $\partial E_0\cap B_r$ containing $ x_h$ is a polygonal curve joining $ x_h$ to $ y_h$.  
Similarly, $\partial E_{2}\cap B_r$ contains a polygonal curve connecting
$ x_h$ to $ z_h$.  Set $\Gamma_h$ to be the union of these two curves. Then $\Gamma_h$ is a  connected set containing $\{ x_h, y_h, z_h\}$, and contained in ${J_{\phi_h}\cap B_r}$. 
Since ${\bf Y}_h\cap \overline B_r$ is the unique solution of  the Steiner problem relative to the points $\{ x_h, y_h, z_h\}$, we have 
$$\mathcal{H}^1({\bf Y}_h\cap B_r)= \mathcal{H}^1(J_{\phi_h}\cap B_r)\geq  \mathcal{H}^1(\Gamma_h)\geq \mathcal{H}^1({\bf Y}_h\cap B_r)\,,$$
and it follows that $J_{\phi_h}\cap B_r=\Gamma_h\cap B_r={\bf Y}_h\cap B_r$. From \eqref{mard1134} we conclude that 
$$\phi_h=\chi_{Y_h^0}+{\bf a}\chi_{Y_h^1}+{\bf a}^{2}\chi_{Y_h^2} \,,$$
that is $u_h=(\chi_{Y_h^0}+{\bf a}\chi_{Y_h^1}+{\bf a}^2\chi_{Y_h^2})w_h$ in $B_r$. 

Since $u_h\to u$ in $L^1(\Omega)$ and $Y^j_h\to Y^j$, Proposition~\ref{buildGLcompet} implies that $w_h\to (\chi_{Y^0}+{\bf a}^{-1}\chi_{Y^1}+{\bf a}^{-2}\chi_{Y^2})u$ in $C^k_{\rm loc}(B_r)$ for every $k\in\mathbb{N}$, and the proof is complete. 
\end{proof}

\section*{Acknowledgments}
We thank R. Badal, M. Cicalese, L. De Luca, and M. Ponsiglione for telling us about their result \cite{BadCicLuPo} and giving us an early access to a preprint version. We also thank M. Dos Santos for pointing out the paper \cite{Radulescu}. The authors have been supported by the
Agence Nationale de la Recherche through the grants
ANR-12-BS01-0014-01 (Geometrya),  ANR-14-CE25-0009-01 (MAToS), and by the PGMO
research project COCA. BM was partially supported by the INRIA team RAPSODI and the Labex CEMPI (ANR-11-LABX-0007-01).


\bibliographystyle{alpha}
\bibliography{ripple.bib}

\begin{thebibliography}{ABDM03}

\bibitem[ABDM03]{AlBouDal}
G.~Alberti, G.~Bouchitt{\'e}, and G.~Dal~Maso.
\newblock The calibration method for the {M}umford-{S}hah functional and
  free-discontinuity problems.
\newblock {\em Calc. Var. Partial Differential Equations}, 16(3):299--333,
  2003.

\bibitem[AFP00]{AFP}
L.~Ambrosio, N.~Fusco, and D.~Pallara.
\newblock {\em Functions of bounded variation and free discontinuity problems}.
\newblock Oxford Mathematical Monographs. The Clarendon Press, Oxford
  University Press, New York, 2000.

\bibitem[AHL17]{AlperHL}
O.~Alper, R.~Hardt, and F.-H. Lin.
\newblock Defects of liquid crystals with variable degree of orientation.
\newblock {\em Calculus of Variations and Partial Differential Equations},
  56(5):128, 2017.

\bibitem[AP14]{AlicPon}
R.~Alicandro and M.~Ponsiglione.
\newblock Ginzburg-{L}andau functionals and renormalized energy: a revised
  {$\Gamma$}-convergence approach.
\newblock {\em J. Funct. Anal.}, 266(8):4890--4907, 2014.

\bibitem[AT90]{AmbTor90}
L.~Ambrosio and V.~M. Tortorelli.
\newblock Approximation of functionals depending on jumps by elliptic
  functionals via {$\Gamma$}-convergence.
\newblock {\em Comm. Pure Appl. Math.}, 43(8):999--1036, 1990.

\bibitem[AT92]{AmbTor}
L.~Ambrosio and V.~M. Tortorelli.
\newblock On the approximation of free discontinuity problems.
\newblock {\em Boll. Un. Mat. Ital. B (7)}, 6(1):105--123, 1992.

\bibitem[BBH93]{BBHarticle}
F.~Bethuel, H.~Brezis, and F.~H{\'e}lein.
\newblock Asymptotics for the minimization of a {G}inzburg-{L}andau functional.
\newblock {\em Calc. Var. Partial Differential Equations}, 1(2):123--148, 1993.

\bibitem[BBH94]{BBH}
F.~Bethuel, H.~Brezis, and F.~H{\'e}lein.
\newblock {\em Ginzburg-{L}andau vortices}.
\newblock Progress in Nonlinear Differential Equations and their Applications,
  13. Birkh\"auser Boston, Inc., Boston, MA, 1994.

\bibitem[BBM00]{BoBrMi}
J.~Bourgain, H.~Brezis, and P.~Mironescu.
\newblock Lifting in {S}obolev spaces.
\newblock {\em J. Anal. Math.}, 80:37--86, 2000.

\bibitem[BC84]{BrezCor}
H.~Brezis and J.-M. Coron.
\newblock Multiple solutions of {$H$}-systems and {R}ellich's conjecture.
\newblock {\em Comm. Pure Appl. Math.}, 37(2):149--187, 1984.

\bibitem[BCG14]{BelChGol}
G.~Bellettini, A.~Chambolle, and M.~Goldman.
\newblock The {$\Gamma$}-limit for singularly perturbed functionals of
  {P}erona-{M}alik type in arbitrary dimension.
\newblock {\em Math. Models Methods Appl. Sci.}, 24(6):1091--1113, 2014.

\bibitem[BCL86]{BreCorLieb}
H.~Brezis, J.-M. Coron, and E.~H. Lieb.
\newblock Harmonic maps with defects.
\newblock {\em Comm. Math. Phys.}, 107(4):649--705, 1986.

\bibitem[BCLP16]{BadCicLuPo}
R.~Badal, M.~Cicalese, L.~De Luca, and M.~Ponsiglione.
\newblock {$\Gamma$}-convergence analysis of a generalized ${XY}$ model:
  fractional vortices and string defects, 2016.

\bibitem[BCP96]{BraiCP}
A.~Braides and V.~Chiad{\`o}~Piat.
\newblock Integral representation results for functionals defined on {${\rm
  SBV}(\Omega;{\bf R}^m)$}.
\newblock {\em J. Math. Pures Appl. (9)}, 75(6):595--626, 1996.

\bibitem[BCS07]{BCS}
A.~Braides, A.~Chambolle, and M.~Solci.
\newblock A relaxation result for energies defined on pairs set-function and
  applications.
\newblock {\em ESAIM Control Optim. Calc. Var.}, 13(4):717--734 (electronic),
  2007.

\bibitem[Bed16]{Bedford}
S.~Bedford.
\newblock Function spaces for liquid crystals.
\newblock {\em Arch. Ration. Mech. Anal.}, 219(2):937--984, 2016.

\bibitem[BFL91]{belfeiglev}
M.~L. Belaya, M.~V. Feigel'Man, and V.~G. Levadny.
\newblock Theory of the ripple phase coexistance.
\newblock {\em Journal de Physique II}, 1(3):375--380, 1991.

\bibitem[BL14]{BucLuc}
D.~Bucur and S.~Luckhaus.
\newblock Monotonicity formula and regularity for general free discontinuity
  problems.
\newblock {\em Arch. Ration. Mech. Anal.}, 211(2):489--511, 2014.

\bibitem[BMP05]{BMP}
H.~Brezis, P.~Mironescu, and A.~C. Ponce.
\newblock {$W^{1,1}$}-maps with values into {$S^1$}.
\newblock In {\em Geometric analysis of {PDE} and several complex variables},
  volume 368 of {\em Contemp. Math.}, pages 69--100. Amer. Math. Soc.,
  Providence, RI, 2005.

\bibitem[Bra98]{braidesfree}
A.~Braides.
\newblock {\em Approximation of free-discontinuity problems}, volume 1694 of
  {\em Lecture Notes in Mathematics}.
\newblock Springer-Verlag, Berlin, 1998.

\bibitem[Bra02]{Braides}
A.~Braides.
\newblock {\em {$\Gamma$}-convergence for beginners}, volume~22 of {\em Oxford
  Lecture Series in Mathematics and its Applications}.
\newblock Oxford University Press, Oxford, 2002.

\bibitem[BZ11]{BallZar}
J.~M. Ball and A.~Zarnescu.
\newblock Orientability and energy minimization in liquid crystal models.
\newblock {\em Arch. Ration. Mech. Anal.}, 202(2):493--535, 2011.

\bibitem[CJ99]{ColJer}
J.~E. Colliander and R.~L. Jerrard.
\newblock Ginzburg-{L}andau vortices: weak stability and {S}chr\"odinger
  equation dynamics.
\newblock {\em J. Anal. Math.}, 77:129--205, 1999.

\bibitem[CLM15]{CLM}
M.~Cicalese, G.~P. Leonardi, and F.~Maggi.
\newblock Improved convergence theorems for bubble clusters. i. the planar
  case, 2015.

\bibitem[CT99]{CorToa}
G.~Cortesani and R.~Toader.
\newblock A density result in {SBV} with respect to non-isotropic energies.
\newblock {\em Nonlinear Anal.}, 38(5, Ser. B: Real World Appl.):585--604,
  1999.

\bibitem[Dav05]{David}
G.~David.
\newblock {\em Singular sets of minimizers for the {M}umford-{S}hah
  functional}, volume 233 of {\em Progress in Mathematics}.
\newblock Birkh\"auser Verlag, Basel, 2005.

\bibitem[Dem90]{Demengel}
F.~Demengel.
\newblock Une caract\'erisation des applications de {$W^{1,p}(B^N,S^1)$} qui
  peuvent \^etre approch\'ees par des fonctions r\'eguli\`eres.
\newblock {\em C. R. Acad. Sci. Paris S\'er. I Math.}, 310(7):553--557, 1990.

\bibitem[DHW87]{DHW}
D.~Z. Du, F.~K. Hwang, and J.~F. Weng.
\newblock Steiner minimal trees for regular polygons.
\newblock {\em Discrete {\&} Computational Geometry}, 2(1):65--84, 1987.

\bibitem[DI03]{IgnDav}
J.~D{\'a}vila and R.~Ignat.
\newblock Lifting of {BV} functions with values in {$S^1$}.
\newblock {\em C. R. Math. Acad. Sci. Paris}, 337(3):159--164, 2003.

\bibitem[DM93]{DalMaso}
G.~Dal~Maso.
\newblock {\em An introduction to {$\Gamma$}-convergence}.
\newblock Progress in Nonlinear Differential Equations and their Applications,
  8. Birkh\"auser Boston, Inc., Boston, MA, 1993.

\bibitem[DPFP17]{PratFusDep}
G.~De~Philippis, N.~Fusco, and A.~Pratelli.
\newblock On the approximation of {SBV} functions.
\newblock {\em Atti Accad. Naz. Lincei Rend. Lincei Mat. Appl.},
  28(2):369--413, 2017.

\bibitem[FM13]{FarMir}
A.~Farina and P.~Mironescu.
\newblock Uniqueness of vortexless {G}inzburg-{L}andau type minimizers in two
  dimensions.
\newblock {\em Calc. Var. Partial Differential Equations}, 46(3-4):523--554,
  2013.

\bibitem[Fus03]{fuscoreview}
N.~Fusco.
\newblock An overview of the {M}umford-{S}hah problem.
\newblock {\em Milan J. Math.}, 71:95--119, 2003.

\bibitem[GMS79]{GMS}
M.~Giaquinta, G.~Modica, and J.~Sou\v{c}ek.
\newblock Functionals with linear growth in the calculus of variations. {I},
  {II}.
\newblock {\em Comment. Math. Univ. Carolin.}, 20(1):143--156, 157--172, 1979.

\bibitem[GP68]{GilPol}
E.~N. Gilbert and H.~O. Pollak.
\newblock Steiner minimal trees.
\newblock {\em SIAM J. Appl. Math.}, 16:1--29, 1968.

\bibitem[GS92]{Gromov}
M.~Gromov and R.~Schoen.
\newblock Harmonic maps into singular spaces and {$p$}-adic superrigidity for
  lattices in groups of rank one.
\newblock {\em Inst. Hautes \'Etudes Sci. Publ. Math.}, (76):165--246, 1992.

\bibitem[HL93]{HardtLin}
R.~Hardt and F.-H. Lin.
\newblock Harmonic maps into round cones and singularities of nematic liquid
  crystals.
\newblock {\em Math. Z.}, 213(4):575--593, 1993.

\bibitem[IL17]{IL2017}
R.~{Ignat} and X.~{Lamy}.
\newblock {L}ifting of $\mathbb{RP}^{d-1}$-valued maps in $bv$. {A}pplications
  to uniaxial $q$-tensors.
\newblock {\em ArXiv e-prints}, June 2017.

\bibitem[ILR00]{Radulescu}
L.~Ignat, C.~Lefter, and V.~D. Radulescu.
\newblock Minimization of the renormalized energy in the unit ball of {$\bold
  R^2$}.
\newblock {\em Nieuw Arch. Wiskd. (5)}, 1(3):278--280, 2000.

\bibitem[JS02]{JerSon}
R.~L. Jerrard and H.~M. Soner.
\newblock The {J}acobian and the {G}inzburg-{L}andau energy.
\newblock {\em Calc. Var. Partial Differential Equations}, 14(2):151--191,
  2002.

\bibitem[Lem16]{lemenant}
A.~Lemenant.
\newblock A selective review on {M}umford-{S}hah minimizers.
\newblock {\em Boll. Unione Mat. Ital.}, 9(1):69--113, 2016.

\bibitem[Lin89]{Lindefects}
F.-H. Lin.
\newblock Nonlinear theory of defects in nematic liquid crystals; phase
  transition and flow phenomena.
\newblock {\em Comm. Pure Appl. Math.}, 42(6):789--814, 1989.

\bibitem[LM93]{LubMkintosh}
T.~C. Lubensky and F.~C. MacKintosh.
\newblock Theory of ``ripple'' phases of lipid bilayers.
\newblock {\em Phys. Rev. Lett.}, 71:1565--1568, Sep 1993.

\bibitem[LM99]{LasMi}
L.~Lassoued and P.~Mironescu.
\newblock Ginzburg-{L}andau type energy with discontinuous constraint.
\newblock {\em J. Anal. Math.}, 77:1--26, 1999.

\bibitem[LS07]{LenzSchmidt}
O.~Lenz and F.~Schmid.
\newblock Structure of symmetric and asymmetric ``ripple'' phases in lipid
  bilayers.
\newblock {\em Phys. Rev. Lett.}, 98:058104, Jan 2007.

\bibitem[LX99]{XinLin}
F.-H. Lin and J.~X. Xin.
\newblock On the incompressible fluid limit and the vortex motion law of the
  nonlinear {S}chr\"odinger equation.
\newblock {\em Comm. Math. Phys.}, 200(2):249--274, 1999.

\bibitem[Mag12]{Maggi}
F.~Maggi.
\newblock {\em Sets of finite perimeter and geometric variational problems},
  volume 135 of {\em Cambridge Studies in Advanced Mathematics}.
\newblock Cambridge University Press, Cambridge, 2012.
\newblock An introduction to geometric measure theory.

\bibitem[Mer06]{Merletlift}
B.~Merlet.
\newblock Two remarks on liftings of maps with values into {$S^1$}.
\newblock {\em C. R. Math. Acad. Sci. Paris}, 343(7):467--472, 2006.

\bibitem[Mor94]{MorMed}
F.~Morgan.
\newblock $({M}, \varepsilon, \delta)$-minimal curve regularity.
\newblock {\em Proceedings of the American Mathematical Society},
  120(3):677--686, 1994.

\bibitem[Mor02]{Mora}
M.~G. Mora.
\newblock The calibration method for free-discontinuity problems on
  vector-valued maps.
\newblock {\em J. Convex Anal.}, 9(1):1--29, 2002.

\bibitem[Pol78]{pollak}
H.~O. Pollak.
\newblock Some remarks on the {S}teiner problem.
\newblock {\em J. Combinatorial Theory Ser. A}, 24(3):278--295, 1978.

\bibitem[RS83]{ruppelsackman}
D.~Ruppel and E.~Sackmann.
\newblock On defects in different phases of two-dimensional lipid bilayers.
\newblock {\em J. Phys. France}, 44(9):1025--1034, 1983.

\bibitem[Sac95]{Sackmann}
E.~Sackmann.
\newblock Physical basis of self-organization and function of membranes:
  physics of vesicles.
\newblock {\em Handbook of Biological Physics}, 1:213--304, 1995.

\bibitem[SS04]{SS}
S.~Serfaty and E.~Sandier.
\newblock Vortices for {G}inzburg-{L}andau equations: with magnetic field
  versus without.
\newblock In {\em Noncompact problems at the intersection of geometry,
  analysis, and topology}, volume 350 of {\em Contemp. Math.}, pages 233--244.
  Amer. Math. Soc., Providence, RI, 2004.

\bibitem[Wen69]{Wente}
H.~C. Wente.
\newblock An existence theorem for surfaces of constant mean curvature.
\newblock {\em J. Math. Anal. Appl.}, 26:318--344, 1969.

\bibitem[YZ96]{Yezu}
D.~Ye and F.~Zhou.
\newblock Uniqueness of solutions of the {G}inzburg-{L}andau problem.
\newblock {\em Nonlinear Anal.}, 26(3):603--612, 1996.

\end{thebibliography}

\end{document}